\date{May 6, 2010}      
\DeclareFontFamily{OT1}{rsfs}{}
\DeclareFontShape{OT1}{rsfs}{n}{it}{<->rsfs10}{}
\DeclareMathAlphabet{\curly}{OT1}{rsfs}{n}{it}
\theoremstyle{plain}  
\newtheorem{theorem}{Theorem}[section]
\newtheorem*{theorem*}{Theorem}
\newtheorem{corollary}[theorem]{Corollary}
\newtheorem{lemma}[theorem]{Lemma}
\newtheorem{proposition}[theorem]{Proposition}
\newtheorem{definition}[theorem]{Definition}
\theoremstyle{definition}
\newtheorem*{convention*}{Convention}
\theoremstyle{remark}
\newtheorem{remark}[theorem]{Remark}
\newtheorem*{claim*}{Claim}
\numberwithin{equation}{section}
\newcommand{\suchthat}{\;|\;}
\newcommand{\abs}[1]{\lvert#1\rvert}
\renewcommand{\leq}{\leqslant}
\renewcommand{\le}{\leqslant}
\renewcommand{\geq}{\geqslant}
\renewcommand{\ge}{\geqslant}
\renewcommand{\setminus}{\smallsetminus}
\newcommand{\x}{\times}
\newcommand{\into}{\hookrightarrow}
\newcommand{\R}{\mathbb{R}}
\newcommand{\Z}{\mathbb{Z}}
\newcommand{\C}{\mathbb{C}}
\newcommand{\D}{\mathbb{D}}
\newcommand{\PP}{\mathbb{P}}
\newcommand{\cO}{\mathcal{O}}
\newcommand{\lie}{\mathfrak}
\newcommand{\ra}{\rightarrow}
\newcommand{\lra}{\longrightarrow}
\newcommand{\PSL}{\mathrm{PSL}}
\newcommand{\SU}{\mathrm{SU}}
\newcommand{\U}{\mathrm{U}}
\newcommand{\OO}{\mathrm{O}}
\newcommand{\CO}{\mathrm{CO}}
\newcommand{\GL}{\mathrm{GL}}
\newcommand{\SL}{\mathrm{SL}}
\newcommand{\SO}{\mathrm{SO}}
\newcommand{\Sp}{\mathrm{Sp}}
\newcommand{\ua}{\underline{a}}
\newcommand{\ub}{\underline{b}}
\DeclareMathOperator{\Jac}{Jac} 
\DeclareMathOperator{\Ad}{Ad} 
 \DeclareMathOperator{\tr}{tr}
\DeclareMathOperator{\rk}{rk} 
\DeclareMathOperator{\Hom}{Hom}
\DeclareMathOperator{\Lie}{Lie}
 \DeclareMathOperator{\Mat}{Mat}
\DeclareMathOperator{\Sym}{Sym} 
\newcommand{\noi}{\noindent}
\renewcommand{\phi}{\varphi}
\newcommand{\liem}{\mathfrak{m}}
\newcommand{\liemc}{\mathfrak{m}^{\mathbb{C}}}
\newcommand{\lieh}{\mathfrak{h}}
\newcommand{\liehc}{\mathfrak{h}^{\mathbb{C}}}
\newcommand{\lieg}{\mathfrak{g}}
\newcommand{\liep}{\mathfrak{p}}
\newcommand{\plie}{\mathfrak{p}}
\newcommand{\liegc}{\mathfrak{g}^{\mathbb{C}}}
\newcommand{\liesl}{\mathfrak{sl}}
\newcommand{\Inn}{\operatorname{Inn}}
\newcommand{\Out}{\operatorname{Out}}
\newcommand{\Aut}{\operatorname{Aut}}
\begin{document}

\title{Deformations of maximal representations in $\Sp(4,\R)$}

\author[S. B. Bradlow]{Steven B. Bradlow}
\address{Department of Mathematics \\
University of Illinois \\
Urbana \\
IL 61801 \\
USA } \email{bradlow@math.uiuc.edu}

\author[O. Garc{\'\i}a-Prada]{Oscar Garc{\'\i}a-Prada}
\address{Instituto de Ciencias Matem\'aticas 
CSIC-UAM-UC3M-UCM \\ Serrano 121 \\ 28006 Madrid \\ Spain}
\email{oscar.garcia-prada@uam.es}

\author[P. B. Gothen]{Peter B. Gothen}
\address{Departamento de Matem\'atica Pura \\
Faculdade de Ci\^encias, Universidade do Porto \\
Rua do Campo Alegre 687 \\ 4169-007 Porto \\ Portugal }
\email{pbgothen@fc.up.pt}

\thanks{
Members of VBAC (Vector Bundles on Algebraic Curves).
Second and Third authors partially supported by FCT / CSIC and
CRUP / Ministerio de
Educaci{\'o}n y Ciencia (Spain) through Portugal/Spain bilateral
programmes.
First and Second authors partially supported by Ministerio de
Educaci{\'o}n y Ciencia (Spain) through Project
MTM2004-07090-C03-01.
Third author partially supported by FCT (Portugal) through the Centro
de Matem\'atica da Universidade do Porto and the projects
POCTI/MAT/58549/2004, PTDC/MAT/099275/2008 and PTDC/MAT/098770/2008.
}


\begin{abstract}
We use Higgs bundles to answer the following question: When can a maximal
$\Sp(4,\R)$-representation of a surface group be deformed to a
representation which factors through a proper reductive subgroup of
$\Sp(4,\R)$?
\end{abstract}

\maketitle
\section{Introduction}

A good way to understand an object of study, as Richard Feynman
famously remarked\footnotemark\footnotetext{In his lecture 
``There's
plenty of room at the bottom'' (see \cite{feynman})}, is to ``just
look at the thing''.  In this paper we apply Feynman's method to answer
the following question: given a surface group representation in
$\Sp(4,\R)$, under what conditions can it be deformed to a
representation which factors through a proper reductive subgroup of
$\Sp(4,\R)$?

A surface group representation in a group $G$ is a homomorphism from
the fundamental group of the surface into $G$. For a surface of genus
$g\ge 2$, the moduli space of reductive surface group representations
into $G=\Sp(4,\R)$, denoted by $\mathcal{R}(\Sp(4,\R))$, has $3\cdot
2^{2g+1}+8g-13$ connected components (see
\cite{garcia-prada-mundet:2004,gothen:2001}).  The components are
partially labeled by an integer, known as the Toledo invariant, which
ranges between $2-2g$ and $2g-2$. If $\mathcal{R}_d$ denotes the
component with Toledo invariant $d$, then there is a homeomorphism
$\mathcal{R}_d \simeq \mathcal{R}_{-d}$ and except for the extremal
cases (i.e.\ $|d|=2g-2$) each $\mathcal{R}_d$ is connected.  In
contrast, the subspace of \textbf{maximal representations}
$\mathcal{R}^{max}=\mathcal{R}_{2g-2}$ have $3\cdot 2^{2g}+2g-4$
components.  These are our objects of study. The precise question we
answer is thus: {\it which maximal components contain representations
that factor through reductive subgroups of $\Sp(4,\R)$?}

One motivation for this question stems from the fundamental work of
Goldman \cite{goldman:1980,goldman:1988} and Hitchin
\cite{hitchin:1992}. Goldman showed that, in the case of $\PSL(2,\R)$,
the space of maximal representations coincides with Teichm\"uller
space, i.e., the space of Fuchsian representations.  
Using Higgs bundles, Hitchin
constructed distinguished components in the moduli space of reductive
representations in the split real form of any complex reductive
group. These components, known as \textbf{Hitchin components}, 
have been the subject of much interest, see for example
Burger--Iozzi--Labourie--Wienhard,
\cite{burger-iozzi-labourie-wienhard:2005}, Fock--Goncharov
\cite{fock-goncharov:2006}, Guichard-Wienhard
\cite{guichard-wienhard:2008} and La\-bou\-rie
\cite{labourie:2005,labourie:2006}.  

Moreover, the representations in these components factor through
homomorphisms from $\SL(2,\R)$ into the split real form.  In the case
of $\Sp(4,\R)$ there are $2^{2g}$ Hitchin components, all of which are
maximal and contain representations which factor through the
irreducible representation of $\SL(2,\R)$ in $\Sp(4,\R)$.  One is thus
led to ask whether the other $ 2^{2g+1}+2g-4$ components have similar
factorization properties.

In the case of $\Sp(4,\R)$ there are $2^{2g}$ Hitchin components.
They are projectively equivalent, in the sense that they project to a
unique Hitchin component in the moduli space for the projective
symplectic group $\mathrm{PSp}(4,\R)$.  The $\Sp(4,\R)$ Hitchin
components are all maximal and all contain representations which
factor through the {\it irreducible} representation of $\SL(2,\R)$ in
$\Sp(4,\R)$.  One is thus led to ask whether the other $2^{2g+1}+2g-4$
maximal components have similar factorization properties.

To answer our question we need a microscope with which we can ``just
look at'' the components of $\mathcal{R}^{max}$.  Higgs bundles
provide the tool we need.  A Higgs bundle is a holomorphic bundle
together with a Higgs field, i.e.\ a section of a particular
associated vector bundle.  Such objects appear in the context of
surface group representations as follows. Given a real orientable
surface, say $S$, and any real reductive Lie group, say $G$,
representations of $\pi_1(S)$ in $G$ depend only on the topology of
$S$, i.e.\ on its genus.  Fixing a conformal structure, or
equivalently a complex structure, transforms $S$ into a Riemann
surface (denoted by $X$). This opens the way for holomorphic
techniques and brings in Higgs bundles. The group $G$ appears as the
structure group of the Higgs bundles, which are hence called $G$-Higgs
bundles.  By the non-abelian Hodge theory correspondence
(\cite{hitchin:1987,donaldson:1987,simpson:1988,corlette:1988,
garcia-gothen-mundet:2008}), reductive representations of $\pi_1(X)$
in $G$ correspond to polystable $G$-Higgs bundles, and the
representation variety, i.e.\ the space of conjugacy classes of
reductive representations, corresponds to the moduli space of
polystable Higgs bundles.

Taking $G=\Sp(4,\R)$ we denote the moduli space of polystable
$\Sp(4,\R)$-Higgs bundles by $\mathcal{M}(\Sp(4,\R))$ (or simply $\mathcal{M}$). The non-abelian Hodge
theory correspondence then gives a homeomorphism $\mathcal{M} \simeq
\mathcal{R}(\Sp(4,\R))$. Let $\mathcal{M}^{max} \subset \mathcal{M}$
be subspace corresponding to $\mathcal{R}^{max}$ under this
homeomorphism. If a representation in $\Sp(4,\R)$ factors through a
subgroup, say $G_*\subset\Sp(4,\R)$, then the structure group of the
corresponding $\Sp(4,\R)$-Higgs bundle reduces to $G_*$. Through the
lens of our Higgs bundle microscope, the question we examine thus
becomes: {\it which components of $\mathcal{M}^{max}$ contain
polystable $\Sp(4,\R)$-Higgs bundles for which the structure group
reduces to a subgroup $G_{*}$?}  This is the question we answer.

The geometry of the hermitean symmetric
space $\Sp(4,\R)/\U(2)$ , together with results of Burger, Iozzi and Wienhard
\cite{burger-iozzi-wienhard:2003,burger-iozzi-wienhard:2006}
(see Section \ref{sec:G*}) constrain $G_*$ to be one
of the following three subgroups

\begin{itemize}
\item $G_i=\SL(2,\R)$, embedded via the irreducible representation of
$\SL(2,\R)$ in $\Sp(4,\R)$,
\item $G_p$, the normalizer of the product representation
$$\rho_p: \SL(2,\R)\x\SL(2,\R)\longrightarrow \Sp(4,\R)\ ,$$
\item $G_{\Delta}$, the normalizer of the composition of $\rho_p$ with
the diagonal embedding of $\SL(2,\R)$ in $\SL(2,\R) \x \SL(2,\R)$.
\end{itemize}

For each possible $G_*$ we analyze what $G_*$-Higgs bundles look
like and then, following Feynman's dictum,  we simply check to see which
components of $\mathcal{M}^{max}$ contain Higgs bundles of the
required type. In practice this means that we carefully describe the
structure of maximal $\Sp(4,\R)$-Higgs bundles and compare it to
that of the $G_*$-Higgs bundles.

Our results for each of the possible subgroups are given by Theorems
\ref{cor: FinalTallyGdelta}, \ref{th:GpTally}, and
\ref{thm:GirrTally}. These lead to our main result,
Theorem~\ref{thm:main-result-higgs}, whose essential point is the following.

\begin{theorem}\label{th: intro1}
Of the $3\cdot 2^{2g}+2g-4$ components of $\mathcal{M}^{max}$

(1) $2^{2g}$ are Hitchin components in which the corresponding Higgs
bundles deform to maximal $\SL(2,\R)$-Higgs bundles,

(2) $2\cdot2^{2g}-1$ components have the property that the
corresponding Higgs bundles deform to Higgs bundles which admit a reduction
of structure group to $G_p$, and also deform to ones which admit a reduction
of structure group to $G_{\Delta}$, and

(3) $2g-3$ components have the property that the corresponding Higgs
bundles do not admit a reduction of structure group to a proper
reductive subgroup of $\Sp(4,\R)$.
\end{theorem}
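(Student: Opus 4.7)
The plan is to assemble Theorem~\ref{th: intro1} as a direct synthesis of the three component-by-component classification theorems~\ref{cor: FinalTallyGdelta}, \ref{th:GpTally}, and~\ref{thm:GirrTally}. Each of these theorems identifies, among the $3\cdot 2^{2g}+2g-4$ components of $\mathcal{M}^{max}$, exactly those that contain a polystable Higgs bundle admitting a reduction of structure group to one of $G_\Delta$, $G_p$, or $G_i$ respectively. The combinatorial task is then to verify that these three lists partition the components as claimed and, in particular, to match the set of components reducible to $G_p$ with the set reducible to $G_\Delta$ off the Hitchin locus.

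First I would recall the labelling of the connected components of $\mathcal{M}^{max}$ by their topological invariants, following \cite{garcia-prada-mundet:2004,gothen:2001}. The Hitchin components are singled out by the square-root data that parameterize a reduction to the irreducible $\SL(2,\R)$, so Theorem~\ref{thm:GirrTally} should exhibit exactly $2^{2g}$ such components, giving clause~(1). Next I would read off from Theorems~\ref{th:GpTally} and~\ref{cor: FinalTallyGdelta} the invariants characterizing those components containing $G_p$- and $G_\Delta$-reductions. The key claim to verify for clause~(2) is that, on the complement of the Hitchin locus, these two sets of invariants describe the same $2\cdot 2^{2g}-1$ components.

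Granting this, clause~(3) is forced by a subtraction argument, since
$$3\cdot 2^{2g}+2g-4 \;-\; 2^{2g} \;-\; (2\cdot 2^{2g}-1) \;=\; 2g-3,$$
and Section~\ref{sec:G*} (which uses the Burger--Iozzi--Wienhard constraints to narrow $G_*$ to the trichotomy $\{G_i,G_p,G_\Delta\}$) guarantees that a component admitting no reduction to any of the three listed subgroups admits no reduction to a proper reductive subgroup at all.

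I expect the main obstacle to lie in the identification described in the second paragraph: showing that the sets of $G_p$-reducible and $G_\Delta$-reducible non-Hitchin components actually coincide. The topological invariants on the two sides arise from rather different geometric structures --- a polarized $\SL(2,\R)\times\SL(2,\R)$ decomposition on one side, versus a diagonal $\SL(2,\R)$ twisted by a double cover on the other --- yet they must match at the level of components of $\mathcal{M}^{max}$. Once that alignment is pinned down, the proof of Theorem~\ref{th: intro1} reduces to organizing the three tallies and checking the arithmetic above.
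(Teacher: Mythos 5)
Your proposal is correct and follows essentially the same route as the paper: Theorem~\ref{th: intro1} is deduced by combining Theorems~\ref{cor: FinalTallyGdelta}, \ref{th:GpTally} and \ref{thm:GirrTally} (which indeed give the same list, $\mathcal{M}_{w_1,w_2}$ with $w_1\neq 0$ together with $\mathcal{M}^0_0$, for both $G_p$ and $G_\Delta$), and then invoking Proposition~\ref{prop:possible-subgroups} to upgrade ``no reduction to $G_i$, $G_p$ or $G_\Delta$'' to ``no reduction to any proper reductive subgroup'' for the $2g-3$ components $\mathcal{M}^0_c$, $0<c<2g-2$. The only cosmetic difference is that the paper routes this last step through the representation variety via non-abelian Hodge theory, since Proposition~\ref{prop:possible-subgroups} is a statement about representations.
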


The corresponding result for surface group representations, given in
Theorem~\ref{thm:main-result}, says the following:

\begin{theorem}\label{th: intro2}
Of the $3\cdot 2^{2g}+2g-4$ components of $\mathcal{R}^{max}$

(1) $2^{2g}$ are Hitchin components, i.e.\ the corresponding
representations deform to ones which factor through (Fuchsian) representations into $\SL(2,\R)$,

(2) $2\cdot2^{2g}-1$ components have the property that the
corresponding representations deform to ones which factor through
$G_p$, and also deform to ones which factor through $G_{\Delta}$, and

(3) $2g-3$ components have the property that the corresponding
representations do not factor through any proper reductive subgroup
of $\Sp(4,\R)$.
\end{theorem}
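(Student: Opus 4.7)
The plan is to deduce Theorem \ref{th: intro2} directly from Theorem \ref{th: intro1} via the non-abelian Hodge correspondence. That correspondence provides a homeomorphism $\mathcal{M}(\Sp(4,\R)) \simeq \mathcal{R}(\Sp(4,\R))$ which restricts to a homeomorphism $\mathcal{M}^{max} \simeq \mathcal{R}^{max}$ and therefore induces a bijection between connected components and carries deformation paths to deformation paths. The component counts $3\cdot 2^{2g}+2g-4$, $2^{2g}$, $2\cdot 2^{2g}-1$ and $2g-3$ transfer automatically from Theorem \ref{th: intro1}; what has to be verified is the passage from a reduction of structure group of the Higgs bundle to $G_*$ on one side to a factorization of the representation through $G_*$ on the other.

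The technical lemma to establish is this dictionary: a polystable $\Sp(4,\R)$-Higgs bundle admits a reduction of structure group to a closed reductive subgroup $G_* \subset \Sp(4,\R)$ if and only if the corresponding reductive representation of $\pi_1(X)$ factors, up to conjugation, through $G_*$. On the Higgs side, the Hitchin-Kobayashi correspondence for real groups of \cite{garcia-gothen-mundet:2008} attaches a harmonic metric to any polystable $\Sp(4,\R)$-Higgs bundle; a reduction to a $G_*$-Higgs bundle then produces a $G_*$-equivariant harmonic map from the universal cover of $X$ into the totally geodesic sub-symmetric space $G_*/K_* \subset \Sp(4,\R)/\U(2)$, whose monodromy is a representation into $G_*$ composing with the inclusion $G_* \hookrightarrow \Sp(4,\R)$ to the original representation. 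Conversely, a factorization through $G_*$ produces a $G_*$-Higgs bundle by non-abelian Hodge theory applied to $G_*$, and extension of structure group recovers the $\Sp(4,\R)$-Higgs bundle on the other side.

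With this dictionary in hand, the three clauses of Theorem \ref{th: intro1} translate term by term. For clause (1), maximal $\SL(2,\R)$-Higgs bundles correspond under non-abelian Hodge to maximal $\SL(2,\R)$-representations, which by Goldman's theorem are exactly the Fuchsian ones; this yields clause (1) of Theorem \ref{th: intro2}. Clause (2) transfers verbatim to a statement about representations deforming both into $G_p$ and into $G_\Delta$. For clause (3) one uses the constraint recalled in Section \ref{sec:G*} (from Burger-Iozzi-Wienhard together with the geometry of $\Sp(4,\R)/\U(2)$) that the only proper reductive subgroups through which a maximal $\Sp(4,\R)$-representation can factor are conjugate to $G_i$, $G_p$ or $G_\Delta$, so the $2g-3$ components without Higgs bundle reductions to any of these admit no representation factoring through a proper reductive subgroup. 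The main obstacle in the whole argument is genuinely just the dictionary lemma; once it is in place, Theorem \ref{th: intro2} is a mechanical translation of Theorem \ref{th: intro1}.
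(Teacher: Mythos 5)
Your proposal is correct and follows essentially the same route as the paper: Theorem~\ref{th: intro2} is deduced from the Higgs bundle statement via the non-abelian Hodge homeomorphism $\mathcal{M}^{max}\simeq\mathcal{R}^{max}$, the reduction/factorization dictionary (which the paper establishes in Section~\ref{sec:reduction} as Proposition~\ref{reduction} and its corollary, arguing algebraically via extension of parabolics rather than via harmonic maps into totally geodesic subspaces, though the content is the same), and Proposition~\ref{prop:possible-subgroups} for clause (3). The only cosmetic difference is that the paper phrases the transfer at the level of the detailed Theorems~\ref{thm:main-result-higgs} and \ref{thm:main-result}, deducing part (4) of the Higgs statement \emph{from} the representation statement exactly as you indicate.
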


In fact part (1) of  Theorems \ref{th: intro1} and \ref{th: intro2}  follows from Hitchin's  general construction in \cite{hitchin:1992}.   It is nevertheless instructive to see the explicit details of the construction in our particular case, namely $G=\Sp(4,\R)$, and to view the results from a new perspective.  The results about the other maximal components and the other possible subgroups are new.  They raise the interesting problem of gaining a better understanding of the representations which do not deform to representations which factor through a proper reductive subgroup of $\Sp(4,\R)$\footnotemark\footnotetext{ The recent preprint \cite{guichard-wienhard:2009} takes interesting steps in this 
direction.}.

In addition to the main results in Theorems \ref{th: intro1} and \ref{th: intro2}, we also give (in Section \ref{sub:count-explicit}) explicit descriptions of some of the components. Together with the main theorems, these have consequences whose import goes beyond the specific case of $G=\Sp(4,\R)$\footnotemark\footnotetext{We thank an anonymous referee for articulating some of these comments}.  In particular the $2g-3$ components where representations do not factor through any reductive subgroup are remarkable for the following reasons: 

\begin{itemize}
\item the representations in these components all have Zariski dense image in $\Sp(4,\R)$. 
\item the components are smooth but, unlike the Hitchin components, topologically non-trivial.  
\end{itemize}

\noi The group $G=\Sp(4,\R)$ is thus an example of a Lie group with rank greater than $1$ for which the moduli space of  surface group representations into $G$ has components with these properties.  To the best of our knowledge this is the first such example.  Furthermore,  by results of Labourie (\cite{labourie:2005}) and Wienhard (\cite{wienhard:2006}), the mapping class group is known to act properly discontinuously on  $\mathcal{R}^{max}$. The components we describe thus give examples of non-trivial manifolds which carry such actions of the mapping class group.

We note, finally, that the case $G=\Sp(4,\R)$ has features not shared
by $\Sp(2n,\R)$ for $n>2$. In particular, the moduli space of
representations (or Higgs bundles) has anomalously large number of
connected components when $n=2$, compared to the case $n\ge 3$.
Moreover, we prove in Corollary~\ref{cor:n-ge-3} that, when $n\ge 3$,
there are no components of $\mathcal{R}^{max}$ in which all
representations have Zariski dense image.  The case $n=2$ thus demands
treatment as a special case.

\bigskip

\noi {\bf Acknowledgements.}  This paper answers a question first
raised by Bill Goldman at the AIM workshop on Surface Group
Representations in March 2007.  The authors thank the workshop
participants and the AIM staff for making the workshop such a valuable
experience.  The authors thank Marc Burger, Bill Haboush, Nigel
Hitchin, Alessandra Iozzi, Ignasi Mundet, Domingo Toledo, and
especially Bill Goldman and Anna Wienhard, for many useful
conversations and helpful consultations.

\section{Basic background on Higgs bundles and representations}

\subsection{Higgs bundles}
\label{sec:higgs-bundles}

Our main tool for exploring surface group representations is the
relation between such representations and Higgs bundles.  We are
interested primarily in representations in
$\Sp(4,\R)$, but it is useful to state the general definition.

Let $G$ be a \textbf{real reductive Lie group}. Following  Knapp
\cite[p.~384]{knapp:1996}, by this we  mean that we are given the data
$(G,H,\theta,B)$, where $H \subset G$ is a maximal compact subgroup
(cf.\ \cite[Proposition~7.19(a)]{knapp:1996}),  $\theta\colon \lieg
\to \lieg$ is a Cartan involution and $B$ is a
non-degenerate bilinear form on $\lieg$, which is $\Ad(G)$-invariant
and $\theta$-invariant.  The data $(G,H,\theta,B)$ has to satisfy in
addition that
\begin{itemize}
\item
the Lie algebra $\lieg$ of $G$ is reductive
\item
$\theta$ gives  a decomposition  (the Cartan decomposition)
\begin{displaymath}
\lie{g} = \lieh \oplus \liem
\end{displaymath}
into its $\pm1$-eigenspaces, where  $\lieh$ is the Lie
algebras of $H$,
\item
$\lieh$ and $\liem$ are orthogonal under $B$, and $B$ is positive definite
on $\liem$ and negative definite on $\lieh$,

\item
multiplication as a map from $H\times \exp\liem$ into $G$ is an onto
diffeomorphism.
\end{itemize}

We will refer sometimes to the data $(G,H,\theta,B)$, as the 
{\bf Cartan data}.

\begin{remark}
\label{rem:ss-cartan-data}
If $G$ is semisimple, the data $(G,H,\theta,B)$ can be
recovered\footnote{To be precise, the quadratic form $B$ can only
  recovered up to a scalar but this will be sufficient for everything
  we do in this paper.}  from the choice of a maximal compact
subgroup $H \subset G$. There are other situations where less
information does the job, e.g.\ for certain linear groups (see
\cite[p.~385]{knapp:1996}).
\end{remark}

\begin{remark}
The bilinear form $B$ does not play any role in the definition of 
$G$-Higgs bundle that  follows but it
is essential for defining  the  stability condition and  
for making sense of the Hitchin--Kobayashi
correspondence.
\end{remark}

\begin{remark} 
 Note that the compactness of $H$ together with the last property
 above say that $G$ has only finitely many components.
 Note also that we are not assuming, like Knapp, that every
 automorphism $\Ad(g)$ of $\lieg^{\C}$ is inner for every $g\in G$.
\end{remark}

Let $\liegc$ and $\liehc$ be the complexifications of 
$\lieg$ and $\lieh$ respectively, and let 
$H^\C$ be the complexification of $H$. Let 
\begin{equation}\label{eqn:CCartan}
\liegc=\liehc \oplus \liemc
\end{equation}
be the complexification of the Cartan decomposition.  The adjoint
action of $G$ on $\lieg$ restricts to give a representation -- the
isotropy representation -- of $H$ on $\liem$.  Since any two Cartan
decompositions of $G$ are related by a conjugation, the isotropy
representation is independent of the choice of Cartan
decomposition. The same is true of the complexification of this
representation, allowing us to define:

\begin{definition}\label{defn:GHiggs} A {\bf $G$-Higgs bundle} over $X$ is 
a pair $(E,\varphi)$ where
\begin{itemize}
\item $E$ is  a principal holomorphic $H^\C$-bundle $E$ over $X$ and
\item  $\varphi$ is a holomorphic section of $E(\liemc)\otimes K$, where $E(\liemc)$ is the bundle associated to
$E$ via the isotropy representation of $H^\C$ in $\liemc$ and $K$ is the canonical bundle on $X$.
\end{itemize}
\end{definition}

\begin{remark} If $G=\Sp(4,\R)$ then $H=\U(2)$ and $H^\C=\GL(2,\C)$. It is often convenient to replace the
principal $\GL(2,\C)$-bundle in Definition~\ref{defn:GHiggs} with
the vector bundle associated to it by the standard
representation. In the next sections we denote this vector bundle by
$V$.
\end{remark}

In order to define a moduli space of $G$-Higgs bundles 
we need a notion of stability. We briefly recall here the main
definitions (see \cite{garcia-gothen-mundet:2008,garcia-prada-gothen-mundet:2009a} for details).  
Let $\mathfrak{h}^\C_s$ be the semisimple part of $\mathfrak{h}^\C$, that
is, $\mathfrak{h}^\C_s=[\mathfrak{h}^\C,\mathfrak{h}^\C]$.
Choosing a  Cartan subalgebra, let $\Delta$ be  a
fundamental system of roots of $\lieh^\C$. For every subset
$A\subseteq \Delta$ there is a corresponding parabolic 
subalgebra $\mathfrak{p}_A$ of $\mathfrak{h}^\C_s$ and all parabolic
subalgebras can be obtained in this way. Denote by $P_A$ the
corresponding parabolic subgroup of $H^\C$.
Let $\chi$ be an antidominant character of $\liep_A$.
Using the invariant form on $\mathfrak{h}$ defined by $B$,
$\chi$ defines an element $s_\chi\in i\lieh$. 
Now for  $s\in i \mathfrak{h}$, define the sets
\begin{align*}
&\mathfrak{p}_s=\{x\in\liehc\ :\ \Ad(e^{ts})x\text{ is
bounded as }t\to\infty\}\\
&P_s=\{g\in H^\C\ :\ e^{ts}ge^{-ts}\text{ is bounded as }t\to\infty\}\\
&\mathfrak{l}_s=\{x\in\liehc\ :\ [x,s]=0\}\\
&L_s=\{g\in H^\C\ :\ \Ad(g)(s)=s\}.
\end{align*}
One has (see \cite{garcia-prada-gothen-mundet:2009a}) that
for $s\in i \mathfrak{h}$, $\mathfrak{p}_s$ is a parabolic subalgebra
of $\mathfrak{h}^\C$, $P_s$ is a parabolic subgroup of $H^\C$ and the Lie
algebra of $P_s$ is $\mathfrak{p}_s$, $\mathfrak{l}_s$ is a Levi
subalgebra of $\mathfrak{p}_s$ and $L_s$ is a Levi subgroup of $P_s$
with Lie algebra $\mathfrak{l}_s$. Moreover, if $\chi$ is an antidominant
character of $\mathfrak{p}_A$, then
$\mathfrak{p}_A\subseteq\mathfrak{p}_{s_\chi}$ and $L_A\subseteq
L_{s_{\chi}}$ and, if $\chi$ is strictly antidominant,
$\mathfrak{p}_A=\mathfrak{p}_s$ and
$\mathfrak{l}_A=\mathfrak{l}_{s_{\chi}}$.

Let $\iota:H^\C\rightarrow \GL(\liem^\C)$ be
the isotropy representation. We define
\begin{align*}
&\liem_{\chi}^-=\{v\in \liem^\C\ :\ \iota(e^{ts_{\chi}})v
\text{ is bounded as}\;\; t\to\infty\}\\
&\liem^0_{\chi}=\{v\in \liem^\C\ :\ \iota(e^{ts_{\chi}})v=v\;\;
\mbox{for every} \;\; t\}.
\end{align*}
One has  that $\liem^-_{\chi}$ is invariant under the action of
$P_{s_{\chi}}$ and $\liem^0_{\chi}$ is invariant under the action of
$L_{s_{\chi}}$.

Let $E$ be a principal $H^\C$-bundle and $A\subseteq \Delta$. 
Let $\sigma$ denote a reduction of the structure group of $E$ to a
standard parabolic subgroup $P_A$ and let $\chi$ be an
antidominant character of $\plie_A$. Associated to this, 
there is a number called the {\bf degree} of $E$ with respect to
$\sigma$ and $\chi$ that we denote by
$\deg(E)(\sigma,\chi)$. If $\chi$ lifts to a character of $P_A$, 
$\deg(E)(\sigma,\chi)$ is the degree of the line bundle associated 
to $E_\sigma$  via the lift.

A $G$-Higgs bundle  $(E,\varphi)$  is called {\bf semistable}
if for any choice of $P_A, \chi, \sigma$ as above
such that  $\varphi\in H^0(X,E_{\sigma}(\liem_{\chi}^-)\otimes K)$,
we have
$$\deg E(\sigma,\chi)\geq 0.$$
The Higgs bundle
$(E,\varphi)$ is called {\bf stable} if it is semistable and for any $P_A$,
$\chi$ and $\sigma$ as above such that $\varphi\in
H^0(X,E_{\sigma}(\liem_{\chi}^-)\otimes K)$ and $A\not=\emptyset$, $$\deg
E(\sigma,\chi)>0.$$
The Higgs bundle
$(E,\varphi)$ is called {\bf polystable} if it is semistable and for each
$P_A$, $\sigma$ and $\chi$ as in the definition of semistable
$G$-Higgs bundle such that $\deg E(\sigma,\chi)=0$, there exists a
holomorphic reduction of the structure group of $E_{\sigma}$ to the
Levi subgroup $L_A$ of $P_A$,
$\sigma_L\in\Gamma(E_{\sigma}(P_A/L_A))$.  Moreover, in this
case, we require $\varphi\in H^0(X,E(\liem^0_{\chi})\otimes K)$.

We define  the \textbf{moduli space of polystable
$G$-Higgs bundles} $\mathcal{M}(G)$ as the set of isomorphism
classes of polystable $G$-Higgs bundles.
The moduli space $\mathcal{M}(G)$ has the structure of a complex analytic
variety.  This can be seen by the standard slice method (see, e.g.,
Kobayashi \cite{kobayashi:1987}).  Geometric Invariant Theory
constructions are available in the literature for $G$ compact
algebraic (Ramanathan \cite{ramanathan:1975,ramanathan:1996}) 
and for $G$ complex
reductive algebraic (Simpson \cite{simpson:1994,simpson:1995}).  The
case of a real form of a complex reductive algebraic Lie group follows
from the general constructions of Schmitt
\cite{schmitt:2008}. We thus have that
$\mathcal{M}(G)$ is a complex analytic variety, which is
algebraic when $G$ is algebraic.

\subsection{Relation to surface group representations}

Let $G$ be a reductive real Lie group.
By a \textbf{representation} of $\pi_1(X)$ in
$G$ we understand a homomorphism $\rho\colon \pi_1(X) \to G$.
The set of all such homomorphisms,
$\Hom(\pi_1(X),G)$,  is a real analytic  variety, which is algebraic
if $G$ is algebraic.
The group $G$ acts on $\Hom(\pi_1(X),G)$ by conjugation:
\[
(g \cdot \rho)(\gamma) = g \rho(\gamma) g^{-1}
\]
for $g \in G$, $\rho \in \Hom(\pi_1(X),G)$ and
$\gamma\in \pi_1(X)$. If we restrict the action to the subspace
$\Hom^+(\pi_1(X),g)$ consisting of \emph{reductive representations},
the orbit space is Hausdorff.  By a \textbf{reductive representation} we mean
one that, composed with the adjoint representation in the Lie algebra
of $G$, decomposes as a sum of irreducible representations.
If $G$ is algebraic this is equivalent to the Zariski closure of the
image of $\pi_1(X)$ in $G$ being a reductive group.
(When $G$ is compact every representation is reductive.)  The
\emph{moduli space of representations} of $\pi_1(X)$ in $G$
is defined to be the orbit space
\[
\mathcal{R}(G) = \Hom^{+}(\pi_1(X),G) / G. \]

\noi It has the structure of a real 
analytic variety (see e.g.\cite{goldman:1984}) which
is algebraic if $G$ is algebraic and is a complex variety if $G$ is complex.

To see the relation between Higgs bundles and representations
of $\pi_1(X)$, let $h$ be a reduction of structure group of $E_{H^\C}$ from
$H^\C$ to $H$, and let $E_H$ be the principal $H$-bundle defined by
$h$.  Let $d_h$ denote the unique connection on $E_{H^\C}$
compatible with $h$ and let $F_h$ be its curvature. If $\tau$ denotes 
the compact conjugation of $\lieg^\C$ we can formulate the Hitchin equation
$$
F_h -[\varphi,\tau(\varphi)]= 0.
$$
A fundamental result of Higgs bundle theory (see 
\cite{hitchin:1987,simpson:1988,garcia-gothen-mundet:2008})
is that a $G$-Higgs bundle admits a solution  to Hitchin's equation if and only
if the Higgs bundle is polystable.

Now if the Hitchin equation is satisfied then
$$
D=d_h + \varphi - \tau(\varphi)
$$
defines a flat connection on the principal $G$-bundle $E_G=E_H\times_H G$.
The holonomy of this connection thus defines a
representation of $\pi_1(X)$ in $G$.
A fundamental theorem of Corlette \cite{corlette:1988}
(and Donaldson \cite{donaldson:1987} for $G=\SL(2,\C)$; see also Labourie \cite{labourie:1991} for a more general set-up) 
says that
this representations is reductive, and that all  reductive
representations  of $\pi_1(X)$ in $G$ arise in this way.

For semisimple groups the above results establish a homeomorphism  between
isomorphism classes of polystable $G$-Higgs bundles and conjugacy
classes of reductive surface group representations in $G$, i.e.
\begin{equation}
\label{eq:non-ab-hodge}
\mathcal{M}(G)\simeq \mathcal{R}(G).
\end{equation}
It is this homeomorphism that allows us to use Higgs 
bundles to study surface
group representations. If  $G$ is reductive (but
not semisimple) there is a similar correspondence involving
representations of a universal central extension of the fundamental
group.

\subsection{Reduction of structure group}
\label{sec:reduction}

Let $G$ be a real reductive Lie group as defined in
Section~\ref{sec:higgs-bundles}. 
Our main concern is to understand when a surface group representation
in $G$ factors through a subgroup of $G$.  In this section we
reformulate in terms of Higgs bundles what it means for the
representation to factor through a subgroup.

A \textbf{reductive subgroup} of $G$
is a reductive group, say $(G', H', \theta', B')$, such that the
Cartan data is compatible in the obvious sense with the Cartan data of
$(G,H,\theta,B)$ under the inclusion map $G'\hookrightarrow G$. In
particular this implies that $H'\subset H$ and we have a commutative
diagram
\begin{equation}\label{CD:cartandata}
\begin{CD}
\liegc @. = @. \liehc @. \oplus @. \liemc\\
@AAA @.  @AAA @. @AAA\\
\lieg'^\C @. = @. \lieh'^\C @. \oplus @. \liem'^\C.
\end{CD}
\end{equation}
Moreover, the embedding of isotropy representations $\liem'^\C \into
\liem^\C$ is equivariant with respect to the embedding ${H'}^{\C}
\into {H}^{\C}$.

\begin{definition}\label{def:G-reduction}
Let $G$ be a real reductive Lie group and let $G' \subset G$ be a
reductive subgroup. Let $(E,\varphi)$ be a $G$-Higgs bundle. A {\bf
  reduction of $(E,\varphi)$ to a $G'$-Higgs bundle $(E',\varphi')$} is given
by the following data:
\begin{enumerate}
\item A holomorphic reduction of structure group of $E$ to a principal
  ${H'}^{\C}$-bundle $E' \into E$ (equivalently, this is given by a
  holomorphic section $\sigma$ of $E/{H'}^{\C}\to X$).
\item A holomorphic section $\varphi'$ of $E'({\liem'}^{\C})\otimes
  K$ which maps to $\varphi$ under the embedding
  \begin{displaymath}
    E'({\liem'}^{\C})\otimes K \into E({\liem}^{\C})\otimes K.
  \end{displaymath}
\end{enumerate}
\end{definition}

We have the following. 

\begin{proposition}\label{reduction}
Let $G$ be a real reductive Lie group and let $G'\subset G$ be a
reductive subgroup.  Let $(E_{H_{\C}},\varphi)$ be a $G$-Higgs bundle
whose structure group reduces to $G'$. Let $(E_{H'_{\C}},\varphi')$ be
the corresponding $G'$-Higgs bundle.  If $(E_{H_{\C}},\varphi)$ is
polystable as a $G$-Higgs bundle, then $(E_{H'_{\C}},\varphi')$ is
polystable as a $G'$-Higgs bundle.
\end{proposition}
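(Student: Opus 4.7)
The plan is to prove this via the Hitchin--Kobayashi correspondence in both directions. Since $(E_{H^\C},\varphi)$ is polystable as a $G$-Higgs bundle, the correspondence recalled in Section~\ref{sec:higgs-bundles} produces an $H$-reduction $h$ of $E_{H^\C}$ satisfying the Hitchin equation
\[
F_h - [\varphi,\tau(\varphi)] = 0.
\]
The strategy is to show that such a metric $h$ can be chosen compatibly with the $G'$-reduction --- i.e.\ so that it restricts to an $H'$-reduction of $E_{{H'}^\C} \subset E_{H^\C}$ --- and then to observe that the restricted metric $h'$ solves the Hitchin equation for $(E_{{H'}^\C},\varphi')$ as a $G'$-Higgs bundle. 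The reverse direction of the correspondence, now applied to the reductive group $G'$, will then deliver polystability of $(E_{{H'}^\C},\varphi')$.

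Granted the existence of such an $h$, the descent of the Hitchin equation is a direct consequence of the Cartan data compatibility displayed in diagram~\eqref{CD:cartandata}. The compact conjugation $\tau$ restricts to the compact conjugation $\tau'$ of $\lieg'^\C$, the Chern connection $d_{h'}$ on the principal ${H'}^\C$-bundle $E_{{H'}^\C}$ is obtained by restricting $d_h$, and the equivariant embedding $\liem'^\C \hookrightarrow \liem^\C$ identifies $[\varphi',\tau'(\varphi')]$ with $[\varphi,\tau(\varphi)]$. Using the $B$-orthogonal decomposition $\lieh^\C = \lieh'^\C \oplus (\lieh'^\C)^\perp$, the $\lieh'^\C$-component of the vanishing expression $F_h - [\varphi,\tau(\varphi)]$ is exactly $F_{h'} - [\varphi',\tau'(\varphi')]$, which therefore also vanishes.

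The main obstacle is the first step: producing a Hitchin metric $h$ that reduces to $H'$ on $E_{{H'}^\C}$. The compatibility of Cartan decompositions implies that the inclusion of symmetric spaces ${H'}^\C/H' \hookrightarrow H^\C/H$ is totally geodesic, and $H$-reductions of $E_{H^\C}$ compatible with the $G'$-structure correspond to sections of the subbundle $E_{{H'}^\C}({H'}^\C/H') \subset E_{H^\C}(H^\C/H)$, which is non-empty. The natural argument is to combine uniqueness (up to $H^\C$-gauge transformation) of the harmonic metric on the polystable $G$-Higgs bundle $(E_{H^\C},\varphi)$ with equivariance of the Donaldson-type functional whose critical points solve Hitchin's equation under the natural action of the subgroup $G' \subset G$: the restriction of this functional to the space of $H'$-reductions of $E_{{H'}^\C}$ has the same Euler--Lagrange equation (namely Hitchin's equation for $G$ projected to $\lieh'^\C$), and a minimizer inside this totally geodesic subspace furnishes the desired compatible metric. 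This gauge-theoretic point is the heart of the proof; once it is established, the rest is bookkeeping with the Cartan data.
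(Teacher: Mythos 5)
Your gauge-theoretic strategy has the right overall shape, and the ``descent'' bookkeeping in your second paragraph is essentially correct: for a metric $h$ that \emph{is} compatible with the reduction, the compatibility of the Cartan data in \eqref{CD:cartandata} guarantees that $\tau$ restricts to $\tau'$, that $[\varphi',\tau'(\varphi')]$ sits inside $E'(\lieh'^{\C})$-valued forms, and that the full expression $F_h-[\varphi,\tau(\varphi)]$ already lives in the $\lieh'$-part (no projection is even needed), so a compatible solution of the $G$-Hitchin equation solves the $G'$-Hitchin equation and the easy direction of the Hitchin--Kobayashi correspondence for $G'$ then gives polystability.

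The genuine gap is the step you yourself flag as the heart of the proof: producing a compatible solution. Your argument is that ``a minimizer inside this totally geodesic subspace furnishes the desired compatible metric,'' but you never establish that such a minimizer exists, and this existence is not a formality --- it is \emph{equivalent}, via the hard direction of the Hitchin--Kobayashi correspondence for $G'$, to the polystability of $(E_{{H'}^{\C}},\varphi')$, i.e.\ to the statement being proved. Convexity of the Donaldson functional along geodesics and the total geodesy of the space of $H'$-reductions give you that the restricted functional is convex and bounded below, but a convex function bounded below need not attain its infimum; ruling out escape to infinity along geodesic rays in the subspace is exactly the properness/stability analysis, and the asymptotic slope of the functional along such a ray is controlled by $-\deg E'(\sigma',\chi')$ for the associated parabolic reduction of ${H'}^{\C}$. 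Uniqueness of the harmonic metric up to gauge does not help either: it says that \emph{if} a compatible solution exists it is gauge-equivalent to the given one, not that one exists. So to close the gap you would have to verify the degree inequalities for parabolic reductions of ${H'}^{\C}$ --- which is precisely the paper's (purely algebraic) proof: every parabolic subgroup of ${H'}^{\C}$ and antidominant character of its Lie algebra extends to a parabolic subgroup of $H^{\C}$ with a character inducing the same degree, so the semistability and polystability conditions for $(E_{{H'}^{\C}},\varphi')$ are inherited directly from those of $(E_{H^{\C}},\varphi)$, with no analysis required. I would either adopt that argument or supply an honest properness proof for the restricted functional; as written, the proposal is circular at its key step.
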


The key fact in the proof of  Proposition \ref{reduction} is that every parabolic 
subgroup of $H'^\C$ and a character
of its Lie algebra extend to a
parabolic subgroup of $H^\C$ and a character of its corresponding Lie algebra.
Moreover,  the corresponding degrees for  parabolic reductions of  structure
group of the bundles coincide. This can be seen using filtrations
of the vector bundles associated to $E_{H^{\C}}$ and  $E_{H'^{\C}}$ 
via an auxiliary representations of $H^\C$
(see \cite{garcia-prada-gothen-mundet:2009a}).

In the situation of Proposition~\ref{reduction}, the non-abelian Hodge
theory correspondence implies that the polystable $G'$-Higgs bundles
obtained from polystable $G$-Higgs bundles correspond to
$G'$-representations of $\pi_1(X)$. Conversely, let $\rho$ be a
reductive surface group representation in $G$ which factors through a
reductive subgroup $G'$. Then it is clear that the corresponding
polystable $G'$-Higgs bundle is a $G'$-reduction of the $G$-Higgs
bundle corresponding to $\rho$. Thus Proposition~\ref{reduction} has
the following immediate corollary.

\begin{proposition}
Let $G$ be a real reductive Lie group and let $G'\subset G$ be a
reductive subgroup. 

(1) A reductive $\pi_1(X)$-representation in $G$ factors through a
reductive representation in $G'$ if and only if the corresponding
polystable $G$-Higgs bundle admits a reduction of structure group to
$G'$.

(2) Let $\rho:\pi_1(X)\longrightarrow G$ be a reductive representation
and let $(E_{H^{\C}},\phi)$ be the corresponding polystable
$G$-Higgs bundle. Suppose that $(E_{H^{\C}},\phi)$ defines a point
in a connected component $\mathcal{M}_c(G)\subset \mathcal{M}(G)$. The
representation $\rho$ deforms to a representation which factors
through $G'$ if and only if $\mathcal{M}_c(G)$ contains a point
represented by a $G$-Higgs bundle that admits a reduction of
structure group to $G'$.
\end{proposition}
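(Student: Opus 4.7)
The plan is to derive both parts as formal consequences of Proposition~\ref{reduction} combined with the non-abelian Hodge correspondence \eqref{eq:non-ab-hodge} applied separately to $G$ and to $G'$.

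For part (1), I would prove both implications. \emph{Forward direction:} suppose $\rho = \iota \circ \rho'$ with $\iota \colon G' \into G$ the inclusion and $\rho'$ reductive. Applying the non-abelian Hodge correspondence for $G'$ produces a polystable $G'$-Higgs bundle $(E',\varphi')$ representing $\rho'$. The inclusion ${H'}^{\C} \into H^{\C}$ together with the equivariance of the isotropy embedding ${\liem'}^{\C} \into \liemc$ recorded in diagram \eqref{CD:cartandata} produce, by extension of structure group, a $G$-Higgs bundle $(E,\varphi)$ which carries $(E',\varphi')$ as a $G'$-reduction in the sense of Definition~\ref{def:G-reduction}. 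A solution of Hitchin's equation for $(E',\varphi')$ pushes forward under $H' \into H$ to a solution for $(E,\varphi)$, so $(E,\varphi)$ is polystable, and the flat connection built from this solution has holonomy $\iota \circ \rho' = \rho$; by the uniqueness clause of Corlette's theorem, $(E,\varphi)$ is therefore the polystable $G$-Higgs bundle attached to $\rho$. \emph{Reverse direction:} if the polystable $G$-Higgs bundle attached to $\rho$ admits a $G'$-reduction $(E',\varphi')$, Proposition~\ref{reduction} asserts that $(E',\varphi')$ is polystable as a $G'$-Higgs bundle, and non-abelian Hodge for $G'$ produces a reductive $\rho' \colon \pi_1(X) \to G'$. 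The forward construction applied to $\rho'$ reverses the process and identifies $\iota \circ \rho'$ with $\rho$ up to conjugation in $G$.

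For part (2), I would combine part (1) with the fact that \eqref{eq:non-ab-hodge} is a homeomorphism: a continuous deformation of representations in $\mathcal{R}(G)$ corresponds to a continuous path in $\mathcal{M}(G)$, and vice versa. Hence $\rho$ deforms to some $\rho_1$ precisely when the Higgs bundle classes of $\rho$ and $\rho_1$ lie in the same connected component $\mathcal{M}_c(G)$. Applying part (1) at the endpoint, either to $\rho_1$ in order to produce a $G'$-reduction of its Higgs bundle, or to the representation attached to a point of $\mathcal{M}_c(G)$ admitting a $G'$-reduction in order to exhibit a factorization, yields both implications.

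I do not expect a serious obstacle. Proposition~\ref{reduction} supplies the only substantive input, namely preservation of polystability under reduction of structure group; the remainder is bookkeeping against the non-abelian Hodge homeomorphism. The one point that deserves care is the identification, in the forward direction of (1), of the Higgs bundle built by extension of structure group with the one attached to $\rho = \iota\circ\rho'$; this is handled by the uniqueness clause of Corlette's theorem once it is checked that extending the Hitchin solution for $(E',\varphi')$ yields a flat $G$-connection whose holonomy coincides with $\rho$.
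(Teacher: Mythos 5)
Your proposal is correct and follows essentially the same route as the paper: both parts are deduced from Proposition~\ref{reduction} together with the non-abelian Hodge correspondence applied to $G$ and to $G'$, with part (2) reduced to part (1) via the homeomorphism $\mathcal{M}(G)\simeq\mathcal{R}(G)$ and the definition of deformation as membership in a common connected component. The only difference is one of detail: the paper declares the forward direction of (1) to be ``clear,'' whereas you justify it by extending the solution of Hitchin's equation from $H'$ to $H$ and invoking the uniqueness in Corlette's theorem, which is exactly the verification the paper leaves implicit.
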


Let $G$ be a real reductive Lie group and let $G' \into G$ be an
embedding of the Lie group $G'$ as a closed subgroup. One may ask
whether the Cartan data of $G$ induces Cartan data on $G'$ such that
$G'$ is a reductive subgroup of $G$. In the following we answer this
question.

\begin{definition}
An embedding of Lie algebras $\lieg' \subset \lieg$ is {\bf
  canonical} with respect to a Cartan involution, $\theta$, on
$\lieg$ if $\theta(\lieg') = \lieg'$.
\end{definition}

\begin{lemma}\label{rem:subgroups} 
 Let $G' \subset G$ be a closed Lie subgroup such that $\lieg'
 \subset \lieg$ is canonically embedded. Then $H' = H \cap G'$ is a
 maximal compact subgroup of $G'$. Moreover, if we let $\theta'$ and
 $B'$ be the restrictions of $\theta$ and $B$, respectively, to
 $\lieg'$, then $(G',H',\theta',B')$ is a reductive subgroup of
 $(G,H,\theta,B)$.
\end{lemma}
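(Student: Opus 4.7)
First, set $\lieh' := \lieg' \cap \lieh$, $\liem' := \lieg' \cap \liem$, $\theta' := \theta|_{\lieg'}$, and $B' := B|_{\lieg'}$. The hypothesis $\theta(\lieg') = \lieg'$ immediately decomposes $\lieg' = \lieh' \oplus \liem'$ into the $\pm 1$-eigenspaces of $\theta'$. The restriction $B'$ is automatically $\theta'$-invariant, $\Ad(G')$-invariant, orthogonal for this decomposition, and negative (resp.\ positive) definite on $\lieh'$ (resp.\ $\liem'$). In particular $B'$ is non-degenerate, so $\lieg'$ is reductive as a Lie algebra. The first three bulleted axioms for the data $(G', H', \theta', B')$ are now formal, as is the commutativity of the complexified diagram \eqref{CD:cartandata}.

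The substantive step is to verify the fourth axiom, namely the global Cartan diffeomorphism $H' \times \exp(\liem') \to G'$, with $H' := H \cap G'$. Compactness of $H'$ follows from $H$ compact and $G'$ closed, and a short exponential-map calculation gives $\Lie(H') = \lieh'$. Injectivity and the local-diffeomorphism property everywhere are inherited from the corresponding statements for $H \times \exp(\liem) \to G$, so the only real content is surjectivity. My plan is to extract this from the geometry of the symmetric space $G/H$. A direct check using $[\liem,\liem]\subseteq\lieh$, $[\liem,\lieh]\subseteq\liem$, and the subalgebra property of $\lieg'$ shows that $\liem'$ is a Lie triple system, so by the Cartan--Ambrose theorem the set $\Sigma := \exp(\liem')\cdot eH \subseteq G/H$ is a complete totally geodesic submanifold of dimension $\dim\liem'$. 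The orbit $G'^\circ\cdot eH$ also has dimension $\dim\liem'$ and contains $\Sigma$; since $\Sigma$ is closed in $G/H$ (being the image of the complete $\liem'$ under the globally injective $\exp$) and open in $G'^\circ\cdot eH$ by dimension, while $G'^\circ\cdot eH$ is connected, the two coincide. For $g \in G'^\circ$ this forces $gH \in \Sigma$, so $g = \exp(Y)h'$ for some $Y \in \liem'$ and $h' \in H \cap G'^\circ \subseteq H'$; rewriting as $g = h'\exp(\Ad(h')^{-1}Y)$ with $\Ad(h')^{-1}Y \in \liem'$ gives the Cartan form and, by uniqueness of the $H \times \exp(\liem) \to G$ factorisation, identifies the $H$-part as $H'$ and the $\liem$-part as $\liem'$. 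The extension from $G'^\circ$ to all of $G'$ is routine via translation by a representative of each component.

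Once this Cartan decomposition of $G'$ is in hand, maximality of $H'$ among compact subgroups of $G'$ is immediate from Cartan's fixed-point theorem: any compact subgroup of $G'$ acts on the contractible nonpositively curved space $\exp(\liem') \cong G'/H'$, hence has a fixed point and so is conjugate into $H'$. All four axioms in the definition of a real reductive Lie group are then verified for $(G', H', \theta', B')$, and its compatibility with $(G, H, \theta, B)$ is built into the construction. The main obstacle in the proof is the surjectivity step, which rests on two essential ingredients: the Lie-triple-system property of $\liem'$ (so that its $\exp$-image is globally totally geodesic in $G/H$), and the fact that $G'$ is an embedded, not merely immersed, closed subgroup of $G$ (so that $G'^\circ\cdot eH$ is a submanifold of the expected dimension). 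Once these are secured the remaining verifications are formal.
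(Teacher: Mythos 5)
The paper states this lemma without proof (it is in essence Knapp's Proposition 7.25 adapted to the paper's notion of reductive group), so there is no in-paper argument to compare against; I assess your proposal on its own terms. Your argument for the identity component is correct and genuinely geometric: realising $\Sigma=\exp(\liem')\cdot eH$ as the complete totally geodesic submanifold attached to the Lie triple system $\liem'$ and matching it with the orbit $G'^{\circ}\cdot eH$ by an open--closed--connected argument is a legitimate alternative to the usual algebraic proof. Two points there deserve a word: openness of $\Sigma$ in the orbit ``by dimension'' needs the fact that orbits are initial submanifolds, so that the inclusion $\Sigma\hookrightarrow G'^{\circ}\cdot eH$ is smooth and hence an equidimensional immersion; and ``$B'$ non-degenerate, so $\lieg'$ is reductive'' is not a valid inference on its own (non-reductive quadratic Lie algebras exist) --- what you need is the standard argument that a $\theta$-stable subalgebra is reductive, using the positive definite form $-B(X,\theta Y)$ to show that orthogonal complements of ideals are ideals. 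You have the definiteness statements available, so this is a matter of phrasing rather than substance.

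The genuine gap is your final sentence. Passing from $G'^{\circ}$ to $G'$ is not routine: translating by a representative $g_0$ of a component only reduces the problem to showing that $g_0$ itself lies in $H'\exp(\liem')$, which is exactly what must be proved, and without a further hypothesis it is false. For instance, in $G=\GL(2,\R)$ with $H=\OO(2)$, the closed subgroup $G'=\{I,g_0\}$ with $g_0=\left(\begin{smallmatrix}0&2\\1/2&0\end{smallmatrix}\right)$ (so $g_0^2=I$, $g_0\notin\OO(2)$) has $\lieg'=0$ canonically embedded, yet $H\cap G'=\{I\}$ is not maximal compact in $G'$ and $H'\exp(\liem')\neq G'$. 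The missing ingredient is stability of the \emph{group} $G'$, not merely its Lie algebra, under the global Cartan involution $\Theta$ of $G$: granting that, one writes any $g\in G'$ as $g=k\exp X$ via the global Cartan decomposition of $G$, notes $\Theta(g)^{-1}g=\exp(2X)\in G'$, uses closedness of $G'$ together with the semisimplicity of $\exp(2X)$ to get $\exp(tX)\in G'$ for all $t$, hence $X\in\liem'$ and $k=g\exp(-X)\in H\cap G'$. That argument treats all components at once and makes the symmetric-space machinery unnecessary, though yours remains a pleasant proof for $G'^{\circ}$. For the subgroups the paper actually uses ($G_{\Delta}$, $G_p$, $G_i$, $\SL(2,\R)\times\SL(2,\R)$) the $\Theta$-stability is immediate, so the lemma holds where it is applied; but as written your proof does not close this step, and the lemma itself needs this extra hypothesis to be literally true.
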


In view of this Lemma, we make the following convention.

\begin{convention*}
 Whenever $G' \subset G$ is a closed subgroup whose Lie algebra is
 canonically embedded, we consider $G'$ as a reductive subgroup of
 $G$ with the induced Cartan data.
\end{convention*}

\begin{remark}
 \label{rem:ss-cartan-compatibility}
 If, in the situation of Lemma~\ref{rem:subgroups}, $G'$ is
 semisimple, the structure of reductive subgroup induced from $G$
 must coincide with the one coming from the choice of the maximal
 compact subgroup $H' = H \cap G'$ (cf.\
 Remark~\ref{rem:ss-cartan-data}).

 Thus, if we are given a semisimple closed subgroup $G' \subset G$
 with an a priori choice of maximal compact $H' \subset G'$, then in
 order to check that the corresponding Cartan data coincides with the
 Cartan data induced from $G$, it suffices to check that $H' = H
 \cap G$ and that $\lieg' \into \lieg$ is canonically embedded.
\end{remark}

\section{$\Sp(4,\R)$-Higgs bundles}

\subsection{Definition of $\Sp(4,\R)$ and choice of Cartan data}\label{subs:Sp4cartan}

The Lie group $\Sp(4,\R)$ is the subgroup of $\SL(4,\R)$ which
preserves a symplectic form on $\R^4$. The description of the group
depends on the choice of symplectic form. We use the following
conventions.

\begin{definition} Let
\begin{equation}\label{defn:J13}
J_{13}=\begin{pmatrix}0 & I_2 \\-I_2 & 0\end{pmatrix}
\end{equation}
\noindent where $I_2$ is the $2\x2$ identity matrix.  This defines the
symplectic form $\omega_{13}(a,b)=a^tJ_{13}b$ where $a$ and $b$ are vectors in $\R^4$, i.e. 
\begin{equation}
\omega_{13}=x_1\wedge x_3+x_2\wedge x_4.
\end{equation}
The {\bf symplectic group} in dimension four, defined using $J_{13}$, is thus
\begin{equation}\label{defn: Sp4J13}
\Sp(4,\R)=\{g\in\SL(4,\R) \suchthat g^tJ_{13}g=J_{13}\ \}.
\end{equation}
\end{definition}

\begin{remark} Later on (see Sections \ref{subs: subgroups}, \ref{subs:irep}) it will be convenient to consider other choices of symplectic form (denoted by $J_{12}$ and $J_0$).  The resulting changes in description will be pointed out as needed.
\end{remark}

\noi The maximal compact subgroups of $\Sp(4,\R)$ are isomorphic to $\U(2)$,
i.e.\ in the notation of the previous section, if $G=\Sp(4,\R)$ then
$H=\U(2)$.  Using symplectic form $J_{13}$, we fix the subgroup $\U(2)\subset \Sp(4,\R)$ given by
\begin{equation}\label{eqn:U(2)}
\U(2)=\Big\{\ \begin{pmatrix}A & B \\-B & A \end{pmatrix} \suchthat 
A^tA+B^tB=I\ ,\ A^tB-B^tA=0\ \Big\}\ ,
\end{equation}
i.e.\ given by the embedding
\begin{equation}\label{eqn:U2J13}
A+iB\mapsto\begin{pmatrix}A & B \\-B & A \end{pmatrix}.
\end{equation}
It follows from (\ref{defn: Sp4J13}) and (\ref{eqn:U2J13}) that the
Cartan decomposition corresponding to our choice of $\U(2)$ is
defined by the involution
\begin{equation}\label{eqn:theta}
\theta(X)=-X^t
\end{equation}

\noi on 
\begin{equation}\label{eqn:sp4}
\mathfrak{sp}(4,\R)=\Big\{\begin{pmatrix} A&B\\C&-A^t\end{pmatrix} \suchthat A,B,C\in \Mat_2(\R)\ ;\
B^t=B\ ,\ C^t=C\ \Big\}\ .
\end{equation}

\noi This gives
\begin{equation}\label{eqn:CartanR}
\mathfrak{sp}(4,\R)=\mathfrak{u}(2)\oplus\mathfrak{m}
\end{equation}
with
\begin{align}\label{eqn:u2}
\mathfrak{u}(2)=&\Big\{\begin{pmatrix} A&B\\-B&A\end{pmatrix} \suchthat A,B\in \Mat_2(\R)\ ;\
A^t=-A\ ,B^t=B\ \Big\}\ ,\\
\mathfrak{m}=&\Big\{\begin{pmatrix} A&B\\B&-A\end{pmatrix} \suchthat  A,B\in \Mat_2(\R)\ ;\
A^t=A\ ,B^t=B\ \Big\}\ .
\end{align}
The complexification of (\ref{eqn:CartanR}),
\begin{equation}\label{eqn:CartanC}
\mathfrak{sp}(4,\C)=\mathfrak{gl}(2,\C)\oplus\liemc
\end{equation}
is obtained by replacing $\Mat_2(\R)$ with $\Mat_2(\C)$. In particular, we identify $\mathfrak{gl}(2,\C)$ via\footnotemark\footnotetext{This corresponds to mapping
\begin{equation*}
Z\mapsto\begin{pmatrix}\frac{Z-Z^t}{2}&\frac{Z+Z^t}{2i}\\-\frac{Z+Z^t}{2i}&\frac{Z-Z^t}{2}\end{pmatrix}.
\end{equation*}
}
\begin{equation}
\mathfrak{gl}(2,\C)=\{\begin{pmatrix} A&B\\-B&A\end{pmatrix}
\suchthat A,B\in 
\Mat_2(\C)\ ;\
A^t=-A\ ,B^t=B\ \}
\end{equation}

\noi Notice that after  conjugation by $T=\begin{pmatrix}I& iI \\
I & -iI
\end{pmatrix}$ , i.e.\ after the change of basis (on $\C^4$) effected by $T$, we
identify the summands in the Cartan decomposition of
$\lie{sp}(4,\C)\subset\lie{sl}(4,\C)$ as
\begin{align}\label{eqn:gl2CInsp4C.2}
\mathfrak{gl}(2,\C) =& \Big\{\begin{pmatrix} Z&0\\0&-{Z^t}\end{pmatrix}\
|\  Z \in 
\Mat_2(\C) \Big\}\ ,\nonumber\\
\liemc =&\Big \{\begin{pmatrix} 0&\beta\\\gamma&0\end{pmatrix}\ |\  \beta,\gamma\in \Mat_2(\C), \beta^t=\beta\ ,\ \gamma^t=\gamma\ \Big\}\nonumber\\
=& \Sym^2(\C^2)\oplus \Sym^2((\C^2)^*)\ .
\end{align}

\noi This corresponds to an embedding of $\U(2)$ (the maximal compact
subgroup of $ \Sp(4,\R)$) in $\SU(4)$ (the maximal compact subgroup in
$\SL(4,\C)$) given by
\begin{equation}\label{eqn:U2inGL4C}
U\mapsto\begin{pmatrix}U & 0 \\0 & (U^t)^{-1}
\end{pmatrix}\ \mathrm{where}\ U^*U=I\ .
\end{equation}

\subsection{Definition of $\Sp(4,\R)$-Higgs bundles.}

We fix $G=\Sp(4,\R)$ and $H=\U(2)$ as in Section \ref{subs:Sp4cartan}.
Given a holomorphic principal $\GL(2,\C)$-bundle on $X$, say $E$, let
$V$ denote the rank 2 vector bundle associated to $E$ by the standard
representation.  The Cartan decomposition described in Section
\ref{subs:Sp4cartan} shows (see (\ref{eqn:gl2CInsp4C.2})) that we can
identify
\begin{equation}\label{eqn:Higgsfields}
E(\liemc)=\Sym^2(V)\oplus \Sym^2(V^*).
\end{equation}
Definition (\ref{defn:GHiggs}) thus specializes to the
following:

\begin{definition}\label{defn:SpHiggs} With $G=\Sp(4,\R)$ and $H=\U(2)$ as in Section \ref{subs:Sp4cartan}, an 
{\bf $\Sp(4,\R)$-Higgs bundle}
over $X$ is defined by a triple
$(V,\beta,\gamma)$ consisting of a rank $2$ holomorphic vector
bundles $V$ and symmetric homomorphisms
$$
\beta: V^*\lra V \otimes K \;\;\;\mbox{and}\;\;\; \gamma: V\lra V^*
\otimes K.
$$
\end{definition}

\noi  Except when it is important to keep track of the maximal
compact subgroup, we will refer to these objects as $\Sp(4,\R)$-Higgs
bundles.  The composite embedding
\begin{equation}
\Sp(4,\R)\hookrightarrow\Sp(4,\C)\hookrightarrow\SL(4,\C)
\end{equation}
allows us to reinterpret the defining data for $\Sp(4,\R)$-Higgs
bundles as data for special $\SL(4,\C)$-Higgs bundles (in the original
sense of \cite{hitchin:1992}).  Indeed, the embeddings (\ref
{eqn:gl2CInsp4C.2}) show 
that the triple $(V,\beta,\gamma)$ in
Definition \ref{defn:SpHiggs} is equivalent to the pair
$(\mathcal{E},\varphi)$, where
\goodbreak
\begin{enumerate}
\item $\mathcal{E}$ is the rank $4$ holomorphic bundle $\mathcal{E}=V\oplus V^*$, and
\item $\varphi$ is a Higgs field $\varphi:\mathcal{E}\longrightarrow \mathcal{E}\otimes K$
given by $
\varphi=\left(
\begin{smallmatrix}
0 & \beta\\
\gamma& 0
\end{smallmatrix}\right)$.
\end{enumerate}

\begin{remark}
\label{rem:SL2R-higgs}
The definition of $\Sp(2n,\R)$-Higgs bundles for general $n$ is of
course entirely analogous and later we shall need the special case
$n=1$, corresponding to $G=\Sp(2,\R) = \SL(2,\R)$. Thus an
$\SL(2,\R)$-Higgs bundle is given by the data $(L,\beta,\gamma)$,
where $L$ is a line bundle, $\beta \in H^0(L^2 K)$ and $\gamma \in
H^0(L^{-2}K)$.
\end{remark}

\subsection{Stability}

The general definition of (semi-)stability for
$G$-Higgs bundles given in Section~\ref{sec:higgs-bundles} 
simplifies in the case $G=\Sp(2n,\R)$ (see \cite[Section~3]{garcia-gothen-mundet:2008} or  \cite{schmitt:2008}).  To state the simplified stability condition, we use the following
notation. For any line subbundle $L\subset V$ we denote by $L^{\perp}$ the
subbundle of $V^*$ in the kernel of the projection onto $L^*$, i.e.
\begin{equation}\label{eqn: Lperp}
0\longrightarrow L^{\perp}\longrightarrow V^*\longrightarrow
L^*\longrightarrow 0\ .
\end{equation}
Moreover, for subbundles $L_1$ and $L_2$ of a vector bundle $V$, we
denote by $L_1 \otimes_S L_2$ the symmetrized tensor product, i.e.\
the symmetric part of $L_1 \otimes L_2$ inside the symmetric product
$S^2V$ (these bundles can be constructed in standard fashion from the
corresponding representations, using principal bundles).  For $n=2$, i.e.\ for $G=\Sp(4,\R)$, the stability condition then takes
the following form.

\begin{proposition}\label{prop: n=2stability}
An $\Sp(4,\R)$-Higgs bundle $(V,\beta,\gamma)$ is semistable if and
only if all the following conditions hold
\begin{enumerate}
\item\label{item:1} If $\beta=0$ then $\deg(V)\geq 0$.
\item\label{item:2} If $\gamma=0$ then $\deg(V) \leq 0$.
\item\label{item:line} Let $L \subset V$ be a line subbundle.
\begin{enumerate}
\item\label{item:line1}
If $\beta \in H^0(L \otimes_S V \otimes K)$ and $\gamma \in
H^0(L^\perp\otimes_S V^*\otimes K)$ then
\begin{math}
\deg(L) \leq \frac{\deg(V)}{2}.
\end{math}
\item\label{item:line2}
If $\gamma \in H^0((L^\perp)^2\otimes K)$ then
\begin{math}
\deg(L) \leq 0.
\end{math}
\item\label{item:line3}
If $\beta \in H^0(L^2\otimes K)$ then
\begin{math}
\deg(L) \leq \deg(V).
\end{math}
\end{enumerate}
\end{enumerate}
If, additionally, strict inequalities hold in (\ref{item:line}), then
$(V,\beta,\gamma)$ is stable.
\end{proposition}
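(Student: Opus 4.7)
The plan is to unwind the general $G$-Higgs bundle stability criterion stated in Section~\ref{sec:higgs-bundles} in the specific case $G=\Sp(4,\R)$, $H^\C=\GL(2,\C)$, using the explicit description of the isotropy representation $\liem^\C = \Sym^2 V \oplus \Sym^2 V^*$ from \eqref{eqn:Higgsfields}. Since $\GL(2,\C)$ has only two types of standard parabolic subgroups (the whole group, and Borel subgroups), the possible quadruples $(P_A,\sigma,\chi)$ to test decompose into two families: (i) $P_A = \GL(2,\C)$ with the trivial reduction, and (ii) $P_A$ a Borel subgroup with reduction $\sigma$ given by a line subbundle $L\subset V$. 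For each family one must enumerate a generating set of antidominant characters $\chi$, compute $\liem_\chi^-$, identify the condition $\varphi\in H^0(E_\sigma(\liem_\chi^-)\otimes K)$, and compute $\deg E(\sigma,\chi)$; the proposition will follow by matching each case.

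For family (i), the antidominant characters of $\GL(2,\C)$ form a one-dimensional space spanned by $\pm\det$. The element $s_\chi$ is a real multiple of $iI_2$ in the center, which acts on $\Sym^2V$ and $\Sym^2V^*$ with opposite nonzero weights. Hence for one sign $\liem_\chi^- = \Sym^2 V^*$, forcing $\beta=0$ and giving the degree inequality $\deg V\geq 0$, i.e.\ condition \eqref{item:1}; the opposite sign gives condition \eqref{item:2}.

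For family (ii), fix a line subbundle $L\subset V$ and the induced Borel reduction. Under $s_\chi$ (with $s_\chi$ a diagonal element in the Levi of the Borel), both $\Sym^2 V$ and $\Sym^2 V^*$ inherit three-step filtrations whose graded pieces are, respectively, $L^2$, $L\otimes_S V$, $(V/L)^2$ for $\Sym^2V$, and $(L^\perp)^2$, $L^\perp\otimes_S V^*$, $\Sym^2V^*$ for $\Sym^2V^*$. The cone of antidominant characters in $\liep_A$ is two-dimensional, so the stability condition need only be checked on its extremal rays. A direct computation of the weights of $s_\chi$ on each graded piece shows that the three extremal (or boundary) antidominant rays yield exactly the three cases:
\begin{enumerate}
\item the balanced character, where $\liem_\chi^-$ becomes the sum of the first two graded pieces of both $\Sym^2V$ and $\Sym^2 V^*$, imposing $\beta\in H^0(L\otimes_S V\otimes K)$ and $\gamma\in H^0(L^\perp\otimes_S V^*\otimes K)$, and whose degree equals $\deg V - 2\deg L$, yielding \eqref{item:line1};
\item the character concentrated on $\Sym^2V^*$, where $\liem_\chi^-$ is the smallest nonzero step $(L^\perp)^2$, giving \eqref{item:line2};
\item the character concentrated on $\Sym^2V$, where $\liem_\chi^-$ is the smallest nonzero step $L^2$, giving \eqref{item:line3}.
\end{enumerate}
In each case the degree calculation uses the explicit lift of $\chi$ to a character of $P_A$ and the standard formula expressing $\deg E(\sigma,\chi)$ as a linear combination of $\deg L$ and $\deg V$.

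The main obstacle is the bookkeeping in family (ii): one must verify that the three rays identified above are indeed a set of extremal antidominant characters for the Borel, and that every antidominant character is a nonnegative combination of these three, so that nothing is lost by restricting to them. This in turn requires keeping track of the two independent parameters (one from the central character of $\GL(2,\C)$ and one from the simple root of $\SL(2,\C)$) and correctly matching their sign conventions with the direction of ``antidominance'' used in the general stability definition. Once this is done, the equivalence with the listed conditions is immediate, and the strict inequalities characterize stability since the case $A=\emptyset$ (trivial reduction) contributes only to conditions \eqref{item:1} and \eqref{item:2}, while $A\neq\emptyset$ corresponds precisely to the line-subbundle tests in \eqref{item:line}.
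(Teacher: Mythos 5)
The paper itself does not prove this proposition; it cites \cite[Section~3]{garcia-gothen-mundet:2008} for the simplification, and your overall strategy --- unwinding the general semistability criterion over the two conjugacy classes of parabolics of $\GL(2,\C)$ and the weight decomposition of $\liemc = \Sym^2 V \oplus \Sym^2 V^*$ --- is exactly the standard route taken there. Your identification of the five test characters is also correct: writing $s_\chi = \mathrm{diag}(s_1,s_2)$ relative to $L \subset V$, the characters $\pm(1,1)$ give (1) and (2), while $(-1,1)$, $(0,1)$, $(-1,0)$ give (3a), (3b), (3c) respectively, with the right subsheaves $\liem^-_\chi$ and the right degrees.

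However, the step you defer as ``bookkeeping'' contains a concrete error as stated. The three characters $(-1,1)$, $(0,1)$, $(-1,0)$ do \emph{not} generate the antidominant cone $\{s_1 \le s_2\}$ of the Borel: their nonnegative combinations are $(-a-c,\,a+b)$, which only sweep out the quadrant $\{s_1 \le 0 \le s_2\}$. A character such as $(2,3)$ is antidominant but is not a nonnegative combination of your three rays, so ``nothing is lost by restricting to them'' fails literally; you must adjoin the two central rays $\pm(1,1)$ (harmless, since they reproduce (1) and (2)). More importantly, generating the cone is not by itself sufficient, because the hypothesis $\varphi \in H^0(E_\sigma(\liem^-_\chi)\otimes K)$ is only piecewise constant in $\chi$. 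The correct reduction is: the walls $s_1=0$, $s_2=0$, $s_1+s_2=0$ cut the half-plane $\{s_1\le s_2\}$ into chambers on which $\liem^-_\chi$ is constant; on each chamber one writes $\chi$ as a nonnegative combination of generators lying in the \emph{closure of that chamber} (e.g.\ $(s_1,s_2)=(s_1+s_2)(0,1)+(-s_1)(-1,1)$ when $s_1<0<s_2$ and $s_1+s_2>0$), notes that $\liem^-$ can only grow on the boundary of a chamber so the hypotheses for the generators are implied, and uses linearity of $\deg E(\sigma,\cdot)$ in $\chi$. With this chamber-by-chamber verification (six short cases) the listed conditions are indeed equivalent to semistability; without it the argument is incomplete.
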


Similarly, the notion of polystability simplifies as follows.

\begin{proposition}
\label{prop:polystability}
Let $(V,\beta,\gamma)$ be an $\Sp(4,\R)$-Higgs bundle with
$\deg(V)\neq 0$. Then $(V,\beta,\gamma)$ is polystable if it is
either stable, or if there is a decomposition $V= L_1 \oplus L_2$ of
$V$ as a direct sum of line bundles, such that one of the following
conditions is satisfied:
\begin{enumerate}
\item The Higgs fields satisfy $\beta=\beta_1 + \beta_2$ and
$\gamma=\gamma_1 + \gamma_2$, where
\begin{displaymath}
\beta_i \in H^0(L_i^2\otimes K)\quad\text{and}\quad
\gamma_i \in H^0(L_i^{-2}\otimes K)
\end{displaymath}
for $i=1,2$. Furthermore, the $\Sp(2,\R)$-Higgs bundles
$(L_i,\beta_i,\gamma_i)$ are stable for $i=1,2$ and there is an
isomorphism of $\Sp(2,\R)$-Higgs bundles $(L_1,\beta_1,\gamma_1)
\simeq (L_2,\beta_2,\gamma_2)$.
\item The Higgs fields satisfy 
$$\begin{cases}\beta \in H^0((L_1L_2\oplus L_2L_1)\otimes K)\\ \gamma \in H^0((L_1^{-1}L_2^{-1}\oplus L_2^{-1}L_1^{-1})\otimes K)\end{cases}\  .$$
\noi Furthermore, $\deg(L_1) = \deg(L_2) = \deg(V)/2$
and the rank $2$ Higgs bundle
$(L_1 \oplus L_2^*,
\left(
\begin{smallmatrix}
0 & \beta \\
\gamma & 0
\end{smallmatrix}
\right))$
is stable.
\end{enumerate}
\end{proposition}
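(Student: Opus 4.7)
The statement gives three sufficient conditions for polystability, and my plan is to dispose of each in turn, with the stable case being essentially free. For the stable case, polystability follows immediately from the definition: the extra requirement beyond semistability applies only to parabolic reductions where the degree attains zero, and stability rules out the existence of any such reduction.

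For decomposition (1), I would recognize $(V,\beta,\gamma)=(L_1,\beta_1,\gamma_1)\oplus(L_2,\beta_2,\gamma_2)$ as a reduction of structure group from $\Sp(4,\R)$ to the block-diagonal image of $\SL(2,\R)\times\SL(2,\R)$, with each summand a stable $\SL(2,\R)$-Higgs bundle. Semistability via Proposition \ref{prop: n=2stability} is checked by noting that the isomorphism $(L_1,\beta_1,\gamma_1)\simeq (L_2,\beta_2,\gamma_2)$ forces $\deg(L_1)=\deg(L_2)=\deg(V)/2$, saturating inequality \ref{item:line1}, while stability of each $(L_i,\beta_i,\gamma_i)$ as an $\SL(2,\R)$-Higgs bundle (Remark \ref{rem:SL2R-higgs}) forbids any violation of inequalities \ref{item:line2} and \ref{item:line3}. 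For polystability proper, I must produce a Levi reduction at every parabolic reduction attaining zero degree; the decomposition $V=L_1\oplus L_2$ itself is such a Levi reduction, and any other line subbundle of $L\oplus L$ (which sweeps out a $\PP^1$-family since $L_1\cong L_2$) furnishes an equivalent decomposition via an automorphism of $V$ commuting with the Higgs field.

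For decomposition (2), the key hypothesis is that the auxiliary rank-two Higgs bundle $(L_1\oplus L_2^*,\varphi)$ with $\varphi=\bigl(\begin{smallmatrix}0&\beta\\ \gamma&0\end{smallmatrix}\bigr)$ is stable. I would translate each potentially destabilizing configuration from Proposition \ref{prop: n=2stability} into a destabilizing configuration for $(L_1\oplus L_2^*,\varphi)$, which cannot exist by stability; the bundles $L_1$ and $L_2$ themselves saturate \ref{item:line1} because of the degree hypothesis $\deg(L_1)=\deg(L_2)=\deg(V)/2$. The Levi reduction at each parabolic reduction of zero degree is then furnished by $V=L_1\oplus L_2$ itself; here the off-diagonal shape of $\beta$ and $\gamma$ prevents this from being a further splitting as an $\Sp(4,\R)$-Higgs bundle and instead realizes a reduction to a reductive subgroup related to $G_\Delta$ from the introduction.

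A conceptually cleaner alternative is to invoke the non-abelian Hodge correspondence recalled in Section \ref{sec:higgs-bundles}: a $G$-Higgs bundle is polystable iff it admits a solution to Hitchin's equation, equivalently, corresponds to a reductive representation. For decomposition (1) one assembles a solution from the two stable $\SL(2,\R)$-Higgs bundles, or equivalently produces the reductive representation $\rho_1\oplus\rho_2$. For decomposition (2) one takes the Hermitian--Einstein metric on the stable rank-two Higgs bundle $(L_1\oplus L_2^*,\varphi)$ and extracts from it a solution of Hitchin's equation for the $\Sp(4,\R)$-data on $V=L_1\oplus L_2$. The main obstacle I anticipate is case (2): checking that arbitrary line subbundles of $V$ interact with the off-diagonal Higgs field in a way compatible with all three inequalities of Proposition \ref{prop: n=2stability} requires careful bookkeeping, though each case ultimately reduces to the stability of the auxiliary rank-two Higgs bundle.
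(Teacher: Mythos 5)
The paper does not supply a proof of this proposition: like Proposition~\ref{prop: n=2stability}, it is imported from \cite{garcia-gothen-mundet:2008} (see also \cite{schmitt:2008}), so there is no in-paper argument to measure you against. Your second, ``non-abelian Hodge'' route is essentially the standard argument and is sound; in the paper's own language it amounts to invoking Proposition~\ref{prop:Sp4R-GL4C-polystability-equivalence}: in case (1) the $\GL(4,\C)$-Higgs bundle $(V\oplus V^*,\varphi)$ splits as $\bigoplus_i (L_i\oplus L_i^{-1},\varphi_i)$ with each summand a slope-zero stable Higgs bundle (Remark~\ref{rem:SL2R-stability}, using $\deg L_i=\deg(V)/2\neq 0$), and in case (2) it splits as $(L_1\oplus L_2^{-1},\varphi')\oplus(L_2\oplus L_1^{-1},\varphi'')$, the first summand being the stable rank-$2$ Higgs bundle in the hypothesis and the second its transpose. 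Polystability of $(V,\beta,\gamma)$ follows.

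Your first, ``direct verification'' route contains one concrete misstep, in case (1). Identify $L_1\cong L_2\cong L$ via the given isomorphism of $\Sp(2,\R)$-Higgs bundles, so that $\gamma$ becomes the quadratic form $\gamma_0\,(x^2+y^2)$ on $V\cong L\oplus L$. The line subbundles at which the degree in the polystability condition vanishes (equivalently, which saturate condition~(\ref{item:line1}) of Proposition~\ref{prop: n=2stability}) are then not $L_1$ and $L_2$ but the two $\gamma$-isotropic graphs $M_{\pm}=\{(x,\pm ix)\}$; the diagonal summands are not isotropic for $\gamma$. The Levi reduction demanded at the parabolic defined by $M_+$ must be a splitting $V=M_+\oplus M_-$ compatible with that parabolic --- in effect re-exhibiting the case-(1) object as a case-(2) object --- and it is neither the decomposition $V=L_1\oplus L_2$ itself nor its image under an automorphism commuting with the Higgs field (the automorphism group here is $\OO(2,\C)$, which fixes the two isotropic lines and does not act transitively on the $\PP^1$ of maximal-degree line subbundles). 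The required splitting does exist, so the conclusion survives, but the step has to be stated this way. A small further point: the hypothesis of condition~(\ref{item:line2}) is that $L\subseteq\ker\gamma$, not that $L$ is $\gamma$-isotropic; with that reading your appeal to stability of the summands to dispose of (\ref{item:line2}) and (\ref{item:line3}) is correct.
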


\begin{remark}\label{rem:stable-not-simple}

If $\deg V=0$ then  there are other possible decompositions
for a polystable $\Sp(4,\R)$-Higgs bundle; and
if  $(V,\beta,\gamma)$ is as in (1) of
Proposition~\ref{prop:polystability} but with $(L_1,\beta_1,\gamma_1)$
and $(L_2,\beta_2,\gamma_2)$ non isomorphic then it is a stable
$\Sp(4,\R)$-Higgs bundle which is not simple (see
Theorem 3.40 in \cite{garcia-gothen-mundet:2008} for details).
\end{remark}

The following result \cite{garcia-gothen-mundet:2008}
relating polystability of $\Sp(4,\R)$-Higgs bundles to
polystability of $\GL(4,\C)$-Higgs bundles is useful.  It is
important to point out that, though the polystability conditions
coincide, the stability condition for a $\Sp(4,\R)$-Higgs bundle is
weaker than the stability condition for the corresponding
$\GL(4,\C)$-Higgs bundle.

\begin{proposition}[\protect{\cite[Theorem
5.13]{garcia-gothen-mundet:2008}}]
\label{prop:Sp4R-GL4C-polystability-equivalence}
An $\Sp(4,\R)$-Higgs bundle $(V,\beta,\gamma)$ is polystable if
and only if the $\GL(4,\C)$-Higgs bundle $(V \oplus V^*, \varphi = \left(
\begin{smallmatrix}
0 & \beta \\
\gamma & 0
\end{smallmatrix}
\right))$ is polystable.
\end{proposition}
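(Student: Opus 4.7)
The approach is via the Hitchin--Kobayashi correspondence in both settings. By the theorem of \cite{garcia-gothen-mundet:2008,garcia-prada-gothen-mundet:2009a}, polystability of $(V,\beta,\gamma)$ as an $\Sp(4,\R)$-Higgs bundle is equivalent to the existence of a Hermitian metric $h$ on $V$ solving the $\Sp(4,\R)$-Hitchin equation $F_h - [\varphi,\tau(\varphi)] = 0$, where $\tau$ is the compact conjugation on $\lie{sp}(4,\C)$ determined by our choice of $\U(2) \subset \Sp(4,\R)$. Likewise, by the Hitchin--Simpson theorem, polystability of $(V \oplus V^*,\varphi)$ as a $\GL(4,\C)$-Higgs bundle is equivalent to the existence of a Hermitian metric $H$ on $V \oplus V^*$ solving the ordinary Hitchin equation. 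The plan is to show that these two analytic conditions are equivalent.

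For the forward implication, I would take a solution $h$ of the $\Sp(4,\R)$-Hitchin equation and form the metric $H = h \oplus h^{-1}$ on $V \oplus V^*$, with $h^{-1}$ denoting the induced dual metric on $V^*$. A short computation, using the off-diagonal form of $\varphi$ and the symmetry conditions $\beta^t = \beta$, $\gamma^t = \gamma$, shows that the $H$-adjoint of $\varphi$ corresponds exactly to $\tau(\varphi)$ under the identification (\ref{eqn:gl2CInsp4C.2}). Hence $H$ solves the ordinary Hitchin equation and $(V \oplus V^*,\varphi)$ is $\GL(4,\C)$-polystable.

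For the converse, suppose $(V \oplus V^*,\varphi)$ is $\GL(4,\C)$-polystable. It carries a canonical $\Sp(4,\C)$-structure, namely the symplectic form $\Omega$ on $V \oplus V^*$ coming from the natural dual pairing, and the conditions $\beta^t = \beta$ and $\gamma^t = \gamma$ are precisely what force $\varphi$ to take values in $\Omega$-symmetric endomorphisms, so $\Omega$ is preserved by $\varphi$. This additional structure induces an involution $H \mapsto H^{\sigma}$ on the space of Hermitian--Einstein metrics. I would then show that a $\sigma$-fixed Hermitian--Einstein metric exists: in the strictly stable case this follows from uniqueness of the solution up to a positive scalar (which is forced to be $1$ by the involution); in the general polystable case one decomposes $(V \oplus V^*,\varphi)$ into stable summands, observes that $\sigma$ permutes these summands by $\Omega$-duality, and then picks matched metrics on dual pairs of summands. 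Any $\sigma$-fixed $H$ splits as $h \oplus h^{-1}$ for a metric $h$ on $V$, and the $\GL(4,\C)$-Hitchin equation for $H$ reduces to the $\Sp(4,\R)$-Hitchin equation for $h$; so $(V,\beta,\gamma)$ is $\Sp(4,\R)$-polystable.

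The main obstacle is this last step: producing a Hermitian--Einstein metric compatible with the $\Sp(4,\C)$-structure. The strictly stable case is routine via uniqueness, but in the polystable, non-simple situation (cf.\ Remark \ref{rem:stable-not-simple}), where $H$ may fail to be unique even up to scaling, one has to organize the stable decomposition so that it refines the splitting $V \oplus V^*$ and so that $\Omega$-duality pairs stable summands correctly. Once such a decomposition is fixed, the compatible metric can be built summand by summand; this is where essentially all the work of the proof resides.
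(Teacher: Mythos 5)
This Proposition is not proved in the paper at all: it is imported verbatim from \cite[Theorem~5.13]{garcia-gothen-mundet:2008}, so there is no internal argument to compare yours against. Your overall strategy---running the Hitchin--Kobayashi correspondence on both sides and matching solutions of the two Hitchin equations---is the standard route to statements of this type, and your forward direction ($h \mapsto H = h \oplus h^{-1}$, with the adjoint of $\varphi$ matching $\tau(\varphi)$ because $\beta$ and $\gamma$ are symmetric and $\deg(V\oplus V^*)=0$ kills the central term) is correct.

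The converse, however, contains a genuine gap. A Hermitian--Einstein metric $H$ on $V\oplus V^*$ fixed by your $\Omega$-duality involution is one for which $\Omega$ is an isometry onto the dual, i.e.\ a reduction of structure group to $\U(4)\cap\Sp(4,\C)$, which is the compact symplectic group $\Sp(2)$. This does \emph{not} force $H$ to be block-diagonal: pointwise the $\Omega$-compatible metrics form a copy of $\Sp(4,\C)/\Sp(2)$, of real dimension $10$, whereas the metrics of the form $h\oplus h^{-1}$ form $\GL(2,\C)/\U(2)$, of real dimension $4$. So the assertion ``any $\sigma$-fixed $H$ splits as $h\oplus h^{-1}$'' is false, and without it you only reduce to $\Sp(2)\subset\Sp(4,\C)$ rather than to $\U(2)\subset\Sp(4,\R)$. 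The missing ingredient is the second symmetry of $(V\oplus V^*,\varphi)$: the automorphism $g=(\mathrm{id}_V,-\mathrm{id}_{V^*})$ conjugates $\varphi$ to $-\varphi$, and since the Hitchin equation is unchanged under $\varphi\mapsto-\varphi$, the metric $g^*H$ is again a solution; in the stable case uniqueness up to a positive scalar together with $g^2=\mathrm{id}$ forces $g^*H=H$, i.e.\ block-diagonality $H=h_1\oplus h_2$, after which your $\Omega$-argument correctly yields $h_2=h_1^{-1}$. The same omission recurs in the strictly polystable case: $\Omega$-duality alone does not let you ``organize the stable decomposition so that it refines the splitting $V\oplus V^*$''---you need the grading automorphism $g$ there too, and the possible configurations are exactly those catalogued in Proposition~\ref{prop:polystability} and Remark~\ref{rem:stable-not-simple}. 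With both involutions in hand the argument closes; as written it does not.
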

Recall that a $\GL(4,\C)$-Higgs bundle $(\mathcal{E},\varphi)$ is
stable if, for any proper non-zero $\varphi$-invariant subbundle $F
\subseteq \mathcal{E}$ satisfies $\mu(F) < \mu(\mathcal{E})$, where
$\mu(F) = \deg(F)/\rk(F)$ is the slope of the subbundle. The Higgs
bundle $(\mathcal{E},\varphi)$ is polystable if it is the direct sum
of stable Higgs bundles, all of the same slope.
Moreover, to check that the $\GL(4,\C)$-Higgs bundle
$(\mathcal{E} = V \oplus V^*, \varphi = \left(
\begin{smallmatrix}
0 & \beta \\
\gamma & 0
\end{smallmatrix}
\right))$ is stable, it suffices to consider $\varphi$-invariant
subbundles which respect the decomposition $\mathcal{E} = V \oplus
V^*$ (see \cite{bradlow-garcia-prada-gothen:2003}).

\begin{remark}
\label{rem:SL2R-stability}
Similarly, the stability condition for an $\SL(2,\R)$-Higgs bundle
$(L,\beta,\gamma)$ simplifies as follows.
\begin{enumerate}
\item If $\deg(L)>0$ then $(L,\beta,\gamma)$ is stable if and only if
$\gamma \neq 0$.
\item If $\deg(L)<0$ then $(L,\beta,\gamma)$ is stable if and only if
$\beta \neq 0$.
\item If $\deg(L)=0$ then $(L,\beta,\gamma)$ is polystable if and only if
either $\beta = 0 = \gamma$ or both $\beta$ and $\gamma$ are nonzero.
\end{enumerate}
Moreover, if $\deg(L) \neq 0$, then stability, polystability and
semistability are equivalent conditions. Notice that from the semistability
condition if $\deg(L) >0$, since $\gamma\neq 0$, we must have that 
$\deg (L) \leq g-1$; and similarly, if  
$\deg (L) <0$, since $\beta\neq 0$,  we must have that 
$\deg (L) \geq  1-g$. We thus  have  the Milnor--Wood inequality for 
$\SL(2,\R)$-Higgs bundles (see \cite{milnor:1957,goldman:1988,hitchin:1987}).

Finally, in a manner analogous to
Proposition~\ref{prop:Sp4R-GL4C-polystability-equivalence}, we have
that $(L,\beta,\gamma)$ is a polystable $\SL(2,\R)$-Higgs bundle if
and only if
$$(L \oplus L^{-1},\left(
\begin{smallmatrix}
 0 & \beta \\
 \gamma & 0
\end{smallmatrix}
\right))$$
is a polystable $\SL(2,\C)$-Higgs bundle.
\end{remark}

\subsection{Toledo invariant and moduli spaces}

The basic topological invariant of an $\Sp(4,\R)$-Higgs bundle is
the degree of $V$.
\begin{definition}
\label{def:toledo}
The {\bf Toledo invariant} of the $\Sp(4,\R)$-Higgs bundle
$(V,\gamma,\beta)$ is the integer
\begin{displaymath}
d = \deg(V).
\end{displaymath}
\end{definition}
From the point of view of representations of the fundamental group,
the Toledo invariant is defined for representations into any group
$G$ of hermitean type. This justifies the terminology used in the
definition. 

The following inequality for the Toledo invariant has a
long history, going back to Milnor \cite{milnor:1957}, Wood
\cite{wood:1971}, Dupont \cite{dupont:1978}, Turaev
\cite{turaev:1984}, Domic--Toledo \cite{domic-toledo:1987} and
Clerc--\O{}rsted \cite{clerc-orsted:2003}. It is
usually known as the \emph{Milnor--Wood inequality}.

\begin{proposition}
\label{prop:MW}
Let $(V,\beta,\gamma)$ be a semistable $\Sp(4,\R)$-Higgs
bundle. Then
\begin{displaymath}
\abs{d} \leq 2g-2.
\end{displaymath}
\qed
\end{proposition}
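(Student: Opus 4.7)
The plan is to combine the semistability conditions of Proposition~\ref{prop: n=2stability} with simple determinant considerations on the rank~$2$ bundle $V$. First, I would note that the assignment $(V,\beta,\gamma) \mapsto (V^{*},\gamma,\beta)$ takes semistable $\Sp(4,\R)$-Higgs bundles to semistable ones and negates the Toledo invariant, so it suffices to prove $d \leq 2g-2$ under the assumption $d \geq 0$. The case $d=0$ is immediate; so assume $d>0$, and apply semistability condition~(\ref{item:2}) of Proposition~\ref{prop: n=2stability} to conclude $\gamma \neq 0$.

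Now I would split according to the generic rank of the symmetric map $\gamma \colon V \to V^{*}\otimes K$. In the case when $\gamma$ is generically of full rank, the determinant
\begin{displaymath}
\det \gamma \colon \det V \lra \det V^{-1}\otimes K^{2}
\end{displaymath}
is a nonzero section of $(\det V)^{-2}\otimes K^{2}$, a line bundle of degree $-2d+4g-4$. Nonvanishing of a global section forces this degree to be nonnegative, which gives $d \leq 2g-2$ directly. This is essentially the only place where the genus enters through $\deg K = 2g-2$.

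In the remaining case $\gamma$ drops rank generically, and the saturation $L \subset V$ of $\ker \gamma$ is a line subbundle. Using that $\gamma$ is symmetric, i.e.\ $\gamma \in H^{0}(S^{2}V^{*}\otimes K)$, the vanishing on $L$ together with symmetry forces $\gamma \in H^{0}\bigl((L^{\perp})^{2}\otimes K\bigr)$ (the form descends to $V/L$). Thus hypothesis~(\ref{item:line2}) of Proposition~\ref{prop: n=2stability} applies and gives $\deg L \leq 0$. Simultaneously, $\gamma$ induces a nonzero morphism of line bundles $V/L \to L^{\perp}\otimes K$, which yields
\begin{displaymath}
\deg(V/L) \;\leq\; \deg(L^{\perp}\otimes K) \;=\; \deg L - d + 2g-2,
\end{displaymath}
and since $\deg(V/L) = d - \deg L$ this rearranges to $d \leq \deg L + (g-1)$. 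Combined with $\deg L \leq 0$ this gives the stronger bound $d \leq g-1 \leq 2g-2$.

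I do not foresee a genuine obstacle; the only point requiring a little care is the identification $\gamma \in H^{0}((L^{\perp})^{2}\otimes K)$ when $L$ is the saturated kernel, which is a clean consequence of the symmetry of $\gamma$ viewed as a section of $S^{2}V^{*}\otimes K$. Everything else is rank-$2$ linear algebra applied term by term to the semistability criteria already recorded in Proposition~\ref{prop: n=2stability}.
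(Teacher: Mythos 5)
Your proof is correct. Note that the paper does not actually prove Proposition~\ref{prop:MW} in the text: the statement is closed with a \qed and the reader is referred to \cite{gothen:2001} for the Higgs-bundle proof. The argument given there runs through the associated $\GL(4,\C)$-Higgs bundle $(V\oplus V^*,\varphi)$: one takes the $\varphi$-invariant subbundle $V\oplus S$, where $S$ is the saturation of $\gamma(V)\otimes K^{-1}$ in $V^*$, and applies slope semistability to obtain $d\leq \rk(\gamma)(g-1)$. Your route is different in presentation but identical in mechanism: you split on $\rk(\gamma)$, handle the full-rank case by the nonvanishing of $\det\gamma\in H^0((\det V)^{-2}K^2)$, and handle the degenerate case by feeding the saturated kernel $L$ of $\gamma$ into condition~(3b) of Proposition~\ref{prop: n=2stability} together with the degree inequality coming from the induced nonzero map $V/L\to L^\perp\otimes K$. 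All the individual steps check out: the reduction to $d\geq 0$ via $(V,\beta,\gamma)\mapsto(V^*,\gamma,\beta)$ is legitimate since the semistability conditions are symmetric under this swap; the identification of symmetric forms vanishing on $L$ with sections of $(L^\perp)^2\otimes K$ is exactly right; and your case analysis reproduces the sharper bound $d\leq\rk(\gamma)(g-1)$, which in particular is consistent with Proposition~\ref{prop:maxtoledo} (that $\gamma$ must be an isomorphism when $d=2g-2$). What your version buys is self-containedness relative to the stability criteria already recorded in the paper; what the cited argument buys is uniformity in $n$ for $\Sp(2n,\R)$ without a case split on the rank of $\gamma$.
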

The sharp bound for $G = \Sp(4,\R)$ was given by Turaev.
In its most general form the Milnor--Wood inequality has been
proved by Burger, Iozzi and Wienhard. For a proof in the present
context of Higgs bundle theory, see \cite{gothen:2001}.

We call $\Sp(4,\R)$-Higgs bundles with Toledo invariant $d = 2g-2$
\textbf{maximal}, and define \textbf{maximal representations} $\rho\colon
\pi_1(X) \to \Sp(4,\R)$ similarly.

For simplicity, we shall henceforth use the notation
\begin{displaymath}
\mathcal{M}_d = \mathcal{M}_d(\Sp(4,\R))
\end{displaymath}
for the moduli space parametrizing isomorphism classes of polystable
$\Sp(4,\R)$-Higgs bundles $(V,\beta,\gamma)$ with $\deg(V) = d$. We
will denote the components with maximal positive Toledo invariant by
$\mathcal{M}^{max}$, i.e.
\begin{displaymath}
\mathcal{M}^{max} = \mathcal{M}_{2g-2} \ .
\end{displaymath}
We remark (cf.\ \cite{garcia-gothen-mundet:2008}) that there is an
isomorphism $\mathcal{M}_{d} \simeq \mathcal{M}_{-d}$, given by the map
$(V,\beta,\gamma) \to (V^*,\gamma,\beta)$. This justifies restricting
attention to the case $d\geq 0$ of positive Toledo invariant .

\subsection{Maximal $\Sp(4,\R)$-Higgs bundles and Cayley partners}
\label{subs:cayley}

The Higgs bundle proof \cite{gothen:2001} of Proposition \ref{prop:MW}
has the following important consequence.

\begin{proposition}\label{prop:maxtoledo}
Let $(V,\beta,\gamma)$ be a polystable $\Sp(4,\R)$-Higgs bundle. If
$\deg V=2g-2$, i.e.\ if $d$ is maximal and positive, then
$$\gamma:V\longrightarrow V^*\otimes K$$
is an isomorphism.
\end{proposition}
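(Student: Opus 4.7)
The plan is to rule out the possibility that $\gamma$ has generic rank less than $2$. Since $V$ and $V^*\otimes K$ both have rank $2$, and $\deg(V^*\otimes K)=-(2g-2)+2(2g-2)=2g-2=\deg V$, the determinant $\det\gamma$ is a section of a line bundle of degree $0$. Such a section is either identically zero or nowhere vanishing, so showing that $\gamma$ has generic rank exactly $2$ will force $\gamma$ to be an isomorphism.

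The case $\gamma\equiv 0$ is ruled out directly by condition (2) of Proposition~\ref{prop: n=2stability}: semistability with $\gamma=0$ would force $\deg V\leq 0$, contradicting $\deg V=2g-2>0$.

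Suppose instead $\gamma$ has generic rank exactly $1$. Let $L\subset V$ be the saturated kernel of $\gamma$, a line subbundle on which the symmetric form $\gamma\in H^0(\Sym^2 V^*\otimes K)$ vanishes. By symmetry, $\gamma$ descends to a nonzero section of $\Sym^2(V/L)^*\otimes K$. Using the defining sequence (\ref{eqn: Lperp}), the line bundle $L^\perp$ is identified with $(V/L)^*$, so this places $\gamma\in H^0((L^\perp)^2\otimes K)\subset H^0(\Sym^2 V^*\otimes K)$. The semistability condition (3)(b) of Proposition~\ref{prop: n=2stability} then yields $\deg L\leq 0$. On the other hand, $\deg L^\perp=\deg L-(2g-2)$, so the target line bundle $(L^\perp)^2\otimes K$ has degree $2\deg L-(2g-2)$; for it to admit a nonzero section this degree must be non-negative, forcing $\deg L\geq g-1\geq 1$. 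This contradicts $\deg L\leq 0$, ruling out the generic-rank-$1$ case.

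The main obstacle is the bookkeeping in the rank-$1$ case: one has to use the symmetry of $\gamma$ together with the existence of a line subbundle $L$ in its kernel to conclude that $\gamma$ actually takes values in the rank-one subbundle $(L^\perp)^2\otimes K$, which is precisely the situation covered by condition (3)(b) of the stability criterion. Once this identification is made, the proof is completed by a direct degree comparison: semistability pushes $\deg L$ down, while the existence of a nonzero section of $(L^\perp)^2\otimes K$ pushes it up, and in the maximal-Toledo regime these bounds are mutually incompatible.
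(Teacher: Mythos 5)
Your proof is correct, and it is essentially the same argument as the paper's: the paper does not spell out a proof but derives the statement from the Higgs-bundle proof of the Milnor--Wood inequality (Proposition~\ref{prop:MW}) in \cite{gothen:2001}, which is precisely this equality-case analysis --- ruling out generic rank $0$ and $1$ for $\gamma$ via the simplified semistability conditions of Proposition~\ref{prop: n=2stability} (items (\ref{item:2}) and (\ref{item:line2})) and then concluding from the fact that $\det\gamma$ is a section of a degree-zero line bundle. The only points worth making explicit are the ones you already handle: that the saturation $L$ of $\ker\gamma$ is a line subbundle on which $\gamma$ vanishes identically, and that symmetry of $\gamma$ forces its image into $L^\perp\otimes K$, placing you exactly in the hypothesis of condition (\ref{item:line2}).
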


\noi If $\gamma:V\longrightarrow V^*\otimes K$ is an isomorphism,
then some of the conditions in Proposition \ref{prop: n=2stability}
cannot occur.  The  stability condition then reduces to:

\begin{proposition}\label{prop: n=2stability-I}
Let $(V,\beta,\gamma)$ be an $\Sp(4,\R)$-Higgs bundle and assume that
$\gamma\colon V \to V^* \otimes K$ is an isomorphism. Set
\begin{equation}
\tilde{\beta}=(\beta\otimes 1)\circ \gamma\colon V\to V\otimes K^2.
\end{equation}
\noindent Then $(V,\beta,\gamma)$ is semi-stable if and only if for
any line subbundle $L \subset V$  isotropic with respect to $\gamma$
and such that $\tilde{\beta}(L) \subseteq  L\otimes K^2$, the
following condition is satisfied
\begin{displaymath}
\mu(L) \le \mu(V)\ .
\end{displaymath}
If strict inequality holds then $(V,\beta,\gamma)$ is stable.
\end{proposition}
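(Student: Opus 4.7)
The plan is to reduce the simplified criterion to the general stability condition in Proposition~\ref{prop: n=2stability}, systematically exploiting the fact that $\gamma \colon V \to V^*\otimes K$ being an isomorphism forces $\deg V = 2g-2$ and hence $\mu(V) = g-1$. I would prove the equivalence by checking that each of the conditions (1), (2), (3a), (3b), (3c) of Proposition~\ref{prop: n=2stability} is either vacuous, automatic, or equivalent to the single simplified condition.

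The first step is to dispose of conditions (1), (2), and (3b): (1) is automatic since $\deg V = 2g-2 \geq 0$; (2) is vacuous since $\gamma\neq 0$; and (3b) is vacuous because $\gamma \in H^0((L^{\perp})^2\otimes K)$ would force $\gamma(L) = 0$, contradicting the fact that $\gamma$ is injective on fibers.

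Next I would translate (3a). Unwinding the symmetric-tensor definitions, $\gamma \in H^0(L^{\perp}\otimes_S V^*\otimes K)$ is equivalent to $L$ being $\gamma$-isotropic, and $\beta \in H^0(L\otimes_S V\otimes K)$ is equivalent to $\beta(L^{\perp}) \subseteq L\otimes K$. Because $\gamma$ is an isomorphism and $\deg V = 2g-2$, the line subbundles $\gamma(L)$ and $L^{\perp}\otimes K$ of $V^*\otimes K$ both have degree $\deg L$; the isotropy inclusion $\gamma(L) \subseteq L^{\perp}\otimes K$ therefore forces the equality $\gamma(L) = L^{\perp}\otimes K$. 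Consequently $\tilde{\beta}(L) = (\beta\otimes 1)\gamma(L) = \beta(L^{\perp})\otimes K$, so the hypothesis of (3a) becomes exactly ``$L$ isotropic and $\tilde{\beta}(L) \subseteq L\otimes K^2$'', with the same conclusion $\mu(L)\leq \mu(V)$.

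The main obstacle is showing that the simplified criterion implies (3c), which requires a separate trick. Suppose $\beta \in H^0(L^2\otimes K)$ for some line subbundle $L \subset V$; I would introduce the auxiliary line subbundle $L' := \gamma^{-1}(L^{\perp}\otimes K) \subset V$, which satisfies $\deg L' = \deg L$. Because $\beta$ factors through $V^*\to L^*$, it vanishes on $L^{\perp}$, hence $\tilde{\beta}(L') = \beta(L^{\perp})\otimes K = 0$. If $L\neq L'$, then $L\oplus L' \hookrightarrow V$ is an injection of sheaves of full rank, so $2\deg L \leq \deg V = 2g-2$, giving $\deg L\leq g-1$. If instead $L = L'$, then $\gamma(L) = L^{\perp}\otimes K$ means $L$ is $\gamma$-isotropic and $\tilde{\beta}(L)=0$, so the simplified criterion bounds $\deg L\leq g-1$. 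In either case $\deg L \leq g-1 < 2g-2 = \deg V$, establishing (3c). The stability version follows by tracking strict inequalities through the same argument.
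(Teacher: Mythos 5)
Your proof is correct and follows the same route the paper takes: the paper offers no written argument beyond the remark that, once $\gamma$ is an isomorphism (forcing $\deg V = 2g-2$), some conditions of Proposition~\ref{prop: n=2stability} cannot occur and the remainder collapse to the single inequality for $\gamma$-isotropic, $\tilde\beta$-invariant line subbundles. Your case analysis for condition (3c) via the auxiliary subbundle $L' = \gamma^{-1}(L^\perp\otimes K)$ supplies exactly the verification the paper leaves implicit.
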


If we fix a square root of $K$, i.e.\ if we pick a line bundle
$L_0$ such that $L_0^2=K$, and define
\begin{equation}\label{defn:CayleyW}
W=V^*\otimes L_0
\end{equation}
then it follows from Proposition \ref{prop:maxtoledo} that
the map
\begin{equation}\label{defn:qW}
q_W:=\gamma\otimes I_{L_0^{-1}}: W^* \ra W
\end{equation}
defines a symmetric, non-degenerate form on $W$, i.e.\
$(W,q_W)$ is an $\OO(2,\C)$-holomorphic bundle.  The remaining part
of the Higgs field, i.e.\ the map $\beta$ defines a $K^2$-twisted
endomorphism
\begin{align}\label{defn:theta}
\theta = (\gamma\otimes I_{K\otimes L_0})&\circ (\beta \otimes
I_{L_0})\colon W \ra W\otimes K^2\ .
\end{align}
The map $\theta$ is $q_W$-symmetric, i.e.\ it takes values in the
isotropy representation for $\GL(2,\R)$.  The pair $(W,q_W,\theta)$
thus satisfies the definition of a $G$-Higgs bundle with
$G=\GL(2,\R)$, except for the fact that the Higgs field $\theta$
takes values in $E(\liemc)\otimes K^2$ instead of in
$E(\liemc)\otimes K$.  We say that $(W,\theta)$ defines a {\bf
$K^2$-twisted Higgs pair with structure group $\GL(2,\R)$} (see
\cite{garcia-gothen-mundet:2008} for more details).

\begin{definition}\label{defn:Cayley}
We call $(W,q_W,\theta)$ the {\bf Cayley partner} of the $\Sp(4,\R)$-Higgs
bundle $(V,\beta,\gamma)$.
\end{definition}
The original $\Sp(4,\R)$-Higgs bundle can clearly be recovered from
the defining data for its Cayley partner. We refer to
\cite{GDP} for more details on this construction, including an
exposition of the general framework which justifies our
terminology. Occasionally, when the section $\theta$ is not directly
relevant for our considerations, we shall also refer to the orthogonal
bundle $(W,q_W)$ as the Cayley partner of $(V,\beta,\gamma)$.

The following Proposition sums up the essential point of the
constructions of this section.
\begin{proposition}\label{prop:maxtoledo2} Let $(V,\beta,\gamma)$ be a
polystable $\Sp(4,\R)$-Higgs bundle with maximal positive Toledo
invariant, i.e.\ with $\deg(V)=2g-2$. Then $V$ can be written as
\begin{equation}\label{eqn:V=WL}
V=W\otimes L_0\ ,
\end{equation}
where $W$ is an $\OO(2,\C)$-bundle and $L_0$ is a
line bundle such that
\begin{equation}
L_0^2=K\ .
\end{equation}
Also, the isomorphism $\gamma$ is given by
\begin{equation}
\gamma=q\otimes I_{L_0}:W\otimes L_0\longrightarrow W^*\otimes L_0\
,
\end{equation}
where $q$ defines the orthogonal structure on $W$ and
$I_{L_0}$ is the identity map on $L_0$, and
\begin{equation}
\det(V)^2=K^2\ .
\end{equation}
\end{proposition}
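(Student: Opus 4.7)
The plan is essentially to invoke the Cayley partner construction already set up earlier in this subsection and to verify that it produces exactly the data claimed.

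First I would appeal to Proposition~\ref{prop:maxtoledo} to conclude that for a polystable $\Sp(4,\R)$-Higgs bundle with $\deg V = 2g-2$, the map $\gamma\colon V \to V^*\otimes K$ is a holomorphic isomorphism. Since $\gamma \in H^0(\Sym^2(V^*)\otimes K)$ by Definition~\ref{defn:SpHiggs}, it is moreover symmetric. Next I would fix, once and for all, a square root $L_0$ of the canonical bundle, so that $L_0^2 = K$ (such $L_0$ exists for any compact Riemann surface). Define the auxiliary bundle
\begin{equation*}
W := V^*\otimes L_0,
\end{equation*}
exactly as in (\ref{defn:CayleyW}). Then $W^* = V \otimes L_0^{-1}$, and tensoring $\gamma$ by $I_{L_0^{-1}}$ gives, as in (\ref{defn:qW}), a morphism $q_W = \gamma \otimes I_{L_0^{-1}}\colon W^*\to W$ which is an isomorphism (because $\gamma$ is) and symmetric (because $\gamma$ is). Thus $(W,q_W)$ is a rank $2$ holomorphic bundle equipped with a non-degenerate holomorphic symmetric bilinear form, i.e.\ an $\OO(2,\C)$-bundle.

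The key observation producing the stated factorization is that the orthogonal structure furnishes a canonical isomorphism $q_W\colon W^*\xrightarrow{\sim} W$. Tensoring by $L_0$ and using $W^* = V \otimes L_0^{-1}$, this yields
\begin{equation*}
V = W^*\otimes L_0 \;\xrightarrow{\;q_W\otimes I_{L_0}\;}\; W\otimes L_0,
\end{equation*}
which is the desired identification $V = W\otimes L_0$. Under this identification, the original $\gamma$ is transported to the map $q\otimes I_{L_0}\colon W\otimes L_0 \to W^*\otimes L_0$ essentially by construction: unraveling the two tensorings by $L_0$ and $L_0^{-1}$ one recovers $\gamma$ from $q_W$, and the orthogonal form $q$ on $W$ is precisely $q_W$ viewed as a section of $\Sym^2 W^*$.

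Finally, for the determinant identity, I would simply take determinants of the isomorphism $\gamma\colon V\to V^*\otimes K$: this gives $\det V \cong \det V^{-1}\otimes K^2$, i.e.\ $(\det V)^2 \cong K^2$. Alternatively, from $V = W\otimes L_0$ one has $\det V = \det W \otimes L_0^2 = \det W\otimes K$, and since $W$ carries a non-degenerate symmetric form the line bundle $\det W$ satisfies $(\det W)^2 \cong \mathcal{O}_X$, giving the same conclusion. The only delicate point is making sure the canonical identification $W^*\cong W$ via $q_W$ is used consistently so that $\gamma$ really becomes $q\otimes I_{L_0}$; once this bookkeeping is settled the rest is immediate from the constructions of Section~\ref{subs:cayley}.
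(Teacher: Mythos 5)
Your argument is correct and is essentially the paper's own: the paper gives no separate proof, saying only that the proposition ``sums up the essential point of the constructions of this section,'' namely Proposition~\ref{prop:maxtoledo} plus the Cayley partner construction (\ref{defn:CayleyW})--(\ref{defn:qW}), which is exactly what you carry out, including both determinant arguments. The only caveat is the bookkeeping point you already flag yourself: $q_W$ as defined is a symmetric isomorphism $W^*\to W$ (a section of $\Sym^2 W$ rather than $\Sym^2 W^*$), so the form $q\colon W\to W^*$ appearing in the statement is its inverse, and the identification $V=W\otimes L_0$ is made through it.
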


\subsection{Connected components of the moduli space}\label{sub:count}

The moduli space $\mathcal{M}^{max}$ of maximal $\Sp(4,\R)$-Higgs
bundles is not connected. Its connected components of
$\mathcal{M}^{max}$ were determined in \cite{gothen:2001}. In
contrast, each moduli space $\mathcal{M}_d$ for $\abs{d} < 2g-2$ is
connected (see \cite{garcia-prada-mundet:2004}). In this
section we explain the count of components of $\mathcal{M}^{max}$ 
and identify the Higgs bundles appearing in each
component. 

The key to the count of the components of $\mathcal{M}^{max}$ is
Proposition~\ref{prop:maxtoledo}. The fact that the orthogonal bundle
$(W,q_W)$ underlying the Cayley partner is an $\OO(2,\C)$-bundle
reveals new topological invariants, namely the first and
second Stiefel--Whitney classes
\begin{align}
w_1(W,q_W)&\in H^1(X;\Z/2) \simeq \Z/2^{2g} \\
w_2(W,q_W)&\in H^2(X;\Z/2) \simeq \Z/2\ .
\end{align}

Rank 2 orthogonal bundles were classified by Mumford in \cite{mumford}
(though the reducible case (3) was omitted there):

\begin{proposition}\label{prop:O(2)bundles}
A rank 2 orthogonal bundle $(W,q_W)$ is one of
the following:
\begin{enumerate}
\item $W=L\oplus L^{-1}$, where $L$ is a line bundle on $X$, and
$q_W=\left(
\begin{smallmatrix}
0 & 1 \\
1 & 0
\end{smallmatrix}\right)$. In this case $w_1(W,q_W) = 0$.
\item $W=\pi_*(\tilde{L}\otimes \iota^*\tilde{L}^{-1})$ where
$\pi:\tilde{X}\longrightarrow X$ is a connected double cover, $\tilde{L}$ is a
line bundle on $\tilde{X}$, and $\iota:\tilde{X}\longrightarrow
\tilde{X}$ is the covering involution. The quadratic form is locally
of the form $q_W=\left(
\begin{smallmatrix}
0 & 1 \\
1 & 0
\end{smallmatrix}\right)$. In this case $w_1(W,q_W) \in
H^1(X;\Z/2) $ is the non-zero element defining the double cover.
\item $W=L_1\oplus L_2$ where $L_1$ and $L_2$ are line bundles on
$X$ satisfying $L_i^2=\cO_X$, and $q_W=q_1+q_2$ where $q_i$ defines
the isomorphism $L_i\simeq L_i^{-1}$. In this case $w_1(W,q_W) =
w_1(L_1,q_1) + w_1(L_2,q_2)$.
\end{enumerate}
\end{proposition}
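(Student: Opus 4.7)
The plan is to classify $(W, q_W)$ according to its first Stiefel--Whitney class. First I would observe that $q_W$ induces an isomorphism $\det W \cong (\det W)^{-1}$, so $\det W$ is a 2-torsion line bundle, and under the canonical identification $H^1(X;\Z/2) \cong \Pic(X)[2]$, the class $w_1(W,q_W)$ corresponds to $\det W$. The argument then hinges on whether the structure group of $(W,q_W)$ reduces from $\OO(2,\C)$ to its identity component $\SO(2,\C) \cong \C^*$.

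If $w_1(W,q_W) = 0$, then $\det W \cong \cO_X$ provides such a reduction. Under the isomorphism $\SO(2,\C) \cong \C^*$ expressed in an isotropic basis, a $\C^*$-bundle is just a line bundle $L$, and the associated $\OO(2,\C)$-bundle is $L \oplus L^{-1}$ with the hyperbolic form, giving case (1). If instead $w_1 \neq 0$, I would let $\pi: \tilde{X} \to X$ be the connected \'etale double cover corresponding to $w_1$, with Galois involution $\iota$. Since $\pi^* w_1 = 0$, the previous case applied on $\tilde{X}$ yields $\pi^*(W, q_W) \cong \tilde{L} \oplus \tilde{L}^{-1}$ with hyperbolic form. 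The involution $\iota$ preserves the form and hence permutes the two isotropic subbundles. If it preserved each, the Lagrangian decomposition would descend to $X$ and force $w_1(W) = 0$; so $\iota$ must swap them, i.e.\ $\iota^* \tilde{L} \cong \tilde{L}^{-1}$. Descent then identifies $W$ with $\pi_*(\tilde{L} \otimes \iota^* \tilde{L}^{-1})$, after absorbing any $\iota$-invariant twist into the choice of $\tilde{L}$, establishing case (2).

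Case (3) arises as the subcase when $W$ additionally decomposes as a direct sum $L_1 \oplus L_2$ of line bundles on $X$. Since $w_1 \neq 0$ rules out a Lagrangian decomposition (which would force $\det W = \cO_X$), the decomposition must be orthogonal: $q_W = q_1 \oplus q_2$ with each $q_i : L_i \otimes L_i \to \cO_X$ non-degenerate, forcing $L_i^2 \cong \cO_X$. The additivity $w_1(W) = w_1(L_1) + w_1(L_2)$ then follows from $\det W = L_1 \otimes L_2$ and multiplicativity of Stiefel--Whitney classes. The main obstacle I foresee is the descent step in case (2): verifying that $W$ takes precisely the explicit form $\pi_*(\tilde{L} \otimes \iota^* \tilde{L}^{-1})$ rather than merely a pushforward of a general anti-invariant line bundle, and checking that the natural orthogonal structure on the pushforward matches $q_W$ under this identification.
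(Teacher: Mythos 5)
The paper does not actually prove this Proposition: it is quoted as Mumford's classification \cite{mumford} (with the parenthetical that the reducible case (3) was omitted there), so there is no in-paper argument to compare against line by line. Your proof is the standard one and is essentially sound: $w_1$ is the obstruction to an $\SO(2,\C)\simeq\C^*$ reduction, which in an isotropic frame gives case (1); when $w_1\neq 0$ the two isotropic line subbundles of $\pi^*W$ on the associated double cover must be exchanged by $\iota$ (else an isotropic subbundle would descend and force $w_1=0$), giving $\iota^*\tilde L\simeq \tilde L^{-1}$ and $W\simeq\pi_*\tilde L$. The step you flag as the main obstacle --- that such an anti-invariant $\tilde L$ can be written as $\tilde M\otimes\iota^*\tilde M^{-1}$ and that the orthogonal structure on the pushforward is the right one --- is exactly the content of the Remark following the Proposition in the paper (the kernel of $1+\iota^*$ on $\Jac(\tilde X)$ consists of two translates $P^\pm$ of the Prym variety, swept out by $\tilde M\otimes\iota^*\tilde M^{-1}$ as $\deg\tilde M$ varies mod $2$), which the paper likewise takes from Mumford; so you have correctly isolated the one nontrivial input rather than left a genuine hole. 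One small logical shortcut in your case (3): ``not Lagrangian, hence orthogonal'' is not an a priori dichotomy for an arbitrary splitting $W=L_1\oplus L_2$. The correct route is that $w_1\neq 0$ forces each $q|_{L_i}\in H^0(L_i^{-2})$ to be nonzero, whence $\deg L_i\le 0$; combined with $\deg L_1+\deg L_2=\deg\det W=0$ this gives $\deg L_i=0$, so each $q|_{L_i}$ is nowhere vanishing, $L_i^2\simeq\cO_X$, and the splitting can be taken orthogonal. With that repaired, the argument is complete modulo the cited Prym-variety fact.
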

Note that cases (1) and (3) above are not mutually exclusive: they
coincide when $V = L \oplus L$ with $L^2 = \mathcal{O}$ and
$q_W=\left(
\begin{smallmatrix}
1 & 0 \\
0 & 1
\end{smallmatrix}\right)$.

\begin{remark}
In case (2) above, the line bundles of the form $M=\tilde{L}\otimes
\iota^*\tilde{L}^{-1}$ constitute the kernel of $1+\iota^*\colon
\Jac(\tilde X) \to \Jac(\tilde X)$. Moreover, this kernel
consists of two components $P^+$ and $P^-$ (distinguished by the
degree of $\tilde L$ modulo $2$), each one of them a translate of
the Prym variety of the cover (cf.\ \cite{mumford}).  It can be
shown that the value of $w_2(W,q_W)$ is $0$ or $1$ depending on
whether $M$ belongs to $P^+$ or $P^-$ (see
\cite[Proposition~5.14]{gothen:2001}).
\end{remark}

Recall that the first Stiefel--Whitney class is the obstruction to the
existence of a reduction of structure group to $\SO(2,\C) \subset
\OO(2,\C)$. Thus, with $\SO(2,\C) \simeq \C^*$ via
$\lambda\mapsto 
\left(
\begin{smallmatrix}
\lambda & 0 \\
0 & \lambda^{-1}
\end{smallmatrix}\right)$,
we get:
\begin{proposition}
\label{prop:def_Chern}
Let $(W,q_W)$ be an $\OO(2,\C)$-bundle. Then $w_1(W,q_W) $ equals zero if and
only if $(W,q_W)$ is of the kind described in (1) of
Proposition~\ref{prop:O(2)bundles}. In this case the second
Stiefel--Whitney class, $w_2(W,q_W)$, lifts to the integer class
$c_1(L) \in H^2(X;\Z)$.
\end{proposition}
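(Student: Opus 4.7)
The plan is to use the defining property of $w_1$ as the obstruction to reducing the structure group from $\OO(2,\C)$ to $\SO(2,\C)$, together with the explicit identification $\SO(2,\C) \simeq \C^*$ given in the excerpt. For the forward implication, assume $w_1(W,q_W)=0$. Then there exists a reduction of the structure group of $(W,q_W)$ to $\SO(2,\C)$. Via the stated isomorphism, such a reduction amounts to a holomorphic line bundle $L$ on $X$, and the associated $\OO(2,\C)$-bundle is exactly $L \oplus L^{-1}$ with the hyperbolic form $\left(\begin{smallmatrix} 0 & 1 \\ 1 & 0 \end{smallmatrix}\right)$. This puts $(W,q_W)$ in case (1) of Proposition~\ref{prop:O(2)bundles}.

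For the reverse implication, one just needs to check that cases (2) and (3) of Proposition~\ref{prop:O(2)bundles} have $w_1 \neq 0$ unless they degenerate into case (1). In case (1) the splitting $L \oplus L^{-1}$ with the given $q_W$ is preserved by the $\C^*$-action $\lambda\cdot(u,v)=(\lambda u,\lambda^{-1}v)$, giving an explicit $\SO(2,\C)$-reduction, so $w_1=0$. In case (2) the first Stiefel--Whitney class is by construction the non-trivial element of $H^1(X;\Z/2)$ defining the connected double cover, hence nonzero. In case (3) we have $w_1(W,q_W)=w_1(L_1,q_1)+w_1(L_2,q_2)$, which vanishes precisely when the two $2$-torsion line bundles agree; in that situation $W=L\oplus L$ with $L^2=\cO_X$, and the remark after Proposition~\ref{prop:O(2)bundles} identifies this with case (1). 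This exhausts all possibilities.

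For the second assertion, once $w_1=0$ we have a $\C^*$-reduction of $(W,q_W)$ corresponding to a line bundle $L$ with $W \simeq L \oplus L^{-1}$. The characteristic classes of an $\OO(2,\C)$-bundle can then be read off the reduced $\C^*$-bundle: the universal characteristic class of an $\SO(2,\C)\simeq\C^*$-bundle in $H^2(\cdot;\Z)$ is the first Chern class of the associated line bundle, while $w_2$ is by definition its mod $2$ reduction (equivalently, the mod $2$ Euler class of the underlying rank $2$ oriented bundle coincides with $c_1$ of the complex line bundle under $\SO(2)=\U(1)$). Applied to our reduction, this shows $w_2(W,q_W)$ lifts to $c_1(L)\in H^2(X;\Z)$.

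The only subtlety is that the line bundle $L$ is determined only up to interchange with $L^{-1}$ (coming from the two possible orientations of the $\SO(2,\C)$-reduction); but $c_1(L^{-1})=-c_1(L)$ has the same reduction modulo $2$, so the statement that $w_2(W,q_W)$ lifts to $c_1(L)$ is unambiguous. I do not expect any serious obstacle here: the argument is a direct unwinding of definitions once the classification in Proposition~\ref{prop:O(2)bundles} and the isomorphism $\SO(2,\C)\simeq\C^*$ are in hand.
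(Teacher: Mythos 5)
Your argument is correct and is essentially the paper's own reasoning: the paper derives this proposition directly from the fact that $w_1$ is the obstruction to reducing the structure group to $\SO(2,\C)\simeq\C^*$, exactly as you do, with the case check against Proposition~\ref{prop:O(2)bundles} and the identification of $w_2$ as the mod $2$ reduction of $c_1(L)$. Your extra care about the $L\leftrightarrow L^{-1}$ ambiguity is a harmless (and correct) addition.
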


Let $(V,\beta,\gamma)$ be a maximal semistable $\Sp(4,\R)$-Higgs
bundle and let $(W,q_W)$ be defined by (\ref{defn:CayleyW}) and
(\ref{defn:qW}).  We define topological invariants of
$(V,\beta,\gamma)$ as follows:
\begin{displaymath}
w_i(V,\beta,\gamma) = w_i(W,q_W), \quad i=1,2.
\end{displaymath}
Note that these invariants are well defined because the
Stiefel--Whitney classes are independent of the choice of the square
root $L_0$ of the canonical bundle used to define the Cayley partner
$(W,q_W)$. When $w_1(V,\beta,\gamma) = 0$, the class
$w_2(V,\beta,\gamma)$ lifts to the integer invariant $\deg(L)$, where
$W = L \oplus L^{-1} = V \otimes L_0^{-1}$ is the
vector bundle underlying the Cayley partner of $(V,\beta,\gamma)$.

\begin{proposition}
\label{prop:def_N}
Let $(V,\beta,\gamma)$ be a maximal semistable $\Sp(4,\R)$-Higgs
bundle with $w_1(V,\beta,\gamma) = 0$ and let $(W = L \oplus L^{-1},
q_W = \left(
\begin{smallmatrix}
0 & 1 \\
1 & 0
\end{smallmatrix}\right))$
be its Cayley partner. Then there is a line bundle
$N$ such that
$$V=N\oplus N^{-1}K,$$
and, with respect to this decomposition,
$$\gamma=\left(
\begin{smallmatrix}
0 & 1 \\
1 & 0
\end{smallmatrix}\right)\in H^0(S^2V^*\otimes K)
\quad\text{and}\quad
\beta=\left(
\begin{smallmatrix}
\beta_1 & \beta_3 \\
\beta_3 & \beta_2
\end{smallmatrix}\right)\in H^0(S^2V\otimes K).$$
The degree of $N$ is given by
\begin{displaymath}
\deg(N) = \deg(L) + g-1.
\end{displaymath}
Moreover,
\begin{displaymath}
0 \leq \deg(L) \leq 2g-2
\end{displaymath}
and, for $\deg(L) > 0$,
\begin{displaymath}
\beta_2 \neq 0.
\end{displaymath}
When $\deg(L) = 2g-2$ the line bundle $N$ satisfies
\begin{equation}
\label{eq:N2K3}
N^2 = K^3.
\end{equation}
\end{proposition}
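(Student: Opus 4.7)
The plan is to read off the decomposition $V = N \oplus N^{-1}K$ directly from the Cayley partner data, and then to use the simplified stability criterion of Proposition~\ref{prop: n=2stability-I} to obtain the degree bounds on $L$ and the constraint at the extremal value $\deg L = 2g-2$.

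First, I would unwind the definition of the Cayley partner. Since $W = V^* \otimes L_0$ by (\ref{defn:CayleyW}), dualizing and using $L_0^2 = K$ gives $V \simeq W^* \otimes L_0$. From $W = L \oplus L^{-1}$, setting $N = L \otimes L_0$ yields $V \simeq N \oplus N^{-1}K$, together with the formula $\deg N = \deg L + g-1$. In this splitting $\gamma = q_W \otimes I_{L_0}$ takes the claimed off-diagonal form, the two $1$'s being the identity isomorphisms on the line bundles $N$ and $N^{-1}K$. Since $\beta \in H^0(S^2V \otimes K)$ and $S^2 V \otimes K \simeq N^2 K \oplus K^2 \oplus N^{-2}K^3$, the symmetric Higgs field automatically has the displayed matrix shape with $\beta_1 \in H^0(N^2 K)$, $\beta_3 \in H^0(K^2)$, and $\beta_2 \in H^0(N^{-2}K^3)$. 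The lower bound $\deg L \geq 0$ is a matter of convention: swapping the two summands of $W$ interchanges $L$ with $L^{-1}$ without altering the Cayley partner, so one may assume $\deg L \geq 0$ from the outset.

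The upper bound and the nonvanishing of $\beta_2$ are the substantive content, and both follow from semistability applied to the line subbundle $N \subset V$. The off-diagonal form of $\gamma$ shows immediately that $\gamma(N)$ lies in the copy of $N$ inside $V^* \otimes K = N^{-1}K \oplus N$, so the composition with the projection onto $N^* \otimes K = N^{-1}K$ vanishes and $N$ is $\gamma$-isotropic. A direct matrix computation, inserting $\gamma(a) = (0,a)$ for $a \in N$ into $\beta \otimes I_K$, gives $\tilde{\beta}(a) = (\beta_3 \cdot a,\, \beta_2 \cdot a) \in NK^2 \oplus N^{-1}K^3 = V \otimes K^2$. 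Hence $\tilde{\beta}(N) \subset N \otimes K^2$ if and only if $\beta_2 = 0$. Proposition~\ref{prop: n=2stability-I} therefore forces: if $\beta_2 = 0$ then $\deg N \leq \mu(V) = g-1$, i.e.\ $\deg L \leq 0$. Contrapositively, $\deg L > 0$ implies $\beta_2 \neq 0$, and nonvanishing of $\beta_2 \in H^0(N^{-2}K^3)$ requires $\deg(N^{-2}K^3) = 6g-6 - 2\deg N \geq 0$, which rearranges to $\deg L \leq 2g-2$.

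For the final assertion, when $\deg L = 2g-2$ we have $\deg(N^{-2}K^3) = 0$, so the nonzero $\beta_2$ is a nowhere-vanishing section of a degree-zero line bundle, forcing $N^{-2}K^3 \simeq \mathcal{O}_X$, equivalently $N^2 = K^3$. The main technical obstacle is the verification in the third paragraph that $N$ is $\gamma$-isotropic and that the condition $\tilde{\beta}(N) \subset N \otimes K^2$ reduces precisely to $\beta_2 = 0$, since these are the two inputs needed to invoke Proposition~\ref{prop: n=2stability-I} in the correct form; the rest of the argument is then a matter of interpreting degrees of line bundles on $X$.
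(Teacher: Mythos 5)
Your proposal is correct and follows essentially the same route as the paper: read off $V=N\oplus N^{-1}K$ with $N=LL_0$ from the Cayley partner, take $\deg L\geq 0$ without loss of generality, deduce $\beta_2\neq 0$ from semistability, and get the degree bound and the identity $N^2=K^3$ from $\deg(N^{-2}K^3)\geq 0$. The paper's proof merely cites semistability (deferring to \cite{gothen:2001}) for the step $\beta_2\neq 0$, whereas you supply the details via Proposition~\ref{prop: n=2stability-I}, checking that $N$ is $\gamma$-isotropic and $\tilde\beta$-invariant exactly when $\beta_2=0$; this computation is accurate.
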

\begin{proof}
The statement about the shape of $(V,\beta,\gamma)$ follows by
applying Propositions~\ref{prop:O(2)bundles} and
\ref{prop:def_Chern} to the Cayley partner, letting $N = L L_0$.

Assuming without loss of generality that $\deg(L) \geq 0$, the fact
that $0 \neq \beta_2 \in H^0(X,N^{-2}K^3)$ follows easily from
semistability (cf.\ \cite{gothen:2001}). The rest now follows from
$\deg(N^{-2}K^3) \geq 0$.
\end{proof}

\noi It follows from  (\ref{eq:N2K3}) that $N$ is determined by a choice of a square root of the canonical bundle $K$, thus revealing a new discrete invariant of a maximal semistable
$\Sp(4,\R)$-Higgs bundle with $w_1=0$ and $deg(L) = 2g-2$.  We introduce  subspaces of $\mathcal{M}^{max}$ as follows:

\begin{definition}\hfill
\begin{enumerate}
\item For $(w_1,w_2) \in H^1(X,\Z/2) \x H^2(X,\Z/2) \setminus (0,0)
\simeq ({(\Z/2)}^{2g}-\{0\})\times\Z/2$, define
\begin{equation}
\label{eq:Def_Mw1w2}
\mathcal{M}_{w_1,w_2} = \{(V,\beta,\gamma) \suchthat
w_1(V,\beta,\gamma) = w_1,\quad
w_2(V,\beta,\gamma) = w_2\} / \simeq,
\end{equation}
where the notation indicates isomorphism
classes of $\Sp(4,\R)$-Higgs bundles $(V,\beta,\gamma)$.
\item For $c \in H^2(X,\Z) \simeq \Z $ with $0 \leq c \leq 2g-2$, define
\begin{equation}
\label{eq:Def_M0c}
\mathcal{M}^{0}_{c} = \{(V,\beta,\gamma) \suchthat
w_1(V,\beta,\gamma) = 0, \quad
\deg(L) = c\} / \simeq,
\end{equation}
where $(W = L \oplus L^{-1},
q_W = \left(
\begin{smallmatrix}
0 & 1 \\
1 & 0
\end{smallmatrix}\right))$
is the Cayley  partner of $(V,\beta,\gamma)$.
\item For a square root $K^{1/2}$ of the canonical bundle, define the
following subspace of $\mathcal{M}^0_{2g-2}$
\begin{equation}
\mathcal{M}^{T}_{K^{1/2}} = \{(V= N \oplus N^{-1}K,\beta,\gamma) \suchthat
N = (K^{1/2})^3\} / \simeq.
\end{equation}
\end{enumerate}
\end{definition}
In particular, we can therefore write
\begin{equation}
\label{eq:def_MT}
\mathcal{M}^0_{2g-2} = \bigcup_{K^{1/2}}\mathcal{M}^{T}_{K^{1/2}},
\end{equation}
where $K^{1/2}$ ranges over the $2^{2g}$  square roots of
the canonical bundle.

\begin{remark}
\label{rem:Hitchin_component}
For the adjoint form of a split real reductive group $G$, Hitchin
showed in \cite{hitchin:1992} the existence of a distinguished
component of $\mathcal{M}(G)$, isomorphic to a vector space and
containing Teichm\"uller space. This component is known as the Hitchin
(or Teichm\"uller) component. In the case of $\Sp(4,\R)$, there are
$2^{2g}$ such components, which are exactly the components
$\mathcal{M}^T_{K^{1/2}}$\footnotemark\footnotetext{hence the
  superscript $T$ in the notation}. These components are all
projectively equivalent, in the sense that the restriction to each of
them of the projection to the moduli space for the adjoint group
$\SO_0(2,3) \simeq \mathrm{PSp}(4,\R)$ is an isomorphism onto the
unique Hitchin component in this moduli space (cf.\ \cite{GDP}).
\end{remark}

\begin{theorem}[\cite{gothen:2001}]
The subspaces $\mathcal{M}_{w_1,w_2}$, $\mathcal{M}^0_c$ with $0
\leq c < 2g-2$ and $\mathcal{M}^T_{K^{1/2}}$ are connected. Hence
the decomposition of $\mathcal{M}^{max}$ in its connected components
is
\begin{displaymath}
\mathcal{M}^{max}=(\bigcup_{w_1,w_2}\mathcal{M}_{w_1,w_2})\cup
(\bigcup_{0 \leq c < 2g-2}\mathcal{M}^0_c)\cup
(\bigcup_{K^{1/2}}\mathcal{M}^{T}_{K^{1/2}})
\end{displaymath}
and the total number of connected components is 
\begin{displaymath}
2(2^{2g}-1)+ (2g-2) + 2^{2g} = 3\cdot 2^{2g} +2g - 4\ .
\end{displaymath}
\end{theorem}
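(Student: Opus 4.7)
The plan is to combine the Cayley partner classification of Sections \ref{subs:cayley} and \ref{sub:count} with a Morse-theoretic argument on the Hitchin function. First I would verify that the listed subspaces form a disjoint decomposition of $\mathcal{M}^{max}$ into open-and-closed pieces. By Proposition \ref{prop:maxtoledo}, every maximal polystable $\Sp(4,\R)$-Higgs bundle admits a Cayley partner $(W,q_W)$ which is an $\OO(2,\C)$-bundle, so Proposition \ref{prop:O(2)bundles} applies. The Stiefel--Whitney invariants $w_1, w_2$ are locally constant on $\mathcal{M}^{max}$, so the loci $\mathcal{M}_{w_1,w_2}$ with $(w_1,w_2) \neq (0,0)$ are open and closed. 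On the $w_1 = 0$ locus, Proposition \ref{prop:def_N} shows that $\deg(L)$ is locally constant and lies in $\{0, 1, \ldots, 2g-2\}$, yielding the $\mathcal{M}^0_c$ pieces; and when $c = 2g-2$ the line bundle $N$ satisfies $N^2 = K^3$, so $NK^{-1}$ is a square root of $K$ which is itself locally constant in families, producing the further subdivision $\mathcal{M}^0_{2g-2} = \bigcup_{K^{1/2}} \mathcal{M}^T_{K^{1/2}}$. The component count is then arithmetic: there are $2(2^{2g}-1)$ choices of $(w_1,w_2) \neq (0,0)$, there are $2g-2$ values of $c$, and there are $2^{2g}$ square roots of $K$, summing to $3 \cdot 2^{2g} + 2g - 4$.

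The substantive task is to prove each of these pieces is connected, and for this I would follow the strategy initiated by Hitchin. Consider the function $f(V,\beta,\gamma) = \tfrac{1}{2} \|\beta\|^2_{L^2} + \tfrac{1}{2} \|\gamma\|^2_{L^2}$, computed using the hermitian metric solving the Hitchin equations. This is a proper, bounded-below moment map for the circle action $(V,\beta,\gamma) \mapsto (V, e^{i\theta}\beta, e^{i\theta}\gamma)$. By properness and the existence of the downward gradient flow, each subspace $\mathcal{N}$ deformation-retracts onto its locus $\mathcal{N}^{\min}$ of local minima of $f|_{\mathcal{N}}$, so the connectedness of $\mathcal{N}$ reduces to the connectedness of $\mathcal{N}^{\min}$.

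The local minima are precisely the $S^1$-fixed points, namely Higgs bundles underlying a (twisted) complex variation of Hodge structure; these admit an explicit description in which the Hodge type constrains the shape of $V$, $\beta$, and $\gamma$. In each subspace the minimum locus can be identified with, or fibred over, a Jacobian of $X$, a Prym variety of the double cover $\tilde X \to X$ appearing in case (2) of Proposition \ref{prop:O(2)bundles}, or a symmetric product of $X$ parametrizing zeros of the relevant component of $\beta$; all of these auxiliary spaces are connected. The main obstacle will be the explicit classification of these critical loci and the verification that each $\mathcal{N}^{\min}$ is indeed a single connected piece — most delicately in $\mathcal{M}^0_c$ for intermediate $c$, where the minima correspond to pairs $(N,\beta_2)$ with $\beta_2$ a nonzero section of $N^{-2}K^3$ of prescribed degree, and one must rule out any further discrete invariant obstructing connectivity. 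This careful case analysis is the content of \cite{gothen:2001}, to which the stated theorem is attributed.
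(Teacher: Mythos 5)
Your proposal follows essentially the same route as the paper, which likewise reduces the theorem to the disjoint open--closed decomposition by the invariants $(w_1,w_2)$, $c$ and $K^{1/2}$, and then invokes Hitchin's strategy of using properness of the $L^2$-norm of the Higgs field to reduce connectedness of each piece to connectedness of its locus of local minima, deferring the detailed classification of those minima to \cite{gothen:2001}. The only slip is your phrase ``$(w_1,w_2)\neq(0,0)$'' where the count $2(2^{2g}-1)$ you use (correctly, and as in the paper) corresponds to $w_1\neq 0$, the case $w_1=0$ being entirely absorbed into the components $\mathcal{M}^0_c$.
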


\noi The proof of the Theorem uses Hitchin's strategy
\cite{hitchin:1987,hitchin:1992} of considering the \textbf{Hitchin
function}, a positive proper function on the moduli space defined by
the $L^2$-norm of the Higgs field. Properness of the function
means that, in order to show that a given subspace $\mathcal{N}$ of
the moduli space is connected, it suffices to prove connectedness of
the non-empty subspace of local minima of the Hitchin function
restricted to $\mathcal{N}$.

\subsection{Description of the maximal Higgs bundles.}\label{sub:count-explicit}

The purpose of this section is to describe the Higgs bundles in each
connected component of $\mathcal{M}^{max}$.

\begin{proposition}\label{prop:stability}
Let $(V,\beta,\gamma)$ be an $\Sp(4,\R)$-Higgs bundle with $\deg(V) = 2g-2$.
\begin{enumerate}
\item Suppose that $V=N\oplus N^{-1}K$ and that with respect to this
decomposition, $\gamma=\left(
\begin{smallmatrix}
0 & 1 \\
1 & 0
\end{smallmatrix}\right)\in H^0(S^2V^*\otimes K)$,\ and $\beta=\left(
\begin{smallmatrix}
\beta_1 & \beta_3 \\
\beta_3 & \beta_2
\end{smallmatrix}\right)\in H^0(S^2V\otimes K)$.
\begin{enumerate}
\item If $g-1 < \deg(N)\le 3g-3$ then:
\begin{enumerate}
\item $(V,\beta,\gamma)$ is a stable $\Sp(4,\R)$-Higgs bundle if and
only if $\beta_2\ne 0$.
\item If $\beta_2 = 0$ then $(V,\beta,\gamma)$ is not semistable.
\end{enumerate}

\item If $\deg(N)=g-1$ then $(V,\beta,\gamma)$ is:
\begin{enumerate}
\item stable if and only if $\beta_2\ne 0$ and $\beta_1\ne 0$,
\item semistable if one of $\beta_2$ and $\beta_1$ is non-zero,
\item polystable if both $\beta_2=0$ and $\beta_1=0$.
\end{enumerate}

\end{enumerate}

\item If $V=W\otimes K^{1/2}$ where $W$ is as in (2) of Proposition \ref{prop:O(2)bundles} and  $\gamma=q_W\otimes 1_{K^{1/2}}$ then $(V,\beta,\gamma)$ is a stable $\Sp(4,\R)$-Higgs bundle.

\item If $V=(L_1\oplus L_2)\otimes K^{1/2}$ where $L_1$ and $L_2$ are line bundles satisfying $L_i^2=\cO$, $\gamma=\left(
\begin{smallmatrix}
q_1\otimes 1_{K^{1/2}} & 0 \\
0 & q_2\otimes 1_{K^{1/2}}
\end{smallmatrix}\right)$ where $q_i$ gives the isomorphism $L_i\simeq L^{-1}_i$ and $1_{K^{1/2}}$ denotes the identity map on $K^{1/2}$, and $\beta= \left(
\begin{smallmatrix}
\beta_1 & 0 \\
0 & \beta_2
\end{smallmatrix}\right)$, then
\begin{enumerate}
\item $(V,\beta,\gamma)$ is a polystable $\Sp(4,\R)$-Higgs bundle.
\item $(V,\beta,\gamma)$ is stable if and only if $L_1 \neq L_2$.
\end{enumerate}
\end{enumerate}
Moreover, if the $\Sp(4,\R)$-Higgs bundle $(V,\beta,\gamma)$ is stable
then it is simple, unless it is of the form described in Case~(3).
\end{proposition}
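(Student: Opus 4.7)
The common thread is Proposition~\ref{prop: n=2stability-I}: since $\gamma$ is an isomorphism in every sub-case (being either the anti-diagonal $\bigl(\begin{smallmatrix}0&1\\1&0\end{smallmatrix}\bigr)$ or a diagonal of non-degenerate orthogonal forms), stability reduces to checking $\mu(L)\le \mu(V)=g-1$ on line subbundles $L\subset V$ that are $\gamma$-isotropic and $\tilde\beta$-invariant, where $\tilde\beta=(\beta\otimes 1_K)\circ\gamma$. Polystability is then read off from Proposition~\ref{prop:polystability}.

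For Case~(1), the only $\gamma$-isotropic line subbundles of $V=N\oplus N^{-1}K$ are $N$ and $N^{-1}K$ (for a local section $(a,b)$ of such a subbundle the identity $\gamma((a,b),(a,b))=2ab=0$ forces $a\equiv 0$ or $b\equiv 0$). A direct matrix computation gives
\[
\tilde\beta
=\begin{pmatrix} \beta_1 & \beta_3 \\ \beta_3 & \beta_2\end{pmatrix}
\begin{pmatrix} 0 & 1 \\ 1 & 0\end{pmatrix}
=\begin{pmatrix} \beta_3 & \beta_1 \\ \beta_2 & \beta_3\end{pmatrix},
\]
so $N$ is $\tilde\beta$-invariant iff $\beta_2=0$ and $N^{-1}K$ iff $\beta_1=0$. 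Sub-cases (a)(i)--(ii) and (b)(i)--(ii) follow at once by comparing $\mu(N)=\deg N$ and $\mu(N^{-1}K)=2g-2-\deg N$ to $g-1$. In sub-case (b)(iii) with $\beta_1=\beta_2=0$, the decomposition $V=L_1\oplus L_2$ with $L_1=N$, $L_2=N^{-1}K$ has $\deg L_i=g-1=\mu(V)$ and both $\beta$ and $\gamma$ are off-diagonal, matching pattern~(2) of Proposition~\ref{prop:polystability}; the auxiliary rank-$2$ Higgs bundle $(L_1\oplus L_2^{*},\bigl(\begin{smallmatrix}0&\beta_3\\1&0\end{smallmatrix}\bigr))$ is readily verified to be stable, hence $(V,\beta,\gamma)$ is polystable. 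The strictly semistable sub-case (exactly one of $\beta_1,\beta_2$ vanishing) is ruled out of polystability because neither pattern of Proposition~\ref{prop:polystability} can match: pattern~(1) would require $\beta$ to be block-diagonal in a refined splitting, and pattern~(2) forces both $\beta_1,\beta_2$ to vanish.

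For Cases~(2) and~(3), the crucial observation is that an isotropic line subbundle of $W$ corresponds to a reduction of $(W,q_W)$ from $\OO(2,\C)$ to the stabilizer $\SO(2,\C)$ of an isotropic line, hence to $w_1(W,q_W)=0$. In Case~(2) the bundle $W$ is of Prym type with $w_1\neq 0$ (Proposition~\ref{prop:O(2)bundles}(2)), so no isotropic line exists, the criterion of Proposition~\ref{prop: n=2stability-I} holds vacuously, and $(V,\beta,\gamma)$ is stable. In Case~(3), $(V,\beta,\gamma)$ is literally the direct sum of the two $\SL(2,\R)$-Higgs bundles $(L_iK^{1/2},\beta_i,q_i\otimes 1_{K^{1/2}})$, each stable of maximal degree $g-1$ by Remark~\ref{rem:SL2R-stability}, yielding polystability via Proposition~\ref{prop:polystability}(1). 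When $L_1\neq L_2$ one has $w_1(W,q_W)=w_1(L_1)+w_1(L_2)\neq 0$, so the Case~(2) argument produces stability; when $L_1=L_2$ isotropic lines exist of slope $g-1=\mu(V)$, obstructing strict stability.

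The simplicity assertion follows by noting that in Case~(3) with $L_1\neq L_2$ the direct-sum decomposition provides a $\C^*\times\C^*$ group of automorphisms, so $(V,\beta,\gamma)$ is stable but not simple (cf.\ Remark~\ref{rem:stable-not-simple}); in every other stable case, the constraints on the $\beta_i$ (Case~(1)) and the indecomposability of $W$ (Case~(2)) force automorphisms to be scalar. The main technical obstacle is the polystability discussion in Case~(1)(b)(iii), where one must verify stability of the auxiliary rank-$2$ Higgs bundle appearing in pattern~(2) of Proposition~\ref{prop:polystability}, together with the bookkeeping that no other polystable decomposition is available in the strictly semistable sub-cases of (b)(ii).
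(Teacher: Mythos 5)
Your overall strategy coincides with the paper's (the published proof is a terse appeal to Propositions~\ref{prop: n=2stability-I}, \ref{prop:polystability} and Remark~\ref{rem:stable-not-simple}, which you have usefully fleshed out), and your treatment of Cases~(1) and~(2) is correct. The genuine gap is in the ``only if'' direction of Case~(3)(b). You argue that when $L_1=L_2$ the isotropic line subbundles $M_\pm=\{(v,\pm iv)\}\subset L_1K^{1/2}\oplus L_1K^{1/2}$ have slope $g-1=\mu(V)$ and therefore ``obstruct strict stability''. But Proposition~\ref{prop: n=2stability-I} only tests isotropic line subbundles that are in addition $\tilde\beta$-invariant, and a direct computation shows $\tilde\beta(M_\pm)\subseteq M_\pm\otimes K^2$ if and only if $\tilde\beta_1=\tilde\beta_2$ (equivalently, $\beta_1=\beta_2$ after using the $q_i\otimes 1_{K^{1/2}}$ to identify both spaces of sections with $H^0(K^2)$). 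When $\beta_1\neq\beta_2$ neither $M_+$ nor $M_-$ is invariant and the criterion returns stability --- consistently with Remark~\ref{rem:stable-not-simple}, since the two $\SL(2,\R)$-summands $(L_iK^{1/2},\beta_i,q_i\otimes 1_{K^{1/2}})$ are then non-isomorphic stable Higgs bundles. So the mere existence of critical-slope isotropic lines does not close this direction, and your one-line justification cannot be completed without bringing $\beta$ into the discussion.

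Two smaller points. In Case~(3)(a) you invoke pattern~(1) of Proposition~\ref{prop:polystability}, but that pattern requires the two $\SL(2,\R)$-summands to be \emph{isomorphic}; when they are not, polystability instead follows because the direct sum is already stable (Remark~\ref{rem:stable-not-simple}), so the conclusion survives but the citation should be split into the two cases. The rest --- the identification of $N$ and $N^{-1}K$ as the only $\gamma$-isotropic line subbundles, the computation $\tilde\beta=\left(\begin{smallmatrix}\beta_3&\beta_1\\ \beta_2&\beta_3\end{smallmatrix}\right)$, the verification that the auxiliary rank-$2$ Higgs bundle in (1)(b)(iii) is stable, and the vacuous-stability argument for Prym-type $W$ (no isotropic line subbundle can exist when $w_1\neq 0$) --- is sound and supplies exactly the details the paper leaves implicit.
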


\begin{proof}
Part (1a) follows immediately from Proposition
\ref{prop: n=2stability-I} and the bounds on $\deg(N)$. Part (1b) follows
from Proposition \ref{prop:polystability}. Part (2) follows from the
fact that in this case $W$ is a stable $\OO(2)$-bundle. Part (3)
follows from Proposition~\ref{prop:polystability} and
Remark~\ref{rem:stable-not-simple}. 
\end{proof}

\begin{remark}\label{rem:smooth} 
Proposition \ref{prop:stability} (1a) says that for $0<c\le 2g-2$
all points in the components $\mathcal{M}^0_c$ are represented by
stable $\Sp(4,\R)$-Higgs bundles.  These Higgs bundles are,
moreover, simple and hence represent smooth points of the moduli
space (see \cite{garcia-prada-gothen-mundet:2009a}). It follows
that the components $\mathcal{M}^0_c$ are smooth for all $c$ in the
range $(0,2g-2]$.
\end{remark}

The following Proposition gives a description of the $\Sp(4,\R)$-Higgs
bundles in each component of $\mathcal{M}^{max}$. Everything in the Proposition follows
immediately from what we have said so far, except for the
identification of the minima of the Hitchin function (which, though
not essential, has been included for completeness; see
\cite{gothen:2001} for the proofs).

\begin{proposition}\label{prop:Vshape}
Let $[V,\beta,\gamma]$ denote an isomorphism class of  $\Sp(4,\R)$-Higgs bundles in  $\mathcal{M}^{max}$. Then
\begin{enumerate}
\item  $[V,\beta,\gamma]\in\mathcal{M}^{T}_{K^{1/2}}$ if and only if we can take $V=K^{3/2}\oplus K^{-1/2}$,
$\gamma=\left(
\begin{smallmatrix}
0 & 1 \\
1 & 0
\end{smallmatrix}\right)$, and $\beta= \left(
\begin{smallmatrix}
\beta_1 & \beta_3 \\
\beta_3 & 1_{K^{1/2}}
\end{smallmatrix}\right)$.  It represents a local minimum of the Hitchin function if and only
if $\beta_1 = 0$ and $\beta_3 = 0$.

\item $[V,\beta,\gamma]\in\mathcal{M}^0_c$ with $0<c<2g-2$ if and only if we can take $V=N\oplus
N^{-1}K$ where $N$ is a line bundle of degree $c+g-1$, $\gamma=\left(
\begin{smallmatrix}
0 & 1 \\
1 & 0
\end{smallmatrix}\right)$ and $\beta= \left(
\begin{smallmatrix}
\beta_1 & \beta_3 \\
\beta_3 & \beta_2
\end{smallmatrix}\right)$ with $\beta_2\ne 0$.  It represents a local minimum of the Hitchin function if and only
if $\beta_1 = 0$ and $\beta_3 = 0$.

\item $[V,\beta,\gamma]\in\mathcal{M}^{0}_{0}$ if and only if we can take $V=N\oplus N^{-1}K$ where $N$ is a line bundle of degree $g-1$ and
$\gamma=\left(
\begin{smallmatrix}
0 & 1 \\
1 & 0
\end{smallmatrix}\right)$.  It represents a local minimum of the Hitchin function if and only
if $\beta = 0$.

\item $[V,\beta,\gamma]\in\mathcal{M}_{w_1,w_2}$ if and only if we can take  either
\begin{enumerate}
\item $V=W\otimes K^{1/2}$ where $W$ is as in (2) of Proposition \ref{prop:O(2)bundles}, or
\item $V=L_1K^{1/2}\oplus L_2K^{1/2}$ where
\begin{enumerate}
\item $L_1$ and $L_2$ are line bundles satisfying $L_i^2=\cO$,
\item $w_1(L_1)+w_1(L_2)= w_1$, $w_1(L_1)w_1(L_2)= w_2$, and
\item $\gamma=\left(
\begin{smallmatrix}
q_1\otimes {\mathbf 1} & 0 \\
0 & q_2\otimes {\mathbf 1} 
\end{smallmatrix}\right)$ where ${\mathbf 1} $ denotes the identity map on $K^{1/2}$ and $q_i$ gives the isomorphism $L_i\simeq L^{-1}_i$.
\end{enumerate}
\end{enumerate} 
It represents a local minimum of the Hitchin function if and only
if $\beta = 0$.
\end{enumerate}
\end{proposition}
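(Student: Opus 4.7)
The plan is to combine Propositions~\ref{prop:maxtoledo2}, \ref{prop:O(2)bundles}, \ref{prop:def_N}, and \ref{prop:stability} to determine the possible structures of a polystable maximal $\Sp(4,\R)$-Higgs bundle in each component, and to cite \cite{gothen:2001} for the identification of local minima of the Hitchin function (which, as the statement notes, is non-essential and included only for completeness).

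For the components with $w_1 = 0$, namely parts (1), (2), (3), I would apply Proposition~\ref{prop:def_N} to write $V = N \oplus N^{-1}K$ with $\deg(N) = c + g - 1$, where $c = \deg(L)$ and $(W = L \oplus L^{-1}, q_W)$ is the Cayley partner, together with the prescribed off-diagonal form of $\gamma$. The three parts correspond to $c = 2g-2$, $0 < c < 2g-2$, and $c = 0$ respectively. For (1), the relation $N^2 = K^3$ (from Proposition~\ref{prop:def_N}) forces $N = (K^{1/2})^3$ for the chosen square root, so $V = K^{3/2} \oplus K^{-1/2}$; the block $\beta_2$ lies in $H^0(N^{-2}K^3) = H^0(\mathcal{O})$, Proposition~\ref{prop:stability}(1a) forces $\beta_2 \neq 0$, and a scalar automorphism of $V$ normalizes $\beta_2$ to the canonical section $1_{K^{1/2}}$. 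For (2), the only additional constraint is $\beta_2 \neq 0$ by Proposition~\ref{prop:stability}(1a). For (3), no further restriction on $\beta$ applies, by Proposition~\ref{prop:stability}(1b).

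For part (4), where $w_1 \neq 0$, Proposition~\ref{prop:maxtoledo2} writes $V = W \otimes L_0$ for an $\OO(2,\C)$-bundle $(W,q_W)$. Since $w_1(W,q_W) \neq 0$, Proposition~\ref{prop:def_Chern} and Proposition~\ref{prop:O(2)bundles} force $(W,q_W)$ to be either of type (2) (pushforward from a connected double cover), producing subcase~(a), or of type (3) (sum of two square roots of the trivial bundle), producing subcase~(b); choosing $L_0 = K^{1/2}$ delivers the stated form of $V$ and $\gamma$. In subcase~(b), the Whitney sum formula $w(L_1 \oplus L_2) = w(L_1)\,w(L_2)$ computes $w_1$ and $w_2$ in terms of the $w_1(L_i)$, giving the identities claimed. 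Polystability in both subcases follows from Proposition~\ref{prop:stability}(2) and (3).

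The main obstacle would be the characterization of local minima of the Hitchin function $f = \lVert \varphi \rVert^2_{L^2}$. Identifying the minima requires Morse-theoretic analysis via the natural $\C^*$-action $\varphi \mapsto \lambda \varphi$ on $\mathcal{M}$: smooth critical points of $f$ are necessarily fixed points of this action, so one decomposes the deformation complex at such a configuration into weight spaces of the induced $\C^*$-action and verifies positivity of the Hessian on variations of positive weight, thereby isolating the minima within each component. Since the statement notes that this analysis is not essential and has already been carried out in detail in \cite{gothen:2001}, I would invoke the results there rather than reproduce the critical-point analysis.
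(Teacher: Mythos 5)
Your proposal is correct and follows essentially the same route as the paper, which gives no separate proof of this Proposition beyond remarking that everything follows from the preceding results (the Cayley-partner decomposition, the classification of rank-2 orthogonal bundles, Proposition~\ref{prop:def_N} and Proposition~\ref{prop:stability}) and citing \cite{gothen:2001} for the identification of the local minima. Your write-up simply makes explicit the assembly the authors leave implicit, including the scalar normalization of $\beta_2$ in the Hitchin components and the Whitney-sum computation of $(w_1,w_2)$ in case (4b).
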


\begin{remark}
The $\Sp(4,\R)$-Higgs bundles of the type described in case (b) of
item (4) in Proposition~\ref{prop:Vshape} have $L_1 \neq L_2$ since
$w_1(L_1)+w_1(L_2)= w_1 \neq 0$. We point out that $\Sp(4,\R)$-Higgs
bundles of this form but with $L_1 = L_2$ ($\iff w_1(L_1)+w_1(L_2)=
0$) are isomorphic to those described in item (3) of the Proposition.
\end{remark}

\subsection{Description of maximal components.}\label{sub:components-explicit}

We can use the information in Section \ref{sub:count-explicit} to completely describe some components of $\mathcal{M}^{max}$.  Points in the moduli space correspond to isomorphism classes of Higgs bundles, while Proposition \ref{prop:Vshape} describes representatives of the isomorphism classes.  We thus need to understand when two representatives belong to the same isomorphism class. 

For $c$ in the range $0\le c\le 2g-2$, representatives of points in the components $\mathcal{M}^0_c$ are specified, according to Proposition \ref{prop:Vshape}, by tuples $(N,\beta_1,\beta_2,\beta_3)$ where $N$ is a line bundle of degree $c+g-1$, $\beta_1\in H^0(N^2K)$, $\beta_2\in H^0(N^{-2}K^3)$, and $\beta_3\in H^0(K^2)$. In the case $c=2g-2$ we require further that $N^2=K$ and that $\beta_2=1_{K^{1/2}}$.

\begin{proposition}\label{prop:equiv} Fix $c$ in the range $0<c\le 2g-2$. Tuples $(N,\beta_1,\beta_2,\beta_3)$ and $(N',\beta'_1,\beta'_2,\beta'_3)$ define the same isomorphism class  in $\mathcal{M}^0_c$ if and only if $N=N'$ and 
\begin{enumerate}
\item when $0<c< 2g-2$
$$(\beta'_1,\beta'_2,\beta'_3)=(t^2\beta_1,t^{-2}\beta_2,\beta_3)$$
\noi for some non-zero $t\in\C^*$, while
\item when $c=2g-2$
$$(\beta'_1,1_{K^{1/2}},\beta'_3)=(\beta_1,1_{K^{1/2}},\beta_3)\ .$$
\end{enumerate}
\end{proposition}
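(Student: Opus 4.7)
The plan is to analyze holomorphic isomorphisms $f\colon V\to V'$ between two $\Sp(4,\R)$-Higgs bundles in the normal form of Proposition~\ref{prop:Vshape}. Such an $f$ is an isomorphism of Higgs bundles precisely when $f^t\gamma' f=\gamma$ and $\beta'=f\beta f^t$, where $f^t$ is the induced map on duals. Writing $V=N\oplus N^{-1}K$, $V'=N'\oplus N'^{-1}K$, and $f=\begin{pmatrix}a & b\\ c & d\end{pmatrix}$ with $a\in H^0(N'N^{-1})$, $b\in H^0(N'NK^{-1})$, $c\in H^0(N'^{-1}N^{-1}K)$, $d\in H^0(N'^{-1}N)$, a direct matrix computation shows that $f^t\gamma' f=\gamma$ is equivalent to the three conditions $ac=0$, $bd=0$, and $ad+bc=1$, read as identities in $H^0(N^{-2}K)$, $H^0(N^2K^{-1})$, and $H^0(\mathcal{O})=\C$ respectively.

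Since a nonzero section of a line bundle on a curve vanishes only on a finite set, $ac=0$ forces $a=0$ or $c=0$, and similarly for $bd=0$. Combined with $ad+bc=1$ this leaves only two possibilities: the diagonal case $b=c=0$ with $ad=1$, or the antidiagonal case $a=d=0$ with $bc=1$. The critical step is to rule out the antidiagonal case when $c>0$: there $b$ would be a nowhere vanishing section of $N'NK^{-1}$, forcing $N'=N^{-1}K$ and $\deg N'=2g-2-\deg N$; combined with $\deg N=\deg N'=c+g-1$ this forces $c=0$, contradicting the hypothesis. Hence $b=c=0$, invertibility of $a$ gives $N'N^{-1}\simeq\mathcal{O}$ so $N=N'$, and $a\in\C^*$ with $d=a^{-1}$. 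A direct computation then yields $f\beta f^t=\begin{pmatrix}a^2\beta_1 & \beta_3\\ \beta_3 & a^{-2}\beta_2\end{pmatrix}$, so setting $t=a$ gives the rule in (1); conversely, for any $t\in\C^*$ the map $\mathrm{diag}(t,t^{-1})$ is an honest isomorphism implementing this transformation.

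For (2), when $c=2g-2$ we have $N^2=K^3$, so $\beta_2\in H^0(\mathcal{O})\simeq\C$, and the normalization $\beta_2=\beta'_2=1_{K^{1/2}}$ together with the transformation rule $\beta'_2=a^{-2}\beta_2$ forces $a^{-2}=1$, hence $a^2=1$. This immediately gives $\beta'_1=a^2\beta_1=\beta_1$ and $\beta'_3=\beta_3$, as claimed. The main obstacle I anticipate is the careful bookkeeping of the Hom line bundles associated to the four entries of $f$ and verifying that the three matrix identities correctly reduce to the stated conditions; once this is in place, the structural content is simply the identification of the stabilizer of the hyperbolic form $\gamma$ in the relevant gauge group with the one-parameter torus $\mathrm{diag}(t,t^{-1})$.
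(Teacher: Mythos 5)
Your proof is correct and follows essentially the same route as the paper's: both derive from the constraint $f^t\gamma' f=\gamma$ that the isomorphism must be diagonal or antidiagonal, rule out the antidiagonal case for $c>0$ by the degree count $\deg(N'N K^{-1})=2(c+g-1)-(2g-2)=2c\neq 0$, and then read off the action $\mathrm{diag}(t,t^{-1})$ on $(\beta_1,\beta_2,\beta_3)$. You are somewhat more explicit than the paper in spelling out the three identities $ac=0$, $bd=0$, $ad+bc=1$ and the case $c=2g-2$, but the argument is the same.
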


\begin{proof} Higgs bundles $(V,\beta,\gamma)$ and
  $(V',\beta',\gamma')$ are isomorphic if and only if there is a
  bundle isomorphism $g:V\longrightarrow V'$ such that
\begin{align}
\beta' &=g\otimes I_K \circ \beta\circ g^*\\
\gamma&=g^*\otimes I_K \circ \gamma'\circ g
\end{align}

\noi If the bundles are of the form $N\oplus N^{-1}K$ and $N'\oplus N'^{-1}K$, and if $\gamma=\gamma'=\begin{pmatrix}0&1\\1&0\end{pmatrix}$, then above constraints imply that there are two possibilities for $g$: either $g= \begin{pmatrix}0&g_2\\g_3&0\end{pmatrix}$ with $g_2 \in H^0(N'NK^{-1})$, $g_3  \in H^0((N'N)^{-1}K)$ and $g_2g_3=1$, or $g=\begin{pmatrix}g_1&0\\0&g_4\end{pmatrix}$ with $g_1 \in H^0(N'N^{-1})$, $g_4\in H^0(N'^{-1}N)$ and $g_1g_4=1$. The first possibility can occur only if $NN'=K$, i.e.\ if $c=0$.  Thus when $0<c\le 2g-2$, the only possibility is that $N^{-1}N'=\mathcal{O}$, i.e.\ $N=N'$, and that
\begin{equation}
g=\begin{pmatrix}t&0\\0&t^{-1}\end{pmatrix}
\end{equation}

\noi where $t$ is any non-zero complex number.  The result follows immediately from this.
\end{proof}

Let $\Jac^d$ be the Jacobian of degree $d$ line bundles on $X$ and let 
\begin{equation}
\mathcal{U}_d\longrightarrow \Jac^d(X)\times X
\end{equation}

\noi be the universal bundle. Denote the projections from $\Jac^d(X)\times X$ onto its factors $\Jac^d(X)$ and $X$ by $\pi_J$ and $\pi_X$ respectively. Define
\begin{align}
\mathcal{E}^{(1)}_d&={\pi_J}_*(\mathcal{U}_d^2\otimes\pi_X^*(K))\ ,\\
\mathcal{E}^{(2)}_d&={\pi_J}_*(\mathcal{U}_d^{-2}\otimes\pi_X^*(K^3))\ , \mathrm{and}\\
\mathcal{E}_{d}&=\mathcal{E}^{(1)}_d\oplus\mathcal{E}^{(2)}_d\oplus {\pi_J}_*(\pi_X^*(K^2))\ .
\end{align}

\noi Then $\mathcal{E}_d$ is a coherent sheaf over $\Jac^d$.  Moreover, for fixed $c$ in the range 
\begin{equation}0<c<g-1
\end{equation}
\noi  both $h^1(N^2K)$ and $h^1(N^{-2}K^3)$  vanish and thus, by the
Riemann-Roch theorem, $h^0(N^2K)$ and $h^0(N^{-2}K^3)$ are independent
of $N$.  It follows  that $\mathcal{E}_d$ is locally free with fiber
over the point  represented by the line bundle $N$ given by  
\begin{equation}
\mathcal{E}_{d,N}=H^0(N^2K)\oplus H^0(N^{-2}K^3)\oplus H^0(K^2)
\end{equation}

\begin{definition} \label{defn:C*} Define a $\C^*$-action on $\mathcal{E}_d$ by the fiberwise action
\begin{align}\label{eqtn:C*}
\C^*\times \mathcal{E}_{d,N} & \longrightarrow \mathcal{E}_{d,_N}\\
(t, (\beta_1,\beta_2,\beta_3))&\mapsto (t^2\beta_1,t^{-2}\beta_2,\beta_3)
\end{align}
\end{definition}

\begin{proposition}\label{prop:components}\hfil

(1) For each $c$ in the range $0< c< g-1$ the component $\mathcal{M}^0_c$ is the total space of the quotient $\hat{\mathcal{E}}_d/\C^*$ where 
\begin{itemize}
\item $d=c+g-1$,
\item $\hat{\mathcal{E}}_d$ denotes ${\mathcal{E}}_d$ minus the zero section of $\mathcal{E}^{(2)}_d$, and
\item  the $\C^*$ action is as in Definition (\ref{defn:C*}).
\end{itemize}

\noi  The fibers of  $\mathcal{M}^0_c$ as a fibration over $\Jac^{d}$ are given by  $\mathcal{O}_{\PP^{s}}(1)^{\oplus r}\times \C^{3g-3}$ where 
\begin{align}
r=&h^0(N^2K)=2c+3g-3\ \mathrm{and}\\
s= &h^0(N^{-2}K^3)-1=3g-4-2c\ .
\end{align}

(2) For each choice of a square root $K^{1/2}$ of the canonical bundle,
the component $\mathcal{M}^{T}_{K^{1/2}}$ is isomorphic to the vector
space $H^0(K^2) \oplus H^0(K^4)$.
\end{proposition}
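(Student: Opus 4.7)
The plan is to combine Propositions~\ref{prop:Vshape} and~\ref{prop:equiv} with Riemann--Roch to realize each component as a concrete algebro-geometric quotient, and for Part~(2) simply to unwind the restrictions that come from the Hitchin locus.

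For Part~(1), Proposition~\ref{prop:Vshape}(2) identifies the points of $\mathcal{M}^0_c$ with tuples $(N,\beta_1,\beta_2,\beta_3)$ satisfying $\deg(N) = c+g-1$ and $\beta_2 \neq 0$, while Proposition~\ref{prop:equiv}(1) shows that the isomorphism relation is precisely the $\C^*$-action of Definition~\ref{defn:C*}. Hence as a set $\mathcal{M}^0_c = \hat{\mathcal{E}}_d/\C^*$ with $d = c+g-1$. To justify that $\mathcal{E}_d$ is locally free in the range $0<c<g-1$ (so that the universal pushforward description genuinely yields a vector bundle), I would verify by Serre duality that $h^1(N^2K) = h^0(N^{-2}) = 0$, since $\deg(N^2) = 2c+2g-2 > 0$, and $h^1(N^{-2}K^3) = h^0(N^2K^{-2}) = 0$, since $\deg(N^2K^{-2}) = 2c - 2g + 2 < 0$ when $c<g-1$; Riemann--Roch then delivers the claimed ranks $r = 2c+3g-3$ and $s+1 = 3g-3-2c$.

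To describe the quotient geometrically, observe that $\{\pm 1\} \subset \C^*$ acts trivially in Definition~\ref{defn:C*}, so the action factors through a faithful $\C^*$ with weights $(+1,-1,0)$ on $(\beta_1,\beta_2,\beta_3)$. Quotienting first in the $\beta_2$ direction produces the projective bundle $\PP(\mathcal{E}^{(2)}_d) \to \Jac^d$ with fibers $\PP^s$; the weight~$+1$ factor $\mathcal{E}^{(1)}_d$ then descends via the associated bundle construction to $\mathcal{O}_{\PP(\mathcal{E}^{(2)}_d)}(1) \otimes \pi^*\mathcal{E}^{(1)}_d$, while the weight~$0$ factor $H^0(K^2)$ descends trivially. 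Specializing to a fiber over a fixed $N \in \Jac^d$ yields the total space of $\mathcal{O}_{\PP^s}(1)^{\oplus r} \times \C^{3g-3}$ as claimed. The main point to get right is the weight bookkeeping: one must normalize carefully to confirm that the twist is $\mathcal{O}(1)$ rather than its dual, and check that the $\Z/2$ stabilizer does not introduce orbifold singularities, which it does not since it acts trivially on every factor.

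For Part~(2), the description is immediate: by Proposition~\ref{prop:Vshape}(1) a representative in $\mathcal{M}^T_{K^{1/2}}$ has $N = K^{3/2}$ and the section $\beta_2 = 1_{K^{1/2}} \in H^0(N^{-2}K^3) = H^0(\mathcal{O})$ rigidified, so the free data is exactly $(\beta_1,\beta_3) \in H^0(N^2K) \oplus H^0(K^2) = H^0(K^4) \oplus H^0(K^2)$. Proposition~\ref{prop:equiv}(2) then asserts that distinct such pairs give non-isomorphic Higgs bundles, yielding the stated identification $\mathcal{M}^T_{K^{1/2}} \cong H^0(K^2) \oplus H^0(K^4)$.
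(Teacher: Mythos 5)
Your proposal is correct and follows essentially the same route as the paper: the set-theoretic identification comes from Propositions~\ref{prop:Vshape} and~\ref{prop:equiv}, local freeness of $\mathcal{E}_d$ and the ranks $r$, $s$ from Serre duality and Riemann--Roch, and the fiber description from identifying the weighted quotient $(\C^r\oplus(\C^*)^{s+1})/\C^*$ with the total space of $\mathcal{O}_{\PP^s}(1)^{\oplus r}$. The only cosmetic difference is that the paper carries out this last identification via an explicit map onto the incidence variety $\{(l,\vec{x}_1,\dots,\vec{x}_r)\suchthat \vec{x}_i\in l\}$, whereas you phrase it as descent of the weight-one factor to a twist of the relative $\mathcal{O}(1)$; both arguments turn on exactly the normalization issue you flag.
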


\begin{remark} Part (2)  of this proposition is equivalent to Hitchin's
parametrization \cite{hitchin:1992} of his Teichm\"uller component
(cf.\ Remark~\ref{rem:Hitchin_component}).
\end{remark}

\begin{proof}
\noi Everything except the description of the fibers of $\mathcal{M}^0_c$ follows immediately from Propositions \ref{prop:Vshape} and \ref{prop:equiv}.  The description of the fibers follows from the claim that 
$$(\C^r\oplus (\C^*)^{s+1})/\C^*=\mathcal{O}_{\PP^{s}}(1)^{\oplus r}\ ,$$

\noi where the $\C^*$-action is given by $ t(\vec{z},\vec{w})= (t^2\vec{z}, t^{-2}\vec{w})$.  But the total space of $\mathcal{O}_{\PP^{s}}(1)^{\oplus r}$ can be identified with the variety

$$\mathcal{T}=\{(l,\vec{x_1},\dots,\vec{x_r})\ | l\ {\mathrm{defines\ a\ line\ in}}\ \C^{s+1}\ \mathrm{and}\ \vec{x_i}\in \C^{s+1} \ \mathrm{lies\ on}\ l\ \}$$

\noi The map
\begin{align}
(\C^r\oplus (\C^*)^{s+1})/\C^*&\longrightarrow \PP^s\times (\C^{s+1}\oplus\dots\oplus\C^{s+1})\\
[(z_1,\dots,z_r),\vec{w}]&\mapsto [\vec{w}], (z_1\vec{w},\dots, z_r\vec{w})
\end{align}

\noi is well defined with a well defined inverse on the subvariety
$\mathcal{T}$, and thus proves our claim.  The factor $\C^{3g-3}$
comes from $H^0(K^2)$.
\end{proof}

\begin{remark}  For $c$ in the range $[g-1,2g-2]$, there is a map $f: \mathcal{M}^0_c\longrightarrow \Jac^{c+g-1}$ but it is not surjective and the fiber dimension is not necessarily constant.  Nevertheless, by remark \ref{rem:smooth}, these components are smooth for all $c$ in the range $(0,2g-2]$.  
\end{remark}

\section{Subgroups for maximal representations}\label{sec:G*}

\subsection{Identification of possible subgroups.}\label{subs: subgroups}

The main result of this subsection,
Proposition~\ref{prop:possible-subgroups} identifies the possible
subgroups of $\Sp(4,\R)$ through which a maximal representation can
factor. The argument leading to this Proposition is due to Wienhard
\cite{wienhard:private}. The basis is the following result of Burger,
Iozzi and Wienhard
\cite{burger-iozzi-wienhard:2003,burger-iozzi-wienhard:2006}. 

\begin{theorem}\label{thm:biw}
Let $G$ be of hermitean type. Let $\rho\colon \pi_1(X) \to G$ be
maximal and let $\tilde{G} =
(\overline{\rho(\pi_1(X))}_{\R})^{\circ}$ (the identity component of
the real part of the Zariski closure). Then
\begin{enumerate}
\item $\tilde{G}$ is hermitean of tube type;
\item the embedding $\tilde{G} \into G$ is \emph{tight}.
\end{enumerate}
\end{theorem}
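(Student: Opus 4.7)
The plan is to work through bounded continuous cohomology, where the Toledo invariant has a natural interpretation as the pairing of a pulled-back bounded class with the fundamental class of $\Sigma$. Let $\kappa_G^b \in H^2_{cb}(G,\R)$ denote the bounded Kähler class associated to the Hermitian symmetric space $G/H$, and let $\kappa_G \in H^2_c(G,\R)$ be its ordinary continuous cohomology counterpart. For a representation $\rho\colon \pi_1(X) \to G$, one identifies
\begin{equation*}
T(\rho) = \frac{1}{\pi}\,\langle \rho^*\kappa_G, [\Sigma]\rangle,
\end{equation*}
and a comparison argument (using that $H^2(\pi_1(X),\R) \to H^2_b(\pi_1(X),\R)$ is an isometric isomorphism for a surface group) allows this to be computed using $\kappa_G^b$ rather than $\kappa_G$. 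First I would recall the fundamental norm estimate
\begin{equation*}
|T(\rho)| \leq \|\kappa_G^b\|_\infty \cdot |\chi(\Sigma)|,
\end{equation*}
with equality characterizing maximal representations.

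Next, factoring $\rho$ through its Zariski closure, I would show that maximality of $\rho$ forces the inclusion $\iota\colon \tilde G \into G$ to pull back $\kappa_G^b$ to a class of the same norm: that is, $\|\iota^*\kappa_G^b\|_\infty = \|\kappa_G^b\|_\infty$. This is precisely the definition of a \emph{tight homomorphism} in the sense of Burger--Iozzi--Wienhard, establishing part (2). The equality of norms will follow by expressing $\kappa_G^b$ in terms of the bounded Euler cocycle and then unwinding the norm equality in the Milnor--Wood estimate.

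For part (1), the plan is to invoke the structural classification of tight embeddings of Hermitian Lie groups. The key fact is that tightness forces strong constraints on the geometry of $\tilde G/\tilde H \into G/H$: the embedding is equivariant for the respective Shilov boundaries, and tightness forces it to be holomorphic (or antiholomorphic) and to respect the Bergman--Shilov structure. For a Hermitian symmetric space, being of tube type is equivalent to the Shilov boundary supporting a transitive action by translations, equivalently, to the Bergman kernel satisfying a particular rationality condition on the Shilov boundary. Using the fact that $\tilde G$ acts on its own Shilov boundary compatibly with the ambient one, and that any tight subgroup inherits the tube-type property via a boundary characterization, I would conclude that $\tilde G$ must itself be of tube type.

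The main obstacle, I expect, is part (1): the passage from a tight embedding to tube type. Tightness alone is a real-analytic/cohomological condition, and extracting the purely complex-geometric conclusion that $\tilde G/\tilde H$ admits a Cayley transform requires the full classification machinery of BIW, including the analysis of the Bergman cocycle on the Shilov boundary and the characterization of tube-type factors via the vanishing of a certain obstruction in bounded cohomology. The tightness step (2) is comparatively soft, reducing to an equality case in a norm inequality, but (1) is where the genuinely new content lies.
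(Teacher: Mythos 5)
First, a point of comparison: the paper does not prove this statement at all --- it is quoted verbatim from Burger--Iozzi--Wienhard \cite{burger-iozzi-wienhard:2003,burger-iozzi-wienhard:2006}, so the only standard against which your sketch can be measured is the BIW argument itself. Against that standard, your part (2) is essentially correct and is indeed the ``soft'' half: by functoriality of pullback in bounded cohomology one has $\abs{T(\rho)} \le \|\iota^*\kappa_G^b\|_\infty\,\abs{\chi(\Sigma)} \le \|\kappa_G^b\|_\infty\,\abs{\chi(\Sigma)}$, and maximality forces equality throughout, which is the definition of tightness of $\iota\colon \tilde G \into G$.

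Part (1), however, contains a genuine gap. You propose to deduce that $\tilde G$ is of tube type from the tightness of the embedding $\tilde G \into G$, via the claims that tight embeddings are holomorphic (or antiholomorphic) and that ``any tight subgroup inherits the tube-type property.'' Both claims are false. The irreducible embedding $\SL(2,\R) \into \Sp(4,\R)$ --- which is central to this very paper --- is tight but \emph{not} holomorphic (the paper remarks explicitly that $\mathcal{D}_i \simeq \D$ is not holomorphically embedded); the BIW classification of tight homomorphisms expressly includes non-holomorphic examples of this kind. More fundamentally, tightness of $\tilde G \into G$ cannot by itself yield tube type: the identity map of any Hermitian group is tight, and the theorem places no tube-type hypothesis on $G$ (indeed $G = \SU(p,q)$ with $p \ne q$ is allowed), so there is nothing for $\tilde G$ to ``inherit.'' In the actual BIW proof, tube type is extracted not from tightness but from maximality together with Zariski density: one views $\rho$ as a Zariski-dense maximal representation into $\tilde G$, produces a measurable $\rho$-equivariant boundary map from $\partial\widetilde{\Sigma} \simeq S^1$ to the Shilov boundary $\check S$ of $\tilde G/\tilde H$ sending positively oriented triples to maximal triples of the Bergman cocycle, and then plays the rationality of the Hermitian triple product on $\check S$ against the fact that for a non-tube-type domain the Bergman cocycle takes a continuum of values on triples in $\check S$. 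That incompatibility is the real content of part (1), and it is absent from your outline; as written, the outline invokes the conclusion (the structure theory of tight/maximal embeddings) as an ingredient, which makes it circular.
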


By classification of tube type domains (\cite{satake}) one has
the following.

\begin{lemma}\label{lem:small-tubes}
The only tube type domains of dimension less than or equal to $3$
and rank less than or equal to $2$ are $\D$, $\D \x \D$ and
$\Sp(4,\R) / \U(2)$.
\end{lemma}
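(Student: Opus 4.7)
The plan is to deduce the lemma directly from the classification of irreducible Hermitian symmetric domains of tube type (Satake, as cited). Recall that the irreducible tube type domains fall into four classical series --- $\SU(n,n)/\mathrm{S}(\U(n)\x\U(n))$ of rank $n$ and complex dimension $n^2$; $\Sp(2n,\R)/\U(n)$ of rank $n$ and complex dimension $n(n+1)/2$; $\SO^*(4n)/\U(2n)$ of rank $n$ and complex dimension $n(2n-1)$; and $\SO_0(2,n)/(\SO(2)\x\SO(n))$ of rank $2$ and complex dimension $n$, for $n\ge 3$ --- together with the exceptional domain of type $E_7$, which has rank $3$ and dimension $27$.

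First I would reduce to the irreducible case. Since rank and complex dimension are both additive under Cartesian products, and every irreducible Hermitian symmetric domain has rank at least $1$, the rank bound $\le 2$ forces a reducible tube type domain to have exactly two irreducible factors, each of rank $1$. The only rank-$1$ irreducible tube type domain is $\D$ itself (the complex hyperbolic balls $\BB^n$ with $n>1$ are not of tube type), so the only reducible possibility is $\D\x\D$, which indeed has rank $2$ and dimension $2$.

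Next I would enumerate the irreducible tube type domains subject to the bounds. The rank-$1$ case gives only $\D$. In rank $2$, checking dimensions shows that $\SU(2,2)/\mathrm{S}(\U(2)\x\U(2))$ has dimension $4$, $\SO^*(8)/\U(4)$ has dimension $6$, and $\SO_0(2,n)/(\SO(2)\x\SO(n))$ has dimension $n\ge 3$, so only $\Sp(4,\R)/\U(2)$ (dimension $3$) and $\SO_0(2,3)/(\SO(2)\x\SO(3))$ (dimension $3$) satisfy dim$\,\le 3$. The exceptional $E_7$ domain is excluded on rank grounds alone.

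The final step --- the one small point that needs verification --- is that the two surviving candidates in rank $2$ coincide as Hermitian symmetric domains. This follows from the exceptional low-dimensional isomorphism $\lie{sp}(4,\R)\simeq\lie{so}(2,3)$, which at the group level gives $\mathrm{PSp}(4,\R)\simeq \SO_0(2,3)$ and identifies the associated symmetric spaces $\Sp(4,\R)/\U(2)$ and $\SO_0(2,3)/(\SO(2)\x\SO(3))$. Assembling the three surviving cases $\D$, $\D\x\D$ and $\Sp(4,\R)/\U(2)$ gives the lemma.
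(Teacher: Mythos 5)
Your argument is correct and is exactly the route the paper intends: the paper gives no written proof, simply citing Satake's classification of tube type domains, and your write-up fills in the details of that classification argument (additivity of rank and dimension for the reducible case, the dimension check in rank $2$, and the exceptional isomorphism $\liesl\mathrm{p}(4,\R)\simeq\lieso(2,3)$ identifying the two surviving rank-$2$ candidates). All the numerical data you quote for the classical series and the $E_7$ domain are accurate.
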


\noi We identify three natural subgroups in $\Sp(4,\R)$ and then show that, as a result of Lemma \ref{lem:small-tubes}, these are essentially the only possibilities.  For two of them it is convenient to define $\Sp(4,\R)$ with respect to the symplectic form
\begin{equation}\label{defn:J12}
J_{12}=\begin{pmatrix}J & 0 \\0 & J\end{pmatrix}\ \mathrm{where}\  J=\begin{pmatrix}0 & 1 \\-1 & 0\end{pmatrix}\ .
\end{equation}

\begin{remark} The relation between $J_{12}$ and $J_{13}$ --- and hence between the resulting descriptions of $\Sp(4,\R)$ --- is described in Appendix \ref{subs:Sp4R}.
\end{remark}

\noindent  The subgroups come from the following three representations:

\smallskip
\begin{itemize}
\item  The irreducible $4$-dimensional representation of
$\SL(2,\R)$ in $\Sp(4,\R)$,
\begin{equation}
\rho_1\colon \SL(2,\R) \into \Sp(4,\R).
\end{equation}
\noi See Section \ref{sect: Gi} for a full description.
\item  The representation of $\SL(2,\R)\x\SL(2,\R)$ given with respect to $ J_{12}$ by
\begin{align}\label{eqn: product}
\rho_2\colon \SL(2,\R)\x\SL(2,\R)& \into \Sp(4,\R)\\
(A,B) &\mapsto
\begin{pmatrix}
A & 0 \\
0 & B
\end{pmatrix} \ .\nonumber
\end{align}
\item  The representation of $\SL(2,\R)$ given by,
\begin{equation}
\rho_3 = \rho_2 \circ \Delta \colon \SL(2,\R) \into \Sp(4,\R),
\end{equation}
where $\Delta$ is the diagonal embedding
$$\SL(2,\R) \hookrightarrow\SL(2,\R) \x \SL(2,\R)\ .$$
\end{itemize}

\begin{remark} Using the  Kronecker product\footnotemark\footnotetext{see Appendix \ref{subs:Sp4R}}, the diagonal embedding $\rho_3$ is given
by
\begin{equation}
\rho_3: A \mapsto \begin{cases} I\otimes A\ \mathrm{with\ respect\ to}\ J_{12}\\
A \otimes I \mathrm{with\ respect\ to}\ J_{13}\end{cases}
\end{equation}

\end{remark}

\begin{definition}
Let
\begin{align*}
\mathcal{D}_p &= \rho_p(\SL(2,\R)\x \SL(2,\R))/\rho_p(\SO(2)\x\SO(2));\\
\mathcal{D}_{\Delta} &= \rho_{\Delta}(\SL(2,\R))/\rho_{\Delta}(\SO(2)); \\
\mathcal{D}_i &= \rho_i(\SL(2,\R))/\rho_i(\SO(2)).
\end{align*}

\end{definition}

\noi With this notation, Lemma~\ref{lem:small-tubes}  together with the results of Wienhard et al. on tight embeddings (see \cite{burger-iozzi-wienhard:2007, wienhard:thesis}) implies the following.

\begin{proposition}\label{prop:subdomains}
Up to isometry of $\Sp(4,\R)/\U(2)$, the only proper tube type
domains tightly embedded in $\Sp(4,\R)/\U(2)$ are $\mathcal{D}_p
\simeq \D\x\D$, $\mathcal{D}_{\Delta} \simeq \D$ and
$\mathcal{D}_i\simeq \D$.
\end{proposition}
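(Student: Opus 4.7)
The plan is to combine Lemma~\ref{lem:small-tubes} with the classification of tight homomorphisms between hermitean Lie groups established in \cite{burger-iozzi-wienhard:2007, wienhard:thesis}. Any proper tube type domain tightly embedded in $\Sp(4,\R)/\U(2)$ arises as the symmetric space $\tilde{G}/\tilde{H}$ of a hermitean tube type subgroup $\tilde{G} \subset \Sp(4,\R)$, with $\tilde H = \tilde G \cap \U(2)$. Since $\Sp(4,\R)/\U(2)$ has complex dimension $3$ and rank $2$, and these invariants are monotone under totally geodesic embeddings, $\tilde{G}/\tilde{H}$ has dimension at most $3$ and rank at most $2$.

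First I would apply Lemma~\ref{lem:small-tubes} directly: among proper sub-domains the only abstract possibilities for $\tilde G / \tilde H$ are $\D$ (of rank one) and $\D \x \D$ (of rank two). The remaining work is to classify, up to isometry of $\Sp(4,\R)/\U(2)$, the tight holomorphic embeddings of each of these two model domains into $\Sp(4,\R)/\U(2)$.

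For the rank-two case $\D \x \D$, the associated group is locally $\SL(2,\R) \x \SL(2,\R)$, and a tight embedding of the bidisk must preserve the Kähler class (up to the correct normalisation) and be totally geodesic and holomorphic. By the classification of tight embeddings of polydisks in \cite{burger-iozzi-wienhard:2007, wienhard:thesis}, such an embedding is unique up to conjugation in $\Sp(4,\R)$, and it is realised precisely by $\rho_p$; the corresponding domain is $\mathcal{D}_p$. For the rank-one case $\D$, the associated group is locally $\SL(2,\R)$, and tightness of a homomorphism $\SL(2,\R) \to \Sp(4,\R)$ forces its composition with the standard representation on $\R^4$ to decompose as a sum of irreducible $\SL(2,\R)$-modules in one of only two admissible ways: either the whole of $\R^4$ is irreducible (yielding $\rho_i$ and $\mathcal{D}_i$), or it splits as a sum of two standard two-dimensional representations which must then be identified under tightness (yielding $\rho_\Delta = \rho_p \circ \Delta$ and $\mathcal{D}_\Delta$). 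These two domains are distinguished, and hence non-isometric in $\Sp(4,\R)/\U(2)$, by the ratio of the pulled-back Kähler form to the Kähler form of $\D$, or equivalently by their Kähler ranks.

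The main obstacle is verifying that this exhausts the list, especially in the rank-one case: one must rule out any further tight homomorphism $\SL(2,\R) \to \Sp(4,\R)$ obtained by combining a non-trivial representation with a trivial summand, or by using an irreducible three-dimensional subrepresentation. This is precisely where the structural results of \cite{burger-iozzi-wienhard:2007, wienhard:thesis} on tight homomorphisms do the heavy lifting: they force each irreducible summand of a tight $\SL(2,\R)$-representation into $\Sp(4,\R)$ to contribute the correct increment to the pulled-back Kähler class, which in the $\Sp(4,\R)$-setting leaves only the two options $\rho_i$ and $\rho_\Delta$. Once this is in hand, Proposition~\ref{prop:subdomains} follows by assembling the three cases.
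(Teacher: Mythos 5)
Your proposal follows essentially the same route as the paper: the paper's own ``proof'' consists precisely of invoking Lemma~\ref{lem:small-tubes} together with the tight-embedding results of \cite{burger-iozzi-wienhard:2007, wienhard:thesis}, and your filling-in of the rank-one versus rank-two case analysis is consistent with that. One correction, though: you assert that the task reduces to classifying tight \emph{holomorphic} embeddings, and in the bidisk case that tightness forces the embedding to be holomorphic; this is false in general, and indeed $\mathcal{D}_i$ --- which you correctly keep on your final list --- is tightly but \emph{not} holomorphically embedded, as the paper notes in the remark immediately following the proposition. The enumeration must therefore be carried out for tight totally geodesic embeddings without the holomorphicity hypothesis; dropping that word does not change your case analysis, since you already treat the irreducible representation $\rho_i$ alongside $\rho_\Delta$ in the rank-one case.
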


\begin{remark} Note that $\mathcal{D}_i\simeq \D$ is not holomorphically embedded,
while the other two are.  
\end{remark}

Proposition \ref{prop:subdomains}  is not quite sufficient for identifying the possible embedded subgroups since the subdomains do not uniquely determine the subgroups. Suppose that subgroups $G_1\subset G_2\subset \Sp(4,\R)$, with maximal compact subgroups $H_1\subset H_2$, give rise to the same subdomain,  i.e.\  are such that
$G_1/H_1=G_2/H_2$.  Then it is straightforward to see that
\begin{itemize}
\item $H_1$ is a normal subgroup of $H_2$, and
\item if the Cartan decompositions for the subgroups are $\lieg_i=\lieh_i+\liem_i$, then $\liem_1=\liem_2$.
\end{itemize}
It follows that  $G_1$ is a normal subgroup of $G_2$.  The next proposition is thus immediate.

\begin{proposition}
The following subgroups are the largest that give
rise to the embedded domains $\mathcal{D}_i, \mathcal{D}_p, $ and $\mathcal{D}_{\Delta}$ respectively:
\begin{align*}
G_i &= N_{\Sp(4,\R)}(\rho_1(\SL(2,\R))),\\
G_p &= N_{\Sp(4,\R)}(\rho_2(\SL(2,\R)\x\SL(2,\R))),\\
G_{\Delta} &= N_{\Sp(4,\R)}(\rho_3(\SL(2,\R))),
\end{align*}
\end{proposition}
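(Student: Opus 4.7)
The plan is to combine the normalizer criterion established in the paragraph immediately preceding the Proposition with a centralizer computation. First, suppose $G' \subseteq \Sp(4,\R)$ is a closed reductive subgroup which contains $\rho_*(G_*)$ and gives rise to the same embedded domain $\mathcal{D}_*$. By the observation just made --- that in such a situation $\rho_*(G_*)$ must be normal in $G'$ --- one gets immediately $G' \subseteq N_{\Sp(4,\R)}(\rho_*(G_*))$. This yields the upper-bound half of the Proposition, so the actual content is to check that each of the three normalizers itself realizes $\mathcal{D}_*$, and not some strictly larger subdomain of $\Sp(4,\R)/\U(2)$.

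To verify this, I would analyze each normalizer via the Lie algebra identity
\[
\mathfrak{n}_{\mathfrak{sp}(4,\R)}(\rho_*(\lieg_*)) = \rho_*(\lieg_*) \oplus \mathfrak{z}_{\mathfrak{sp}(4,\R)}(\rho_*(\lieg_*)),
\]
which is valid because each $\rho_*(\lieg_*)$ is semisimple, and then show that the centralizer vanishes in each case. For $\rho_1$ this follows from Schur's lemma applied to the irreducible real $4$-dimensional representation of $\SL(2,\R)$ (from Section~\ref{sect: Gi}), intersected with the symplectic constraint. For $\rho_2$, any element of $\mathfrak{sp}(4,\R)$ commuting with every block-diagonal $\operatorname{diag}(A,B)$ with $A,B \in \mathfrak{sl}(2,\R)$ is forced to be block-diagonal and central in each $\mathfrak{sl}(2,\R)$ factor, hence zero. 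The case $\rho_3$ reduces to a similar block computation. Consequently, the identity component of each normalizer coincides with $\rho_*(G_*)$, and in particular the $\liem$-part of its Cartan decomposition is the same $\liem_*$; so the orbit through the base point $x_0$ of $\Sp(4,\R)/\U(2)$ does not gain any new tangent directions.

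It remains to show that the (finite) non-identity components of each normalizer preserve $\mathcal{D}_*$ set-wise rather than enlarge it. For any $g \in N_{\Sp(4,\R)}(\rho_*(G_*))$ we have
\[
g \cdot \mathcal{D}_* \;=\; g \rho_*(G_*) \cdot x_0 \;=\; \rho_*(G_*) \cdot (g \cdot x_0),
\]
so $g\cdot\mathcal{D}_* = \mathcal{D}_*$ precisely when $g\cdot x_0 \in \mathcal{D}_*$. I would then identify explicit generators for each outer component: most notably the block-swap $\bigl(\begin{smallmatrix} 0 & I \\ I & 0 \end{smallmatrix}\bigr)$ for $G_p$, which realizes the factor swap on $\mathcal{D}_p \simeq \D\x\D$ and visibly fixes the base point $(0,0)$; a corresponding involution for $G_\Delta$ that restricts along the diagonal; and, for $G_i$, the outer involution of $\SL(2,\R)$ realized inside $\Sp(4,\R)$. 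In each case one checks that the representative sits in $\U(2)$ for a suitable choice of base point, so fixes $x_0$ and hence stabilizes $\mathcal{D}_*$.

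The main obstacle is the centralizer vanishing in item two, which in the $\rho_1$ case requires unpacking the explicit form of the irreducible embedding given later in Section~\ref{sect: Gi}; for $\rho_2$ and $\rho_3$ the computations are mechanical. The conceptual picture is clean: the maximal subgroup realizing a given totally geodesic symmetric subdomain $\mathcal{D}_*$ is necessarily the normalizer of any initially chosen transitive subgroup, and passing to this normalizer adds no new tangent directions at $x_0$ precisely when the corresponding centralizer is trivial, which is the case here.
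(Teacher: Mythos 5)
Your upper-bound half is exactly the paper's argument: the observation immediately preceding the Proposition shows that if $G_1\subset G_2$ give rise to the same subdomain then $G_1$ is normal in $G_2$, so any such $G_2$ lies in $N_{\Sp(4,\R)}(\rho_*(G_*))$, and the paper declares the Proposition ``immediate'' from this, deferring the explicit identification of the normalizers to Propositions~\ref{prop:GpandGdelta} and~\ref{prop: Gi=SL2R}. Your additional step --- checking that the normalizer itself does not enlarge the domain --- is a reasonable refinement, but its execution contains a genuine error in the $\rho_3$ case. The centralizer of the diagonally embedded $\liesl(2,\R)$ in $\lie{sp}(4,\R)$ is \emph{not} zero: with respect to $J_{12}$ the commutant of $I\otimes\liesl(2,\R)$ in $\lie{gl}(4,\R)$ is $\Mat_2(\R)\otimes I$ by Schur, and imposing $X^tJ_{12}+J_{12}X=0$ on $X=M\otimes I$ gives $(M^t+M)\otimes J=0$, i.e.\ $M\in\lieso(2)$. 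So the centralizer is a one-dimensional compact subalgebra, and the identity component of $N_{\Sp(4,\R)}(\rho_3(\SL(2,\R)))$ is $\SO(2)\otimes\SL(2,\R)$, of dimension $4$, strictly larger than $\rho_3(\SL(2,\R))$ --- exactly as reflected in the paper's computation $G_\Delta=\OO(2)\otimes\SL(2,\R)$ in Proposition~\ref{prop:GpandGdelta}. Your assertion that ``the identity component of each normalizer coincides with $\rho_*(G_*)$'' therefore fails for $G_\Delta$. The correct criterion is not vanishing of the centralizer but the weaker statement that the ($\theta$-stable) centralizer meets $\liem$ trivially, i.e.\ lies in $\lieh=\lie{u}(2)$, so that no new tangent directions appear at the base point; this does hold here since $\lieso(2)\otimes I$ consists of antisymmetric matrices, but it is a different argument from the one you give.

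A second, smaller inaccuracy: for $G_i$ there is no outer component to exhibit. The paper shows in Proposition~\ref{prop: Gi=SL2R} that the natural representative $\rho_1\left(\begin{smallmatrix}0&1\\1&0\end{smallmatrix}\right)$ of the outer automorphism of $\SL(2,\R)$ fails the symplectic condition~(\ref{eq:symp-J_0}), so $N_{\Sp(4,\R)}(\rho_1(\SL(2,\R)))=\rho_1(\SL(2,\R))$ is connected; your plan presupposes the opposite. Neither issue is fatal to your overall strategy, but as written the second half of the argument rests on two false computational claims and would need the corrections above before it establishes that each normalizer still realizes only $\mathcal{D}_*$.
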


\noi Hence Theorem~\ref{thm:biw} implies the following result. 

\begin{proposition}\label{prop:possible-subgroups}
Let $\rho\colon \pi_1(X) \to \Sp(4,\R) $ be
maximal and assume that $\rho$ factors through a proper reductive
subgroup $\tilde{G} \subset G$. Then, up to conjugation, $\tilde{G}$
is contained in one of the subgroups $G_i$, $G_{\Delta}$ and $G_{p}$.
\end{proposition}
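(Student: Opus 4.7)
The plan is to apply Theorem~\ref{thm:biw} to the representation $\rho$ in order to pin down the identity component $\tilde{G}_0 := (\overline{\rho(\pi_1(X))}_\R)^\circ$, and then use the fact that $\tilde{G}$ automatically normalizes $\tilde{G}_0$ to land inside one of the maximal normalizer subgroups $G_i$, $G_p$, $G_\Delta$ defined just above the statement.

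First I would observe that, since $\rho$ factors through $\tilde{G}$ and $\tilde{G}\subsetneq \Sp(4,\R)$, we have $\tilde{G}_0\subseteq \tilde{G}\subsetneq \Sp(4,\R)$. Theorem~\ref{thm:biw} then asserts that $\tilde{G}_0$ is Hermitian of tube type and that the embedding $\tilde{G}_0\hookrightarrow \Sp(4,\R)$ is tight. In particular the associated symmetric space $\tilde{G}_0/\tilde{H}_0$ is a tightly, hence totally geodesically, embedded tube type subdomain of $\Sp(4,\R)/\U(2)$, so it has rank $\le 2$ and complex dimension $\le 3$. Lemma~\ref{lem:small-tubes} then leaves only the three possibilities $\D$, $\D\x\D$, or $\Sp(4,\R)/\U(2)$; the last case is excluded because it would force $\tilde{G}_0$ to coincide with $\Sp(4,\R)$ up to finite centre, contradicting properness of $\tilde{G}$.

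Knowing that the domain of $\tilde{G}_0$ is a proper tube type subdomain, Proposition~\ref{prop:subdomains} identifies it, up to an isometry of $\Sp(4,\R)/\U(2)$, with one of $\mathcal{D}_i$, $\mathcal{D}_p$, $\mathcal{D}_\Delta$. After conjugating $\rho$ by an element of $\Sp(4,\R)$ that realizes this isometry, $\tilde{G}_0$ produces the same embedded symmetric space as $\rho_i(\SL(2,\R))$, $\rho_p(\SL(2,\R)\x\SL(2,\R))$, or $\rho_\Delta(\SL(2,\R))$. By the observation recorded immediately before Proposition~\ref{prop:possible-subgroups}, two connected subgroups of $\Sp(4,\R)$ giving rise to the same embedded symmetric subdomain share the same $\liem$-summand in their Cartan decomposition, and combined with $[\liem,\liem]\subseteq \lieh$ this pins down the full semisimple part of the Lie algebra. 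Hence $\tilde{G}_0$ is forced to coincide (after conjugation) with the standard connected image $\rho_i(\SL(2,\R))$, $\rho_p(\SL(2,\R)\x\SL(2,\R))$, or $\rho_\Delta(\SL(2,\R))$.

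Finally, since $\tilde{G}$ has $\tilde{G}_0$ as its identity component, $\tilde{G}$ normalizes $\tilde{G}_0$ in $\Sp(4,\R)$; and by their very definition, $G_i$, $G_p$, $G_\Delta$ are the normalizers of these three standard subgroups. Thus $\tilde{G}$ is contained, up to conjugation, in the corresponding one, as claimed. The main obstacle I expect is the step in the third paragraph: Proposition~\ref{prop:subdomains} only gives equivalence of subdomains up to an \emph{isometry} of $\Sp(4,\R)/\U(2)$, so one must argue carefully that every such isometry is realized by conjugation in $\Sp(4,\R)$ (up to an outer involution which permutes $\{G_i,G_p,G_\Delta\}$ among themselves), and that the reconstruction of the full reductive Lie subalgebra from $\liem$ together with $[\liem,\liem]$ is rigid enough to identify $\tilde{G}_0$ with the prescribed standard subgroup rather than merely with some connected subgroup sharing its tangent space at the basepoint.
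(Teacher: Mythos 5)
Your proposal follows the paper's own route exactly: Theorem~\ref{thm:biw} combined with Lemma~\ref{lem:small-tubes} and Proposition~\ref{prop:subdomains} pins down the identity component of the Zariski closure of the image, and the normalizer observation recorded just before the statement then yields the three groups $G_i$, $G_p$, $G_{\Delta}$; in fact you make explicit several steps the paper leaves implicit. The one slip is in your last paragraph: $\tilde{G}_0=(\overline{\rho(\pi_1(X))}_{\R})^{\circ}$ need not be the identity component of the given subgroup $\tilde{G}$ (only $\tilde{G}_0\subseteq\tilde{G}$ is guaranteed), so the normalization should instead be justified by noting that $\rho(\pi_1(X))$ normalizes its own Zariski closure and hence $\tilde{G}_0$, which gives $\rho(\pi_1(X))\subseteq N_{\Sp(4,\R)}(\tilde{G}_0)=G_*$ --- the factorization statement that the paper actually uses.
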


\noi {\bf Note:} We will sometimes use $G_{*}$ to denote $G_i$, $G_p$ or $G_{\Delta}$.


\bigskip

\noi Explicit calculations show that:

\begin{proposition}\label{prop:GpandGdelta} We compute that

\noi (1)  $G_p$ is the group generated by $\SL(2,\R)\x\SL(2,\R)$ and $\begin{pmatrix}
0 & I \\
I & 0 \end{pmatrix}$.  That is, with respect to $J_{12}$, $G_p\subset\Sp(4,\R)$ is
\begin{equation*}
G_p=\{\begin{pmatrix}
X & Y \\
Z & T \end{pmatrix}\in\Sp(4,\R)\ |\ \mathrm{either}\ Y=Z=0\
\mathrm{or}\ X=T=0\ \}\ .
\end{equation*}

\noi (2)  $G_{\Delta}=\OO(2)\otimes \SL(2,\R)$ with respect to $J_{12}$.  That is, with respect to $J_{12}$, $G_{\Delta}\subset \Sp(4,\R)$ is
\begin{equation*}
G_{\Delta}=\{\begin{pmatrix}xA & yA \\zA & tA \end{pmatrix}\ |\ X=\begin{pmatrix}x & y
\\z & t
\end{pmatrix} \in \OO(2)\ \mathrm{and}\   A\in\SL(2,\R) \}\ .
\end{equation*}
\end{proposition}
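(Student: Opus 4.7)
The plan is to compute each normalizer in two stages: first identify it inside $\GL(4,\R)$ using the structure of $\SL(2,\R)$-representations, then intersect with $\Sp(4,\R)$ with respect to $J_{12}$.

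For (1), I start by observing that $\rho_2(\SL(2,\R)\times\SL(2,\R))$ has two non-conjugate simple factors (the block upper-left and block lower-right copies of $\SL(2,\R)$), so any $g \in N_{\GL(4,\R)}(\rho_2(\SL(2,\R)\times\SL(2,\R)))$ either preserves or swaps them. In the preserving case, writing $g=\bigl(\begin{smallmatrix}X & Y\\ Z & T\end{smallmatrix}\bigr)$ and imposing $g\,\mathrm{diag}(A,I)\,g^{-1}\in\mathrm{diag}(\SL(2),\SL(2))$ for all $A\in\SL(2,\R)$ forces $Y=0$ and $Z=0$, so $g=\mathrm{diag}(X,T)$ with $X,T\in\GL(2,\R)$. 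The swap case is handled by composing with the involution $\sigma=\bigl(\begin{smallmatrix}0 & I\\ I & 0\end{smallmatrix}\bigr)\in\Sp(4,\R)$, which reduces it to the previous case and yields $g=\bigl(\begin{smallmatrix}0 & Y\\ Z & 0\end{smallmatrix}\bigr)$. Finally, imposing the symplectic condition $g^t J_{12} g=J_{12}$ with $J_{12}=\mathrm{diag}(J,J)$ cuts $\GL(2,\R)$ down to $\SL(2,\R)$ in each block, giving the stated description.

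For (2), I use Schur's lemma: since $\SL(2,\R)$ acts irreducibly on $\R^2$, the centralizer of $\rho_3(\SL(2,\R))=\{I_2\otimes A\}$ in $M_4(\R)$ equals $M_2(\R)\otimes I_2$. Next I claim $\Aut(\SL(2,\R))=\PGL(2,\R)$; in particular the outer automorphism $A\mapsto (A^t)^{-1}$ is inner, realized by conjugation by $J$. Consequently, if $g\in N_{\GL(4,\R)}(\rho_3(\SL(2,\R)))$ then conjugation by $g$ equals conjugation by some $I_2\otimes Q$ with $Q\in\GL(2,\R)$, so $(I_2\otimes Q^{-1})g$ centralizes $\rho_3(\SL(2,\R))$ and equals $P\otimes I_2$ for some $P\in\GL(2,\R)$. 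Hence $g=P\otimes Q$, and the normalizer in $\GL(4,\R)$ is $\GL(2,\R)\otimes\GL(2,\R)$.

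The last step is to intersect with $\Sp(4,\R)$. Writing $J_{12}=I_2\otimes J$ and using $(P\otimes Q)^t(I\otimes J)(P\otimes Q)=P^tP\otimes Q^tJQ$, the symplectic condition becomes $P^tP\otimes Q^tJQ=I\otimes J$. This forces $P^tP=\alpha I$ and $Q^tJQ=\alpha^{-1}J$ for some scalar $\alpha$; since $P^tP$ is positive definite, $\alpha>0$, and using the rescaling freedom $P\otimes Q=(\lambda P)\otimes(\lambda^{-1}Q)$ we may normalize $\alpha=1$, i.e.\ $P\in\OO(2)$ and $Q\in\Sp(2,\R)=\SL(2,\R)$. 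The converse is a direct verification. Writing out $P\otimes Q$ in block form yields precisely the matrix shape in the statement. The only subtle point in the argument is the correct handling of this scalar ambiguity in the Kronecker tensor decomposition; everything else is routine.
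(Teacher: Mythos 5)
Your argument is correct, and it actually supplies a proof where the paper gives none: Proposition~\ref{prop:GpandGdelta} is introduced only with the words ``Explicit calculations show that,'' so there is nothing to compare line by line. What you have done is essentially to apply, to $\rho_2$ and $\rho_3$, the same normalizer--centralizer strategy that the paper does spell out for the irreducible case $G_i$ in Section~\ref{subs: normirr} (diagram (\ref{eq:normalizer-centralizer}), Schur's lemma for the centralizer, and $\Out(\SL(2,\R))=\Z/2$ realized inside $\GL(2,\R)$, cf.\ Propositions~\ref{prop:out-SL2R}--\ref{prop:centralizer}); your identification of $N_{\GL(4,\R)}(\rho_3(\SL(2,\R)))$ as $\GL(2,\R)\otimes\GL(2,\R)$ and the subsequent normalization of the scalar in the Kronecker factorization are exactly the points one has to get right, and you handle them correctly. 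Two small touches worth tightening in a write-up: in the factor-preserving case of part (1) you should say explicitly that $g\,\mathrm{diag}(A,I)\,g^{-1}=\mathrm{diag}(A',I)$ (with the second block equal to $I$, not merely in $\SL(2,\R)$), since that is what forces $ZA=Z$ and hence $Z=0$; and the observation that $A\mapsto (A^t)^{-1}$ is inner is not really needed --- the fact you use is that every continuous automorphism of $\SL(2,\R)$ is conjugation by some $Q\in\GL(2,\R)$, i.e.\ $\Aut(\SL(2,\R))\simeq\PGL(2,\R)$, which is consistent with the paper's Proposition~\ref{prop:out-SL2R}.
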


\noi We defer the calculation of $G_i$ to Section \ref{sect: Gi} where the necessary details of the irreducible representation are given.   The result we obtain (see Proposition \ref{prop: Gi=SL2R}) is:

\begin{proposition}\label{prop:Gi}  $G_i=\SL(2,\R)$, i.e.
\begin{equation}
N_{\Sp(4,\R)}(\rho_1(\SL(2,\R)))= \rho_1(\SL(2,\R))\ .
\end{equation}

\end{proposition}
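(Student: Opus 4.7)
The plan is to combine three standard ingredients: (a) every Lie group automorphism of $\SL(2,\R)$ is inner when viewed inside $\PGL(2,\R)$, so it has the form $A \mapsto hAh^{-1}$ for some $h \in \GL(2,\R)$; (b) the representation $\rho_1$ is (up to equivalence) the symmetric cube of the standard representation, and it is \emph{absolutely} irreducible on $\R^4 \cong \Sym^3\R^2$, since it remains irreducible over $\C$; (c) $\rho_1$ extends naturally to a representation $\tilde{\rho}_1 = \Sym^3 \colon \GL(2,\R) \to \GL(4,\R)$. The reverse inclusion $\rho_1(\SL(2,\R)) \subseteq N_{\Sp(4,\R)}(\rho_1(\SL(2,\R)))$ is of course automatic.

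Given $g \in N_{\Sp(4,\R)}(\rho_1(\SL(2,\R)))$, I would first use conjugation by $g$ to produce an automorphism of $\rho_1(\SL(2,\R)) \cong \SL(2,\R)$, and then apply (a) to find $h \in \GL(2,\R)$ such that
\[
g\,\rho_1(A)\,g^{-1} \;=\; \rho_1(hAh^{-1}) \;=\; \tilde{\rho}_1(h)\,\rho_1(A)\,\tilde{\rho}_1(h)^{-1}
\]
for every $A \in \SL(2,\R)$. Consequently $\tilde{\rho}_1(h)^{-1} g$ commutes with the entire image of $\rho_1$, and Schur's lemma together with the absolute irreducibility in (b) forces $\tilde{\rho}_1(h)^{-1} g = \lambda I_4$ for some $\lambda \in \R^{\ast}$, i.e.\ $g = \lambda\,\tilde{\rho}_1(h)$.

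Finally I would impose $g \in \Sp(4,\R)$. The natural symplectic form $\Omega$ on $\Sym^3\R^2 \cong \R^4$ is obtained by restricting $\omega^{\otimes 3}$ from $(\R^2)^{\otimes 3}$ to the symmetric cube, where $\omega$ is the standard area form on $\R^2$; since $\omega$ scales by $\det h$ under $h \in \GL(2,\R)$, one obtains the scaling law
\[
\tilde{\rho}_1(h)^{t}\,\Omega\,\tilde{\rho}_1(h) \;=\; (\det h)^3\,\Omega
\qquad\text{for all } h \in \GL(2,\R).
\]
The condition $g^{t}\Omega g = \Omega$ then reads $\lambda^2 (\det h)^3 = 1$, which forces $\det h > 0$. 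Writing $h = c h'$ with $c = \sqrt{\det h} > 0$ and $h' \in \SL(2,\R)$, I obtain $g = \lambda c^{3}\,\rho_1(h') = \pm\,\rho_1(h')$; and since $\rho_1(-I_2) = -I_4$, the minus sign is absorbed into $h' \mapsto -h'$, yielding $g \in \rho_1(\SL(2,\R))$. The only genuine computation is the scaling identity for $\Omega$ under $\tilde{\rho}_1$, which is routine; the conceptual heart of the argument is the Schur step, and no real obstacle arises.
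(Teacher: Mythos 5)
Your proof is correct, and its skeleton coincides with the paper's: both arguments rest on the fact that every automorphism of $\SL(2,\R)$ is realized by conjugation by an element of $\GL(2,\R)$ (so that $\Out(\SL(2,\R))\simeq\Z/2$ is generated by a negative-determinant conjugation), on Schur's lemma applied to the absolutely irreducible module $\Sym^3\R^2$ to identify the centralizer with the scalars, and finally on the symplectic condition to kill the outer part. Where you genuinely diverge is in the execution of that last step. The paper first computes $N_{\SL(4,\R)}(\rho_1(\SL(2,\R)))$ as an extension of $\rho_1(\SL(2,\R))$ by the $\Z/2$ generated by $\rho_1\left(\begin{smallmatrix}0&1\\1&0\end{smallmatrix}\right)$ (Corollary~\ref{cor:normalizer-SL4R}), and then checks by an explicit matrix computation that this particular generator fails to preserve $J_0$ (in fact it sends $J_0$ to $-J_0$), hence is not symplectic. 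You instead extend $\rho_1$ to $\Sym^3$ on all of $\GL(2,\R)$ and invoke the scaling identity $\tilde{\rho}_1(h)^t\,\Omega\,\tilde{\rho}_1(h)=(\det h)^3\,\Omega$, which forces $\det h>0$ and excludes the outer automorphism conceptually, with no matrices to write down; it also disposes of the scalar ambiguity coming from Schur's lemma in the same stroke. Your route buys a cleaner, computation-free conclusion that would generalize verbatim to $\Sym^{2n+1}$ of $\SL(2,\R)$ inside $\Sp(2n+2,\R)$; the paper's route buys the explicit intermediate description of the normalizer in $\SL(4,\R)$, which it records as a statement of independent interest.
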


\section{Deformations of representations -- main results}
\label{sec:main-theorem}

\subsection{Invariants of representations}
\label{sec:invariants-reps}

Let $\rho\colon \pi_1(X) \to \Sp(4,\R)$ be a representation and let
$E_{\rho}$ be the associated flat $\Sp(4,\R)$-bundle. Then the Toledo
invariant $d(\rho)$ of $\rho$ is simply the first Chern class of the
(non-flat) $\U(2)$-bundle obtained by a reduction of the structure
group of $E_{\rho}$ to the maximal compact $\U(2) \subset \Sp(4,\R)$.
In terms of the $\Sp(4,\R)$-Higgs bundle $(V,\beta,\gamma)$ associated
to $\rho$ via the non-abelian Hodge theory correspondence, we have
$d(\rho) = \deg(V)$. A representation $\rho$ is said to be
\textbf{maximal} if $d(\rho) = 2g-2$ (cf.\ Proposition~\ref{prop:MW}).
Denote the subspace of maximal representations of
$\mathcal{R}(\Sp(4,\R))$ by $\mathcal{R}^{max}$. Then the non-abelian
Hodge theory correspondence (\ref{eq:non-ab-hodge}) gives a
homeomorphism
\begin{equation}
\label{eq:non-ab-hodge-max}
\mathcal{R}^{max} \simeq \mathcal{M}^{max}.
\end{equation}
We point out that, by the results of Burger, Iozzi and Wienhard
\cite{burger-iozzi-wienhard:2003,burger-iozzi-wienhard:2006}, any
maximal representation is reductive. Hence the space
$\mathcal{R}^{max}$ consists of all (isomorphism classes of) maximal
representations.

\begin{definition}
We denote by $\mathcal{R}_{w_1,w_2}$, $\mathcal{R}_{c}^{0}$ and
$\mathcal{R}^{T}_{K^{1/2}}$ the subspaces of $\mathcal{R}^{max}$
corresponding under (\ref{eq:non-ab-hodge-max}) to
the subspaces $\mathcal{M}_{w_1,w_2}$, $\mathcal{M}_{c}^{0}$ and
$\mathcal{M}^{T}_{K^{1/2}}$, respectively, of $\mathcal{M}^{max}$ (cf.\
(\ref{eq:Def_Mw1w2}), (\ref{eq:Def_M0c}) and (\ref{eq:def_MT})) .
\end{definition}

\begin{remark}
Though apparently of a holomorphic nature, the choice of a square
root $K^{1/2}$ of the canonical bundle of $X$ is in fact purely
topological: each such choice corresponds to the choice of a spin
structure on the oriented topological surface $S$ underlying $X$.
\end{remark}

\subsection{Main Theorem}
\label{sec:man-theorem}

With these preliminaries in place, we can state our main result.  The
proof is based on a careful analysis of $G_*$-Higgs bundles carried
out in Sections~\ref{sec:analysis-I}, \ref{sec:analysis-II} and
\ref{sect: Gi} below.

We shall say that a $\Sp(4,\R)$-Higgs bundle $(V,\beta,\gamma)$
\textbf{deforms} to a $\Sp(4,\R)$-Higgs bundle $(V',\beta',\gamma')$,
if they belong to the same connected component of the moduli space. In
other words, we mean continuous deformation through polystable
$\Sp(4,\R)$-Higgs bundles.  In the setting of representations, we use
the analogous notion of deformation.

\begin{theorem}\label{thm:main-result-higgs}
Let $X$ be a closed Riemann surface of genus $g \geq 2$ and let
$(V,\beta,\gamma)$ be a maximal polystable $\Sp(4,\R)$-Higgs
bundle. Then:

(1) $(V,\beta,\gamma)$ deforms 
to a polystable $G_{\Delta}$-Higgs bundle if and only if it belongs
to one of the subspaces $\mathcal{M}_{w_1,w_2}$ or
$\mathcal{M}_{0}^{0}$ of $\mathcal{M}^{max}$.

(2) $(V,\beta,\gamma)$ deforms 
to a polystable $G_{p}$-Higgs bundle if and only if it belongs
to one of the subspaces $\mathcal{M}_{w_1,w_2}$ or
$\mathcal{M}_{0}^{0}$ of $\mathcal{M}^{max}$.

(3)  $(V,\beta,\gamma)$ deforms 
to a polystable $G_{i}$-Higgs bundle if and only if it belongs
to one of the subspaces  $\mathcal{M}^{T}_{K^{1/2}}$.

(4) There is no proper reductive subgroup $G_* \subset \Sp(4,\R)$
such that $(V,\beta,\gamma)$ can be
deformed to a $G_*$-Higgs bundle if and only if $(V,\beta,\gamma)$
belongs to one of the components $\mathcal{M}_c^{0}$ with $0 < c <
2g-2$.
\end{theorem}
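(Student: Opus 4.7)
The plan is to combine Proposition~\ref{prop:possible-subgroups}---which restricts the possible proper reductive subgroups through which a maximal representation can factor to (conjugates of) $G_i$, $G_p$, and $G_\Delta$---with a case-by-case analysis of each of these three subgroups. Parts (1)--(3) are the three individual $G_*$ statements, and part (4) then follows by elimination: if $(V,\beta,\gamma)$ belongs to $\mathcal{M}_c^0$ with $0 < c < 2g-2$, then (1)--(3) exclude deformations into reductions to each of $G_\Delta$, $G_p$, $G_i$, and no other subgroup is available. So the real work lies in (1)--(3), each of which splits into an existence (``if'') and a non-existence (``only if'') direction.

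For each candidate $G_*$, the first step is to unpack Definition~\ref{def:G-reduction} into explicit data on $(V,\beta,\gamma)$, using the descriptions of $G_p$ and $G_\Delta$ in Proposition~\ref{prop:GpandGdelta} and of $G_i$ in Proposition~\ref{prop:Gi}. A $G_p$-reduction yields a splitting $V = L_1 \oplus L_2$ with Higgs data block-diagonal, so that each $(L_i,\beta_i,\gamma_i)$ is an $\SL(2,\R)$-Higgs bundle; maximality forces each $L_i$ to be a theta characteristic. A $G_\Delta$-reduction yields a tensor decomposition $V = W \otimes L$ with $W$ an $\OO(2,\C)$-bundle and $L$ a line bundle, with $(\beta,\gamma)$ factoring accordingly; maximality together with the Cayley partner description (Proposition~\ref{prop:maxtoledo2}) forces $L^2 = K$. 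A $G_i$-reduction realises $V$ as $\Sym^3$ of an $\SL(2,\C)$-bundle, which in the maximal case specialises to $V = K^{3/2} \oplus K^{-1/2}$, i.e.\ the Hitchin-section form of Proposition~\ref{prop:Vshape}(1).

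For the ``if'' directions, Proposition~\ref{prop:Vshape} supplies explicit local minima of the Hitchin function in each component. In $\mathcal{M}_{w_1,w_2}$ and $\mathcal{M}_0^0$ the minima have $\beta = 0$ and $V = W \otimes K^{1/2}$ with $W$ orthogonal, which is manifestly a $G_\Delta$-Higgs bundle; when $W$ splits as a sum of line bundles (cases (1) or (3) of Proposition~\ref{prop:O(2)bundles}) the same Higgs bundle also admits a $G_p$-reduction, and the Prym case (case (2)) is reduced to the split case by deforming within the connected component, using that every $(w_1,w_2)$ with $w_1 \neq 0$ is realisable by a sum of $2$-torsion line bundles. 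The Hitchin components $\mathcal{M}^T_{K^{1/2}}$ contain the Hitchin section and hence carry a tautological $G_i$-reduction.

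The main obstacle is the ``only if'' direction. By Proposition~\ref{reduction}, any $G_*$-reduction of a polystable $\Sp(4,\R)$-Higgs bundle is itself polystable, and one translates the constraints on the $G_*$-Higgs data back through the Cayley-partner correspondence to pin down the topological invariants $w_1$, $w_2$ (or $\deg(L)$) of $(V,\beta,\gamma)$. A $G_p$- or $G_\Delta$-reduction with $w_1 = 0$ forces $\deg(L) = 0$ in the notation of Proposition~\ref{prop:def_N}, placing $(V,\beta,\gamma)$ in $\mathcal{M}_0^0$; with $w_1 \neq 0$, it lands in some $\mathcal{M}_{w_1,w_2}$. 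A $G_i$-reduction forces the rigid relation $N = K^{3/2}$, placing $(V,\beta,\gamma)$ in $\mathcal{M}^T_{K^{1/2}}$. These exclusions complete the ``only if'' halves of (1)--(3), and in particular show that $\mathcal{M}_c^0$ for $0 < c < 2g-2$ admits no $G_*$-reduction, yielding (4).
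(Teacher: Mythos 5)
Your proposal follows essentially the same route as the paper: parts (1)--(3) are exactly the component-by-component results the paper establishes for $G_{\Delta}$, $G_p$ and $G_i$ (Theorems~\ref{cor: FinalTallyGdelta}, \ref{th:GpTally}, \ref{thm:GirrTally}), and part (4) is obtained by elimination via Proposition~\ref{prop:possible-subgroups}, transferring through the non-abelian Hodge correspondence since that proposition lives on the representation side. The one imprecision worth fixing is your claim that a $G_p$-reduction yields a splitting $V = L_1 \oplus L_2$: since $G_p$ is disconnected, this holds only when the obstruction class $\xi(V,\beta,\gamma) \in H^1(X,\Z/2)$ to reducing further to $\SL(2,\R)\times\SL(2,\R)$ vanishes, and otherwise one must pass to the associated unramified double cover $X' \to X$ (where the pullback does split and a degree computation gives $\deg(N)=g-1$) to complete the non-existence half of (2) for the components $\mathcal{M}^0_c$ with $c>0$; this is exactly how the paper closes that gap.
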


\noi The corresponding result for surface group representations is:

\begin{theorem}\label{thm:main-result}
Let $S$ be a closed oriented surface of genus $g \geq 2$ and let
$\rho\colon \pi_1(S) \to \Sp(4,\R)$ be a maximal
representation. Then:

(1) The representation $\rho$ deforms to a representation
which factors through the subgroup
$G_{\Delta} \subset \Sp(4,\R)$ if and only if it belongs to one of
the subspaces $\mathcal{R}_{w_1,w_2}$ or $\mathcal{R}_{0}^{0}$.

(2) The representation $\rho$ deforms to a representation
which factors through the subgroup
$G_{p} \subset \Sp(4,\R)$ if and only if it belongs to one of
the subspaces $\mathcal{R}_{w_1,w_2}$ or $\mathcal{R}_{0}^{0}$.

(3) The representation $\rho$ deforms to a representation
which factors through the subgroup
$G_{i} \subset \Sp(4,\R)$ if and only if it belongs to one of
the subspaces $\mathcal{R}^{T}_{K^{1/2}}$.

(4) There is no proper reductive subgroup $G_* \subset \Sp(4,\R)$
such that $\rho$ can be deformed to a
representation which factors through $G_*$ if and only if $\rho$
belongs to a component $\mathcal{R}_c^{0}$ for some $0 < c <
2g-2$.
\end{theorem}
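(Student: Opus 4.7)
The plan is to transfer Theorem~\ref{thm:main-result-higgs} across the non-abelian Hodge theory correspondence. By (\ref{eq:non-ab-hodge-max}) we have a homeomorphism $\mathcal{R}^{max} \simeq \mathcal{M}^{max}$, under which, by definition, the subspaces $\mathcal{R}_{w_1,w_2}$, $\mathcal{R}_c^0$, and $\mathcal{R}^T_{K^{1/2}}$ correspond precisely to $\mathcal{M}_{w_1,w_2}$, $\mathcal{M}_c^0$, and $\mathcal{M}^T_{K^{1/2}}$. Since a homeomorphism preserves connected components, two representations lie in the same component of $\mathcal{R}^{max}$ if and only if the associated polystable $\Sp(4,\R)$-Higgs bundles lie in the same component of $\mathcal{M}^{max}$. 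Consequently, the notion of deformation for representations is equivalent, under the correspondence, to the notion of deformation for Higgs bundles used in Theorem~\ref{thm:main-result-higgs}.

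Next I would invoke Proposition~\ref{reduction} and its corollary, which say that for any reductive subgroup $G_* \subset \Sp(4,\R)$, a reductive representation $\rho$ factors through $G_*$ if and only if the corresponding polystable $\Sp(4,\R)$-Higgs bundle admits a reduction of structure group to $G_*$. Recall also that, by the results of Burger--Iozzi--Wienhard cited in Section~\ref{sec:invariants-reps}, every maximal representation is reductive, so the question of factorization for arbitrary elements of $\mathcal{R}^{max}$ reduces to the reductive case already covered by that corollary. Combining these ingredients, the statement ``$\rho$ deforms to a representation factoring through $G_*$'' translates verbatim, via the correspondence, into ``$(V,\beta,\gamma)$ deforms to a polystable $\Sp(4,\R)$-Higgs bundle admitting a reduction of structure group to $G_*$.'' Parts (1), (2), and (3) of Theorem~\ref{thm:main-result} then follow directly from the corresponding parts of Theorem~\ref{thm:main-result-higgs} applied to $G_\Delta$, $G_p$, and $G_i$ respectively.

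For part (4), the extra ingredient is Proposition~\ref{prop:possible-subgroups}: any maximal representation that factors through a proper reductive subgroup of $\Sp(4,\R)$ factors, up to conjugation, through one of $G_i$, $G_p$, or $G_\Delta$. Hence a maximal $\rho$ fails to deform to a representation factoring through any proper reductive subgroup precisely when it deforms to none of these three distinguished subgroups. By parts (1)--(3), this happens if and only if $\rho \notin \mathcal{R}_{w_1,w_2} \cup \mathcal{R}^T_{K^{1/2}} \cup \mathcal{R}_0^0$. Inspecting the component decomposition recalled in Section~\ref{sub:count}, the complement inside $\mathcal{R}^{max}$ consists exactly of the components $\mathcal{R}_c^0$ with $0 < c < 2g-2$, which establishes part (4).

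There is no substantive obstacle beyond bookkeeping: the content is all in Theorem~\ref{thm:main-result-higgs} on the Higgs side, in Proposition~\ref{reduction} for translating factorization into reduction of structure group, and in Proposition~\ref{prop:possible-subgroups} for restricting the possible subgroups. The only point requiring a line of care is to note that conjugate subgroups give the same notion of ``factoring through,'' so the phrase ``up to conjugation'' in Proposition~\ref{prop:possible-subgroups} is harmless for the statement of part (4).
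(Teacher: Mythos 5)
Your proposal is correct and follows essentially the same route as the paper: parts (1)--(3) are transferred from Theorem~\ref{thm:main-result-higgs} via the non-abelian Hodge correspondence (\ref{eq:non-ab-hodge-max}) together with Proposition~\ref{reduction}, and part (4) is deduced from (1)--(3) combined with Proposition~\ref{prop:possible-subgroups}. The extra remarks you include (reductivity of maximal representations, harmlessness of conjugation) are points the paper also relies on, so there is nothing to add.
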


\begin{proof}[Proof of Theorems~\ref{thm:main-result-higgs} and
\ref{thm:main-result}]
Statements (1)--(3) of Theorem~\ref{thm:main-result-higgs} follow
from the results for $G_*$-Higgs bundles given in Theorem~\ref{cor:
FinalTallyGdelta} for $G_* = G_{\Delta}$, Theorem~ \ref{th:GpTally} for
$G_* = G_{p}$ and Theorem~\ref{thm:GirrTally} for $G_* = G_{i}$.  

Statements (1)--(3) of Theorem~\ref{thm:main-result} now follow
immediately through the non-abelian Hodge theory correspondence
(\ref{eq:non-ab-hodge-max}).   Moreover, by Proposition~\ref{prop:possible-subgroups}, a maximal
representation which factors through a proper reductive subgroup
must in fact factor through one of the groups $G_{\Delta}$, $G_{p}$
and $G_{i}$. Hence statements (1)--(3) of Theorem~\ref{thm:main-result} imply
statement (4) of the same Theorem.

Finally, by the non-abelian Hodge theory correspondence
(\ref{eq:non-ab-hodge-max}), statement (4) of
Theorem~\ref{thm:main-result-higgs} follows from statement (4) of
Theorem~\ref{thm:main-result}.
\end{proof}

\begin{remark} Part (4) of this theorem says that for any
representation, say $\rho:\pi_1(X)\rightarrow\Sp(4,\R)$, represented
by a point in one of the components $\mathcal{R}^0_c$, the image
$\rho(\pi_1(X))$  in $\Sp(4,\R)$ is Zariski dense\footnote{We thank
Anna Wienhard for suggesting this formulation of the
result.}. Parts (1)--(3) of the theorem say that any other
representation can be deformed to one whose image is not Zariski
dense, and describe in which subgroups the image $\rho(\pi_1(X))$
may lie.
\end{remark}

\begin{remark}
Though (4) of Theorem~\ref{thm:main-result-higgs} is a result about
$\Sp(4,\R)$-Higgs bundles our proof depends on the correspondence
with representations, since it uses
Proposition~\ref{prop:possible-subgroups}. We expect,
though, that a pure Higgs bundle proof can be given by applying the
Cayley correspondence of \cite{GDP} (cf.\ Section~\ref{subs:cayley}).
\end{remark}

\section{Analysis of $G_{*}$-Higgs bundles I: $G_{\Delta}$-Higgs
bundles }
\label{sec:analysis-I}

In this section we identify the $\Sp(4,\R)$-Higgs bundles which
admit a reduction of structure group to $G_{\Delta}$, in the sense of
Definition~\ref{def:G-reduction}.

\subsection{The embedding of $G_{\Delta}$ in $\Sp(4,\R) $}

Proposition~\ref{prop:GpandGdelta} describes $G_{\Delta}$ as an
embedded subgroup of $\Sp(4,\R)$ (with respect to $J_{12}$).  As an
abstract group we can identify\footnotemark\footnotetext{The map
$(A,B)\mapsto B\otimes A$ defines a homomorphism from
$\SL(2,\R)\times \OO(2)$ to $\OO(2)\otimes \SL(2,\R)$
which is surjective and has kernel $\Z/2=\{\pm (I,I)\}$.} 
$G_{\Delta}$ as the group
\begin{equation}\label{eqn:GdeltaAbstract}
{G}_{\Delta}\simeq(\SL(2,\R)\times \OO(2))/(\Z/2)\ .
\end{equation}
This has a maximal
compact subgroup
\begin{equation}\label{eqn:Hdelta}
{H}_{\Delta}\simeq(\SO(2)\times \OO(2))/(\Z/2)\ 
\end{equation}

\noindent and a Cartan decomposition  of its Lie algebra 
\begin{equation}\label{eqn:GdeltaCartan}
\mathrm{Lie}({G}_{\Delta})\simeq(\lie{so}(2)\oplus\lie{o}(2))\oplus 
\liem(\SL(2,\R))
\end{equation}
where 
\begin{equation}\label{eqn:mSL2}
\liem(\SL(2,\R))=\{\begin{pmatrix}x & y\\y& -x\end{pmatrix}\in \lie{gl}(2,\R)\ \}\ .
\end{equation}

Since we prefer to use $J_{13}$ when describing $\Sp(4,\R)$-Higgs
bundles, we need to adjust the embedding given in Proposition
(\ref{prop:GpandGdelta}). Conjugation by the matrix $h$ given in
Appendix \ref{subs:Sp4R} shows that with respect to $J_{13}$ the
images of ${G}_{\Delta}$ and ${H}_{\Delta}$ are
\begin{align}\label{GdeltaJ13embed}
G_{\Delta}= & \SL(2,\R)\otimes \OO(2)
\\
=&\{\begin{pmatrix}aX & bX \\cX & dX \end{pmatrix} \suchthat  X^tX=I\ \text{ and}\  A=\begin{pmatrix}a & b
\\c & d
\end{pmatrix} \in \SL(2,\R)\ \}\ \\\nonumber
H_{\Delta}=  &\SO(2)\otimes \OO(2)\\
=& \{A\otimes X\in \SL(2,\R)\otimes \OO(2) \suchthat \ A^tA=I\ , \det(A)=1\ \}\nonumber
\end{align}

\begin{lemma} \hfill

\begin{enumerate}
\item The Lie algebra of $G_{\Delta}$ is invariant under the Cartan involution on $\Sp(4,\R)$.
\item $\Lie(G_{\Delta})\bigcap\frak{u}(2)=\Lie(H_{\Delta})$ where $\frak{u}(2)\subset\mathfrak{sp}(4,\R)$ is as in (\ref{eqn:u2})
\end{enumerate}
\end{lemma}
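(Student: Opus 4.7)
The plan is to exploit the explicit Kronecker-product description of $G_{\Delta}$ relative to $J_{13}$ given in \eqref{GdeltaJ13embed}. Writing $G_{\Delta}=\SL(2,\R)\otimes\OO(2)$, the Lie algebra $\Lie(G_{\Delta})$ is spanned by elements of the two types
\[
A\otimes I_2\ \ (A\in\lie{sl}(2,\R))\qquad\text{and}\qquad I_2\otimes Y\ \ (Y\in\lie{o}(2)),
\]
so every element is of the form $A\otimes I_2+I_2\otimes Y$ with $A\in\lie{sl}(2,\R)$ and $Y\in\lie{o}(2)$. All of what follows is then a direct computation using $(P\otimes Q)^t=P^t\otimes Q^t$ and the formula $\theta(X)=-X^t$ from \eqref{eqn:theta}.

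For part (1), I would apply $\theta$ termwise. Since $\theta(A\otimes I_2)=-A^t\otimes I_2$ and $-A^t$ is again trace-free, the first summand stays in the $\lie{sl}(2,\R)\otimes I_2$ piece. For the second, $\theta(I_2\otimes Y)=I_2\otimes(-Y^t)=I_2\otimes Y$ because $Y\in\lie{o}(2)$ is antisymmetric. Both summands therefore lie in $\Lie(G_{\Delta})$, proving $\theta$-invariance.

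For part (2), I would use part (1) to observe that $\theta$ restricts to an involution of $\Lie(G_{\Delta})$; its $+1$-eigenspace coincides with $\Lie(G_{\Delta})\cap\lie{u}(2)$, since $\lie{u}(2)$ is exactly the $+1$-eigenspace of $\theta$ on $\lie{sp}(4,\R)$ under the $J_{13}$-conventions of \eqref{eqn:u2}. From the computation in part (1), the element $A\otimes I_2+I_2\otimes Y$ is $\theta$-fixed iff $-A^t=A$, i.e.\ iff $A\in\lie{so}(2)$. Hence the intersection is exactly $\lie{so}(2)\otimes I_2+I_2\otimes\lie{o}(2)$, which by \eqref{eqn:Hdelta} is $\Lie(H_{\Delta})$.

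There is no real obstacle; the only thing to watch is the convention-switch between the $J_{12}$- and $J_{13}$-descriptions of $\Sp(4,\R)$. In particular one must verify that after conjugation by the matrix $h$ of Appendix~\ref{subs:Sp4R} the Cartan involution on $\lie{sp}(4,\R)$ still has the form $X\mapsto -X^t$, so that the compact conjugation $\lie{u}(2)\subset\lie{sp}(4,\R)$ from \eqref{eqn:u2} is indeed its $+1$-eigenspace in the basis in which $G_{\Delta}$ is presented as a Kronecker product. Once this compatibility is confirmed, both assertions reduce to the one-line computations above.
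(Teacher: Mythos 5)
Your proof is correct and follows exactly the direct verification the paper intends (the lemma is stated without proof, immediately after the explicit $J_{13}$ Kronecker-product description of $G_{\Delta}$ and $H_{\Delta}$, which is precisely the input you use). The compatibility worry in your last paragraph is harmless: since $h=h^t=h^{-1}$, conjugation by $h$ commutes with $X\mapsto -X^t$, so the involution retains the form \eqref{eqn:theta} in the $J_{13}$ basis and your one-line computations go through as written.
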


\begin{remark}
 \label{rem:GD-sp4r-compatible}
 It follows that the Cartan involution on $\Sp(4,\R)$ restricts to
 define a Cartan involution on $G_{\Delta}$.  In fact it is the
 Cartan involution\footnotemark corresponding to the decomposition
 (\ref{eqn:GdeltaCartan}) and we see that $G_{\Delta}$ is a reductive
 subgroup of $\Sp(4,\R)$ (see Section \ref{sec:reduction}). In
 particular, $H_{\Delta}$ lies in the $\U(2)$ subgroup embedded in
 $\Sp(4,\R)$ as in (\ref{eqn:U(2)}).
 \footnotetext{We cannot apply
 Remark~\ref{rem:ss-cartan-compatibility} directly to conclude this,
 because $G_{\Delta}$ is not semisimple. However, the explicit
 verification below of (\ref{CD:cartandata2}) justifies our claim.}
\end{remark}


The following computations are needed to identify the
$\Sp(4,\R)$-Higgs bundles whose structure group reduces to
$G_{\Delta}$.

\begin{proposition}

\noi (1) The complexification of ${G}_{\Delta}$ is
\begin{equation}\label{eqn:GdeltaCAbstract}
{G}^{\C}_{\Delta}\simeq(\SL(2,\C)\times \OO(2,\C))/(\Z/2)\ .
\end{equation}

\noi (2)  The complexification of ${H}_{\Delta}$ is  isomorphic to the complex conformal group, i.e.
\begin{equation}\label{eqn:HdeltaC}
{H}^{\C}_{\Delta}\simeq(\SO(2,\C)\times \OO(2,\C))/(\Z/2)\simeq\CO(2,\C)
\end{equation}
\noi where
\begin{equation}
\CO(2,\C)=\{A\in\GL(2,\C)\ |\ A^tA=\frac{\tr(A^tA)}{2}\ I\ \}
\end{equation}
\end{proposition}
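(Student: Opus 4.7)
The plan is to derive both parts from the real-group identifications (\ref{eqn:GdeltaAbstract}) and (\ref{eqn:Hdelta}), using the fact that complexification preserves the discrete quotient by the central $\Z/2 = \{\pm(I,I)\}$ together with an explicit identification of the embedding $H_{\Delta} \subset \U(2)$.

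For part (1), the Kronecker product gives a surjection $\SL(2,\R) \times \OO(2) \twoheadrightarrow G_{\Delta} \subset \Sp(4,\R)$ with kernel $\{\pm(I,I)\}$. The Lie algebra $\Lie(G_{\Delta}) = \liesl(2,\R) \oplus \lie{o}(2)$ complexifies to $\liesl(2,\C) \oplus \lie{o}(2,\C)$. At the group level, the analogous Kronecker product map from $\SL(2,\C) \times \OO(2,\C)$ into $\Sp(4,\C)$ has the same discrete kernel $\{\pm(I,I)\}$, and its image is the complex subgroup $G_{\Delta}^{\C}$. This gives (\ref{eqn:GdeltaCAbstract}).

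For part (2), the same reasoning applied to $H_{\Delta}$ yields the first isomorphism $H_{\Delta}^{\C} \simeq (\SO(2,\C) \times \OO(2,\C))/(\Z/2)$. To identify this with $\CO(2,\C)$, I would first locate $H_{\Delta}$ explicitly inside $\U(2)$. Writing $A \in \SO(2)$ as $\cos\theta \, I + \sin\theta \, J$ with $J = \left(\begin{smallmatrix} 0 & -1 \\ 1 & 0 \end{smallmatrix}\right)$, the Kronecker product is $A \otimes X = \left(\begin{smallmatrix} \cos\theta \, X & -\sin\theta \, X \\ \sin\theta \, X & \cos\theta \, X \end{smallmatrix}\right)$, which under the embedding (\ref{eqn:U2J13}) corresponds in $\U(2)$ to the scalar multiple $e^{-i\theta} X$. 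Thus $H_{\Delta}$ is realised as $\U(1) \cdot \OO(2) \subset \U(2)$, whose complexification is $\C^{*} \cdot \OO(2,\C) \subset \GL(2,\C)$.

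Finally, I would verify the equality $\C^{*} \cdot \OO(2,\C) = \CO(2,\C)$ by direct inspection: if $A = \lambda X$ with $X^{t}X = I$, then $A^{t}A = \lambda^{2} I = \tfrac{\tr(A^{t}A)}{2} I$; conversely, for $A \in \CO(2,\C)$ one has $A^{t}A = cI$ with $c \in \C^{*}$ (nonzero since $A$ is invertible), and any square root $\lambda$ of $c$ produces $\lambda^{-1} A \in \OO(2,\C)$. The only place a subtle check is required is the compatibility of the two $\Z/2$ quotients: one must confirm that the kernel of the multiplication map $\C^{*} \times \OO(2,\C) \to \GL(2,\C)$ is $\{\pm(1, I)\}$, matching the $\Z/2$ obtained via the character identification $\SO(2,\C) \cong \C^{*}$ that emerged from part (1). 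This is the main bookkeeping obstacle, but the explicit formula $A \otimes X \mapsto e^{-i\theta} X$ above makes it transparent.
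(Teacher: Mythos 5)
Your proposal is correct and follows essentially the same route as the paper: the paper declares part (1) clear and proves part (2) by identifying $\SO(2,\C)$ with $\C^*$ and observing that the multiplication map $\C^*\times\OO(2,\C)\to\CO(2,\C)$, $(\lambda,A)\mapsto\lambda A$, is surjective with kernel $\{\pm(1,I)\}$. Your additional explicit computation placing $H_{\Delta}$ inside $\U(2)$ as $\U(1)\cdot\OO(2)$ via $A\otimes X\mapsto e^{-i\theta}X$ is a correct elaboration of the identification the paper relegates to a footnote.
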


\begin{proof} (1) Clear.  For (2) identify\footnotemark\footnotetext{ via
$$\lambda\mapsto\begin{pmatrix}\frac{\lambda+\lambda^{-1}}{2}&-\frac{\lambda-\lambda^{-1}}{2i}\\ \frac{\lambda-\lambda^{-1}}{2i}&\frac{\lambda+\lambda^{-1}}{2}\end{pmatrix}$$
}
$\SO(2,\C)$ with $\C^*$ and use the homomorphism
\begin{equation}\label{CO2projection}
\C^*\times \OO(2,\C)\longrightarrow\CO(2,\C)
\end{equation}

\noi defined by $(\lambda,A)\mapsto \lambda A$. This is surjective with kernel $\{\pm I\}$.
\end{proof}

\noi It follows from (\ref{eqn:GdeltaCAbstract}) and (\ref{eqn:HdeltaC}) that the complexification of the Cartan decomposition (\ref {eqn:GdeltaCartan}) is 
\begin{align}\label{eqn:GdeltaCartanC}
\mathrm{Lie}({G}^{\C}_{\Delta})=&\ \mathrm{Lie}({H}^{\C}_{\Delta})\oplus\liemc_{\Delta}\\
=&\ (\lie{so}(2,\C)\oplus\lie{o}(2,\C))\oplus \liem^{\C}(\SL(2,\R))\nonumber
\end{align}
\noi where 
\begin{equation}\label{eqn:mCSL2}
\liem^{\C}(\SL(2,\R))=\{\begin{pmatrix}x & y\\y& -x\end{pmatrix}\in \lie{gl}(2,\C)\ \}\ .
\end{equation}


\noi The proof of Proposition \ref{prop:GpandGdelta} `complexifies' to
show:

\begin{proposition}\label{prop:GdeltaCInSp4C}  The embedding of ${G}^{\C}_{\Delta}$ in $\Sp(4,\C)$ is given by
\begin{equation}\label{eqn:embedSL2CxOsC}
(A,X)\mapsto \begin{cases}
X\otimes A=\begin{pmatrix}xA & yA \\zA & tA \end{pmatrix}\ \text{with respect to}\ J_{12},\\
A\otimes X=\begin{pmatrix}aX & bX \\cX & dX \end{pmatrix}\ \text{with respect to}\ J_{13}
\end{cases}
\end{equation}

\noindent where $A=\begin{pmatrix}a & b
\\c & d\end{pmatrix}$ is in $\SL(2,\C)$ and $X=\begin{pmatrix}x & y
\\z & t
\end{pmatrix}$ is in $\OO(2,\C)$.
\end{proposition}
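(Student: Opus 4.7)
The plan is to obtain this by direct complexification of the real embedding already established in Proposition~\ref{prop:GpandGdelta} (in the $J_{12}$ picture) and in equation~(\ref{GdeltaJ13embed}) (in the $J_{13}$ picture). Since $G_\Delta^\C$ is, by~(\ref{eqn:GdeltaCAbstract}), the quotient $(\SL(2,\C)\times \OO(2,\C))/(\Z/2)$, and since $\SL(2,\C)$ and $\OO(2,\C)$ are the complexifications of $\SL(2,\R)$ and $\OO(2)$, it will suffice to show that the Kronecker-product formulas in~(\ref{eqn:embedSL2CxOsC}) define a holomorphic homomorphism $\SL(2,\C)\times \OO(2,\C) \to \Sp(4,\C)$ which restricts on the real points to the embeddings already known, and whose kernel is exactly $\{\pm(I,I)\}$.

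First, I would check that the formulas in~(\ref{eqn:embedSL2CxOsC}) define a homomorphism into $\GL(4,\C)$. This is a purely formal property of the Kronecker product, namely $(A\otimes X)(A'\otimes X')=(AA')\otimes(XX')$, and $\det(A\otimes X)=\det(A)^2\det(X)^2=1$ when $A\in\SL(2,\C)$ and $X\in \OO(2,\C)$. Next, one verifies the symplectic condition. Write $\Phi(A,X)=X\otimes A$ in the $J_{12}$ description. A direct block computation shows
\[
(X\otimes A)^t\, J_{12}\, (X\otimes A)=(X^tX)\otimes (A^t J A),
\]
where $J=\bigl(\begin{smallmatrix}0&1\\-1&0\end{smallmatrix}\bigr)$; since $X^tX=I$ for $X\in \OO(2,\C)$ and $A^t J A=\det(A)J=J$ for $A\in\SL(2,\C)$, the right-hand side equals $I\otimes J=J_{12}$. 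The analogous check in the $J_{13}$ picture uses the relation $J_{13}=h\,J_{12}\,h^{-1}$ for the matrix $h$ recorded in Appendix~\ref{subs:Sp4R}, which converts $X\otimes A$ into $A\otimes X$; alternatively one verifies directly that $(A\otimes X)^tJ_{13}(A\otimes X)=J_{13}$.

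Having the map into $\Sp(4,\C)$, I would then identify its kernel: $X\otimes A=I$ forces $A=\lambda I$ and $X=\lambda^{-1}I$ for some scalar $\lambda$, and the conditions $A\in\SL(2,\C)$, $X\in \OO(2,\C)$ give $\lambda^2=1$, so the kernel is $\{\pm(I,I)\}$. Thus the map descends to an injective holomorphic homomorphism $(\SL(2,\C)\times \OO(2,\C))/(\Z/2)\into\Sp(4,\C)$. By construction the restriction of $\Phi$ to $\SL(2,\R)\times \OO(2)$ coincides with the real embedding of Proposition~\ref{prop:GpandGdelta}, whose image is $G_\Delta$. Since $G_\Delta^\C$ is, by definition, the Zariski closure (equivalently, the complexification) of $G_\Delta$ inside $\Sp(4,\C)$, the image of $\Phi$ is precisely $G_\Delta^\C$ and the formula~(\ref{eqn:embedSL2CxOsC}) is established.

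The only place requiring care is keeping straight the two conventions $J_{12}$ and $J_{13}$, since the $\OO(2,\C)$ factor migrates from the left to the right slot of the Kronecker product under conjugation by $h$; I expect this bookkeeping (rather than any conceptual difficulty) to be the main source of possible error, and would handle it by performing the computation explicitly in one convention and invoking $J_{13}=h\,J_{12}\,h^{-1}$ to transport the result to the other.
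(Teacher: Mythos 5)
Your proposal is correct and follows essentially the same route as the paper, which simply states that the proof of Proposition~\ref{prop:GpandGdelta} ``complexifies''; you have filled in the details (the Kronecker-product identity $(X\otimes A)^t(I\otimes J)(X\otimes A)=(X^tX)\otimes(A^tJA)$, the kernel computation, and the transport between the $J_{12}$ and $J_{13}$ conventions via conjugation by $h$) that the paper leaves implicit. All of these checks are accurate.
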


\noi These embeddings induce embeddings of $\mathrm{Lie}({G}^{\C}_{\Delta})$ in $\mathfrak{sp}(4,\C)$.  Let $\liemc_{\Delta}$
denote the image of $\liem^{\C}(\SL(2,\R))$ under the embedding with
respect to $J_{13}$.  It follows that we can identify
$\liemc_{\Delta}\subset\mathfrak{sp(4,\C)}$ as
\begin{equation}\label{eqn:mcdeltainsp}
\mathfrak{m}_{\Delta}^{\C}= \{
\begin{pmatrix}aI&bI\\bI&-aI\end{pmatrix} \suchthat a,b\in\C\ \}.
\end{equation}

\begin{remark} Comparison with the Cartan decomposition for
 $\Sp(4,\R)$ (see (\ref{eqn:CartanC}) and (\ref{eqn:gl2CInsp4C.2}))
 confirms that, as required (cf.\ (\ref{CD:cartandata}) and
 Remark~\ref{rem:GD-sp4r-compatible}), we get
\begin{equation}\label{CD:cartandata2}
\begin{CD}
\lie{sp}(4,\C) @. = @. \lie{gl}(2,\C) @. \oplus @. \liemc\\
@AAA @.  @AAA @. @AAA\\
\lieg^{\C}_{\Delta}@. = @. \lieh^{\C}_{\Delta}@. \oplus @. \liemc_{\Delta}\
\end{CD}
\end{equation}
\smallskip

\noi where $\lieg^{\C}_{\Delta}=\Lie(G^{\C}_{\Delta})$ and  $\lieh^{\C}_{\Delta}=\Lie(H^{\C}_{\Delta})$.  
\end{remark}

A change of basis via $T=\begin{pmatrix}1&i\\1&-i\end{pmatrix}$ transforms $\mathfrak{m}_{\Delta}^{\C}$ into
\begin{equation}\label{eqn:mcdeltainsp.2}
\mathfrak{m}_{\Delta}^{\C}= \{
\begin{pmatrix}0&\tilde{\beta}I\\\tilde{\gamma}I&0\end{pmatrix}\ |
\tilde{\beta}, \tilde{\gamma}\in\C\ \},
\end{equation}
where the descriptions in (\ref{eqn:mcdeltainsp}) and
(\ref{eqn:mcdeltainsp.2}) are related by
\begin{align}
\tilde{\beta}&=2(a+ib),\\
\tilde{\gamma}&=2(a-ib).
\end{align}

\subsection{The principal bundle}

\begin{lemma}\label{lemma:CO2} Let $V$ be a rank 2 vector bundle
associated to a principal $\CO(2,\C)$-bundle over $X$.  Fix a good
cover $\mathcal{U}=\{U_{\alpha}\}$ for $X$ and suppose that $V$ is
defined by transition functions $\{ g_{\alpha\beta}\}$ with respect
to $\mathcal{U}$. Pick $\{l_{\alpha\beta}\in\C^*\}$ and
$\{h_{\alpha\beta}\in \OO(2,\C)\}$ such that
\begin{equation}\label{eqn:gab}
g_{\alpha\beta}=l_{\alpha\beta}h_{\alpha\beta}.
\end{equation}
Then
\begin{enumerate}
\item the functions $\{ l^2_{\alpha\beta}\}$ define a line bundle, say $L$, and
\item $L^2=det^2(V)$
\end{enumerate}
\end{lemma}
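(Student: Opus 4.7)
The plan is to exploit the fact that $\CO(2,\C)\simeq(\C^*\times\OO(2,\C))/\{\pm I\}$ via $(\lambda,A)\mapsto \lambda A$ (this is essentially the content of (\ref{CO2projection})). So the decomposition $g_{\alpha\beta}=l_{\alpha\beta}h_{\alpha\beta}$ exists, but with a sign ambiguity: replacing $(l_{\alpha\beta},h_{\alpha\beta})$ by $(-l_{\alpha\beta},-h_{\alpha\beta})$ yields the same $g_{\alpha\beta}$. This is why one cannot expect $\{l_{\alpha\beta}\}$ itself to define a line bundle, but only the squares $\{l_{\alpha\beta}^2\}$.

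First I would verify the cocycle condition for $\{l_{\alpha\beta}^2\}$. On a triple intersection, the cocycle condition for $V$ gives $g_{\alpha\beta}g_{\beta\gamma}g_{\gamma\alpha}=I$. Since the scalars $l_{\alpha\beta}$ are central in $\GL(2,\C)$, this rewrites as
\begin{equation*}
c_{\alpha\beta\gamma}\cdot(h_{\alpha\beta}h_{\beta\gamma}h_{\gamma\alpha})=I,
\end{equation*}
where $c_{\alpha\beta\gamma}=l_{\alpha\beta}l_{\beta\gamma}l_{\gamma\alpha}\in\C^*$. Hence $h_{\alpha\beta}h_{\beta\gamma}h_{\gamma\alpha}=c_{\alpha\beta\gamma}^{-1}\,I$. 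But the left hand side lies in $\OO(2,\C)$, so taking the relation $A^tA=I$ for a scalar matrix forces $c_{\alpha\beta\gamma}^{-2}=1$, i.e.\ $c_{\alpha\beta\gamma}=\pm 1$. Squaring yields $l_{\alpha\beta}^2 l_{\beta\gamma}^2 l_{\gamma\alpha}^2=1$, which is exactly the cocycle condition for the transition functions $\{l_{\alpha\beta}^2\}$. This defines the desired line bundle $L$ and proves part (1).

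For part (2), I would just compute determinants. Since $\det(h_{\alpha\beta})=\pm1$ for $h_{\alpha\beta}\in\OO(2,\C)$, and since $l_{\alpha\beta}$ is a scalar on $\C^2$,
\begin{equation*}
\det(g_{\alpha\beta})=l_{\alpha\beta}^2\,\det(h_{\alpha\beta})=\pm\,l_{\alpha\beta}^2.
\end{equation*}
Squaring gives $\det(g_{\alpha\beta})^2=l_{\alpha\beta}^4$. The left hand side is the transition cocycle of $\det(V)^2$ and the right hand side is the transition cocycle of $L^2$, so $\det(V)^2=L^2$.

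There is no serious obstacle here: the only subtlety is the sign issue in the factorization of $g_{\alpha\beta}$ through $\C^*\times\OO(2,\C)$, and both parts of the lemma are designed precisely to be insensitive to that sign (by squaring the scalar factor in (1), and by squaring the determinant in (2)). The good cover hypothesis is only needed to make sense of triple-intersection cocycle relations; local existence of the factorization $g_{\alpha\beta}=l_{\alpha\beta}h_{\alpha\beta}$ on each $U_{\alpha\beta}$ is automatic from the surjection $\C^*\times\OO(2,\C)\twoheadrightarrow\CO(2,\C)$ (one can always choose a set-theoretic lift on a contractible open set).
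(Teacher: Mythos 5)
Your proof is correct and follows essentially the same route as the paper's: both verify the cocycle condition for $\{l_{\alpha\beta}^2\}$ by combining the cocycle identity for $g_{\alpha\beta}$ with the orthogonality relation $h^th=I$, the only cosmetic difference being that the paper computes $g_{\alpha\beta\gamma}^tg_{\alpha\beta\gamma}$ directly while you isolate the scalar $c_{\alpha\beta\gamma}$ first. Your determinant computation for part (2) just makes explicit what the paper leaves as "follows directly from (\ref{eqn:gab})."
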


\begin{proof}
Consider the cocycles $g_{\alpha\beta\gamma}$ defined by
\begin{align}
g_{\alpha\beta\gamma}=& g_{\alpha\beta}g_{\beta\gamma}g_{\gamma\alpha}\\
=& (l_{\alpha\beta}l_{\beta\gamma}l_{\gamma\alpha})(h_{\alpha\beta}h_{\beta\gamma}h_{\gamma\alpha})\ .\nonumber
\end{align}
Since $g_{\alpha\beta\gamma}=I$ and the $h_{\alpha\beta}$ are orthogonal , taking $g^t_{\alpha\beta\gamma}g_{\alpha\beta\gamma}$ yields
\begin{equation}
I=(l^2_{\alpha\beta}l^2_{\beta\gamma}l^2_{\gamma\alpha})I.
\end{equation}
This proves (1). Part (2) now follows directly from (\ref{eqn:gab}).
\end{proof}

\begin{remark} Using the description $\CO(2,\C)=(\OO(2,\C)\times\C^*)/(\Z/2)$, we can define a homomorphism
\begin{align}
\sigma:\CO(2,\C)&\longrightarrow \C^*\\
[A,\lambda]&\mapsto \lambda^2.
\end{align}
The bundle $L$ is the line bundle associated to $V$ by the
representation $\sigma$, i.e.\ if $E$ is the principal
$\CO(2,\C)$-bundle underlying $V$ then
\begin{equation}
L=E\times_{\sigma}\C.
\end{equation}
\end{remark}

The locally defined transition data $\{l_{\alpha\beta}\}$ or
$\{h_{\alpha\beta}\}$ do not in general define $\C^*$ or $\OO(2,\C)$
bundles.  However, if $V$ has even degree, then we get the following
decomposition.

\begin{lemma} Suppose $V$ and $L$ are as in Lemma \ref {lemma:CO2} and
  that $V$ has even degree. Then $\deg(L)$ is even and we can pick a
  line bundle $L_0$ such that
\begin{equation}
L_0^2=L.
\end{equation}
We can then decompose $V$ as
\begin{equation}
V=U\otimes L_0,
\end{equation}
where $U$ is an $\OO(2,\C)$ bundle.
\end{lemma}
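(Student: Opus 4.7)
The plan is to base the proof on the fact that $\CO(2,\C)$ is a conformal orthogonal group, so that a $\CO(2,\C)$-structure on $V$ is the same thing as a nondegenerate $L$-valued symmetric bilinear form. Once this reinterpretation is in hand, the desired decomposition $V = U \otimes L_0$ will follow from a single tensor twist.

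First I would dispose of the parity assertion. By Lemma~\ref{lemma:CO2} we have $L^2 = \det(V)^2$, hence $2\deg L = 2 \deg V$, so $\deg L = \deg V$. If $\deg V$ is even then $\deg L$ is even, and $L$ therefore admits a square root $L_0$; any two such choices differ by a $2$-torsion line bundle, giving $2^{2g}$ possibilities.

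Next I would make explicit the conformal symmetric pairing underlying the $\CO(2,\C)$-structure. The standard form $\langle u,v\rangle = u^t v$ on $\C^2$ is $\CO(2,\C)$-equivariant with target $\C$ acted upon by $\sigma\colon [A,\lambda] \mapsto \lambda^2$: for every $g \in \CO(2,\C)$ the defining relation $g^t g = \sigma(g)\, I$ yields $\langle gu,gv\rangle = \sigma(g)\langle u,v\rangle$. Passing to the associated bundles of the principal $\CO(2,\C)$-bundle $E$ underlying $V$, this pairing descends to a nondegenerate symmetric morphism
\begin{equation*}
q\colon V \otimes V \longrightarrow L,
\end{equation*}
whose target is precisely the line bundle $L = E \times_\sigma \C$ appearing in Lemma~\ref{lemma:CO2}.

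Finally, setting $U := V \otimes L_0^{-1}$, twisting $q$ by $L_0^{-2} = L^{-1}$ produces a nondegenerate symmetric pairing
\begin{equation*}
U \otimes U \longrightarrow L \otimes L^{-1} = \cO,
\end{equation*}
which is exactly an $\OO(2,\C)$-structure on $U$. This gives $V = U \otimes L_0$ with $U$ an orthogonal rank two bundle, as required. The step I expect to require the most care is identifying the target of the conformal pairing with the specific line bundle $L$ whose cocycle $\{l_{\alpha\beta}^2\}$ is constructed in Lemma~\ref{lemma:CO2}; matching these two incarnations of $L$ is what forces the final twist to land in $\cO$ and thereby realizes $U$ as a genuine $\OO(2,\C)$-bundle rather than merely a bundle with $2$-torsion determinant.
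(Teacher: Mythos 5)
Your proof is correct, and it reaches the paper's conclusion by a genuinely different, more invariant route. The paper stays entirely at the level of \v{C}ech cocycles: with $g_{\alpha\beta}=l_{\alpha\beta}h_{\alpha\beta}$ as in Lemma~\ref{lemma:CO2} and $L_0$ given by transition functions $n_{\alpha\beta}$ satisfying $n_{\alpha\beta}^2=l_{\alpha\beta}^2$, it notes that $V\otimes L_0^{-1}$ has transition functions $v_{\alpha\beta}=(l_{\alpha\beta}/n_{\alpha\beta})h_{\alpha\beta}$ and checks directly that $v_{\alpha\beta}^t v_{\alpha\beta}=1$. You instead globalize the conformal structure first, extracting a nondegenerate symmetric pairing $q\colon V\otimes V\to L$ from the $\sigma$-equivariance of the standard form, and then trivialize its target by the twist $U=V\otimes L_0^{-1}$. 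Both arguments produce the same bundle $U$ and verify its orthogonality in equivalent ways; yours is coordinate-free and makes transparent that the only obstruction is the existence of a square root of the target line bundle $L$ (hence the parity hypothesis on $\deg V$, via $L^2=\det(V)^2$, so $\deg L=\deg V$), while the paper's is more elementary and dovetails with the cocycle bookkeeping already set up in Lemma~\ref{lemma:CO2}. The step you flagged as delicate --- matching the target of $q$ with the cocycle-defined $L$ --- is indeed the crux, but it is exactly the content of the remark following Lemma~\ref{lemma:CO2}: $L=E\times_\sigma\C$ has transition functions $\sigma(g_{\alpha\beta})=l_{\alpha\beta}^2$, so the identification holds and your argument closes.
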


\begin{proof} Using the same notation as in the proof of the previous
  lemma, let $L_0$ be defined by transition functions
  $\{n_{\alpha\beta}\}$. By construction we have
\begin{equation}
n_{\alpha\beta}^2=l_{\alpha\beta}^2\ .
\end{equation}
Moreover, the bundle $V\otimes L^{-1}_0$ is defined by transition functions
\begin{equation}
v_{\alpha\beta}=(\frac{l_{\alpha\beta}}{n_{\alpha\beta}})h_{\alpha\beta} \ .
\end{equation}
But then, since $h_{\alpha\beta}\in\OO(2,\C)$,
\begin{equation}
v^t_{\alpha\beta}v_{\alpha\beta}=(\frac{l^2_{\alpha\beta}}{n^2_{\alpha\beta}})h^t_{\alpha\beta} h_{\alpha\beta}=1.
\end{equation}
Thus $U=V\otimes L^{-1}_0$ is an $\OO(2,\C)$ bundle.
\end{proof}

\noi Conversely, we have the following. 

\begin{proposition}\label{prop:CO2}
If a rank $2$ vector bundle $V$ is of the form
\begin{equation}\label{eqn:V=UL}
V = U \otimes L_0,
\end{equation}
where $U$ is an $\OO(2,\C)$-bundle and $L_0$ is a line
bundle, then the structure group of $V$ reduces to $\CO(2,\C)$.
\end{proposition}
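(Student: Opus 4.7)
The plan is to prove this by working with transition functions, essentially inverting the construction in the preceding lemma. Fix a good cover $\{U_\alpha\}$ of $X$ fine enough to trivialize both $U$ and $L_0$ simultaneously. Let $\{h_{\alpha\beta}\}$ be $\OO(2,\C)$-valued transition functions defining $U$, and let $\{l_{\alpha\beta}\}$ be $\C^*$-valued transition functions defining $L_0$. Then the vector bundle $V = U\otimes L_0$ is defined by the transition functions
\begin{equation*}
g_{\alpha\beta} = l_{\alpha\beta}\,h_{\alpha\beta} \in \GL(2,\C).
\end{equation*}

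Next I would observe that each $g_{\alpha\beta}$ lies in $\CO(2,\C)$: indeed, $(l_{\alpha\beta} h_{\alpha\beta})^t(l_{\alpha\beta} h_{\alpha\beta}) = l_{\alpha\beta}^2\, I$, so $g_{\alpha\beta}^t g_{\alpha\beta}$ is a scalar multiple of the identity, which is the defining condition of $\CO(2,\C)$ recalled earlier in the section. Equivalently, by the surjection (\ref{CO2projection}),
\begin{equation*}
\OO(2,\C) \times \C^* \lra \CO(2,\C), \qquad (A,\lambda)\mapsto \lambda A,
\end{equation*}
the pair $(h_{\alpha\beta}, l_{\alpha\beta})$ maps to $g_{\alpha\beta} \in \CO(2,\C)$, so the cocycle $\{g_{\alpha\beta}\}$ takes values in $\CO(2,\C) \subset \GL(2,\C)$. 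This cocycle defines a principal $\CO(2,\C)$-bundle $E$ whose associated vector bundle via the standard inclusion $\CO(2,\C)\into \GL(2,\C)$ is precisely $V$, giving the desired reduction of structure group.

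There is essentially no obstacle to carrying this out: the content of the statement is the elementary identity $\CO(2,\C) = \C^* \cdot \OO(2,\C)$, and the only minor point to verify is that the $g_{\alpha\beta}$ satisfy the cocycle condition, which is automatic from the cocycle conditions for $\{h_{\alpha\beta}\}$ and $\{l_{\alpha\beta}\}$ separately. This makes the statement a clean converse to the decomposition obtained in the preceding lemma.
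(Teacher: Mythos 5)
Your proof is correct and takes essentially the same route as the paper, which simply cites the surjection $\C^*\times\OO(2,\C)\to\CO(2,\C)$, $(\lambda,A)\mapsto\lambda A$, as making the reduction immediate; you have merely spelled out the cocycle-level details of that observation.
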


\begin{proof} The proof follows immediately from the projection
  (\ref{CO2projection}).
\end{proof}

\begin{remark}  It follows from (\ref{eqn:V=UL}) that the
line bundle $L_0$ must satisfy
\begin{equation}\label{eqn:L4=detV}
L_0^4= \det(V)^2\ .
\end{equation}
\end{remark}

\begin{corollary}\label{prop:maxtoledo3} Let $(V,\beta,\gamma)$ be a
  polystable $\Sp(4,\R)$-Higgs bundle with maximal Toledo invariant,
  i.e.\ with $\deg(V)=2g-2$. Then the structure group of $V$ (or,
  equivalently, of the underlying principal $\GL(2,\C)$-bundle)
  reduces to $\CO(2,\C)$, i.e.\ to $H^{\C}_{\Delta}$.
\end{corollary}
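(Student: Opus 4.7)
The plan is to chain together two results already established in the paper. First, Proposition~\ref{prop:maxtoledo2} provides the structural decomposition we need: given that $(V,\beta,\gamma)$ is polystable with $\deg(V) = 2g-2$, we may write $V = W \otimes L_0$, where $W$ is an $\OO(2,\C)$-bundle and $L_0$ is a square root of the canonical bundle. This is the deep input, coming from the fact that at maximal Toledo invariant the Higgs field $\gamma$ is an isomorphism, which forces the symmetric structure on $V^* \otimes L_0$.

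Second, Proposition~\ref{prop:CO2} tells us that precisely this kind of factorization, $V = U \otimes L_0$ with $U$ an $\OO(2,\C)$-bundle and $L_0$ a line bundle, is equivalent to a reduction of structure group of $V$ to $\CO(2,\C)$. Combining these two facts, the principal $\GL(2,\C)$-bundle underlying $V$ reduces to a principal $\CO(2,\C)$-bundle.

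To finish, I would invoke the identification $\CO(2,\C) \simeq H_{\Delta}^{\C}$ established in \eqref{eqn:HdeltaC}, which states $H_{\Delta}^{\C} \simeq (\SO(2,\C) \times \OO(2,\C))/(\Z/2) \simeq \CO(2,\C)$. This yields the claimed reduction of structure group to $H_{\Delta}^{\C}$.

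Since every ingredient is already in place, there is no genuine obstacle here; the corollary is essentially a bookkeeping consequence of Propositions~\ref{prop:maxtoledo2} and~\ref{prop:CO2} combined with the group-theoretic identification of $H_{\Delta}^{\C}$ with $\CO(2,\C)$. The only minor point worth verifying explicitly is the compatibility of the Chern class constraint $L_0^4 = \det(V)^2$ from Proposition~\ref{prop:CO2} with the relation $\det(V)^2 = K^2$ given in Proposition~\ref{prop:maxtoledo2}; but these are plainly consistent since $L_0^2 = K$ forces $L_0^4 = K^2 = \det(V)^2$.
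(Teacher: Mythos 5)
Your proposal is correct and follows exactly the paper's own argument: the paper's proof is the one-line combination of Proposition~\ref{prop:maxtoledo2} (giving $V=W\otimes L_0$) with Proposition~\ref{prop:CO2} (factorization implies reduction to $\CO(2,\C)$), together with the identification $\CO(2,\C)\simeq H^{\C}_{\Delta}$ from (\ref{eqn:HdeltaC}). Your extra check that $L_0^4=K^2=\det(V)^2$ is a harmless verification of consistency that the paper leaves implicit.
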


\begin{proof} By Proposition \ref{prop:maxtoledo2} we can write $V=U\otimes L_0$, as required by Proposition \ref{prop:CO2}.
\end{proof}

\subsection{The Higgs field}

By Lemma \ref{lemma:CO2} we can always give a `virtual' decomposition of a $\CO(2,\C)$ bundle $V$ as $V=U^v\otimes L_0^v$, where $U^v$ and $L_0^v$ are `virtual' bundles. This is an honest decomposition into actual bundles if $\deg(V)$ is even, and in all cases there is a line bundle $L$ such that $L=(L_0^v)^2$. 

\begin{proposition} Let $V=U^v\otimes L_0^v$ be the vector bundle in a $G_{\Delta}$-Higgs bundle.  The Higgs field is then a pair $(\tilde{\beta},\tilde{\gamma})$ where 
\begin{equation}\label{defn:GDeltaHiggsField}
\tilde{\beta}\in H^0((L_0^v)^2K))\ {\mathrm and}\  \tilde{\gamma}\in H^0((L^v_0)^{-2}K)\ .
\end{equation}
\end{proposition}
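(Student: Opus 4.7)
The plan is to unpack the definition of the Higgs field for a $G_\Delta$-Higgs bundle and compute the isotropy representation of $H_\Delta^\C$ on $\liemc_\Delta$ explicitly, using the descriptions built up in this section. By Definition~\ref{defn:GHiggs} applied to $G_\Delta$, the Higgs field is a holomorphic section of $E_{H_\Delta^\C}(\liemc_\Delta)\otimes K$, where $E_{H_\Delta^\C}$ is the principal $\CO(2,\C)$-bundle obtained from $V$ via the reduction of structure group furnished by Proposition~\ref{prop:CO2}. Because $\liemc_\Delta$ has complex dimension~$2$, the result will follow once we decompose it into a sum of two characters of $H_\Delta^\C$ and identify the associated line bundles.

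First I would use the presentation (\ref{eqn:mcdeltainsp.2}) to write
\begin{equation*}
\liemc_\Delta = \liemc_\Delta^+ \oplus \liemc_\Delta^-,
\end{equation*}
where $\liemc_\Delta^\pm$ are the lines spanned by $\left(\begin{smallmatrix} 0 & I \\ 0 & 0 \end{smallmatrix}\right)$ and $\left(\begin{smallmatrix} 0 & 0 \\ I & 0 \end{smallmatrix}\right)$ respectively. The isotropy action of $H_\Delta^\C \subset \GL(2,\C)$ is just conjugation inside $\Sp(4,\C)$ via the block embedding (\ref{eqn:gl2CInsp4C.2}); for $h \in H_\Delta^\C$ one checks that it sends $\left(\begin{smallmatrix} 0 & \tilde\beta I \\ \tilde\gamma I & 0 \end{smallmatrix}\right)$ to $\left(\begin{smallmatrix} 0 & \tilde\beta\, h h^t \\ \tilde\gamma\, (h h^t)^{-1} & 0 \end{smallmatrix}\right)$. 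Writing $h = \mu A$ with $A \in \OO(2,\C)$ and $\mu \in \C^*$, we get $h h^t = \mu^2 I$, so $H_\Delta^\C$ acts on $\liemc_\Delta^+$ by the character $\sigma\colon h \mapsto \mu^2$ and on $\liemc_\Delta^-$ by $\sigma^{-1}$. This confirms in particular that $\liemc_\Delta$ is a well-defined $H_\Delta^\C$-submodule of $\liemc$ and that it sits inside the latter as a pair of lines stable under the reduced structure group.

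Next I would identify the associated line bundles. The character $\sigma$ is exactly the one discussed after Lemma~\ref{lemma:CO2}: pulled back to the double cover $\OO(2,\C) \x \C^* \to \CO(2,\C)$ it is $(A,\lambda)\mapsto \lambda^2$, and the corresponding associated line bundle is the bundle $L$ of Lemma~\ref{lemma:CO2}, which by construction equals $(L_0^v)^2$. Consequently
\begin{equation*}
E_{H_\Delta^\C}(\liemc_\Delta) \;=\; (L_0^v)^2 \,\oplus\, (L_0^v)^{-2},
\end{equation*}
and tensoring with $K$ and taking sections gives the claimed decomposition of the Higgs field as a pair $(\tilde\beta,\tilde\gamma)$ with $\tilde\beta \in H^0((L_0^v)^2 K)$ and $\tilde\gamma \in H^0((L_0^v)^{-2}K)$. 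Note that although $L_0^v$ may only exist virtually, its square $L = (L_0^v)^2$ is an honest line bundle, so the statement is meaningful in all cases.

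The only step that requires care is the identification of the character $\sigma$ with the one producing $L$: this is where the diagram (\ref{CD:cartandata2}) is invoked to ensure that the isotropy action computed inside $\Sp(4,\C)$ really agrees with the intrinsic $H_\Delta^\C$-action on $\liemc_\Delta$. The compatibility follows because the Cartan involution on $\Sp(4,\R)$ restricts to the Cartan involution on $G_\Delta$ (Remark~\ref{rem:GD-sp4r-compatible}), so there is no ambiguity in the isotropy representation. Once this compatibility is noted, everything else is a direct matrix computation.
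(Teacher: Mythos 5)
Your proposal is correct and follows essentially the same route as the paper: both identify the isotropy representation of $H^{\C}_{\Delta}=\C^*\times_{\pm 1}\OO(2,\C)$ on $\liemc_{\Delta}$ as the pair of characters $[\lambda,g]\mapsto(\lambda^2,\lambda^{-2})$ and then read off the associated bundle $E_{H^{\C}_{\Delta}}(\liemc_{\Delta})=(L_0^v)^2\oplus(L_0^v)^{-2}$. Your version merely makes explicit the conjugation computation in $\Sp(4,\C)$ and the matching of the character with the homomorphism $\sigma$ producing $L=(L_0^v)^2$, details the paper leaves implicit.
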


\begin{proof}  The Cartan decomposition of ${G}^{\C}_{\Delta} $ (see (\ref{eqn:GdeltaCartanC})) shows that the isotropy
representation of ${H}^{\C}_\Delta$ is given by
\begin{align*}
{H}^{\C}_\Delta = \C^* \x_{\pm 1} \OO(2,\C)
&\to \C^* \x \C^* \\
[\lambda,g] & \mapsto (\lambda^2,\lambda^{-2}).
\end{align*}

\noi Let  $E_{H^{\C}_\Delta}$ be the principal $\CO(2,\C)$ bundle underlying $V$. It  follows from the above observations that the bundle associated to $E_{H^{\C}_\Delta}$ by the isotropy representation, i.e.\ $E_{H^{\C}_\Delta}(\liem^{\C}_\Delta)=E_{H^{\C}_\Delta}\times_{\Ad}\liem^{\C}_\Delta$, is
\begin{equation}
E_{H^{\C}_\Delta}(\liem^{\C}_\Delta)=(L_0^v)^2\oplus (L_0^v)^{-2}\ .
\end{equation}
The result follows from this.
\end{proof}

\begin{proposition}\label{prop: GDeltaHiggsFields} Let $(V,\beta,
  \gamma)$ be an $\Sp(4,\R)$-Higgs bundle which admits a reduction of
  structure group to $G_{\Delta}$.  Then Higgs fields $\beta$ and
  $\gamma$ have to be of the form
\begin{align}
\beta&=\tilde{\beta}I,\\
\gamma&=\tilde{\gamma}I.
\end{align}
\end{proposition}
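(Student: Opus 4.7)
My plan is to read off the stated form of $\beta$ and $\gamma$ directly from the matrix description of $\liemc_\Delta \subset \liemc$ given in (\ref{eqn:mcdeltainsp.2}), using Definition~\ref{def:G-reduction} applied to the reduction of structure group.

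First, I would unpack what it means that $(V,\beta,\gamma)$ admits a reduction to $G_\Delta$. By Definition~\ref{def:G-reduction}, together with the compatibility diagram (\ref{CD:cartandata2}), this amounts to a holomorphic reduction of the underlying $\GL(2,\C)$-bundle $E$ to a principal $H^\C_\Delta = \CO(2,\C)$-bundle $E'$, plus a holomorphic section of $E'(\liemc_\Delta)\otimes K$ that maps to $\varphi=\left(\begin{smallmatrix}0&\beta\\\gamma&0\end{smallmatrix}\right)$ under the embedding
\begin{equation*}
E'(\liemc_\Delta)\otimes K \hookrightarrow E(\liemc)\otimes K = \bigl(\Sym^2 V \oplus \Sym^2 V^*\bigr)\otimes K.
\end{equation*}
So what I really need to identify is this embedding at the level of associated bundles, and observe that it forces $\beta$ and $\gamma$ to be proportional to the identity.

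Next, I would use the presentation $H^\C_\Delta \simeq (\OO(2,\C)\times\C^*)/(\pm 1)$ together with (\ref{eqn:mcdeltainsp.2}) to compute the isotropy representation. Writing $\liemc_\Delta = \C\tilde\beta \oplus \C\tilde\gamma$ in the coordinates of (\ref{eqn:mcdeltainsp.2}), the $\OO(2,\C)$-factor acts trivially (the entries of the blocks are scalar matrices), while $\lambda\in\C^*$ acts by $(\lambda^2,\lambda^{-2})$. Hence, with $L_0^v$ the virtual line bundle attached to the $\C^*$-factor of the reduction as in the paragraph preceding (\ref{defn:GDeltaHiggsField}), we obtain
\begin{equation*}
E'(\liemc_\Delta) \;=\; (L_0^v)^2 \oplus (L_0^v)^{-2},
\end{equation*}
recovering (\ref{defn:GDeltaHiggsField}). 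The embedding $\liemc_\Delta \hookrightarrow \liemc$ of (\ref{eqn:mcdeltainsp.2}) is $H^\C_\Delta$-equivariant, so it induces an injection of vector bundles
\begin{equation*}
(L_0^v)^2 \hookrightarrow \Sym^2 V, \qquad (L_0^v)^{-2} \hookrightarrow \Sym^2 V^*.
\end{equation*}
On local trivializations compatible with the reduction $V = U^v\otimes L_0^v$ (with $U^v$ orthogonal), this inclusion sends a local section $\tilde\beta$ of $(L_0^v)^2$ to $\tilde\beta\cdot I$, where $I\in\Sym^2 V\otimes (L_0^v)^{-2} = \Sym^2 U^v$ is the global section defining the orthogonal form on $U^v$; similarly for $\gamma$.

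Consequently the Higgs field components factor as claimed:
\begin{equation*}
\beta = \tilde\beta\cdot I \in H^0\bigl((L_0^v)^2 K\bigr)\otimes I, \qquad
\gamma = \tilde\gamma\cdot I \in H^0\bigl((L_0^v)^{-2} K\bigr)\otimes I.
\end{equation*}
The only step requiring genuine care is the equivariance check that pins the inclusion $\liemc_\Delta \hookrightarrow \liemc$ down to ``scalar multiple of the identity block''; but this is immediate from (\ref{eqn:mcdeltainsp.2}), since the entries of both the $\tilde\beta$- and $\tilde\gamma$-blocks there are literally scalar matrices. I do not expect this to be the hard part of the proof; the bookkeeping of the virtual line bundle $L_0^v$ (needed when $\deg V$ is odd, so that $L_0^v$ does not exist as an honest bundle) is the only subtlety, and it is dealt with already by the discussion preceding the proposition.
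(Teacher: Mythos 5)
Your proposal is correct and follows the same route as the paper, whose entire proof is the one-line observation that the claim is a direct consequence of the matrix description (\ref{eqn:mcdeltainsp.2}) of $\liemc_{\Delta}\subset\liemc$, in which both blocks are scalar multiples of the identity. Your additional bookkeeping (the isotropy representation, the virtual line bundle $L_0^v$, and the equivariance of the inclusion) merely spells out what the paper leaves implicit, and is consistent with the frame-independent restatement in Corollary~\ref{cor:GDelta-higgs}.
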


\begin{proof}
This is a direct consequence of (\ref{eqn:mcdeltainsp.2}).
\end{proof}

\noindent We can rephrase Proposition \ref{prop:
GDeltaHiggsFields} in a frame-independent way:

\begin{corollary}\label{cor:GDelta-higgs}
  Let $(V,\beta,\gamma)$ be a semistable $\Sp(4,\R)$-Higgs bundle for
  which the structure group reduces to $G_{\Delta}$. Suppose that $V$
  has a decomposition as $V =U\otimes L$ where $(U,q_U)$ is an
  orthogonal bundle and $L$ is a line bundle.  Then, using $S^2V =
  (S^2U) \otimes L^2$ and $S^2V^* = (S^2U^*) \otimes L^{-2}$, the
  components of the Higgs field are given by
\begin{displaymath}
\gamma=q_U \otimes \tilde{\gamma}\ ,\\
\beta= q_U^t \otimes \tilde{\beta}
\end{displaymath}
where
\begin{displaymath}
\tilde{\beta}\in H^0(L^2K),\quad \tilde{\gamma}\in H^0(L^{-2}K).
\end{displaymath}

\end{corollary}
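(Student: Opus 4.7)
The plan is to globalize the local, frame-dependent description of the Higgs field obtained in Proposition~\ref{prop: GDeltaHiggsFields} by exploiting the canonical nature of the orthogonal form $q_U$. Since $(U,q_U)$ is an orthogonal bundle, $q_U$ is a global $\OO(2,\C)$-invariant section of $S^2 U^*$, and there is a canonical dual $q_U^t \in H^0(S^2U)$ (via the identification $U \simeq U^*$ induced by $q_U$). These are precisely the ingredients needed to recast the local expressions $\beta = \tilde{\beta}I$ and $\gamma = \tilde{\gamma}I$ in a frame-independent way.

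First I would use the decomposition $V = U \otimes L$ together with the surjection \eqref{CO2projection} to exhibit the reduction of the structure group of the underlying $\GL(2,\C)$-bundle to $H^{\C}_{\Delta} = \CO(2,\C)$; call the resulting principal bundle $E$. Next I would decompose the associated isotropy bundle $E(\liem^{\C}_{\Delta})$. From \eqref{eqn:mcdeltainsp.2}, $\liem^{\C}_{\Delta}$ splits as a direct sum of two one-dimensional subspaces on which the scalar $\C^* \subset \CO(2,\C)$ acts with weights $+2$ and $-2$, while the $\OO(2,\C)$ factor acts trivially (since $\liem^{\C}_{\Delta}$ consists of scalar matrices, which commute with $\OO(2,\C)$). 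This yields $E(\liem^{\C}_{\Delta}) = L^2 \oplus L^{-2}$, so that by Definition~\ref{def:G-reduction} the $G_{\Delta}$-Higgs field is a pair $(\tilde{\beta},\tilde{\gamma})$ with $\tilde{\beta} \in H^0(L^2 K)$ and $\tilde{\gamma} \in H^0(L^{-2} K)$.

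The key remaining step is to identify the $H^{\C}_{\Delta}$-equivariant embedding $\liem^{\C}_{\Delta} \hookrightarrow \liem^{\C}$ at the level of associated bundles. Because $q_U$ is $\OO(2,\C)$-invariant and transforms with weight $-2$ under the scalar $\C^*$-action, multiplication by $q_U$ defines an equivariant inclusion $L^{-2} \hookrightarrow S^2U^* \otimes L^{-2} = S^2V^*$; analogously $q_U^t$ gives $L^2 \hookrightarrow S^2U \otimes L^2 = S^2V$. To confirm that these inclusions realize the embedding induced from $\liem^{\C}_{\Delta} \hookrightarrow \liem^{\C}$ (rather than merely an abstractly isomorphic equivariant copy), one evaluates in a local frame in which $q_U$ is represented by $I$: there $q_U^t \otimes \tilde{\beta} = \tilde{\beta}I$ and $q_U \otimes \tilde{\gamma} = \tilde{\gamma}I$, which matches Proposition~\ref{prop: GDeltaHiggsFields} exactly. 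Tensoring with $K$ and taking global sections then gives the stated formulas. The main step requiring care is the weight and equivariance bookkeeping — not deep, but since the paper uses two different standard representatives of the orthogonal form (the identity and the hyperbolic form $\bigl(\begin{smallmatrix}0&1\\1&0\end{smallmatrix}\bigr)$ appearing in Proposition~\ref{prop:def_N}), one must be precise about the interpretation of $q_U^t$ as the dual form so that the frame-independent formula is unambiguous across normalizations.
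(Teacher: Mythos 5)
Your proposal is correct and follows essentially the same route as the paper: the paper states this corollary as an immediate frame-independent rephrasing of Proposition~\ref{prop: GDeltaHiggsFields} (together with the preceding proposition identifying $E_{H^{\C}_\Delta}(\liem^{\C}_\Delta)=(L_0^v)^2\oplus(L_0^v)^{-2}$ via the isotropy representation) and gives no separate proof. Your globalization via the $\OO(2,\C)$-invariance and $\C^*$-weights of $q_U$ and $q_U^t$ is exactly the intended argument, with the equivariance bookkeeping that the paper leaves implicit spelled out.
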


\begin{remark} Notice that the section $\tilde{\gamma} \in
H^0(L^{-2}K)$ must be non-zero, since otherwise $\gamma =
\tilde{\gamma}q_U$ would be zero, contradicting semistability.  If
$\deg(V)=2g-2$ then $\deg(L)=g-1$ and $\deg(L^{-2}K) = 0$.  It follows
that in this case $L^2=K$, i.e.\ $L$ is a square root of $K$.
\end{remark}

\subsection{Identifying components with $G_{\Delta}$-Higgs bundles}

Having characterized $G_{\Delta}$-Higgs bundles, we now identify which
connected components of $\mathcal{M}^{max}$ contain the
$G_{\Delta}$-Higgs bundles.

\begin{theorem}Let $(V,\beta,\gamma)$ be a polystable
  $\Sp(4,\R)$-Higgs bundle with maximal (positive) Toledo invariant.
  If $(V,\beta,\gamma)$ represents a point in one of the components
  $\mathcal{M}^0_c$ with $0<c<2g-2$ or in one of the components
  $\mathcal{M}^{T}_{K^{1/2}}$, then the structure group of
  $(V,\beta,\gamma)$ does not reduce to $G_{\Delta}$.
\end{theorem}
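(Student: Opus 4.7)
The strategy is to derive a contradiction: a $G_{\Delta}$-reduction forces both ``diagonal'' components $\beta_1$ and $\beta_2$ of the Higgs field to vanish, but Proposition~\ref{prop:Vshape} requires $\beta_2 \neq 0$ in the specified components.

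Suppose $(V,\beta,\gamma)$ admits a reduction to $G_{\Delta}$. By Corollary~\ref{cor:GDelta-higgs}, together with the observation that maximality plus Proposition~\ref{prop:maxtoledo} force $\gamma$ to be an isomorphism, we obtain a decomposition $V = U \otimes L_1$ with $(U, q_U)$ an $\OO(2,\C)$-bundle, $L_1^2 \cong K$, and
\begin{equation*}
\gamma = q_U \otimes \tilde\gamma, \qquad \beta = q_U^t \otimes \tilde\beta,
\end{equation*}
where $\tilde\gamma \in \C^*$ and $\tilde\beta \in H^0(K^2)$. Since the Higgs bundles in $\mathcal{M}^{0}_{c}$ and $\mathcal{M}^{T}_{K^{1/2}}$ have vanishing first Stiefel--Whitney class, Proposition~\ref{prop:O(2)bundles}(1) lets us write $U = L_0 \oplus L_0^{-1}$ with the hyperbolic form $q_U = \bigl(\begin{smallmatrix} 0 & 1 \\ 1 & 0 \end{smallmatrix}\bigr)$. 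Setting $N = L_0 L_1$, with $L_0$ chosen to be the summand of higher degree, the decomposition $V = N \oplus N^{-1}K$ agrees with the canonical decomposition of Proposition~\ref{prop:Vshape} (with $\deg N = c + g - 1$).

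In this basis, $q_U^t$ is again $\bigl(\begin{smallmatrix} 0 & 1 \\ 1 & 0 \end{smallmatrix}\bigr)$, so $\beta = q_U^t \otimes \tilde\beta$ is off-diagonal. Translating through the isomorphism $S^2 V \otimes K \simeq N^2 K \oplus K^2 \oplus N^{-2}K^3$, this yields $\beta_1 = 0$ and $\beta_2 = 0$ in the notation of Proposition~\ref{prop:Vshape}. However, Proposition~\ref{prop:Vshape}(1) states that every representative of $\mathcal{M}^{T}_{K^{1/2}}$ has $\beta_2 = 1_{K^{1/2}} \neq 0$, and Proposition~\ref{prop:Vshape}(2) (together with the semistability clause of Proposition~\ref{prop:stability}(1a)) states that every representative of $\mathcal{M}^{0}_{c}$ with $0 < c < 2g-2$ has $\beta_2 \neq 0$. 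This contradicts $\beta_2 = 0$ and completes the argument.

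The main obstacle I expect is matching the orthogonal splitting $U = L_0 \oplus L_0^{-1}$ with the canonical decomposition $V = N \oplus N^{-1}K$ from Proposition~\ref{prop:Vshape}. The key point is that the two $\gamma$-isotropic line subbundles of $V$ are precisely the two summands of $U \otimes L_1$, and the convention $\deg N > g - 1$ pins down which isotropic subbundle plays the role of $N$. With this identification fixed, the residual non-uniqueness of the $G_\Delta$-reduction (twisting $L_1$ by a $2$-torsion line bundle, or rescaling $q_U$) preserves both the hyperbolic shape of $q_U$ and the shape of the decomposition, so the conclusion $\beta_1 = \beta_2 = 0$ is choice-independent.
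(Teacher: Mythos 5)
Your argument is correct and follows essentially the same route as the paper's own proof: invoke Corollary~\ref{cor:GDelta-higgs} to force $\beta = q_U^t\otimes\tilde\beta$, use $w_1=0$ to put $q_U$ in hyperbolic form so that $\beta$ becomes off-diagonal (hence $\beta_1=\beta_2=0$), and contradict the requirement $\beta_2\neq 0$ from Proposition~\ref{prop:Vshape}. Your extra care in matching the orthogonal splitting of $U$ with the canonical decomposition $V=N\oplus N^{-1}K$ via the $\gamma$-isotropic subbundles is a slightly more explicit version of the paper's remark that the two decompositions agree up to a twist by a $2$-torsion line bundle.
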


\begin{proof} Let $(V,\beta,\gamma)$ be a polystable $\Sp(4,\R)$-Higgs
  bundle for which $\deg(V)=2g-2$.  Then $\gamma$ is an isomorphism
  and $V=W\otimes L_0$ where $W$ is an $\OO(2,\C)$-bundle and
  $L_0^2=K$ (see Section \ref{subs:cayley}).  Suppose that the
  structure group reduces to $G_{\Delta}$.  Then by Corollary
  \ref{cor:GDelta-higgs} and the remark following it, $V$ has a second
  decomposition $V=U\otimes L$ with $L^2=K$. Since the bundles in this
  decomposition are determined only up to a twist by a square root of
  the trivial line bundle, we can assume that $L=L_0$, and hence that
  $U=W$.  It follows, again by Corollary \ref{cor:GDelta-higgs}, that
  $ \beta=q^t\otimes\tilde{\beta}$ where $q$ is the quadratic form on
  $W$ and $\tilde{\beta}\in H^0(L^2K)$.

If $w_1=0$ then $V$ decomposes as
$$V=(L\oplus L^{-1})\otimes K^{1/2}=N\oplus N^{-1}K\ $$
and the quadratic form on $W=L\oplus L^{-1}$ is given by
$$ q=\left(
\begin{smallmatrix}
0 & 1 \\
1 & 0
\end{smallmatrix}
\right)\ .$$
It follows that
$$ \beta=\left(
\begin{smallmatrix}
0 & \tilde{\beta} \\
\tilde{\beta} & 0
\end{smallmatrix}
\right)$$
with respect to the decomposition $V=N\oplus N^{-1}K$.  A
comparison with the form of $\beta$ given in (1) and (2) of
Proposition \ref{prop:Vshape} shows that this is not possible if
$(V,\beta,\gamma)$ represents a point in $\mathcal{M}^0_c$ with
$0<c<2g-2$ or $\mathcal{M}^{T}_{K^{1/2}}$.
\end{proof}

Furthermore, by comparing our description of $G_{\Delta}$-Higgs
bundles with the descriptions of minima of the Hitchin function on
$\mathcal{M}^{max}$, and hence with the list of connected components
(see Section \ref{sub:count-explicit}), we get:

\begin{theorem}\label{cor: FinalTallyGdelta}
The following components of $\mathcal{M}^{max}$ contain
$G_{\Delta}$-Higgs bundles:
\begin{enumerate}
\item any component in which $w_1\ne 0$, i.e.
$$\mathcal{M}_{w_1,w_2}\  for\  any\ (w_1,w_2)\in
({(\Z/2)}^{2g}-\{0\})\times\Z/2\ ,$$
\item the component in which $w_1=0$ and $c_1=0$, i.e.\
$\mathcal{M}_{0}^{0}$.
\end{enumerate}
\end{theorem}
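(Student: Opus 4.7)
The plan is to exhibit, in each component of $\mathcal{M}^{max}$ listed in the statement, an explicit polystable maximal $G_{\Delta}$-Higgs bundle, built directly from the classification of rank two orthogonal bundles (Proposition \ref{prop:O(2)bundles}) and the structural description of $G_{\Delta}$-Higgs bundles established in the preceding subsections.

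By Corollary \ref{cor:GDelta-higgs} and the remark following it, any polystable maximal $G_{\Delta}$-Higgs bundle has the form
\begin{equation*}
(V, \beta, \gamma) \;=\; (U \otimes L_0,\ q_U^t \otimes \tilde{\beta},\ q_U \otimes \tilde{\gamma}),
\end{equation*}
where $(U, q_U)$ is a rank $2$ orthogonal bundle, $L_0^2 = K$, $\tilde{\gamma} \in H^0(\cO)$ is non-zero, and $\tilde{\beta} \in H^0(K^2)$. For the constructions below I take $\tilde{\gamma} = 1$ and $\tilde{\beta} = 0$. In this set-up $V \otimes L_0^{-1} = U$, so $(U, q_U)$ is precisely the Cayley partner of $(V,\beta,\gamma)$, and therefore $w_i(V,\beta,\gamma) = w_i(U)$ for $i=1,2$. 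Thus realizing a prescribed component of $\mathcal{M}^{max}$ reduces to choosing $U$ with the right invariants.

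For each pair $(w_1, w_2)$ with $w_1 \neq 0$ I would use case (2) of Proposition \ref{prop:O(2)bundles}: the class $w_1$ singles out a connected double cover $\pi\colon \tilde X \to X$, and choosing $\tilde L$ in one of the two components $P^{\pm}$ of $\ker(1+\iota^*)$ produces an orthogonal bundle $U = \pi_*(\tilde L \otimes \iota^*\tilde L^{-1})$ with the prescribed Stiefel--Whitney invariants. The resulting $(V,\beta,\gamma)$ matches representative (4)(a) of Proposition \ref{prop:Vshape} and therefore lies in $\mathcal{M}_{w_1,w_2}$. For the component $\mathcal{M}^0_0$ I would instead use case (1) of Proposition \ref{prop:O(2)bundles} with $U = M \oplus M^{-1}$ for some degree-zero line bundle $M$ (for instance $M = \cO$): then $V = M L_0 \oplus M^{-1} L_0 \simeq N \oplus N^{-1} K$ with $N = M L_0$ of degree $g-1$, the quadratic form $\gamma$ takes the off-diagonal shape $\left(\begin{smallmatrix} 0 & 1 \\ 1 & 0 \end{smallmatrix}\right)$, and the Cayley partner has vanishing first Chern class, placing the triple in $\mathcal{M}^0_0$ as described in representative (3) of Proposition \ref{prop:Vshape}.

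The one non-routine check is polystability of each construction. For the first family it is immediate since the orthogonal bundles arising from case (2) are themselves stable, so Proposition \ref{prop:stability}(2) applies directly. For the second family one is in the borderline situation $\deg(N) = g-1$ with $\beta_1 = \beta_2 = 0$ (indeed our $\beta$ vanishes entirely), and polystability then follows from Proposition \ref{prop:stability}(1)(b)(iii). The main logistical obstacle I anticipate is bookkeeping: keeping the decomposition $V = U \otimes L_0$ coming from the $G_{\Delta}$-reduction consistent with both the Cayley decomposition $V = W \otimes L_0$ and, when $w_1=0$, with the decomposition $V = N \oplus N^{-1}K$ of Proposition \ref{prop:Vshape}, so that $q_U$ is correctly identified with $\gamma$ and the Stiefel--Whitney invariants read off the Cayley partner indeed pick out the advertised component. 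Once these identifications are pinned down, the listed components are covered and the theorem follows.
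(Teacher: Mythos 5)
Your proposal is correct and follows essentially the same route as the paper: both construct explicit $G_{\Delta}$-Higgs bundles of the form $(U\otimes L_0,\ 0,\ q_U\otimes 1_{L_0})$ with $L_0^2=K$, identify $U$ with the Cayley partner, and read off the component from the Stiefel--Whitney classes of $(U,q_U)$, using stable orthogonal bundles of type (2) for $w_1\neq 0$ and the split polystable case $U=M\oplus M^{-1}$ with $\deg M=0$ for $\mathcal{M}^0_0$. The only cosmetic difference is that you verify polystability case by case via Proposition \ref{prop:stability}, whereas the paper deduces it in one stroke from (poly)stability of $U$ as an orthogonal bundle.
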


\begin{proof} We construct $\Sp(4,\R)$-Higgs bundles whose structure group reduces to $G_{\Delta}$ and show explicitly that they lie in the requisite components of $\mathcal{M}^{max}$.  Let $U$ be a stable $\OO(2,\C)$-bundle over $X$ and let $L$ be a square root of $K$.
Let $(w_1,w_2)$ be the first and second Stiefel-Whitney classes of
$U$ and let $q_U:U\longrightarrow U^*$ be the (symmetric)
isomorphism which defines the orthogonal structure on $U$.  Consider the
data $(V,\beta,\gamma)$, in which
\begin{itemize}
\item $V=U\otimes L$,
\item $\beta:V^*\longrightarrow VK$ is the zero map, and
\item $\gamma:V\longrightarrow V^*K$ is given by $q_U\otimes I_L$, where $I_L$ is the identity map on $L$,
\end{itemize}

By construction, the structure group of $V$ reduces to $\CO(2,\C)$
and the Higgs fields $\beta$ and $\gamma$ take values in
$\liemc_{\Delta}$. Thus $(V,\beta,\gamma)$ defines a
$G_{\Delta}$-Higgs bundle. It is polystable because the bundle $V$
is stable as an $\CO(2,\C)$ bundle.

If $(V,\beta,\gamma)$ is polystable as a $G_{\Delta}$-Higgs bundle
then it is polystable as an $\Sp(4,\R)$-Higgs bundle.  Since
$\deg(V)=2\deg(L)=2g-2$, it follows $(V,\beta,\gamma)$ lies in one of
the connected components of $\mathcal{M}^{max}$. As described in
Section \ref{sub:count}, the component containing $(V,\beta,\gamma)$
is labeled by invariants which classify the Cayley partner of
$(V,\beta,\gamma)$.  Since $L^2=K$ we may identify $U$ as the Cayley
partner. The invariants of $(V,\beta,\gamma)$ are thus $(w_1,w_2)$
if $w_1\ne 0$.  If $w_1=0$ then $U$ decomposes as
$$U=M\oplus M^{-1}\ $$
with $\deg(M)\ge 0$. The invariants of $U$ are then $(0,\deg(M))$. We
observe, finally, that $\deg(M)=0$ if $U$ is polystable.
\end{proof}

\section{Analysis of $G_{*}$-Higgs bundles II: $G_p$-Higgs bundles}
\label{sec:analysis-II}

\subsection{Generalities}

Recall the abstract description of $G_p$ as an extension
\begin{equation}
\label{eq:4}
\{1\} \to \SL(2,\R)\times \SL(2,\R) \to G_p \to \Z/2 \to \{0\},
\end{equation}
in fact, a semi-direct product
\begin{equation}\label{Gp_semidirect}
G_p = (\SL(2,\R)\times \SL(2,\R)) \rtimes \Z/2.
\end{equation}
Also,
\begin{proposition}
The maximal compact subgroups, $H_p\subset G_p$, and their 
complexifications $H^{\C}_p$ are conjugate to
\begin{align}
H_p&=(\SO(2)\times\SO(2))\rtimes\Z/2,\\
H^{\C}_p&=(\SO(2,\C)\times\SO(2,\C))\rtimes\Z/2\ .
\end{align}
\end{proposition}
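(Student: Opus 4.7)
The plan is to apply Lemma~\ref{rem:subgroups} directly to $G_p$, using the explicit matrix description given in Proposition~\ref{prop:GpandGdelta}. This reduces the statement to two verifications: (i) that the Lie algebra $\mathfrak{g}_p$ is canonically embedded in $\mathfrak{sp}(4,\R)$, and (ii) that the explicit intersection $H \cap G_p$ coincides with $(\SO(2) \times \SO(2)) \rtimes \Z/2$ up to conjugation. Once $H_p$ is realised concretely as a linear group, its complexification will follow by simply allowing complex entries, since complexification commutes with finite extensions.

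For (i), with respect to $J_{12}$ the identity component of $G_p$ is $\SL(2,\R) \times \SL(2,\R)$ embedded in block-diagonal form, and hence $\mathfrak{g}_p$ consists of matrices $\bigl(\begin{smallmatrix} A & 0 \\ 0 & B \end{smallmatrix}\bigr)$ with $A,B \in \mathfrak{sl}(2,\R)$. This subspace is manifestly closed under $X \mapsto -X^t$, so $\mathfrak{g}_p$ is $\theta$-invariant. For (ii), Proposition~\ref{prop:GpandGdelta} tells us that an element of $G_p$ is either block-diagonal with blocks in $\SL(2,\R)$, or block-antidiagonal $\bigl(\begin{smallmatrix} 0 & Y \\ Z & 0 \end{smallmatrix}\bigr)$ where imposing the symplectic condition forces $Y, Z \in \SL(2,\R)$ as well. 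Requiring such a matrix to be orthogonal (hence to lie in the $J_{12}$-version of $\U(2)$) forces each block into $\SL(2,\R) \cap \OO(2) = \SO(2)$. The block-diagonal intersections then give $\SO(2) \times \SO(2)$, and the block-antidiagonal ones contribute a single coset represented by the swap matrix $\bigl(\begin{smallmatrix} 0 & I \\ I & 0 \end{smallmatrix}\bigr)$. An easy direct computation shows that conjugation by the swap matrix exchanges the two $\SO(2)$-factors, so $H_p = H \cap G_p = (\SO(2) \times \SO(2)) \rtimes \Z/2$; by Lemma~\ref{rem:subgroups} this is a maximal compact subgroup of $G_p$.

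For the complexification, $H_p$ has been realised concretely as the set of block-diagonal or block-antidiagonal matrices with blocks in $\SO(2)$. Its Zariski closure in $\GL(4,\C)$, which is the complexification $H_p^{\C}$, is obtained by replacing $\SO(2)$ with $\SO(2,\C)$ in each block while retaining the finite $\Z/2$-extension generated by the swap matrix. More abstractly, the identity component of $H_p$ complexifies to $\SO(2,\C) \times \SO(2,\C)$, the component group $\pi_0(H_p) = \Z/2$ is unchanged, and the conjugation action of the swap on each factor is defined over $\Z$. This yields $H_p^{\C} = (\SO(2,\C) \times \SO(2,\C)) \rtimes \Z/2$.

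The argument is essentially a direct calculation with no deep obstacle; the only bookkeeping issue is reconciling the conventions, since Proposition~\ref{prop:GpandGdelta} describes $G_p$ with respect to $J_{12}$ while the standard embedding of $\U(2) \subset \Sp(4,\R)$ in (\ref{eqn:U(2)}) uses $J_{13}$. The word \emph{conjugate} in the statement allows us to pick whichever convention is most convenient, and the change-of-basis matrix of Appendix~\ref{subs:Sp4R} provides the explicit conjugation translating between the two descriptions.
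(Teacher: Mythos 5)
Your proof is correct, and it follows essentially the route the paper itself takes: the paper states this proposition without a separate proof, but the surrounding text performs the same verifications (the block description of $G_p$ from Proposition~\ref{prop:GpandGdelta}, the observation that $\SO(2)\times\SO(2)$ lands inside the chosen maximal compact $\U(2)$, and the appeal to the canonical embedding of the Lie algebra via Remark~\ref{rem:ss-cartan-compatibility}, which is just Lemma~\ref{rem:subgroups} in the semisimple case). Your explicit handling of the $J_{12}$ versus $J_{13}$ conventions and of the swap matrix generating the $\Z/2$ is exactly the intended bookkeeping.
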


With respect to $J_{13}$ the embedding (\ref{eqn: product}) becomes
\begin{equation}
(A,B)\longmapsto A\otimes \begin{pmatrix}1&0\\0&0\end{pmatrix}+B\otimes \begin{pmatrix}0&0\\0&1\end{pmatrix}
\end{equation}
showing that $\SO(2)\times\SO(2)$ (a maximal compact subgroup of
$\SL(2,\R)\times\SL(2,\R)$) embeds in the choice of maximal compact
subgroup of $\Sp(4,\R)$) (i.e.\ $\U(2)$) defined by (\ref{eqn:U(2)}).
After conjugation by $T=\begin{pmatrix}1&i\\1&-i\end{pmatrix}\otimes I$ this yields an embedding of $\SO(2,\C)\times\SO(2,\C)$ in $\SL(4,\C)$ given by 
\begin{equation}\label{eqn: SO2CxSO2Cembed}
\begin{pmatrix}u&-v\\ v&u\end{pmatrix},\begin{pmatrix}z&-w\\ w&z\end{pmatrix}\mapsto
\begin{pmatrix}u+iv&0&0&0\\ 0&z+iw&0&0\\ 0&0&u-iv&0\\ 0&0&0&z-iw\end{pmatrix}.
\end{equation}
Either way, since $\mathbb{G}_{p}$ is semisimple, it follows from
Remark~\ref{rem:ss-cartan-compatibility} that
\begin{proposition} The embedding defined in (1) of
Proposition~\ref{prop:GpandGdelta} makes $\mathbb{G}_{p}$ into a
reductive subgroup of $\Sp(4,\R)$.
\end{proposition}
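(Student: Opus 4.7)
The plan is to invoke Remark~\ref{rem:ss-cartan-compatibility}, since $G_p \cong (\SL(2,\R)\times\SL(2,\R))\rtimes\Z/2$ is semisimple. Under that remark, it suffices to verify two things for the embedding of Proposition~\ref{prop:GpandGdelta}(1): first, that the a priori chosen maximal compact subgroup $H_p = (\SO(2)\times\SO(2))\rtimes\Z/2$ coincides with $\U(2)\cap G_p$ inside $\Sp(4,\R)$; second, that $\lieg_p \hookrightarrow \mathfrak{sp}(4,\R)$ is canonically embedded, i.e.\ stable under the Cartan involution $\theta(X)=-X^t$.

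For the first step I would work after conjugating the $J_{12}$-description of $G_p$ back to the $J_{13}$-description used to fix $\U(2)\subset\Sp(4,\R)$ in (\ref{eqn:U(2)}); the paper already carries out the analogous change of coordinates for the identity component via $T=\bigl(\begin{smallmatrix}1&i\\1&-i\end{smallmatrix}\bigr)\otimes I$ and exhibits $\SO(2)\times\SO(2)$ as a subgroup of $\U(2)$. To get equality with $\U(2)\cap G_p$, one intersects $\U(2)$ with the description of $G_p$ in Proposition~\ref{prop:GpandGdelta}(1) (block-diagonal and block-antidiagonal matrices, in the $J_{12}$-frame): the block-diagonal part intersects $\U(2)$ exactly in the image of $\SO(2)\times\SO(2)$ (since a real $2\times 2$ symplectic matrix lies in $\SO(2)$ precisely when it is orthogonal), and the block-antidiagonal part supplies the $\Z/2$ factor after conjugating back. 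This is essentially an elementary linear algebra check.

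For the second step, the Lie algebra $\lieg_p$ of $G_p$, expressed in the $J_{13}$-frame via the same conjugation, is the image of $\mathfrak{sl}(2,\R)\oplus\mathfrak{sl}(2,\R)$ as a certain subalgebra of real $4\times 4$ matrices inside $\mathfrak{sp}(4,\R)$. Since $\mathfrak{sl}(2,\R)$ is closed under $A\mapsto -A^t$, and the embedding expresses $\lieg_p$ using matrices built from $\mathfrak{sl}(2,\R)$-blocks tensored with fixed real symmetric idempotents (as in the displayed formula $(A,B)\mapsto A\otimes(\begin{smallmatrix}1&0\\0&0\end{smallmatrix})+B\otimes(\begin{smallmatrix}0&0\\0&1\end{smallmatrix})$), applying $-(\cdot)^t$ preserves the form of these matrices and only permutes within $\lieg_p$. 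Thus $\theta(\lieg_p)=\lieg_p$, which is the canonicity condition.

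The only mild obstacle is keeping the bookkeeping straight between the two conventions $J_{12}$ and $J_{13}$: the embeddings of $G_p$ and of $\U(2)$ are most natural in different frames, so one must consistently perform the conjugation (already tabulated in the excerpt and in Appendix~\ref{subs:Sp4R}) before comparing Cartan data. Once this is done, both verifications are routine, and Lemma~\ref{rem:subgroups}/Remark~\ref{rem:ss-cartan-compatibility} immediately produce the induced Cartan data $(G_p,H_p,\theta|_{\lieg_p},B|_{\lieg_p})$ making $G_p$ a reductive subgroup of $(\Sp(4,\R),\U(2),\theta,B)$.
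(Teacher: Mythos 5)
Your proposal is correct and follows essentially the same route as the paper: the paper also rewrites the embedding in the $J_{13}$-frame as $(A,B)\mapsto A\otimes\bigl(\begin{smallmatrix}1&0\\0&0\end{smallmatrix}\bigr)+B\otimes\bigl(\begin{smallmatrix}0&0\\0&1\end{smallmatrix}\bigr)$, observes that $\SO(2)\x\SO(2)$ lands inside the chosen $\U(2)$, and then invokes semisimplicity of $G_p$ together with Remark~\ref{rem:ss-cartan-compatibility}. You simply spell out the two verifications ($H_p=\U(2)\cap G_p$ and $\theta$-invariance of $\lieg_p$) in more detail than the paper, which leaves them as routine checks.
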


Since the identification (\ref{Gp_semidirect}) induces an
isomorphism of Lie algebras
\begin{displaymath}
\liesl(2,\R)\times\liesl(2,\R) \to \Lie(G_p),
\end{displaymath}
we have the following result.

\begin{proposition}\label{prop: bundlesonly}
A $G_p$-Higgs bundle $(V,\beta,\gamma)$ admits a reduction
of structure group to $\SL(2,\R)\times\SL(2,\R)$ if and only if
the bundle $V$ admits a reduction of structure group from $H_p^{\C}$ to
$\SO(2,\C)\times\SO(2,\C)$.  \qed
\end{proposition}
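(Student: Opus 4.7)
The plan is to exploit the fact that the discrete extension in \eqref{eq:4} forces the infinitesimal data of $G_p$ and of $\SL(2,\R)\times\SL(2,\R)$ to coincide, so that the Higgs field component of the reduction takes care of itself once the principal bundle has been reduced.

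The ``only if'' direction is immediate from Definition~\ref{def:G-reduction}: a reduction of the $G_p$-Higgs bundle $(V,\beta,\gamma)$ to an $\SL(2,\R)\times\SL(2,\R)$-Higgs bundle includes, by definition, a holomorphic reduction of the principal $H_p^\C$-bundle underlying $V$ to the complexified maximal compact of $\SL(2,\R)\times\SL(2,\R)$, which is $\SO(2,\C)\times\SO(2,\C)$.

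For the ``if'' direction, the first step is to observe that since $G_p = (\SL(2,\R)\times\SL(2,\R))\rtimes\Z/2$ is a semidirect product with a discrete quotient, the inclusion $\SL(2,\R)\times\SL(2,\R)\hookrightarrow G_p$ induces an isomorphism of Lie algebras $\liesl(2,\R)\oplus\liesl(2,\R)\xrightarrow{\sim}\Lie(G_p)$. The Cartan involution on $G_p$ (restricted from $\Sp(4,\R)$) restricts to the standard Cartan involution on the identity component, so the Cartan decompositions agree. In particular the isotropy representation spaces are canonically identified,
\begin{displaymath}
\liem_p^\C \;=\; \liem_{\SL(2,\R)\times\SL(2,\R)}^\C,
\end{displaymath}
and the isotropy action of $H_p^\C = (\SO(2,\C)\times\SO(2,\C))\rtimes\Z/2$, when restricted to the identity component $\SO(2,\C)\times\SO(2,\C)$, coincides with the isotropy action of the latter on its own $\liem^\C$.

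With this identification in hand, suppose $E'\subset E$ is a holomorphic reduction of the principal $H_p^\C$-bundle $E$ (underlying $V$) to $\SO(2,\C)\times\SO(2,\C)$. Then the associated bundles coincide:
\begin{displaymath}
E(\liem_p^\C) \;=\; E'\bigl(\liem_{\SL(2,\R)\times\SL(2,\R)}^\C\bigr).
\end{displaymath}
Therefore the given Higgs field $\varphi$, viewed as a holomorphic section of $E(\liem_p^\C)\otimes K$, is automatically a holomorphic section of $E'(\liem_{\SL(2,\R)\times\SL(2,\R)}^\C)\otimes K$, and both conditions of Definition~\ref{def:G-reduction} are satisfied. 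The only real point to check — and what I expect to be the mildest of obstacles — is the compatibility of the isotropy actions, which reduces to the observation that conjugation by the nontrivial component of $H_p^\C$ acts by an outer automorphism swapping the two $\liesl(2,\C)$-summands, and in particular preserves the identity-component isotropy representation.
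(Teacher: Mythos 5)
Your argument is correct and is exactly the one the paper intends: the proposition is stated with \verb|\qed| and justified only by the preceding one-line observation that the inclusion $\SL(2,\R)\times\SL(2,\R)\hookrightarrow G_p$ induces an isomorphism of Lie algebras, so that $\liem_p^{\C}$ and its isotropy representation are unchanged and the Higgs field condition in Definition~\ref{def:G-reduction} is automatic once the principal bundle is reduced. Your write-up simply makes explicit the details the paper leaves implicit.
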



\begin{proposition}\label{prop: Gpstructure}
If $(V,\beta,\gamma)$ is an $\Sp(4,\R)$-Higgs bundle for which the
structure group reduces to $\SL(2,\R)\times\SL(2,\R)$, then:
\begin{enumerate}
\item The bundle $V$ has the form
\begin{equation}
V=L_1\oplus L_2.
\end{equation}

\item The components of the Higgs field are diagonal with respect to
this decomposition, i.e.
\begin{equation}
\beta= \begin{pmatrix}\beta_1&0\\0&\beta_2\end{pmatrix}\ ,\
\quad \gamma= \begin{pmatrix}\gamma_1&0\\0&\gamma_2\end{pmatrix}
\end{equation}
with $\beta_i\in H^0(L_i^2K)$ and $\gamma_i\in
H^0(L_i^{-2}K)$.
\end{enumerate}
\end{proposition}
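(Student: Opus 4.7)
The plan is to deduce both statements from Proposition~\ref{prop: bundlesonly} together with Definition~\ref{def:G-reduction} of reduction of structure group, using the explicit form of the embedding $\SO(2,\C) \times \SO(2,\C) \hookrightarrow \GL(2,\C)$ recorded in \eqref{eqn: SO2CxSO2Cembed}. By Proposition~\ref{prop: bundlesonly}, the hypothesis is equivalent to a reduction of the underlying principal $H_p^{\C}$-bundle to a principal $\SO(2,\C)\times\SO(2,\C)$-bundle $E'$, together with a compatible Higgs section valued in the associated bundle $E'(\liemc')\otimes K$, where $\liemc'$ is the isotropy representation of $\SO(2,\C)\times\SO(2,\C)$ on the complexified Cartan complement $\liemc(\SL(2,\R)\times\SL(2,\R))$.

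For part (1), I would observe that after the change of basis implicit in \eqref{eqn: SO2CxSO2Cembed}, the image of $\SO(2,\C)\times\SO(2,\C)$ inside the maximal complex compact subgroup $\GL(2,\C)\subset\SL(4,\C)$ of $\Sp(4,\R)$ is precisely the diagonal torus of $\GL(2,\C)$. Consequently a reduction of the $\GL(2,\C)$-bundle associated to $V$ to $\SO(2,\C)\times\SO(2,\C)$ is the same as a reduction to the diagonal torus, which is the same as a splitting
\[
V = L_1 \oplus L_2
\]
into a direct sum of line bundles, where $L_1$ and $L_2$ are the two line bundles extracted from $E'$ via the two $\C^*$-factors.

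For part (2), I would compute $\liemc'$ using the fact that the Cartan decomposition of $\liesl(2,\R)\times\liesl(2,\R)$ is the direct sum of the Cartan decompositions of the two factors. Each $\SL(2,\R)$-factor, in the sense of Remark~\ref{rem:SL2R-higgs}, contributes an isotropy bundle $L_i^2 \oplus L_i^{-2}$, so by equivariance of the embedding (the vertical maps in \eqref{CD:cartandata}) one obtains
\[
E'(\liemc') = \bigl(L_1^2 \oplus L_2^2\bigr) \oplus \bigl(L_1^{-2} \oplus L_2^{-2}\bigr)
\]
sitting inside
\[
E(\liemc) = \Sym^2 V \oplus \Sym^2 V^* = \bigl(L_1^2 \oplus L_1 L_2 \oplus L_2^2\bigr) \oplus \bigl(L_1^{-2} \oplus L_1^{-1} L_2^{-1} \oplus L_2^{-2}\bigr)
\]
as the ``diagonal'' summands with respect to $V = L_1 \oplus L_2$. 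A holomorphic section $\varphi = (\beta,\gamma)$ of $E'(\liemc')\otimes K$ therefore has vanishing off-diagonal components, yielding $\beta = \mathrm{diag}(\beta_1,\beta_2)$ and $\gamma = \mathrm{diag}(\gamma_1,\gamma_2)$ with $\beta_i \in H^0(L_i^2 K)$ and $\gamma_i \in H^0(L_i^{-2} K)$, as claimed.

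The main obstacle is purely bookkeeping: tracking the various realizations of $\Sp(4,\R)$ (the passage between $J_{12}$ and $J_{13}$), then passing to the complexified maximal compact and identifying $\SO(2,\C)\times\SO(2,\C)$ inside $\GL(2,\C)$ correctly via \eqref{eqn: SO2CxSO2Cembed}. Once the diagonal-torus identification is in place, both statements are formal consequences of Definition~\ref{def:G-reduction} and the compatibility diagram \eqref{CD:cartandata}.
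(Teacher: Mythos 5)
Your proposal is correct and follows essentially the same route as the paper: part (1) comes from observing that \eqref{eqn: SO2CxSO2Cembed} lands in the diagonal torus of $\GL(2,\C)$ (the paper phrases this as applying \eqref{eqn: SO2CxSO2Cembed} to the transition functions), and part (2) from the Higgs field being forced into $E'(\liemc')\otimes K = \bigl((L_1^2\oplus L_2^2)\oplus(L_1^{-2}\oplus L_2^{-2})\bigr)\otimes K$ inside $\bigl(\Sym^2 V\oplus \Sym^2 V^*\bigr)\otimes K$, which is exactly the paper's argument.
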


\begin{proof}
  For (1), apply (\ref{eqn: SO2CxSO2Cembed}) to the transition
  functions for the $\SO(2,\C)\times\SO(2,\C)$ bundle. As for (2), if
  the structure group of the Higgs bundle reduces to a subgroup $G_*$
  then the Higgs field takes vales in $\mathfrak{m}_*^{\C}\subset
  \mathfrak{m}^{\C}$ where $\mathfrak{m}_*^{\C}= \liegc_*/\liehc_*$,
  with the usual meanings for $\liegc_*, \liehc_*$, etc.  In our case
  , i.e.\ $G_*=\SL(2,\R)\times\SL(2,\R)$, expressed in global terms
  this means that $\beta$ must lie in
\begin{equation}
(L_1^2\oplus L_2^2)K\subset \Sym^2(L_1\oplus L_2)K
\end{equation}
and $\gamma$ must lie in
\begin{equation}
(L_1^{-2}\oplus L_2^{-2})K\subset \Sym^2(L^{-1}_1\oplus L^{-1}_2)K
\end{equation}
\end{proof}

\begin{remark} Proposition \ref{prop: Gpstructure} says
simply that if the structure group of $(V,\beta,\gamma)$ reduces to
$\SL(2,\R)\times\SL(2,\R)$, then $(V,\beta,\gamma)$ is a direct sum
of $\SL(2,\R)$-Higgs bundles, i.e.
\begin{equation}
(V,\beta,\gamma)=(L_1,\beta_1,\gamma_2)\oplus(L_2,\beta_2,\gamma_2).
\end{equation}
Of course for $(V,\beta,\gamma)$ to be {\it polystable} as
an $\Sp(4,\R)$-Higgs bundle, each $(L_i,\beta_i,\gamma_i)$ must be
(poly)stable as an $\SL(2,\R)$-Higgs bundle (cf.\
Remark~\ref{rem:SL2R-stability}). 

\end{remark}

\subsection{$G_p$-Higgs versus $\SL(2,\R)\times\SL(2,\R)$}\label{subs:method}

Let $(V,\beta,\gamma)$ be a $G_p$-Higgs bundle. The obstruction to
reducing the structure group to $\SL(2,\R)\times\SL(2,\R) \subseteq
G_p$ defines an invariant (depending, by Proposition \ref{prop:
bundlesonly}, only on $V$)
\begin{equation}\label{defn:xi}
\xi(V,\beta,\gamma) \in H^1(X,\Z/2).
\end{equation}
 Let $\{t_{\alpha\beta}\}$ be a {\v C}ech $\Z/2$-cocycle
representing the class $
\xi(V,\beta,\gamma)$ and  let
\begin{equation}\label{eqn:covermap}
p: X'\longrightarrow X
\end{equation}
 be an unramified double cover defined by
$\{t_{\alpha\beta}\}$. Note that if $\xi(V,\beta,\gamma)$ is non-zero then
\begin{equation}\label{eqn:g'}
g' = g(X') = 2g-1\ .
\end{equation}

\begin{proposition} Let $V'=p^*V$ be the pull-back of $V$ and let $\beta'=p^*\beta$ and $\gamma'=p^*\gamma$ be the pull-backs of the Higgs fields.
\begin{enumerate}
\item The bundle $V'$ admits a reduction of structure group to $\C^*\times\C^*$, i.e.\ we can write $V'$ as a sum of line bundles $L_1'\oplus L_2'$.
\item If $\iota:X'\longrightarrow X'$ is the involution covering the projection onto $X$ then $\iota^*(V')=V'$.
\item Both $\beta'$ and $\gamma'$ decompose, as $(\beta_1'\oplus\beta_2')$ and  $(\gamma_1'\oplus\gamma_2')$ respectively, with respect to the splitting $V'=L_1'\oplus L_2'$.
\item The pull-back of $G_p$-Higgs bundle $(V,\beta,\gamma)$ defines an $\SL(2,\R)\times\SL(2,\R)$-Higgs bundle, namely
\begin{equation}
p^*(V,\beta,\gamma)=(L_1',\beta_1',\gamma_1')\oplus
(L_2',\beta_2',\gamma_2')\ .
\end{equation}
\item If $(V,\beta,\gamma)$ is polystable and $\deg(V)=2g-2$, i.e.\ if $(V,\beta,\gamma)$ represents a point in $\mathcal{M}^{max}$, then in $(V',\beta',\gamma')$ we have

$$\deg(L_1)=\deg(L_2)=g'-1=2g-2\ .$$
\end{enumerate}
\end{proposition}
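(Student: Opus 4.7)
My plan is to treat the five claims as a chain in which (1) carries the substantive content and the remaining parts follow essentially formally. For (1), I will start from the short exact sequence
\[
1 \to \SO(2,\C)\times\SO(2,\C) \to H_p^\C \to \Z/2 \to 1
\]
induced by (\ref{eq:4}). Applied to the principal $H_p^\C$-bundle $E$ underlying $V$, the connecting map in non-abelian cohomology yields an obstruction class in $H^1(X,\Z/2)$; by construction this class is precisely the invariant $\xi(V,\beta,\gamma)$ of (\ref{defn:xi}), and its vanishing is equivalent to $E$ reducing to the identity component $\SO(2,\C)\times\SO(2,\C)\simeq\C^*\times\C^*$. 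Since $p\colon X'\to X$ is by definition the double cover associated to $\xi$, we have $p^*\xi = 0$, so $V' = p^*V$ admits such a reduction and therefore decomposes as $L_1'\oplus L_2'$. Statement (2) is then immediate from $p\circ\iota = p$, which gives $\iota^*V' = (p\circ\iota)^*V = V'$, together with the analogous identities for $\beta'$ and $\gamma'$.

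For (3), I will trace the Higgs field through the reduction of structure group performed in step (1). A $G_p$-Higgs field lies in $H^0(X, E(\liem_p^\C)\otimes K)$, and in our case $\liem_p^\C \simeq \liem^\C(\SL(2,\R))\oplus\liem^\C(\SL(2,\R))$ with the $\Z/2$-quotient of $H_p^\C$ swapping the two summands. After pulling back to $X'$ and reducing the structure group to the identity component, the swap is trivialised and the isotropy bundle splits according to the weights of $\C^*\times\C^*$ on $\liem^\C(\SL(2,\R))^{\oplus 2}$: each factor acts by weight $\pm 2$ on its own summand and trivially on the other. This yields
\[
p^*E(\liem_p^\C) \simeq (L_1')^2\oplus(L_1')^{-2}\oplus(L_2')^2\oplus(L_2')^{-2},
\]
and matching against $\liem^\C = \Sym^2 V\oplus\Sym^2 V^*$ produces the diagonal decompositions of $\beta'$ and $\gamma'$ with $\beta_i'\in H^0((L_i')^2 K_{X'})$ and $\gamma_i'\in H^0((L_i')^{-2} K_{X'})$. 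Claim (4) is then a formal repackaging of this data as a pair of $\SL(2,\R)$-Higgs bundles in the sense of Remark~\ref{rem:SL2R-higgs}.

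Claim (5) is a short degree computation. The polystability of $(V,\beta,\gamma)$ combined with $\deg V = 2g-2$ lets me invoke Proposition~\ref{prop:maxtoledo} to conclude that $\gamma\colon V\to V^*\otimes K$ is an isomorphism. Pulling back, $\gamma'\colon V'\to V'^*\otimes K_{X'}$ is an isomorphism, and since $\gamma' = \gamma_1'\oplus\gamma_2'$ is diagonal, each $\gamma_i'\colon L_i'\to(L_i')^{-1}\otimes K_{X'}$ is itself an isomorphism, giving $2\deg L_i' = 2g'-2$ and hence $\deg L_i' = g'-1 = 2g-2$ by (\ref{eqn:g'}). The main step demanding genuine verification rather than pure bookkeeping will be (3): it is not enough that the isotropy bundle $E(\liem_p^\C)$ split on $X'$ after reduction, I also need to check that the Higgs field, which lives globally in this bundle on $X$, pulls back to a section respecting the split form. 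This amounts to a naturality statement for the isotropy construction under pullback followed by refinement of structure group, but it is the one point which requires care.
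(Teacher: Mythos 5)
Your argument is correct. For parts (1)--(4) the paper simply says they ``follow by construction'', and your unpacking --- the connecting map $H^1(X,H_p^\C)\to H^1(X,\Z/2)$ identifying the obstruction with $\xi(V,\beta,\gamma)$, the vanishing of $p^*\xi$ on the associated double cover, and the weight decomposition of the isotropy representation of $\SO(2,\C)\times\SO(2,\C)$ forcing $\beta'$ and $\gamma'$ to be diagonal --- is exactly what that phrase is hiding, so there is no real divergence there. Where you genuinely differ is part (5). The paper deduces $\deg(L_1')=\deg(L_2')=\tfrac12\deg(V')$ from the $\iota$-invariance in part (2) (when $\xi\neq 0$ the deck transformation exchanges $L_1'$ and $L_2'$, so their degrees agree), and then computes $\deg(V')=2\deg(V)=4g-4$ and invokes $g'=2g-1$. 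You instead apply Proposition~\ref{prop:maxtoledo} to conclude that $\gamma$ is an isomorphism, pull back, and read off $2\deg(L_i')=2g'-2$ from the diagonal isomorphisms $\gamma_i'\colon L_i'\to (L_i')^{-1}\otimes K_{X'}$. Both are valid: the paper's route uses only $\deg V=2g-2$ together with the symmetry of the cover, while yours leans on polystability through Proposition~\ref{prop:maxtoledo} but avoids having to argue that $\iota$ actually swaps the two line bundles. Note that both computations, and indeed the identity $g'-1=2g-2$ in the statement, implicitly assume $\xi\neq 0$ so that $X'$ is the connected cover of genus $2g-1$.
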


\begin{proof} Parts (1)--(4) follow by construction. It follows from
  (2) that $\deg(L_1)=\deg(L_2)=\frac{1}{2}\deg(V')$. Part (5) thus
  follows from (\ref{eqn:g'}) and
\begin{align*}
\deg(V')=\deg(\pi^*(V))&=\int_{X'} c_1(\pi^*(V))\\
&=\int_{\pi_*(X')} c_1(V)=2\int_{X}c_1(V)=2 \deg(V).
\end{align*}
\end{proof}

\subsection{Identifying components with $G_p$-Higgs bundles}

We now determine which components of $\mathcal{M}^{max}$ contain
Higgs bundles for which the structure group reduces to $G_p$
or to $\SL(2,\R)\times\SL(2,\R)$. In the next section we
consider components for which the invariant $w_1=0$, and in section
\ref{subs:w1ne0} we consider the case $w_1\ne 0$.

\subsubsection{The case $w_1= 0$.}

The invariant $w_1$ is the first Stiefel-Whitney class of the Cayley
partner of a maximal $\Sp(4,\R)$-Higgs bundle. Using the notation of
Section \ref {sub:count}, the connected components of
$\mathcal{M}^{max}$ in which $w_1=0$ are the components
$\mathcal{M}^0_c$ (with $0 \leq c<2g-2$) and the connected components of $\mathcal{M}^0_{2g-2}$ (i.e.\ the components $\mathcal{M}^T_{K^{1/2}}$).

\begin{proposition}\label{cor: w1=0}\hfil
\begin{enumerate}
\item For all $c$ in the range $0<c\le 2g-2$ the connected components of $\mathcal{M}^0_c$ do not contain
$\Sp(4,\R)$-Higgs bundles which admit a reduction of structure group
to $\SL(2,\R)\times\SL(2,\R)$.

\item The component $\mathcal{M}_{0}^{0}$ does contain $\Sp(4,\R)$-Higgs
bundles which admit a reduction of structure group to
$\SL(2,\R)\times\SL(2,\R)$ 
--- and hence to $G_p$. In fact the structure group can be reduced to the diagonally embedded $\SL(2,\R)\hookrightarrow \SL(2,\R)\times\SL(2,\R)$.
\end{enumerate}
\end{proposition}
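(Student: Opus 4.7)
My plan is to prove Part (1) by contradiction via a careful analysis of the Cayley partner, and to prove Part (2) by an explicit construction using the diagonal $\SL(2,\R)$. For Part (1), suppose $(V,\beta,\gamma)\in\mathcal{M}^0_c$ with $0<c\le 2g-2$ admits a reduction to $\SL(2,\R)\times\SL(2,\R)$. By Proposition~\ref{prop: Gpstructure} the Higgs bundle splits as a direct sum $(L_1,\beta_1,\gamma_1)\oplus(L_2,\beta_2,\gamma_2)$ of $\SL(2,\R)$-Higgs bundles, each of which must itself be polystable. The Milnor--Wood bound $|\deg L_i|\le g-1$ (Remark~\ref{rem:SL2R-stability}) together with maximality $\deg L_1+\deg L_2=2g-2$ forces $\deg L_1=\deg L_2=g-1$. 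Polystability of an $\SL(2,\R)$-Higgs bundle of positive degree then forces $\gamma_i\ne 0$, and since $\deg(L_i^{-2}K)=0$ this gives $L_i^2\simeq K$; both summands are theta characteristics.

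Next I would pass to the Cayley partner. With $L_0$ a square root of $K$, the partner decomposes as $W=M_1\oplus M_2$, where $M_i=L_i^{-1}L_0$ satisfies $M_i^2\simeq\mathcal{O}$. In particular every line subbundle of $W$ has non-positive degree, so the maximum degree of a line subbundle of $W$ is $0$. On the other hand, since $(V,\beta,\gamma)\in\mathcal{M}^0_c$ we have $w_1=0$, and Proposition~\ref{prop:def_N} provides a presentation $W=L\oplus L^{-1}$ with $c=\deg L$. Combining these, $c=\deg L\le 0$. But Proposition~\ref{prop:def_N} also gives $c\ge 0$, so $c=0$, contradicting $c>0$. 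This rules out the components $\mathcal{M}^0_c$ for $0<c<2g-2$ and the Hitchin components $\mathcal{M}^T_{K^{1/2}}\subset\mathcal{M}^0_{2g-2}$ simultaneously.

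For Part (2), I would construct an explicit example by doubling a stable $\SL(2,\R)$-Higgs bundle along the diagonal. Fix a theta characteristic $L_0$ (so $L_0^2\simeq K$) and consider the $\SL(2,\R)$-Higgs bundle $(L_0,0,1)$, where $1\in H^0(L_0^{-2}K)=H^0(\mathcal{O})$; it is stable by Remark~\ref{rem:SL2R-stability}. The diagonal embedding $\SL(2,\R)\hookrightarrow\SL(2,\R)\times\SL(2,\R)\subset G_p$ associates to it the $\Sp(4,\R)$-Higgs bundle $(V,\beta,\gamma)=(L_0\oplus L_0,\;0,\;\mathrm{diag}(1,1))$. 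Polystability follows from Proposition~\ref{prop:polystability}(1) (with isomorphic summands). Its Cayley partner is $(\mathcal{O}\oplus\mathcal{O},\;\mathrm{diag}(1,1))$, so $w_1=0$ and the line bundle $L$ of Proposition~\ref{prop:def_N} is trivial; hence $(V,\beta,\gamma)\in\mathcal{M}^0_0$, and by construction it carries a reduction to the diagonal $\SL(2,\R)\subset\SL(2,\R)\times\SL(2,\R)\subset G_p$.

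The main obstacle I expect is the rigidity step in Part (1) that pins the summands to theta characteristics: combining the individual Milnor--Wood bound with the polystability analysis of $\SL(2,\R)$-Higgs bundles of maximal positive degree is what forces $L_i^2\simeq K$, and only after this is established does the Cayley partner take on its restrictive form as a sum of $2$-torsion line bundles. Once this rigidity is in hand, the degree comparison against the $W=L\oplus L^{-1}$ presentation of $\mathcal{M}^0_c$ is immediate.
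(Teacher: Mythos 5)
Your proof is correct and follows essentially the same route as the paper: both arguments use the reduction to a direct sum of polystable $\SL(2,\R)$-Higgs bundles together with maximality of the Toledo invariant to force $\deg L_1=\deg L_2=g-1$ and $L_1^2=L_2^2=K$, and Part (2) is the paper's construction verbatim (doubling $(L,0,1)$ with $L^2=K$). The only cosmetic difference is that you conclude Part (1) by bounding degrees of line subbundles of the Cayley partner $M_1\oplus M_2$ (a clean way to finish), whereas the paper compares the decomposition $V=L_1\oplus L_2$ directly against the presentation $V=N\oplus N^{-1}K$ to deduce $N^2=K$, hence $c=0$.
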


\begin{proof}
Let $(V,\beta,\gamma)$ be a maximal $\Sp(4,\R)$-Higgs bundle.
Recall that $w_1 = 0$ means that
\begin{equation}
\label{eq:22}
V = N \oplus N^{-1}K,
\qquad \gamma =
\begin{pmatrix}
0 & 1 \\
1 & 0
\end{pmatrix},
\qquad \det(V) = K.
\end{equation}

\noindent Suppose furthermore that $(V,\beta,\gamma)$ admits a
reduction to $\SL(2,\R)\times\SL(2,\R)$. Then by Proposition
\ref{prop: Gpstructure}, together with the fact that it has maximal
Toledo invariant, this means that
\begin{equation}
\label{eq:33}
V = L_1 \oplus L_2,
\qquad L_1^2 = L_2^2 = K,
\qquad
\gamma =
\begin{pmatrix}
1 & 0 \\
0 & 1
\end{pmatrix}.
\end{equation}
For (\ref{eq:22}) and (\ref{eq:33}) to be compatible there must be
diagonal embeddings
\begin{displaymath}
L_\nu \into N \oplus N^{-1}K,\quad \nu=1,2.
\end{displaymath}
This is equivalent to
\begin{displaymath}
L_1 = L_2 = N=N^{-1}K
\end{displaymath}
\noindent and hence
\begin{displaymath}
K= L^2_1 = L^2_2 = N^2.
\end{displaymath}

\noindent In particular, $\deg(N)=g-1$, i.e.

\begin{equation}
c=\deg(N)-(g-1)=0\ .
\end{equation}

\noindent This proves (1).  To prove (2), pick any $L$ such that
$L^2=K$ and construct the $\SL(2,\R)$-Higgs bundle $(L,0,\gamma)$
with $\gamma=1_L$. Then the polystable Higgs bundle

$$ (L,0,\gamma)\oplus (L,0,\gamma)\ $$
proves part (2).

\end{proof}

Proposition \ref{cor: w1=0} leaves open the
possibility that there are $G_p$-Higgs bundles with  $w_1=0$ but in which the structure group does not reduce to
$\SL(2,\R)\times\SL(2,\R)$.  The next results rules out this
possibility.

\begin{proposition}
\label{prop:Gp-double-cover}
Let $(V,\beta,\gamma)$ be a maximal $G_p$-Higgs bundle which does not reduce
to an $\SL(2,\R)\times\SL(2,\R)$-Higgs bundle. Then, on the connected double cover
\begin{displaymath}
X' \xrightarrow{p} X
\end{displaymath}
defined by the class $\xi(V,\beta,\gamma)$, there exist line bundles
${L'}_1$ and ${L'}_2$ on $X'$ such that
\begin{displaymath}
p^*V = L'_1 \oplus L'_2,
\qquad {L'}_1^2 = {L'}_2^2 = K_{X'},
\qquad
p^*(\gamma) =
\begin{pmatrix}
1 & 0 \\
0 & 1
\end{pmatrix}.
\end{displaymath}
In other words, $p^*(V,\beta,\gamma)$ is a (maximal) Higgs bundle on
$X'$ with structure group $\SL(2,\R)\times\SL(2,\R)$.
\end{proposition}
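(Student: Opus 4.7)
The plan is to use the defining property of the double cover $p\colon X'\to X$ to kill the obstruction class $\xi(V,\beta,\gamma)\in H^1(X,\Z/2)$, and then read off the structure of $p^*(V,\beta,\gamma)$ from the general results on $G_p$-Higgs bundles established in this section.

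First, I would observe that the hypothesis that $(V,\beta,\gamma)$ does not admit a reduction to $\SL(2,\R)\x\SL(2,\R)$ forces $\xi(V,\beta,\gamma)\neq 0$, so that the cover $p\colon X'\to X$ constructed from the cocycle $\{t_{\alpha\beta}\}$ in \eqref{eqn:covermap} is genuinely connected and double. Because $p^*\xi(V,\beta,\gamma)=0$, the pulled-back principal $H^\C_p$-bundle on $X'$ admits a reduction of structure group from $H^\C_p=(\SO(2,\C)\x\SO(2,\C))\rtimes\Z/2$ to $\SO(2,\C)\x\SO(2,\C)$. By Proposition~\ref{prop: bundlesonly}, this means that the pulled-back $G_p$-Higgs bundle $p^*(V,\beta,\gamma)$ reduces to an $\SL(2,\R)\x\SL(2,\R)$-Higgs bundle on $X'$.

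Next, Proposition~\ref{prop: Gpstructure} applied on $X'$ tells us that $p^*V=L'_1\oplus L'_2$, and that $p^*\beta$ and $p^*\gamma$ are diagonal with respect to this splitting, say $p^*\gamma=\mathrm{diag}(\gamma'_1,\gamma'_2)$ with $\gamma'_i\in H^0(X',(L'_i)^{-2}K_{X'})$. I would then invoke maximality: since $(V,\beta,\gamma)$ is a maximal $\Sp(4,\R)$-Higgs bundle on $X$, Proposition~\ref{prop:maxtoledo} gives that $\gamma\colon V\to V^*\otimes K$ is an isomorphism on $X$, so its pull-back $p^*\gamma\colon p^*V\to(p^*V)^*\otimes K_{X'}$ is an isomorphism on $X'$. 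Being diagonal, each component $\gamma'_i\colon L'_i\to (L'_i)^{-1}\otimes K_{X'}$ is then an isomorphism, whence $(L'_1)^2=(L'_2)^2=K_{X'}$. In particular $\deg(L'_i)=g'-1=2g-2$, so each summand $(L'_i,\beta'_i,\gamma'_i)$ is a maximal $\SL(2,\R)$-Higgs bundle on $X'$, consistent with the degree count given just before the Proposition.

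Finally, to obtain the precise normal form $p^*\gamma=\left(\begin{smallmatrix}1&0\\0&1\end{smallmatrix}\right)$ stated in the Proposition, I would use the $\SO(2,\C)\x\SO(2,\C)$-gauge freedom in the splitting $p^*V=L'_1\oplus L'_2$ to rescale each summand so that the (now nowhere vanishing) isomorphism $\gamma'_i$ becomes the identity section of $(L'_i)^{-2}K_{X'}=\mathcal{O}_{X'}$. The one point that deserves some care — and is perhaps the main subtlety of the argument — is checking that an honest global splitting $p^*V=L'_1\oplus L'_2$ exists, as opposed to just a local one; this follows from unwinding the transition data on $X'$ using the chosen trivialization of $p^*\xi(V,\beta,\gamma)$, which by construction reduces the transition cocycles from $H^\C_p$ to the connected component $\SO(2,\C)\x\SO(2,\C)$ and hence diagonalizes them after the identification \eqref{eqn: SO2CxSO2Cembed}.
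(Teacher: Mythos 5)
Your argument is correct and follows essentially the same route as the paper, which disposes of this Proposition with the single word ``Clear'' precisely because all the ingredients you assemble (triviality of the pulled-back obstruction class, the splitting and diagonal Higgs fields from Proposition~\ref{prop: Gpstructure}, and the degree count $\deg(L_i')=g'-1=2g-2$) are established in the unnamed Proposition of Section~\ref{subs:method} immediately preceding it. Your additional observations --- that $\gamma$ being an isomorphism (Proposition~\ref{prop:maxtoledo}) forces $(L_i')^2=K_{X'}$, and that a gauge rescaling normalizes $p^*\gamma$ to the identity --- are exactly the details the paper leaves implicit.
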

\begin{proof}
Clear.
\end{proof}

\begin{proposition}
Let $(V,\beta,\gamma)$ be a maximal $G_p$-Higgs bundle for which the structure group does not reduce
to $\SL(2,\R)\times\SL(2,\R)$. Assume that $w_1(V,\beta,\gamma)= 0$, in other words, that $(V,\beta,\gamma)$ is of the form (\ref{eq:22}). Then $\deg(N) = g-1$.
\end{proposition}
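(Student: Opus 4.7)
The plan is to work on the connected double cover $p \colon X' \to X$ defined by the class $\xi(V,\beta,\gamma)$. By Proposition~\ref{prop:Gp-double-cover}, there is a decomposition $p^*V = L'_1 \oplus L'_2$ with $(L'_i)^2 = K_{X'} = p^*K$, so $\deg(L'_i) = g(X') - 1 = 2g - 2$. I will compare this with the $\iota$-invariant decomposition $p^*V = p^*N \oplus p^*(N^{-1}K)$ obtained by pulling back $V = N \oplus N^{-1}K$ from $X$, where $\iota \colon X' \to X'$ denotes the covering involution.

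The first step is to show that $\iota^*L'_1 \cong L'_2$. The canonical identification $\iota^*(p^*V) = p^*V$ lifts $\iota$ to an involution of $p^*V$, which either preserves each of $L'_1, L'_2$ or swaps them. In the former case each $L'_i$ would be $\iota$-invariant and hence descend to a line bundle on $X$, giving an $\SL(2,\R)\times\SL(2,\R)$-reduction of $(V,\beta,\gamma)$ on $X$ itself --- contradicting the standing hypothesis. Therefore $\iota$ swaps the two summands.

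Next, consider the $\iota$-equivariant projections $\pi_1 \colon p^*V \to p^*N$ and $\pi_2 \colon p^*V \to p^*(N^{-1}K)$ coming from the $\iota$-invariant decomposition. For each $j \in \{1,2\}$ the restrictions $\pi_j|_{L'_1}$ and $\pi_j|_{L'_2}$ are interchanged by $\iota$-equivariance (using $\iota^*L'_1 \cong L'_2$), so they vanish simultaneously or not at all; but simultaneous vanishing would force $\pi_j$ to be zero on all of $p^*V = L'_1 \oplus L'_2$, which is absurd. Hence all four restrictions $\pi_j|_{L'_i}$ are non-zero, yielding non-zero sections of the line bundles $(L'_i)^{-1} \otimes p^*N$ and $(L'_i)^{-1} \otimes p^*(N^{-1}K)$.

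The existence of these sections forces $\deg(L'_i) \leq 2\deg(N)$ and $\deg(L'_i) \leq 4g - 4 - 2\deg(N)$; combined with $\deg(L'_i) = 2g-2$, these inequalities pinch $\deg(N)$ between $g-1$ and $g-1$, giving $\deg(N) = g-1$. The only conceptually delicate point is the $\iota$-equivariance argument establishing the swap $\iota^*L'_1\cong L'_2$; the subsequent degree comparison is a routine application of the fact that a non-zero map between line bundles bounds their degrees.
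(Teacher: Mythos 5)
Your proof is correct and takes essentially the same route as the paper: both pass to the double cover $p\colon X'\to X$ via Proposition~\ref{prop:Gp-double-cover} and exploit the fact that $p^*V$ carries both the decomposition $L'_1\oplus L'_2$ with $(L'_i)^2=K_{X'}$ and the pulled-back decomposition $p^*N\oplus p^*(N^{-1}K)$. The paper finishes by citing the compatibility argument of Proposition~\ref{cor: w1=0} to get $(p^*N)^2=K_{X'}$ outright, whereas you replace that citation with an elementary degree-inequality pinch; note that your delicate $\iota$-swap step is actually dispensable, since the mere surjectivity of each projection $\pi_j$ already supplies a nonzero map out of some $L'_i$, and $\deg L'_1=\deg L'_2$ makes it irrelevant which one.
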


\begin{proof}
Combining Propositions~\ref{cor: w1=0} and
\ref{prop:Gp-double-cover} we get that
\begin{displaymath}
(p^*N)^2 = K_{X'}.
\end{displaymath}
Recall, moreover, that $g(X')=2g(X)-1$ and that $\deg(p^*N)=2\deg(N)$.
The result now follows.
\end{proof}

\begin{corollary}\label{cor: NoGp}
None of the components $\mathcal{M}^0_c$ with $c > 0$ contains $\Sp(4,\R)$-Higgs
bundles  which admit a reduction of structure group to $G_p$.
\end{corollary}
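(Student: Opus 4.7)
The plan is to observe that \textsf{Corollary \ref{cor: NoGp}} is essentially immediate from the two propositions that precede it, once one does a case split on whether the structure group further reduces to $\SL(2,\R)\x\SL(2,\R)$. So the proof will be very short; the substantive work has already been done in \textsf{Propositions \ref{cor: w1=0} and \ref{prop:Gp-double-cover}} and the proposition computing $\deg(N)$.

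Concretely, I would argue by contradiction: suppose $(V,\beta,\gamma)$ lies in some $\mathcal{M}^0_c$ with $c > 0$ and that its structure group reduces to $G_p$. Since $(V,\beta,\gamma) \in \mathcal{M}^0_c$, by definition $w_1(V,\beta,\gamma)=0$, so (\ref{eq:22}) applies and $V = N \oplus N^{-1}K$ with $\deg(N) = c + g - 1$. Now either the reduction to $G_p$ further reduces to the subgroup $\SL(2,\R)\x\SL(2,\R) \subset G_p$, or it does not, and I handle the two cases separately.

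In the first case, $(V,\beta,\gamma)$ is simultaneously a maximal $\Sp(4,\R)$-Higgs bundle with $w_1 = 0$ and an $\SL(2,\R)\x\SL(2,\R)$-Higgs bundle; but part (1) of \textsf{Proposition \ref{cor: w1=0}} says that this forces $c = 0$, contradicting $c > 0$. In the second case, the immediately preceding proposition (whose statement uses \textsf{Proposition \ref{prop:Gp-double-cover}} to pass to the double cover $p\colon X' \to X$ and then exploits $g(X') = 2g-1$ together with $\deg(p^*N) = 2\deg(N)$ and $(p^*N)^2 = K_{X'}$) forces $\deg(N) = g - 1$, i.e.\ again $c = 0$, a contradiction. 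Either way we reach a contradiction, so $\mathcal{M}^0_c$ with $c>0$ contains no $G_p$-Higgs bundles.

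There is no substantive obstacle here: the two cases exhaust the possibilities allowed by the extension (\ref{eq:4}), and the numerical contradiction in each case has already been extracted in the preceding propositions. The only thing to be careful about is to verify that the dichotomy ``reduces to $\SL(2,\R)\x\SL(2,\R)$ or not'' is the right one to run; this is justified because $\SL(2,\R)\x\SL(2,\R)$ is the identity component of $G_p$ (see (\ref{Gp_semidirect})), so the obstruction $\xi(V,\beta,\gamma) \in H^1(X,\Z/2)$ defined in (\ref{defn:xi}) is exactly what distinguishes the two cases.
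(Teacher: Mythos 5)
Your proposal is correct and follows exactly the route the paper intends: the corollary is stated without proof precisely because it is the combination, via the dichotomy on the obstruction class $\xi(V,\beta,\gamma)$, of Proposition~\ref{cor: w1=0}(1) (the reducible case) and the proposition forcing $\deg(N)=g-1$ via the double cover (the non-reducible case). Nothing further is needed.
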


\subsubsection{The case $w_1\ne 0$}\label{subs:w1ne0}

In this section we prove the following.

\begin{proposition}\label{w1ne0}
For all $(w_1,w_2)\in (H^1(X,\Z/2)-\{0\})\times H^2(X,\Z/2)$ the
component $\mathcal{M}_{w_1,w_2}$ contains $\Sp(4,\R)$-Higgs
bundles which admit a reduction of structure group to
$\SL(2,\R)\times\SL(2,\R)\subset G_p$.
\end{proposition}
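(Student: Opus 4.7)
The plan is to construct, for each $(w_1,w_2)$ with $w_1\ne 0$, an explicit polystable maximal $\Sp(4,\R)$-Higgs bundle in $\mathcal{M}_{w_1,w_2}$ which is visibly a direct sum of two stable $\SL(2,\R)$-Higgs bundles. The candidates are exactly the ``local minimum'' Higgs bundles described in Proposition~\ref{prop:Vshape}(4)(b), taken with $\beta=0$. Concretely, fix a square root $K^{1/2}$, pick two $2$-torsion line bundles $\eta_1,\eta_2$ on $X$ (that is, $\eta_i^2=\mathcal{O}$), set $M_i := \eta_i\otimes K^{1/2}$ so that $M_i^2=K$, and consider $(V,\beta,\gamma)$ with $V=M_1\oplus M_2$, $\beta=0$, and $\gamma$ block-diagonal whose $i$-th block is the canonical isomorphism $M_i\simeq M_i^{-1}\otimes K$.

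First I would check that this is a polystable $\Sp(4,\R)$-Higgs bundle whose structure group reduces to $\SL(2,\R)\times\SL(2,\R)\subset G_p$. Each summand $(M_i,0,\gamma_i)$ has $\deg M_i=g-1>0$ and non-zero $\gamma_i$, so it is a stable $\SL(2,\R)$-Higgs bundle by Remark~\ref{rem:SL2R-stability}. Polystability of the direct sum as an $\Sp(4,\R)$-Higgs bundle then follows by applying Proposition~\ref{prop:Sp4R-GL4C-polystability-equivalence} summand-by-summand (the associated $\GL(4,\C)$-Higgs bundle splits as a direct sum of two polystable $\GL(2,\C)$-Higgs bundles of the same slope $0$), and the reduction of structure group to $\SL(2,\R)\times\SL(2,\R)$ is manifest from the diagonal form of $(V,\beta,\gamma)$.

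Next I would identify the component by computing the topological invariants via the Cayley partner. Taking $L_0=K^{1/2}$ in (\ref{defn:CayleyW}) gives $W=V\otimes K^{-1/2}=\eta_1\oplus\eta_2$ with $q_W=q_1\oplus q_2$, an orthogonal bundle of the reducible type in case (3) of Proposition~\ref{prop:O(2)bundles}. The Whitney sum formula yields $w_1(W)=w_1(\eta_1)+w_1(\eta_2)$ and $w_2(W)=w_1(\eta_1)\cup w_1(\eta_2)$. The proof then reduces to the following numerical fact, which is the main (if minor) obstacle: given $w_1\ne 0$ and any $w_2\in H^2(X,\Z/2)$, one can choose $a,b\in H^1(X,\Z/2)$ with $a+b=w_1$ and $a\cup b=w_2$. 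Writing $b=w_1+a$ and using that $a\cup a=0$ on an orientable surface (since $\mathrm{Sq}^1=0$ on $H^1$ by Wu's formula), this collapses to $a\cup w_1=w_2$, which is solvable for either value of $w_2$ because the cup product on $H^1(X,\Z/2)$ is non-degenerate by Poincar\'e duality and $w_1\ne 0$. Choosing $\eta_1,\eta_2$ to be the $2$-torsion line bundles with $w_1$-invariants $a,b$ then places $(V,\beta,\gamma)$ in $\mathcal{M}_{w_1,w_2}$, completing the argument.
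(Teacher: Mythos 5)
Your proof is correct and follows essentially the same route as the paper's: the same explicit polystable Higgs bundle $V=M_1\oplus M_2$ with $M_i^2=K$, $\beta=0$ and block-diagonal $\gamma$, the same Cayley-partner/Whitney-sum computation, and the same reduction to the surjectivity of $(a,b)\mapsto(a+b,\,a\cup b)$ onto $\bigl(H^1(X,\Z/2)\setminus\{0\}\bigr)\times H^2(X,\Z/2)$. The only difference is that you settle this last combinatorial point conceptually, via $a\cup a=0$ on an orientable surface and nondegeneracy of the cup product pairing, where the paper performs an explicit computation in symplectic coordinates on $H^1(X,\Z/2)$; your version is a clean shortcut for the same fact.
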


\begin{proof}

Let $(V,\beta,\gamma)$ be a $\Sp(4,\R)$-Higgs bundle of the form
\begin{displaymath}
V = L_1 \oplus L_2,
\qquad L_1^2 = L_2^2 = K,
\qquad \beta=0\ ,\qquad
\gamma =
\begin{pmatrix}
1 & 0 \\
0 & 1
\end{pmatrix}.
\end{displaymath}

If we fix a square-root of $K$, i.e.\ if we pick $L_0$ such that
$L_0^2=K$, and define the Cayley partner $W=V^*\otimes L_0$, then
we get
\begin{equation}
W=M_1\oplus M_2
\end{equation}
 with $M_i^2=\cO$. Moreover, $\gamma$ defines isomorphisms
\begin{equation}
\tilde{\gamma}_i:M_i\longrightarrow M_i^*\ ,
\end{equation}
 that is, $M_1$ and $M_2$ are $\OO(1,\C)$ bundles.  As
such, they are determined by their first Stiefel--Whitney classes
\begin{displaymath}
w_1(M_1),w_1(M_2) \in H^1(X,\Z/2).
\end{displaymath}
To determine the invariants of $W$, we need to calculate the total
Stiefel--Whitney class
\begin{align}
w(M_1\oplus M_2) &= 1 + w_1(M_1\oplus M_2) + w_2(M_1\oplus M_2)\\
&= 1 + w_1(M_1) + w_1(M_2) + w_1(M_1)w_1(M_2).
\end{align}
In other words, we need to analyze the map
\begin{displaymath}
\begin{aligned}
H^1(X,\Z/2) \x H^1(X,\Z/2) &\to H^1(X,\Z/2) \x H^2(X,\Z/2) \\
(w_1,w_1') & \mapsto (w_1+w_1', w_1w_1').
\end{aligned}
\end{displaymath}
Using standard coordinates on $H^1(X,\Z/2) \cong {(\Z/2)}^{2g}$ we write
an element in this space as $(\ua,\ub) =
\bigl((a_1,\dots,a_g),(b_1,\dots,b_g)\bigr)$. The map is then given
as follows:
\begin{equation}
\label{eq:6}
\begin{aligned}
{(\Z/2)}^{2g}\x {(\Z/2)}^{2g} &\to {(\Z/2)}^{2g}\x \Z/2, \\
\bigl((\ua,\ub),(\ua',\ub')\bigr) &\mapsto
((\ua+\ua',\ub+\ub'),\sum_{i=1}^{g}(a_ib_i'+a_i'b_i)).
\end{aligned}
\end{equation}
One easily sees that $a_i+a_i'=0$ and $b_i+b_i'=0$ imply that
$a_ib_i'+a_i'b_i = 0$.
Moreover, one has that
\begin{displaymath}
\bigl((\ua,\ub),(0,0)\bigr) \mapsto \bigl((\ua,\ub),0\bigr).
\end{displaymath}
Hence it only remains to show that any element of the form
$\bigl((\tilde\ua,\tilde\ub),1\bigr)$ with $(\tilde{a}_j,\tilde{b}_j)
\neq (0,0)$ for some $j$ is in the image of the map.  It is a
simple exercise to show that there exists $((a,b),(a',b')) \in
{(\Z/2)}^{2}\x {(\Z/2)}^{2}$ such that $ab'+a'b = 1$ and $(a+a',
b+b')=(\tilde{a}_j,\tilde{b}_j)$. Now let $(\ua,\ub)$ be the element
obtained from $(\tilde\ua,\tilde\ub)$ by substituting $a$ for
$\tilde{a}_j$ and $b$ for $\tilde{b}_j$. Moreover, define $(\ua',\ub')$ by
letting the $j$th entries of $\ua'$ and $\ub'$ be equal to $a'$ and
$b'$, respectively, and setting the remaining entries equal to
zero. Then, clearly,
\begin{displaymath}
\bigl((\ua,\ub),(\ua',\ub')\bigr) \mapsto \bigl((\tilde\ua,\tilde\ub),1\bigr),
\end{displaymath}
and this concludes the proof.
\end{proof}

\subsection{The final tally}

Combining Corollary \ref{cor: NoGp} and Proposition \ref{w1ne0} we
get, finally, that

\begin{theorem}\label{th:GpTally} The following components of $\mathcal{M}^{max}$ contain $\Sp(4,\R)$-Higgs bundles which admit a reduction of structure group to the subgroup $\SL(2,\R)\times\SL(2,\R)\subset G_p$:
\begin{itemize}
\item  $\mathcal{M}_{w_1,w_2}$, for all $(w_1,w_2)\in H^1(X,\Z/2)-\{0\}\times H^2(X,\Z/2)$,
\item $\mathcal{M}_{0}^{0}$
\end{itemize}

\noindent In the remaining components, i.e.\ in $\mathcal{M}^0_c$ for
$0<c<2g-2$ and in $\mathcal{M}^T_{K^{1/2}}$ for all choices of
$K^{1/2}$, none of the Higgs bundles admit a reduction of structure
group to $G_p$. \qed
\end{theorem}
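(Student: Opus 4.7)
The plan is to assemble both halves of the theorem from results proved in the two preceding subsections, organizing the cases according to the enumeration of components of $\mathcal{M}^{max}$ given in Section \ref{sub:count}.

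For the positive direction I would split into two subcases. For a component $\mathcal{M}_{w_1,w_2}$ with $w_1\ne 0$ I would invoke Proposition \ref{w1ne0}, which for any prescribed $(w_1,w_2)$ produces an explicit polystable $\SL(2,\R)\times\SL(2,\R)$-Higgs bundle of the form $(L_1\oplus L_2,\,0,\,\mathrm{diag}(1,1))$ with $L_i^2=K$, chosen so that the Stiefel--Whitney classes of its Cayley partner realize $(w_1,w_2)$; the combinatorial surjectivity argument in the proof of that proposition is what ensures that every pair $(w_1,w_2)$ with $w_1\ne 0$ is hit. For the component $\mathcal{M}_0^0$ I would appeal to part (2) of Proposition \ref{cor: w1=0}: for any square root $L$ of $K$, the Higgs bundle $(L,0,1_L)\oplus (L,0,1_L)$ is polystable, maximal, reduces even to the diagonal $\SL(2,\R)\subset\SL(2,\R)\times\SL(2,\R)$, and by construction has $w_1=0$ and invariant $c=0$.

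For the negative direction, the components remaining are $\mathcal{M}^0_c$ with $0<c<2g-2$ together with the Hitchin components $\mathcal{M}^T_{K^{1/2}}$. Since $\mathcal{M}^T_{K^{1/2}}\subset \mathcal{M}^0_{2g-2}$, all of these are accounted for by Corollary \ref{cor: NoGp} applied in the range $0<c\le 2g-2$. The argument there proceeds by contradiction: any maximal polystable $G_p$-Higgs bundle whose structure group does not already reduce to $\SL(2,\R)\times\SL(2,\R)$ does so after pulling back along the connected unramified double cover $p\colon X'\to X$ classified by the obstruction class $\xi(V,\beta,\gamma)\in H^1(X,\Z/2)$ (Proposition \ref{prop:Gp-double-cover}); on $X'$, which has genus $2g-1$, Proposition \ref{cor: w1=0}(1) then forces $(p^*N)^2=K_{X'}$, whence $2\deg N=2g-2$ and $c=\deg N-(g-1)=0$, contradicting the standing assumption $c>0$.

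There is essentially no new obstacle at this stage; the theorem is a packaging result, and all the analytical content is already contained in the cited propositions. The one point demanding attention is to notice that the Hitchin case $c=2g-2$ is subsumed under the range $0<c\le 2g-2$ in Proposition \ref{cor: w1=0}(1), so that the double-cover argument applies to it uniformly without separate treatment.
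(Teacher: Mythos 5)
Your proposal is correct and follows essentially the same route as the paper, which proves Theorem~\ref{th:GpTally} precisely by combining Proposition~\ref{w1ne0} (for the components with $w_1\neq 0$), Proposition~\ref{cor: w1=0} (for $\mathcal{M}^0_0$ and the exclusion of $\SL(2,\R)\times\SL(2,\R)$-reductions when $0<c\le 2g-2$), and Corollary~\ref{cor: NoGp} via the double-cover argument of Proposition~\ref{prop:Gp-double-cover}. Your observation that the Hitchin components are subsumed in the range $0<c\le 2g-2$ matches the paper's treatment as well.
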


\section{Analysis of $G_{*}$-Higgs bundles III: $G_i$-Higgs bundles}\label{sect: Gi}

\subsection{The irreducible representation}
\label{subs:irep}

The irreducible representation of $\SL(2,\R)$ in $\R^4$ comes from its representation on $S^3\R^2$, the third symmetric tensor power of $\R^2$.  If we
identify $S^3\R^2$ with the space of degree three homogeneous
polynomials in two variables, then the representation is defined by
\begin{equation}
\rho_1(\begin{pmatrix} a&b\\ c&d\end{pmatrix})(P)(x,y)=P(ax+cy,bx+dy),
\end{equation}
where $A=\begin{pmatrix} a&b\\ c&d\end{pmatrix}$ is in $\SL(2,\R)$ and
$P$ is a degree three homogeneous polynomial in $(x,y)$. We get a
matrix representation (denoted by $\rho_1$) if we fix a basis for $S^3\R^2$.  Taking
$$\{x^3, 3x^2y, y^3, 3xy^2\}$$
as our basis for $S^3\R^2$ (thought of as the space of degree three homogeneous polynomials in two variables) we get 
\begin{displaymath}\label{eq:irrepmatrix}
\rho_1(\begin{pmatrix}
a & b \\
c & d
\end{pmatrix}) =
\begin{pmatrix}
a^3 & 3a^2b & b^3 & 3ab^2  \\
a^2c & a^2d + 2abc & b^2d & b^2c + 2abd  \\
c^3 & 3c^2d & d^3 & 3cd^2\\
ac^2 & bc^2 + 2acd & bd^2 & ad^2 + 2bcd 
\end{pmatrix}.
\end{displaymath}

The standard symplectic form $\omega = dx_1 \wedge dx_2$ on $\R^2$ induces a
bilinear form on all tensor powers $(\R^2)^{\otimes n}$, as follows:
\begin{displaymath}
\Omega((v_1,\dots,v_n),(w_1,\dots,w_n)) =
\omega(v_1,w_1)\cdot\dots\cdot \omega(v_n,w_n),
\end{displaymath}
and therefore there is also an induced bilinear form on the symmetric
powers of $\R^2$, viewed as subspaces $S^n \R^2 \subset
(\R^2)^{\otimes n}$. This form is symmetric when $n$ is even and
antisymmetric when $n$ is odd so, in particular, gives us a symplectic
form $\Omega$ on $S^3\R^2$. Non-degeneracy follows from the fact that
the kernel of the form is an $\SL(2,\R)$-submodule of an irreducible
representation (and can of course also be seen from the explicit
calculation below).

Take the standard basis $\{e_1,e_2\}$ of $\R^2$ and the basis
$$
\{e_{ijk} = e_i\otimes e_j \otimes e_k \suchthat i,j,k = 1,2\}
$$
of $(\R^2)^{\otimes 3}$. Then the
basis $\{E_1,E_2,E_3,E_4\}$ for $S^3\R^2$, where
\begin{align*}
E_1 &= e_{111},\\
E_2 &= e_{112} + e_{121} + e_{211},\\
E_3 &= e_{222},\\
E_4 &= e_{122} + e_{212} + e_{221}
\end{align*}
corresponds to the basis $\{x^3, 3x^2y, y^3, 3xy^2\}$ for $S^3\R^2$
thought of as the space of degree three homogeneous polynomials of
degree in two variables.  Calculating the matrix $J_0$ of the
symplectic form $\Omega$ on $S^3\R^2$ with respect to this basis one
obtains:
\begin{displaymath}
J_0 =
\begin{pmatrix}
0 & 0 & 1& 0 \\
0 & 0 & 0 & -3 \\
-1 & 0 & 0 & 0\\
0 & 3 & 0 & 0 
\end{pmatrix}.
\end{displaymath}
If  $ad -bc = 1$, i.e.\ if $A=\begin{pmatrix} a&b\\ c&d\end{pmatrix}$ is symplectic, then $\rho_1(A)$ is a
symplectic transformation of $(S^3,\R^2,\Omega)$ , i.e.
\begin{equation}\label{eq:symp-J_0}
\rho_1(A)^tJ_0\rho_1(A)=J_0\ .
\end{equation}
\noi Notice that with
\begin{displaymath}
h =
\begin{pmatrix}
1 & 0 & 0 & 0 \\
0 & 0 & 0 & \frac{1}{\sqrt{3}}  \\
0 & 0 & 1 & 0\\
0 & \frac{1}{\sqrt{3}} & 0 & 0
\end{pmatrix} = h^t
\end{displaymath}
we get
\begin{displaymath}
h^t J_0 h =J_{13}\ .
\end{displaymath}
Thus using  $J_{13}$ to define $\Sp(4,\R)$, the irreducible representation is given by
\begin{equation}\label{irredJ13}
\rho_{13}(A)= h^{-1}\rho_1(A)h =
\begin{pmatrix}
a^3 & \sqrt{3}ab^2 & b^3 & \sqrt{3}a^2b  \\
\sqrt{3}ac^2 & ad^2 + 2bcd & \sqrt{3}bd^2 & bc^2 + 2acd \\
c^3 & \sqrt{3}cd^2 &d^3 & \sqrt{3}c^2d\\
\sqrt{3}a^2c & b^2c + 2abd & \sqrt{3}b^2d & a^2d + 2abc
\end{pmatrix}
\end{equation}
If $A\in \SO(2)$, i.e.\ if $d=a,\ b=-c$ and $a^2+c^2=1$, then $\rho_{13}\begin{pmatrix}
a & -c \\
c & a
\end{pmatrix}$ lies in the copy of $\U(2)$ embedded in
$\Sp(4,\R)$ as in (\ref{eqn:U(2)}).  
Moreover, with the Cartan involution as in (\ref{eqn:theta}) the image of the induced embedding 
\begin{equation}
\rho_{13*}:\mathfrak{sl}(2,\R)\longrightarrow\mathfrak{sp}(4,\R)\ .
\end{equation}
is $\theta$-invariant, so
Remark~\ref{rem:ss-cartan-compatibility} gives us the following.

\begin{proposition} With $G_i = \rho_{13}(\SL(2,\R))$  defined as above and with the choices for $\Sp(4,\R)$ as in Section \ref{subs:Sp4cartan}, the subgroup $G_i $ is a reductive subgroup of $\Sp(4,\R)$.
\end{proposition}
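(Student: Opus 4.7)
The plan is to invoke Lemma \ref{rem:subgroups}, which reduces the reductivity of $G_i \subset \Sp(4,\R)$ to two verifications: (i) that $G_i$ is a closed Lie subgroup of $\Sp(4,\R)$, and (ii) that $\rho_{13*}(\mathfrak{sl}(2,\R))\subset\mathfrak{sp}(4,\R)$ is canonically embedded, i.e.\ preserved by the Cartan involution $\theta(X)=-X^t$ fixed in Section \ref{subs:Sp4cartan}.

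For (i), I would first note that $\rho_{13}=h^{-1}\rho_{1}h$ is a morphism of linear algebraic groups, its entries being polynomial in those of $A\in \SL(2,\R)$. A direct check of injectivity from (\ref{eq:irrepmatrix}): if $\rho_{1}(A)=I$, the $(1,1)$-entry gives $a^{3}=1$ so $a=1$, after which $3a^{2}b=0$ and $a^{2}c=0$ force $b=c=0$, and finally $a^{2}d=1$ gives $d=1$. Thus $\rho_{13}$ is an injective morphism of affine algebraic groups, which realises $G_i$ as a closed algebraic subgroup of $\SL(4,\R)$, hence a closed Lie subgroup of $\Sp(4,\R)$.

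For (ii), I would compute $\rho_{13*}$ on the standard basis $\{E,F,H\}$ of $\mathfrak{sl}(2,\R)$, with $E=\bigl(\begin{smallmatrix}0&1\\0&0\end{smallmatrix}\bigr)$, $F=E^{t}$ and $H=\bigl(\begin{smallmatrix}1&0\\0&-1\end{smallmatrix}\bigr)$, by differentiating the entries of (\ref{irredJ13}) at the identity in these directions. The explicit computation should yield the intertwining relation $\theta\circ\rho_{13*}=\rho_{13*}\circ\theta_{0}$ between the Cartan involutions on the two Lie algebras, concretely $\rho_{13*}(E)^{t}=\rho_{13*}(F)$ and $\rho_{13*}(H)^{t}=\rho_{13*}(H)$. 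In particular $\rho_{13*}(\mathfrak{sl}(2,\R))$ is $\theta$-stable. This matrix bookkeeping is the only genuine step of the argument; it is purely routine but does require writing out the four derivative matrices explicitly and checking the transposes agree.

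With (i) and (ii) in hand, Lemma \ref{rem:subgroups} immediately supplies the reductive-subgroup structure on $(G_i, \U(2)\cap G_i, \theta|_{\lieg_i}, B|_{\lieg_i})$. As a consistency check aligning this with the natural Cartan data on $\SL(2,\R)$: integrating the Lie-algebra intertwining of (ii) gives $\Theta\circ\rho_{13}=\rho_{13}\circ\Theta_{0}$ at group level, and since $\rho_{13}$ is injective it follows that $\U(2)\cap G_i = G_i^{\Theta}$ coincides with $\rho_{13}(\SO(2))$, which is precisely the criterion of Remark \ref{rem:ss-cartan-compatibility} in the present setting. There is no real obstacle beyond carrying out the transpose computations cleanly.
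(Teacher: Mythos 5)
Your proposal is correct and follows essentially the same route as the paper: both arguments reduce the claim to the $\theta$-invariance of $\rho_{13*}(\mathfrak{sl}(2,\R))\subset\mathfrak{sp}(4,\R)$ (equivalently, the relations $\rho_{13*}(e)^t=\rho_{13*}(f)$, $\rho_{13*}(h_0)^t=\rho_{13*}(h_0)$ in the $J_{13}$-adapted basis, which do hold) together with the compatibility $\rho_{13}(\SO(2))=\U(2)\cap G_i$, and then invoke Lemma~\ref{rem:subgroups} / Remark~\ref{rem:ss-cartan-compatibility}. The only difference is that you additionally spell out the (standard, and correct) closedness of $G_i$, which the paper takes for granted.
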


\begin{remark}\label{rem:embedSL2C}
This embedding extends to an embedding of $\SL(2,\C)$ in
$\Sp(4,\C)\subset\SL(4,\C)$.  The restriction to $\SO(2,\C)$ takes
values in the copy of $\GL(2,\C)$ embedded in $\SL(4,\C)$ via

\begin{equation}\label{eqn:GL2embed}
Z\mapsto\begin{pmatrix}\frac{Z+{Z^t}^{-1}}{2}&\frac{Z-{Z^t}^{-1}}{2i}\\
\frac{{Z^t}^{-1}-Z}{2i}&\frac{Z+{Z^t}^{-1}}{2}
\end{pmatrix}\ .
\end{equation}
\end{remark}

If we conjugate by $T=\begin{pmatrix}
I & iI \\
I &- iI
\end{pmatrix}$, that is if we make a complex change of frame from $\R^4\otimes\C$ to $\C^2\oplus (\C^2)^*$ , the embedding of $\SO(2)$ becomes (with $A=\begin{pmatrix}
a & -c \\
c & a
\end{pmatrix}$) 
\begin{equation*}
T\circ\rho_{13}(A)\circ T^{-1}=\begin{pmatrix}\Lambda&0_2\\0_2&(\Lambda^t)^{-1}\end{pmatrix}
\end{equation*}

\noi where $0_2$ denotes the $2\times 2$ zero matrix and
\begin{equation*}
\Lambda=\begin{pmatrix}
a^3+ic^3 & \sqrt{3}ac(ia+c) \\
\sqrt{3}ac(ia+c)  & a^3-2ac^2 +i (c^3-2a^2c) \end{pmatrix}\ .
\end{equation*}

\noindent A further conjugation  by
\begin{equation}\label{eqn: Htilde}
\tilde{H}= \begin{pmatrix}
0 & 0 & \frac{\sqrt{3}-1}{8}u   & \frac{ \sqrt{3}-3}{8}u \\
0 & 0 &-\frac{ \sqrt{3}+3}{8} v& -\frac{\sqrt{3}+1}{8}v\\
\frac{\sqrt{3}+1}{u} & -\frac{(\sqrt{3}+3)}{u}    &0 &0\\
\frac{\sqrt{3}-3}{v}  & -\frac{(\sqrt{3}-1)}{v} & 0 & 0\\
\end{pmatrix}\ ,
\end{equation}

\noindent where $u= -4 \sqrt{6 + 3 \sqrt{3}}$ and
$v=2/\sqrt{2+\sqrt{3}}$, yields
$$ 
\tilde{H}\circ T\circ \rho_{13}(A)\circ (\tilde{H}\circ T)^{-1}= \begin{pmatrix}
\lambda^{3} & 0 & 0 &0 \\
0 & \lambda^{-1} & 0 &0\\
0 &0 & \lambda^{-3} & 0\\
0 &0 & 0 & \lambda^{1}\\
\end{pmatrix}\ ,\ \lambda=a+ic\ .
$$

\begin{remark}
Direct computation shows that with $\Sp(4,\C)$ defined by $J_{13}$, conjugation by $T$ or $\tilde{H}$ preserves $\Sp(4,\C)\subset\SL(4,\C)$. 
\end{remark}


\begin{definition} Let $\varphi:\SL(2,\C)\longrightarrow\Sp(4,\C)$ be
the composite
\begin{equation}\label{eqn:varphi}
\varphi(A)=(\tilde{H}\circ T)\circ\rho_{13}(A)\circ(\tilde{H}\circ T)^{-1}\ .
\end{equation}
\end{definition}

\noindent
We then have a commutative diagram 
\begin{equation}
\label{eq:restrict-phi}
\begin{CD}
\SL(2,\C) @>{\varphi}>> \Sp(4,\C) \\
@AAA  @AAA \\
\GL(1,\C) @>{\varphi_{|\GL(1,\C)}}>> \GL(2,\C)
\end{CD}
\end{equation}
where the vertical arrow on the left is given by the identification
\begin{equation}\label{eqn:GL1C}
\GL(1,\C) \simeq \Big\{\begin{pmatrix}\lambda&0\\0&\lambda^{-1}\end{pmatrix}\ |\ \lambda\in\C^*\Big\}\end{equation}

\noi and the one on the
right is given by (\ref{eqn:GL2embed}).

\subsection{The embedding of Higgs bundles}\label{subs: Lie}

We can compute the infinitesimal version of the embedding
(\ref{irredJ13}) to find the embedding of
$\lie{sl}(2,\R)\subset\lie{sp}(4,\R)$ (using $J=J_{13}$). With
\begin{displaymath}
e=\begin{pmatrix}
0 & 1\\
0 & 0 
\end{pmatrix}\ ,\
f=\begin{pmatrix}
0 & 0\\
1 & 0
\end{pmatrix}\ ,\
h_0=\begin{pmatrix}
1 & 0\\
0 & -1
\end{pmatrix}\
\end{displaymath}

\noindent and with $\tilde{H}$ and $T$ as above, we compute
\begin{lemma} 
\begin{align*}
(\tilde{H}T)\rho_{13*}(e-f)(\tilde{H}T))^{-1}&=i \begin{pmatrix}-3 & 0 & 0& 0\\0  & 1 & 0 & 0\\
0  & 0 & 3& 0\\0  & 0 & 0 & -1\end{pmatrix}\\
(\tilde{H}T)\rho_{13*}(e+f)(\tilde{H}T))^{-1}&=i \begin{pmatrix}
0 & 0&0 &3 \\
0 & 0&3 & -1 \\
0 &-1& 0 & 0 \\
-1 & 4& 0 & 0 
\end{pmatrix}\\
(\tilde{H}T)\rho_{13*}(h_0)(\tilde{H}T)^{-1}&= \begin{pmatrix}
0 & 0&0 &3 \\
0 & 0&3 & 1 \\
0 &1& 0 & 0 \\
1 & 4& 0 & 0 
\end{pmatrix}
\end{align*}

\end{lemma}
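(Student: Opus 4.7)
The three identities are verified by direct computation, differentiating the explicit matrix formula (\ref{irredJ13}) for $\rho_{13}$ along the one-parameter subgroups generated by $e-f$, $h_0$, and $e+f$, and then conjugating by $\tilde H T$. My plan is to exploit the diagonalization established just after (\ref{eqn: Htilde}) wherever possible, and otherwise to proceed by brute force.

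The first identity admits a conceptual shortcut. The paragraph following (\ref{eqn: Htilde}) already shows that for $A = \begin{pmatrix}a & -c\\ c & a\end{pmatrix} \in \SO(2)$ one has $(\tilde H T)\rho_{13}(A)(\tilde H T)^{-1} = \mathrm{diag}(\lambda^3, \lambda^{-1}, \lambda^{-3}, \lambda)$ with $\lambda = a + ic$. Since $\exp(t(e-f)) = \begin{pmatrix}\cos t & \sin t\\ -\sin t & \cos t\end{pmatrix}$ matches this template with $a = \cos t$, $c = -\sin t$, so that $\lambda = e^{-it}$, differentiating the diagonal one-parameter family $\mathrm{diag}(e^{-3it}, e^{it}, e^{3it}, e^{-it})$ at $t=0$ immediately yields $i\,\mathrm{diag}(-3,1,3,-1)$, which is the first formula.

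For the $h_0$ identity, I would differentiate $\rho_{13}(\exp(t h_0))$ at $t=0$. Since $\exp(t h_0) = \mathrm{diag}(e^t, e^{-t})$, plugging $a = e^t$, $b=c=0$, $d = e^{-t}$ into (\ref{irredJ13}) gives the diagonal matrix $\mathrm{diag}(e^{3t}, e^{t}, e^{-3t}, e^{-t})$, hence $\rho_{13*}(h_0) = \mathrm{diag}(3,1,-3,-1)$. The conjugate $(\tilde H T)\rho_{13*}(h_0)(\tilde H T)^{-1}$ is then a finite $4\times 4$ matrix product using the explicit entries of $\tilde H$ in (\ref{eqn: Htilde}) and of $T$, which I would carry out to obtain the stated off-diagonal matrix. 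For the $e+f$ identity, I would similarly set $a = d = \cosh t$, $b = c = \sinh t$, substitute into (\ref{irredJ13}), and differentiate entrywise at $t=0$; the only nonzero derivatives come from entries that are first-order in $b$ or $c$ (for instance $\sqrt 3 a^2 b$, $3 a^2 b$, etc., depending on the position), producing $\rho_{13*}(e+f)$. Conjugating by $\tilde H T$ then completes the verification.

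The main obstacle is simply computational bookkeeping: the matrix $\tilde H T$ and its inverse have irrational entries involving $\sqrt{3}$ and the constants $u,v$ of (\ref{eqn: Htilde}), so the conjugations for $h_0$ and $e+f$ are tedious even though elementary. As a sanity check during the computation I would use the fact that $\{-i(e-f), e+f, h_0\}$ (after appropriate rescaling) spans a copy of $\mathfrak{sl}(2,\mathbb{C})$ and must satisfy the corresponding bracket relations under $\rho_{13*}$; verifying those relations against the three claimed matrices gives a strong consistency check that catches arithmetic errors without replacing the direct calculation.
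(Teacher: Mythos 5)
Your proposal is correct in method and coincides with the paper's own proof, which consists entirely of the words ``Calculation (Mathematica)''; your treatment of $e-f$ via the already-established diagonalization of $\rho_{13}(\SO(2))$ and your $\liesl(2)$-bracket consistency check are sensible refinements of that brute-force computation. One small slip to fix before carrying it out: since the conjugating matrix $h$ in the definition of $\rho_{13}$ swaps the second and fourth basis vectors, substituting $a=e^t$, $d=e^{-t}$ into (\ref{irredJ13}) gives diagonal $(e^{3t},e^{-t},e^{-3t},e^{t})$, so $\rho_{13*}(h_0)=\mathrm{diag}(3,-1,-3,1)$ rather than $\mathrm{diag}(3,1,-3,-1)$ as you wrote.
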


\begin{proof} Calculation (Mathematica).
\end{proof}

\noindent  It follows that the restriction of $\varphi$ to  $\liemc(\SL(2,\C))$,where
\begin{equation*}
\liemc(\SL(2,\C))= \{\begin{pmatrix} x&y\\y&-x\end{pmatrix}\ |\  x,y\in\C \}\ \ ,
\end{equation*}

\noindent gives
\begin{equation*}
(\tilde{H}T)\rho_{13*}(\begin{pmatrix} x&y\\y&-x\end{pmatrix})(\tilde{H}T))^{-1}=
\begin{pmatrix}
0 & 0 &  0 &3 \beta\\
0 & 0 &  3\beta & \gamma \\
0 &\gamma& 0 & 0 \\
\gamma & 4\beta &  0 & 0 
\end{pmatrix} \ \mathrm{with}\ \begin{cases}\beta= x+iy\\
\gamma=x-iy\end{cases}\ .
\end{equation*}

\noi We can make a further transformation  so that the bottom left corner is a multiple of  $\begin{pmatrix}0&1\\1&0\end{pmatrix}$.  

\begin{lemma}
Let
\begin{equation}
S= \begin{pmatrix}
1 &  2(\frac{\beta}{\gamma} )& 0 & 0\\
0 & 1 & 0 & 0 \\
0 & 0 & 1 & 0\\
0 & 0 & -2(\frac{\beta}{\gamma} ) & 1 \\
\end{pmatrix} 
\end{equation}

\noindent Then
\begin{equation*}
(S\tilde{H}T)\rho_{13*}(\begin{pmatrix} x&y\\y&-x\end{pmatrix})(S\tilde{H}T))^{-1}=
\gamma\begin{pmatrix}
0 & 0 &  16(\frac{\beta}{\gamma})^2 &5 (\frac{\beta}{\gamma})\\
0 & 0 &  5(\frac{\beta}{\gamma}) & 1 \\
0 &1& 0 & 0 \\
1& 0 &  0 & 0 \\
\end{pmatrix} 
\end{equation*}
\end{lemma}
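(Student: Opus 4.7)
Set $t = \beta/\gamma$ to shorten notation, and let $M$ denote the matrix computed in the preceding lemma, namely
\[
M \;=\; \begin{pmatrix} 0 & 0 & 0 & 3\beta \\ 0 & 0 & 3\beta & \gamma \\ 0 & \gamma & 0 & 0 \\ \gamma & 4\beta & 0 & 0 \end{pmatrix}.
\]
The lemma is a direct matrix identity, so the plan is simply to compute $SMS^{-1}$ and match it term by term against the displayed right-hand side. The only non-mechanical input is how to invert $S$, and this I would extract from the structure of $S$ itself.

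The key observation is that $S = I + 2t(E_{12} - E_{43})$, where $E_{ij}$ is the usual matrix unit. Since $E_{12}E_{43} = E_{43}E_{12} = 0$ and $E_{12}^2 = E_{43}^2 = 0$, we have $(S - I)^2 = 0$, giving immediately $S^{-1} = 2I - S = I - 2t(E_{12} - E_{43})$. (As an optional sanity check one verifies that $S^{t}J_{13}S = J_{13}$, so $S \in \Sp(4,\C)$; this ensures the conjugation stays inside the symplectic group but is not strictly needed for the identity itself.) With this in hand, the conjugation decomposes into two rank-one row updates followed by two rank-one column updates: left-multiplication by $S$ replaces row $1$ by (row $1$)$\,+\,2t\cdot$(row $2$) and row $4$ by (row $4$)$\,-\,2t\cdot$(row $3$); subsequent right-multiplication by $S^{-1}$ replaces column $2$ by (col $2$)$\,-\,2t\cdot$(col $1$) and column $3$ by (col $3$)$\,+\,2t\cdot$(col $4$).

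Carrying out these four updates, the identity $2t\gamma = 2\beta$ produces exactly the simplifications needed: the $(4,2)$ entry $4\beta - 2t\gamma$ collapses to $2\beta$ and then further to $0$ after the column update, while the $(1,3)$ entry $6t\beta = 6\beta^{2}/\gamma$ grows under the column update to $6\beta^{2}/\gamma + 10t\beta = 16\beta^{2}/\gamma$, and the $(2,3)$ entry $3\beta$ becomes $3\beta + 2t\gamma = 5\beta$. Factoring out $\gamma$ yields the matrix displayed in the statement. I do not anticipate any conceptual obstacle; the entire content of the lemma is that $S$ has been chosen precisely so that its conjugation action turns the lower-left $2\times 2$ block of $M$ into $\gamma\bigl(\begin{smallmatrix}0 & 1 \\ 1 & 0\end{smallmatrix}\bigr)$. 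The only risk is bookkeeping of signs and factors of $2t$, and as in the analogous computation of the previous lemma this is most safely confirmed by a symbolic algebra system.
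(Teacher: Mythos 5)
Your computation is correct: the paper offers no written proof of this lemma (the preceding one is justified only by ``Calculation (Mathematica)''), and your direct verification via $S = I + 2t(E_{12}-E_{43})$, $S^{-1} = 2I - S$, and the four rank-one row/column updates reproduces exactly the stated matrix, with the repeated use of $2t\gamma = 2\beta$ accounting for all the simplifications. This is essentially the paper's (implicit) approach carried out by hand, and your organization of the conjugation makes the bookkeeping transparent.
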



Next, we recall from Remark~\ref{rem:SL2R-higgs} (cf.\
\cite{hitchin:1987}) that an $\SL(2,\R)$-Higgs
bundles is defined by a triple $(L,\tilde{\beta},\tilde{\gamma})$
where $L$ is a holomorphic line bundle, $\tilde{\beta}\ne 0\in
H^0(L^2K)$, and $\tilde{\gamma}\in H^0(L^{-2}K)$.  Let $E$ be the
principal $\GL(1,\C)$-bundle which defines $L$.  Using the
identification of $\GL(1,\C)$ with $\SO(2,\C)$ given by
(\ref{eqn:GL1C}), $E$ defines a rank two bundle $L\oplus L^{-1}$.  The
Higgs fields $(\tilde{\beta},\tilde{\gamma})$ then define a bundle map
\begin{equation}\begin{pmatrix}
0 & \tilde{\beta}\\
\tilde{\gamma} & 0
\end{pmatrix}: L\oplus L^{-1}\longrightarrow (L\oplus L^{-1})\otimes K\ .
\end{equation}

\begin{theorem} \label{theorem: SL2R} 
Let
$$\rho_{13}: \SL(2,\R)\longrightarrow\Sp(4,\R)$$
be the irreducible representation as in (\ref{irredJ13}), and let 
$$\varphi: \SL(2,\C)\longrightarrow \Sp(4,\C)$$
be the resulting representation as in (\ref{eqn:varphi}).  Use
$\varphi_{|\GL(1,\C)}$ to extend the structure group of $E$ to
$\GL(2,\C)$ and use $\varphi$ to embed $\liemc(\SL(2,\R))$ in
$\liemc(\Sp(4,\R))$ (cf.\ (\ref{eq:restrict-phi})) .  Let
\begin{equation}
\rho_{ir}^P:\mathcal{M}(\SL(2,\R))\longrightarrow\mathcal{M}(\Sp(4,\R))
\end{equation}

\noi be the induced map from the moduli space of $\SL(2,\R)$-Higgs
bundles to the moduli space of $\Sp(4,\R)$-Higgs bundles . Let
$(L,\tilde{\beta},\tilde{\gamma}) $  be a polystable
$\SL(2,\R)$-Higgs bundle.  Then:

\smallskip

\noi (a) If $0\le \deg(L)\le g-1$ then 
\begin{equation}\label{eqb:rhoirr}
\rho_{ir}^P([L,\tilde{\beta},\tilde{\gamma}])= ([L^3\oplus L^{-1},\beta,\gamma])
\end{equation}

\noi  where 
\begin{equation}\label{eqn:betagamma}
\beta = \begin{pmatrix}
0 &3 \tilde{\beta}\\
3\tilde{\beta}&\tilde{\gamma}\end{pmatrix}  \ ,\  \gamma= \begin{pmatrix}  0 & \tilde{\gamma}  \\
\ \tilde{\gamma} &4\tilde{\beta} 
\end{pmatrix} 
\end{equation}


\noi (b)  If $\deg(L)=g-1$ then $L^2=K$ and $\beta$ and $\gamma$ can be put in the form
\begin{equation}
\gamma=\tilde{\gamma}\begin{pmatrix}0&1\\1&0\end{pmatrix}\ ,\ \beta=\tilde{\gamma}\begin{pmatrix}\beta_1&\beta_3\\
\beta_3& 1\end{pmatrix}\ \mathrm{with}\ \begin{cases} \beta_3 =5 (\frac{\tilde{\beta}}{\tilde{\gamma}})\\\beta_1 = (\frac{16}{25})\beta_3^2\end{cases}
\end{equation}
\end{theorem}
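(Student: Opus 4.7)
The approach is to apply the general functorial procedure of extending the structure group of a $G'$-Higgs bundle via a homomorphism $G' \to G$, using the embedding $\varphi: \SL(2,\C) \to \Sp(4,\C)$ constructed in Section~\ref{subs:irep}. The key observation is that both $\varphi_{|\GL(1,\C)}$ and the restriction of $\varphi_*$ to $\liemc(\SL(2,\C))$ have already been computed in explicit matrix form in Section~\ref{subs: Lie}; the proof will consist mainly of interpreting those local computations globally.

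For part~(a), the first step is to identify the underlying holomorphic rank~$2$ bundle of $\rho_{ir}^P([L,\tilde\beta,\tilde\gamma])$. The commutative diagram~(\ref{eq:restrict-phi}) together with the computation, for $A=\begin{pmatrix}\lambda&0\\0&\lambda^{-1}\end{pmatrix}\in\GL(1,\C)$, of $(\tilde H T)\rho_{13}(A)(\tilde H T)^{-1}=\mathrm{diag}(\lambda^3,\lambda^{-1},\lambda^{-3},\lambda)$ identifies the associated $\GL(2,\C)$-bundle as $V=L^3\oplus L^{-1}$. Next, writing the Higgs field locally as $\tilde\varphi=\begin{pmatrix}x&y\\y&-x\end{pmatrix}\otimes dz$ with $\tilde\beta=x+iy\in H^0(L^2K)$ and $\tilde\gamma=x-iy\in H^0(L^{-2}K)$, the first lemma of Section~\ref{subs: Lie} gives the image $\varphi_*\tilde\varphi\in\liemc(\Sp(4,\C))\otimes K$ as an explicit $4\times 4$ matrix. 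Reading off the top-right and bottom-left $2\times 2$ blocks as $\beta\colon V^*\to V\otimes K$ and $\gamma\colon V\to V^*\otimes K$, and checking that each matrix entry lies in the appropriate summand of the decomposition $S^2V\otimes K = L^6K\oplus L^2K\oplus L^{-2}K$ (respectively $S^2V^*\otimes K$), yields precisely the formulas~(\ref{eqn:betagamma}).

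For part~(b), the hypothesis $\deg(L)=g-1$ combined with polystability forces $\tilde\gamma\neq 0$ by Remark~\ref{rem:SL2R-stability}; since $\deg(L^{-2}K)=0$, this nowhere-vanishing section trivialises $L^{-2}K$, so $L^2=K$. Upon this trivialisation, $\tilde\gamma$ is a nonzero constant and the ratio $\tilde\beta/\tilde\gamma$ is a well-defined global section of $L^4=K^2$. The further conjugation by $S$ from the second lemma of Section~\ref{subs: Lie} is therefore defined globally as a holomorphic gauge transformation of $V$, and its effect on $\varphi_*\tilde\varphi$ produces the canonical form $\gamma=\tilde\gamma\bigl(\begin{smallmatrix}0&1\\1&0\end{smallmatrix}\bigr)$, $\beta_3=5\tilde\beta/\tilde\gamma$, and $\beta_1=(16/25)\beta_3^2$ directly from the matrix entries of that lemma.

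The main obstacle in a clean exposition is the bookkeeping of line-bundle twists: although the underlying matrix computations have already been carried out, one must carefully translate each scalar entry in those $4\times 4$ matrices (where $\tilde\beta$ and $\tilde\gamma$ appear as complex numbers) into a section of the correct associated bundle, and verify compatibility with the symmetric-pairs structure $(\beta,\gamma)\in H^0(S^2V\otimes K)\oplus H^0(S^2V^*\otimes K)$. This bookkeeping is purely algebraic: it reduces to the observation that $\varphi_*$ is equivariant with respect to the maximal torus actions on $\liemc(\SL(2,\C))$ and $\liemc(\Sp(4,\C))$, with the weight decomposition dictated by the commutative diagram~(\ref{eq:restrict-phi}), so the twist of each matrix entry is forced.
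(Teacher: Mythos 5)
Your proposal follows essentially the same route as the paper: both identify the bundle $V=L^3\oplus L^{-1}$ from the diagonalized form of $\varphi_{|\GL(1,\C)}$, read off $\beta$ and $\gamma$ from the explicit matrix computation of $(\tilde HT)\rho_{13*}(\cdot)(\tilde HT)^{-1}$ on $\liemc(\SL(2,\C))$, and for part (b) use the nowhere-vanishing of $\tilde\gamma$ (hence $L^2=K$) to make the gauge transformation $S$ globally defined. Your observation that $\tilde\beta/\tilde\gamma$ is a global section of $L^4=K^2$ is exactly the content of the paper's check that $g_{ij}S_jg_{ij}^{-1}=S_i$, and your weight-decomposition bookkeeping for the twists matches the paper's transition-function argument.

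However, there is one genuine omission. The theorem asserts that $\rho_{ir}^P$ maps into $\mathcal{M}(\Sp(4,\R))$, the moduli space of \emph{polystable} $\Sp(4,\R)$-Higgs bundles, so one must verify that $(L^3\oplus L^{-1},\beta,\gamma)$ is polystable whenever $(L,\tilde\beta,\tilde\gamma)$ is. This is not automatic: Proposition~\ref{reduction} only gives the implication in the opposite direction (polystability of the ambient $G$-Higgs bundle implies polystability of the reduction), and extension of structure group does not in general preserve polystability. The paper devotes a substantial part of its proof to this point, running through the conditions of Proposition~\ref{prop: n=2stability}: for $\deg(L)>0$ one has $\tilde\gamma\neq 0$, so the only candidate destabilizing line subbundle of $L^3\oplus L^{-1}$ is $L^3$ itself, and the explicit form of $\beta$ and $\gamma$ in (\ref{eqn:betagamma}) shows it never satisfies the invariance hypotheses of conditions (3a)--(3c); for $\deg(L)=0$ a separate two-case analysis ($\tilde\beta=\tilde\gamma=0$ versus both nonzero) is needed. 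Without this step your map is not known to land in the moduli space, so you should add it.
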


\begin{remark}
The fact that the Higgs bundles obtained in (a) of
Theorem~\ref{theorem: SL2R} are not of the standard form given in Proposition \ref{prop:Vshape}
is due to the fact that unless $\deg(L)=g-1$ the Higgs bundles are not maximal, i.e.\ do not lie in $\mathcal{M}^{max}$.
\end{remark}

\begin{proof}  We use local trivializations and transition functions to describe all bundle data.  Fix an open cover $\{U_i\}$ for $X$ and local trivializations for $L$ and $K$, with transition functions 
$$l_{ij}, k_{ij}:U_i\cap U_j\longrightarrow\GL(1,\C)$$

\noi  on non-empty intersections  $U_i\cap U_j$.  Let the local descriptions of $\tilde{\beta}$ and $\tilde{\gamma}$ over $U_i$ be $\tilde{\beta}_i$ and $\tilde{\gamma}_i$ respectively. Then on non-empty intersections  $U_i\cap U_j$

\begin{equation}\label{eqn: beta rule}
l_{ij}^2k_{ij}\tilde{\beta}_j=\tilde{\beta}_j
\end{equation}

\noindent Similarly 
\begin{equation}\label{eqn: gamma rule general}
l_{ij}^{-2}k_{ij}\tilde{\gamma}_j=\tilde{\gamma}_j\ .
\end{equation}

\noi Observe that if $L^2=K$, so that  $l_{ij}^{2}=k_{ij}$, this implies
\begin{equation}\label{eqn: gamma rule}
\tilde{\gamma}_j=\tilde{\gamma}_j\ .
\end{equation}

The embedding of the $\SL(2,\R)$-Higgs bundle
$(L,\tilde{\beta},\tilde{\gamma})$ in the space of $\Sp(4,\R)$-Higgs
bundles\footnotemark
\footnotetext{To be precise, this yields an $\SL(4,\C)$-Higgs
bundle of the form $(V\oplus V^*, \Phi)$ with
$\Phi=\begin{pmatrix}0& \beta\\ \gamma & 0 \end{pmatrix}$.  The
$\Sp(4,\R)$-Higgs bundle is defined by the data $(V,\beta,\gamma)$.}
is obtained
by applying $\varphi$ to $T^{-1}\begin{pmatrix}l_{ij} & 0 \\ 0& l_{ij}^{-1}\end{pmatrix}T$ and 
$T^{-1} \begin{pmatrix} 0 &\tilde{\beta}_i\\ \tilde{\gamma}_i & 0\end{pmatrix}  T $,where $T=\begin{pmatrix}1& i\\ 1& -i\end{pmatrix}$.  We find
\begin{align*}
\begin{pmatrix}l_{ij} & 0 \\ 0& l_{ij}^{-1}\end{pmatrix}&\longmapsto
\begin{pmatrix}
l_{ij}^{3} & 0 & 0 &0 \\
0 &l_{ij}^{-1} & 0 &0\\
0 &0 &l_{ij}^{-3} & 0\\
0 &0 & 0 & l_{ij}^{1}\\
\end{pmatrix} = g_{ij}\ ,\\
\begin{pmatrix} 0 &\tilde{\beta}_i\\ \tilde{\gamma}_i & 0\end{pmatrix}&\mapsto
\begin{pmatrix}
0 & 0 &  0 & 3\tilde{\beta}_i\\
0 & 0 & 3\tilde{\beta}_i &  \tilde{\gamma}_i \\
0 &\tilde{\gamma}_i   & 0 &0 \\
\ \tilde{\gamma}_i  & 4\tilde{\beta}_i &  0 & 0 
\end{pmatrix} =\Phi_i\ .
\end{align*}

\noi It follows from this that $\{g_{ij}\}$ define a bundle $V\oplus
V^*$ with $V=L^3\oplus L^{-1}$ and that with respect to this
decomposition $\{\Phi_i\}$ define a Higgs field $\Phi$ with $\beta$
and $\gamma$ as in (\ref{eqn:betagamma}).  It remains to show that the
resulting $\Sp(4,\R)$-Higgs bundle, i.e.\ $(L^3\oplus
L^{-1},\beta,\gamma)$, is polystable and thus defines a point in
$\mathcal{M}(\Sp(4,\R))$.

Notice that if $\deg(L)> 0$ and $(L,\tilde{\beta},\tilde{\gamma})$ is
a polystable $\SL(2,\R)$-Higgs bundle, then $\tilde{\gamma}\ne 0$
(cf.\ Remark~\ref{rem:SL2R-stability}). Thus both $\beta$ and $\gamma$
are non-zero.  It follows that $(L^3\oplus L^{-1},\beta,\gamma)$ is
stable if and only if the strict versions of the conditions (3a-c) of
Proposition \ref{prop: n=2stability} are satisfied by line subbundles
$L'\subset L^3\oplus L^{-1}$. But for any such line subbundle, either
$L'=L^3$ or $\deg(L')\le \deg(L^{-1})< 0$. Conditions (3a-c) are thus
clearly satisfied if $L'\ne L^3$. If $L'=L^3$ and $\beta, \gamma$ are
as in (\ref{eqn:betagamma}) then $\beta$ fails to satisfy the
hypotheses in (a) and (c). Moreover, $\gamma$ satisfies the hypothesis
in (b) only if $\tilde{\gamma}=0$, which is not possible if
$(L,\tilde{\beta},\tilde{\gamma})$ is polystable. Thus $L^3$ is not a
destabilizing subbundle and we conclude that $(L^3\oplus
L^{-1},\beta,\gamma)$ is stable.

Finally, if $\deg(L)=0$ then (see Remark~\ref{rem:SL2R-stability})
either $\tilde\beta = \tilde\gamma = 0$ or both $\tilde\beta$ and
$\tilde\gamma$ are non-zero. In the former case, clearly $(L^3\oplus
L^{-1},\beta,\gamma)$ is polystable. In the latter case, clearly the
conditions on $\beta$ and $\gamma$ in (3b-c) of Proposition \ref{prop:
n=2stability} are never satisfied by line subbundles $L'\subset
L^3\oplus L^{-1}$. The only $L'\subset L^3\oplus L^{-1}$ for which the
condition on $\gamma$ in (3a) of Proposition \ref{prop: n=2stability}
is satisfied is $L' = L^3$. But then the condition on $\beta$ in (3a)
of Proposition \ref{prop: n=2stability} is not satisfied and we
conclude that $(L^3\oplus L^{-1},\beta,\gamma)$ is stable. This
completes the proof of part (a).

Suppose now that $\deg(L)=g-1$.  It follows from the definition of
polystability for $\SL(2,\R)$-Higgs bundles that $L^2=K$ and
$\tilde{\gamma}\ne 0$.  By (\ref{eqn: gamma rule}) we can then assume
that the $\tilde{\gamma}_i $ are nowhere zero.  We exploit this to
define an automorphism of $V$ which puts $\gamma$ in a more standard
form.  In the local trivialization over $U_i$, define
\begin{equation}
S_i=\begin{pmatrix} 1 &2\frac{\tilde{\beta}_i}{\tilde{\gamma}_i} & 0 & 0\\
0 & 1 & 0 & 0\\
0 & 0 & 1 & 0\\
0 & 0 & -2\frac{\tilde{\beta}_i}{\tilde{\gamma}_i}  & 1\\
\end{pmatrix}
\end{equation}

\noi Observe that, because of (\ref{eqn: beta rule}) and (\ref{eqn: gamma rule}) we get $g_{ij}S_j g_{ij}^{-1} = S_i$,  which verifies that the $\{S_i\}$ define a bundle automorphism. But
\begin{equation}
S_i\Phi_i S_i^{-1} = \begin{pmatrix}
0 & 0 &16 \frac{\tilde{\beta}^2_i}{\tilde{\gamma}_i} & 5\tilde{\beta}_i\\
0 & 0 & 5\tilde{\beta}_i &  \tilde{\gamma}_i \\
0 &\tilde{\gamma}_i   & 0 &0 \\
\ \tilde{\gamma}_i  & 0 &  0 & 0 
\end{pmatrix}
\end{equation}

\noi Thus the $\Sp(4,\R)$-Higgs bundle defined by $(V,\beta,\gamma)$ is
isomorphic to the $\Sp(4,\R)$-Higgs bundle defined by
$(V,\beta',\gamma')$ where $\beta'$ and $\gamma'$ are as in the
statement of the theorem.
\end{proof}

\begin{corollary}\label{cor:which-hitchin-component}
Let $(V,\beta,\gamma)$ be the image of
$(L,\tilde{\beta},\tilde{\gamma})$ under $\varphi$.
\begin{enumerate}
\item The degree of $V$ is $\deg(V) = 2\deg(L)$.  
\item If $L^2=K$ then $(V,\beta,\gamma)$ lies in the component
$\mathcal{M}^T_L$ of $\mathcal{M}^{max}$.
\end{enumerate}
\end{corollary}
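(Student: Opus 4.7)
The plan is to read off both statements directly from Theorem~\ref{theorem: SL2R}, which already does the heavy lifting. There will be essentially no new computation; the content is in matching our Higgs bundle against the defining data of the subspace $\mathcal{M}^T_L$.

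First I would dispatch part (1). By Theorem~\ref{theorem: SL2R}(a), applied in the range $0 \le \deg(L) \le g-1$, we have $V = L^3 \oplus L^{-1}$, and hence
\begin{displaymath}
\deg(V) = 3\deg(L) + (-\deg(L)) = 2\deg(L).
\end{displaymath}
The case $\deg(L)<0$ follows by applying the same argument to the dual Higgs bundle $(L^{-1},\tilde\gamma,\tilde\beta)$, or directly from the symmetry $\mathcal{M}_d(\SL(2,\R)) \simeq \mathcal{M}_{-d}(\SL(2,\R))$. In either case the result is $\deg(V) = 2\deg(L)$.

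For part (2), assume $L^2 = K$, so that $L$ is a square root $K^{1/2}$ of the canonical bundle and $\deg(L) = g-1$. By part (1), $\deg(V) = 2g-2$, so $(V,\beta,\gamma)$ lies in $\mathcal{M}^{\max}$. Set $N := L^3$; then
\begin{displaymath}
N^{-1}K = L^{-3} \otimes L^2 = L^{-1},
\end{displaymath}
so $V = L^3 \oplus L^{-1} = N \oplus N^{-1}K$ with $N = (K^{1/2})^3$. According to the definition of $\mathcal{M}^T_{K^{1/2}}$, to conclude that $(V,\beta,\gamma)\in\mathcal{M}^T_L$ it remains to verify that the Higgs fields can be put in the standard form of Proposition~\ref{prop:Vshape}(1). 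This is exactly the content of Theorem~\ref{theorem: SL2R}(b): the section $\tilde\gamma\in H^0(L^{-2}K)=H^0(\mathcal{O})$ is a nowhere vanishing constant and, after rescaling $\tilde\gamma$ to $1$, we have $\gamma = \bigl(\begin{smallmatrix}0&1\\1&0\end{smallmatrix}\bigr)$ and $\beta$ of the required form.

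The only step requiring even a moment's thought is the matching between the data produced by $\varphi$ and the defining data of $\mathcal{M}^T_{K^{1/2}}$; but once one notes that $N = L^3$ makes $L^{-1} = N^{-1}K$ automatic, there is nothing further to check. No obstacle of any substance is anticipated.
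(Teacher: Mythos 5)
Your proposal is correct and follows essentially the same route as the paper: part (1) is read off from $V=L^3\oplus L^{-1}$, and part (2) sets $N=L^3$ so that $V=N\oplus N^{-1}K$ with $N=(K^{1/2})^3$ and then invokes the characterization of $\mathcal{M}^T_{K^{1/2}}$ in Proposition~\ref{prop:Vshape} (your extra check of the Higgs-field form via Theorem~\ref{theorem: SL2R}(b) is exactly what that characterization requires). The remark about $\deg(L)<0$ is harmless additional care not present in the paper's proof.
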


\begin{proof} Part (1) follows immediately from the fact that $V=L^3\oplus
L^{-1}$. For (2), defining $N=L^3$ yields $V=N\oplus N^{-1}K$ with $\deg(N)=3g-3$. This, together with the characterization of $\mathcal{M}^T_{K^{1/2}}$ in Proposition \ref{prop:Vshape}, yields the result.
\end{proof}

\begin{corollary} Let $(V,\beta,\gamma)$ represent a $\Sp(4,\R)$ Higgs
bundles in $\mathcal{M}^T_{K^{1/2}}$ and suppose that it admits a reduction
of structure group to $\SL(2,\R)$.  Then $(V,\beta,\gamma)$ is
isomorphic to a $\Sp(4,\R)$-Higgs bundle with $V=K^{3/2}\oplus K^{-1/2}$
and $\beta$ and $\gamma$ as in Theorem \ref{theorem: SL2R}.
\end{corollary}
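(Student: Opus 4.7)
The plan is to assemble the desired conclusion directly from Theorem~\ref{theorem: SL2R} and Corollary~\ref{cor:which-hitchin-component}, arguing by tracking degrees and the labeling of Hitchin components. First, by hypothesis $(V,\beta,\gamma)$ admits a reduction of structure group to $G_i = \rho_{13}(\SL(2,\R))$, which by Definition~\ref{def:G-reduction} and Theorem~\ref{theorem: SL2R} means that there exists a polystable $\SL(2,\R)$-Higgs bundle $(L,\tilde{\beta},\tilde{\gamma})$ such that $(V,\beta,\gamma)$ is isomorphic to $\rho_{ir}^P(L,\tilde{\beta},\tilde{\gamma})$.

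Next I would use the maximality of the Toledo invariant to pin down the line bundle $L$. Since $[V,\beta,\gamma]\in\mathcal{M}^T_{K^{1/2}}\subset\mathcal{M}^{max}$ we have $\deg(V)=2g-2$, and by Corollary~\ref{cor:which-hitchin-component}(1) this forces $\deg(L)=g-1$. Because $g\ge 2$ we have $\deg(L)>0$, so Remark~\ref{rem:SL2R-stability} applied to the polystable $\SL(2,\R)$-Higgs bundle $(L,\tilde\beta,\tilde\gamma)$ forces $\tilde\gamma\neq 0$. But $\tilde\gamma\in H^0(L^{-2}K)$ and $\deg(L^{-2}K)=0$, so the nonvanishing of $\tilde\gamma$ forces $L^{-2}K\cong\mathcal{O}$, i.e.\ $L^2 = K$. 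Thus $L$ is a square root of $K$.

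It remains to identify which square root. By Corollary~\ref{cor:which-hitchin-component}(2), the image $\rho_{ir}^P(L,\tilde\beta,\tilde\gamma)$ lies in the Hitchin component $\mathcal{M}^T_L$. Since the Hitchin components $\{\mathcal{M}^T_{K^{1/2}}\}$ indexed by the $2^{2g}$ distinct square roots of $K$ are mutually disjoint connected components of $\mathcal{M}^{max}$, and we are assuming $[V,\beta,\gamma]\in\mathcal{M}^T_{K^{1/2}}$ for the given $K^{1/2}$, we must have $L = K^{1/2}$. Then part~(b) of Theorem~\ref{theorem: SL2R} applies verbatim and produces an isomorphic model with $V = L^3\oplus L^{-1} = K^{3/2}\oplus K^{-1/2}$ and the stated normalized forms for $\beta$ and $\gamma$.

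The proof is essentially a bookkeeping exercise: no step is hard, but the only mild subtlety is ensuring that the square root $L$ produced abstractly by the reduction coincides with the specific $K^{1/2}$ labeling the target Hitchin component, which is precisely what Corollary~\ref{cor:which-hitchin-component}(2) guarantees together with the disjointness of the components $\mathcal{M}^T_{K^{1/2}}$.
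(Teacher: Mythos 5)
Your proof is correct and follows the same route the paper intends: the corollary is a direct consequence of Theorem~\ref{theorem: SL2R} and Corollary~\ref{cor:which-hitchin-component}, with the square root $L$ pinned down by $\deg(L)=g-1$, the nonvanishing of $\tilde{\gamma}\in H^0(L^{-2}K)$, and the mutual disjointness of the components $\mathcal{M}^T_{K^{1/2}}$. The only citation worth adding is Proposition~\ref{reduction}, which is what guarantees that the reduced $\SL(2,\R)$-Higgs bundle $(L,\tilde{\beta},\tilde{\gamma})$ is polystable --- a fact you implicitly use when invoking Remark~\ref{rem:SL2R-stability} to conclude $\tilde{\gamma}\neq 0$.
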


\subsection{The normalizer of $\SL(2,\R)$}\label{subs: normirr}
Next we calculate the normalizer of $\SL(2,\R)$ embedded in
$\Sp(4,\R)$ via the irreducible representation.\footnotemark\footnotetext{We are grateful to
Bill Goldman for explaining this to us.}  We shall need the following standard fact.

\begin{proposition}
\label{prop:out-SL2R}
The outer automorphism group of $\SL(2,\R)$ is $\Z/2$, generated by
conjugation by the matrix
$\left(
\begin{smallmatrix}
0 & 1 \\
1 & 0
\end{smallmatrix}\right)$.
\end{proposition}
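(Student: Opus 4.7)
My plan is to reduce the statement to the computation of $\Aut(\lie{sl}(2,\R))$ and then exploit the fact that the complexification $\lie{sl}(2,\C)$ has no outer automorphisms (its Dynkin diagram $A_1$ admits no non-trivial symmetry). First, I would show that every continuous automorphism $\Phi$ of $\SL(2,\R)$ is of the form $A \mapsto gAg^{-1}$ for some $g \in \GL(2,\R)$. Since $\SL(2,\R)$ is connected, the differential $d\Phi$ at the identity is an $\R$-linear Lie algebra automorphism $\phi$ of $\lie{sl}(2,\R)$, and $\Phi$ is uniquely determined by $\phi$ (connectedness gives injectivity of $\Phi \mapsto d\Phi$). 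Conversely every $\phi \in \Aut(\lie{sl}(2,\R))$ of the form $\Ad(g)$ for $g \in \GL(2,\R)$ manifestly integrates to the group automorphism $A \mapsto gAg^{-1}$. So it suffices to show that $\Aut(\lie{sl}(2,\R)) = \PGL(2,\R)$, acting by conjugation.

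Next I would argue by complexification. Any $\R$-linear automorphism $\phi$ of $\lie{sl}(2,\R)$ extends $\C$-linearly to an automorphism $\phi^\C$ of $\lie{sl}(2,\C)$. Since the Dynkin diagram $A_1$ has no non-trivial symmetries, $\Aut_\C(\lie{sl}(2,\C)) = \Inn(\lie{sl}(2,\C)) = \PGL(2,\C)$, so $\phi^\C = \Ad(g)$ for some $g \in \PGL(2,\C)$. The condition that $\phi^\C$ preserves the real form $\lie{sl}(2,\R) \subset \lie{sl}(2,\C)$ is equivalent, after a short computation, to $\bar{g} = g$ in $\PGL(2,\C)$, that is, to the existence of a representative in $\GL(2,\R)$. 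This gives the identification $\Aut(\lie{sl}(2,\R)) = \PGL(2,\R)$.

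Finally, the inner automorphism group $\Inn(\SL(2,\R))$ is $\SL(2,\R)/Z(\SL(2,\R)) = \PSL(2,\R)$, which under the identification above corresponds to the identity component of $\PGL(2,\R)$. Therefore
\[
\Out(\SL(2,\R)) \;=\; \PGL(2,\R)/\PSL(2,\R) \;\cong\; \Z/2,
\]
and a representative of the non-trivial coset is given by any element of $\GL(2,\R)$ with negative determinant; the matrix $\left(\begin{smallmatrix} 0 & 1 \\ 1 & 0 \end{smallmatrix}\right)$ is the simplest choice. The main (and only really non-trivial) step is the complexification argument giving $\Aut(\lie{sl}(2,\R)) = \PGL(2,\R)$; everything else is a straightforward bookkeeping of connected components and centres.
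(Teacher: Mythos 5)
Your argument is correct. Note that the paper itself gives no proof of this proposition --- it is introduced with ``we shall need the following standard fact'' and simply asserted --- so there is nothing in the text to compare against; your write-up supplies the details the authors omit, and it follows the standard route: reduce to $\Aut(\liesl(2,\R))$, complexify, use the absence of symmetries of the $A_1$ diagram to get $\Aut(\liesl(2,\C)) = \PGL(2,\C)$, and descend to $\PGL(2,\R)$ by a reality argument. One point worth making explicit is that $\SL(2,\R)$ is not simply connected, so the map from group automorphisms to Lie algebra automorphisms is injective (by connectedness) but not a priori surjective; your argument handles this correctly, since every element of $\Aut(\liesl(2,\R)) = \PGL(2,\R)$ is visibly realized by the honest group automorphism $A \mapsto gAg^{-1}$, so the failure of simple connectedness causes no harm. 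The only step left implicit is the ``short computation'': if $\Ad(g)$ preserves the real form then $\Ad(\bar g^{-1}g)=\mathrm{id}$, so $g = \lambda \bar g$ with $\abs{\lambda}=1$, and rescaling $g$ by a square root of $\lambda$ yields a representative in $\GL(2,\R)$. That is routine and fine to leave as stated; the final identification $\Out(\SL(2,\R)) = \PGL(2,\R)/\PSL(2,\R) \cong \Z/2$, with the determinant-$(-1)$ matrix $\left(\begin{smallmatrix} 0 & 1 \\ 1 & 0\end{smallmatrix}\right)$ representing the nontrivial coset, is exactly what the paper needs in Section 7.3.
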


Consider the extension of the irreducible representation $\rho_1$ to a representation in
$\SL(4,\R)$. Note that the domain of $\rho_1$ can be extended to
$\SL_{\pm}(2,\R)=\{A \suchthat \det(A)
= \pm 1\}$: in fact, substituting $\left(
\begin{smallmatrix}
a & b \\
c & d
\end{smallmatrix}\right)$
by
$\left(
\begin{smallmatrix}
0 & 1 \\
1 & 0
\end{smallmatrix}\right)$
in (\ref{eq:irrepmatrix}) we obtain
\begin{equation}\label{eq:irrep-det-minus}
\rho_1(
\begin{pmatrix}
0 & 1 \\
1 & 0
\end{pmatrix})
=
\begin{pmatrix}
0&0&1&0\\ 0&0&0&1\\ 1&0&0&0\\ 0&1&0&0
\end{pmatrix},
\end{equation}
which has determinant $1$.

Next we make a general observation. Let $\tilde{G} \subset G$ be a Lie
subgroup. We have the following diagram of exact sequences of groups:
\begin{equation}
\label{eq:normalizer-centralizer}
\begin{CD}
&&  1 && 1 && 1 \\
&& @VVV @VVV @VVV \\
1 @>>> Z(\tilde{G}) @>>> \tilde{G} @>>> \Inn(\tilde{G}) @>>> 1 \\
&& @VVV @VVV @VVV \\
1 @>>> C_G(\tilde{G}) @>>> N_G(\tilde{G}) @>>> \Aut(\tilde{G}) \\
&& @VVV @VVV @VVV \\
1 @>>> C_G(\tilde{G})/Z(\tilde{G})
   @>>> N_G(\tilde{G})/\tilde{G} @>>> \Out(\tilde{G}) \\
&& && && @VVV \\
&& && && 1 \\
\end{CD}
\end{equation}
\medskip

\begin{proposition}\label{prop:centralizer-normalizer}
Let $\tilde{G} = \rho_1(\SL(2,\R)) \subset G = \SL(4,\R)$. Then we
have a short exact sequence of groups:
\begin{displaymath}
1 \to C_G(\tilde{G})/Z(\tilde{G})
   \to N_G(\tilde{G})/\tilde{G} \to \Z/2 \to 1,
\end{displaymath}
where the quotient  $\Z/2$ is generated by the image of
$\rho_1(\left(
\begin{smallmatrix}
0 & 1 \\
1 & 0
\end{smallmatrix}\right)) \in N_G(\tilde{G})$.
\end{proposition}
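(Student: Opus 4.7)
The plan is to extract the desired exact sequence directly from the bottom row of diagram~(\ref{eq:normalizer-centralizer}) and then exhibit an explicit element realizing the generator of $\Out(\tilde{G})$. A diagram chase (or snake lemma) applied to~(\ref{eq:normalizer-centralizer}) gives exactness of
\begin{equation*}
1 \to C_G(\tilde{G})/Z(\tilde{G}) \to N_G(\tilde{G})/\tilde{G} \to \Out(\tilde{G})
\end{equation*}
for free, since the rightmost column is $\Inn(\tilde{G}) \hookrightarrow \Aut(\tilde{G}) \twoheadrightarrow \Out(\tilde{G})$ and the kernel of $N_G(\tilde{G}) \to \Aut(\tilde{G})$ is $C_G(\tilde{G})$, which meets $\tilde{G}$ in $Z(\tilde{G})$. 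By Proposition~\ref{prop:out-SL2R} we have $\Out(\tilde{G}) \cong \Z/2$, so all that remains is to show that the map $N_G(\tilde{G})/\tilde{G} \to \Out(\tilde{G})$ is surjective.

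For surjectivity, I would use the extension of $\rho_1$ to $\SL_{\pm}(2,\R)$, which is available because $\rho_1$ is defined on all of $\GL(2,\R)$ via its construction on $S^3\R^2$. Set $\sigma = \bigl(\begin{smallmatrix}0&1\\1&0\end{smallmatrix}\bigr)$; by (\ref{eq:irrep-det-minus}), $\rho_1(\sigma)$ lies in $\SL(4,\R)=G$. Since $\rho_1$ is a homomorphism on $\SL_{\pm}(2,\R)$ and $\sigma$ normalizes $\SL(2,\R)$ inside $\SL_{\pm}(2,\R)$, conjugation by $\rho_1(\sigma)$ preserves $\tilde{G}=\rho_1(\SL(2,\R))$, so $\rho_1(\sigma) \in N_G(\tilde{G})$. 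Moreover the induced automorphism of $\tilde{G}$ corresponds under $\rho_1$ to conjugation by $\sigma$ on $\SL(2,\R)$, which by Proposition~\ref{prop:out-SL2R} represents the nontrivial class in $\Out(\SL(2,\R))$. Hence the image of $\rho_1(\sigma)$ in $\Out(\tilde{G})$ is the generator, which gives surjectivity.

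Finally, to verify that this element is nontrivial in the quotient $N_G(\tilde{G})/\tilde{G}$, one notes that $\rho_1(\sigma) \notin \tilde{G}$: since $\rho_1$ is faithful on $\SL_{\pm}(2,\R)$ (on $\SL(2,\R)$ faithfulness is immediate because $\rho_1(-I)=-I \neq I$, and if $\rho_1(A)=\rho_1(B)$ with $\det A = -\det B$ then $\rho_1(AB^{-1})=I$ forces a contradiction on determinants), the equality $\rho_1(\sigma)=\rho_1(A)$ for some $A\in \SL(2,\R)$ would force $\sigma \in \SL(2,\R)$, which is false. Thus $\rho_1(\sigma)$ descends to a nontrivial element of $N_G(\tilde{G})/\tilde{G}$ mapping onto the generator of $\Z/2$, completing the proof.

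The main obstacle I anticipate is purely bookkeeping: one must be careful that the extension of $\rho_1$ to $\SL_{\pm}(2,\R)$ really does land in $\SL(4,\R)$ (not just $\GL(4,\R)$), and that the conjugation action on $\tilde{G}$ genuinely induces the outer, rather than inner, automorphism. Both points are handled by the explicit matrix computation~(\ref{eq:irrep-det-minus}) together with the faithfulness of $\rho_1$.
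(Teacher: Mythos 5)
Your proposal is correct and follows essentially the same route as the paper: both extract the exact sequence from the bottom row of diagram~(\ref{eq:normalizer-centralizer}) and establish surjectivity onto $\Out(\tilde{G})\cong\Z/2$ by exhibiting $\rho_1\bigl(\begin{smallmatrix}0&1\\1&0\end{smallmatrix}\bigr)\in\SL(4,\R)$ via the determinant-one computation~(\ref{eq:irrep-det-minus}) and Proposition~\ref{prop:out-SL2R}. Your version merely spells out the diagram chase and the faithfulness check that the paper leaves implicit.
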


\begin{proof}
As observed above, $\rho_1(\left(
\begin{smallmatrix}
0 & 1 \\
1 & 0
\end{smallmatrix}\right))$
is an element of $G$. Now Proposition~\ref{prop:out-SL2R} implies
that this element belongs to $N_G(\tilde{G})$ and that the map on
the right in the bottom row of (\ref{eq:normalizer-centralizer}) is
surjective. 
\end{proof}

\begin{proposition}\label{prop:centralizer}
Let $\tilde{G} = \rho_1(\SL(2,\R)) \subset G = \SL(4,\R)$. The
centralizer of $\tilde{G}$ in $G$ equals the centre $\{\pm I\}$ of
$\tilde{G}$.
\end{proposition}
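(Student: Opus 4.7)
The plan is to apply Schur's lemma. Note first that the representation $\rho_1$ is absolutely irreducible: its complexification is the standard representation of $\SL(2,\C)$ on $S^3\C^2 \simeq \C^4$, which is well known to be irreducible, and the restriction to $\SL(2,\R)\subset\SL(2,\C)$ remains irreducible because $\SL(2,\R)$ is Zariski dense in $\SL(2,\C)$. Thus the complex linear span of $\rho_1(\SL(2,\R))$ inside $\End(\C^4)$ is all of $\End(\C^4)$.

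Now let $M \in C_G(\tilde G) \subset \SL(4,\R)$. Extending $M$ complex-linearly to $\C^4$, it commutes with the complexified irreducible representation of $\SL(2,\C)$ on $S^3\C^2$. By the complex version of Schur's lemma, $M = \lambda I$ for some $\lambda\in\C$; since $M$ is real, $\lambda\in\R$. The determinant condition $\det M = \lambda^4 = 1$ then forces $\lambda = \pm 1$.

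Finally, both $\pm I$ indeed lie in $\tilde G$: from the explicit matrix formula (\ref{eq:irrepmatrix}), direct substitution gives $\rho_1(I) = I$ and $\rho_1(-I) = -I$. Since the center of $\SL(2,\R)$ is $\{\pm I\}$ and $\rho_1$ is injective (being irreducible and nontrivial), we have $Z(\tilde G) = \rho_1(\{\pm I\}) = \{\pm I\}$. This yields $C_G(\tilde G) = \{\pm I\} = Z(\tilde G)$, as required.

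The only real content is absolute irreducibility of $\rho_1$; once that is in hand, everything reduces to Schur's lemma and a determinant computation, so there is no serious obstacle.
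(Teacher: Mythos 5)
Your proof is correct and follows essentially the same route as the paper: pass to the complexification, note it is the irreducible $4$-dimensional representation of $\SL(2,\C)$, apply Schur's lemma to get a scalar, and use the determinant (equivalently, reality and membership in $\SL(4,\R)$) to pin it down to $\pm I$. Your version is slightly more explicit about absolute irreducibility and about identifying $\{\pm I\}$ with the centre of $\tilde G$, but the argument is the same.
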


\begin{proof}
Any element in the centralizer of $\tilde{G}$ is also in the
centralizer of its complexification. Since this complexification is
just the $4$-dimensional irreducible representation of $\SL(2,\C)$,
Schur's Lemma implies that any element centralizing $\tilde{G}$ is a
complex multiple of the identity. But the only multiples of the
identity in $\SL(4,\R)$ are $\pm I$.
\end{proof}

\begin{corollary}\label{cor:normalizer-SL4R}
The normalizer of $\tilde{G} = \rho_1(\SL(2,\R))$ in $\SL(4,\R)$
fits in the short exact sequence of groups
\begin{displaymath}
1 \to \tilde{G} \to N_{\SL(4,\R)}(\tilde{G}) \to \Z/2 \to 1,
\end{displaymath}
where the quotient $\Z/2$ is generated by the image
$\rho_1(\left(
\begin{smallmatrix}
0 & 1 \\
1 & 0
\end{smallmatrix}\right)) \in N_{\SL(4,\R)}(\tilde{G})$.
\end{corollary}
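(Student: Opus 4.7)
The plan is to simply combine the two preceding Propositions~\ref{prop:centralizer-normalizer} and \ref{prop:centralizer}. Recall from the diagram~(\ref{eq:normalizer-centralizer}) that for any Lie subgroup $\tilde{G} \subset G$ there is a short exact sequence
\begin{equation*}
1 \to C_G(\tilde{G})/Z(\tilde{G}) \to N_G(\tilde{G})/\tilde{G} \to \Out(\tilde{G})
\end{equation*}
arising from the bottom row.

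First I would apply Proposition~\ref{prop:centralizer-normalizer} in the case $G = \SL(4,\R)$ and $\tilde{G} = \rho_1(\SL(2,\R))$ to conclude that the map $N_G(\tilde{G})/\tilde{G} \to \Out(\tilde{G}) \simeq \Z/2$ is surjective, with a preimage of the nontrivial generator given by the class of $\rho_1(\left(\begin{smallmatrix}0 & 1 \\ 1 & 0\end{smallmatrix}\right))$. Next I would invoke Proposition~\ref{prop:centralizer}, which identifies $C_G(\tilde{G}) = \{\pm I\}$; since $Z(\tilde{G}) = \{\pm I\}$ as well (the center of $\SL(2,\R)$ maps to $\pm I$ under $\rho_1$, as can be read off directly from the formula~(\ref{eq:irrepmatrix})), the quotient $C_G(\tilde{G})/Z(\tilde{G})$ is trivial.

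Feeding this triviality into the short exact sequence from Proposition~\ref{prop:centralizer-normalizer} yields the isomorphism $N_G(\tilde{G})/\tilde{G} \simeq \Z/2$, generated by the specified element. Rewriting this isomorphism as a short exact sequence
\begin{equation*}
1 \to \tilde{G} \to N_{\SL(4,\R)}(\tilde{G}) \to \Z/2 \to 1
\end{equation*}
gives the statement of the Corollary. There is no real obstacle here: the entire content has been established in the two preceding propositions, and the corollary is a direct bookkeeping consequence.
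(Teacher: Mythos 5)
Your proposal is correct and matches the paper's own argument, which simply cites Propositions~\ref{prop:centralizer-normalizer} and \ref{prop:centralizer} as giving the result immediately. You have merely spelled out the bookkeeping (triviality of $C_G(\tilde{G})/Z(\tilde{G})$) that the paper leaves implicit.
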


\begin{proof}
Immediate from Propositions~\ref{prop:centralizer-normalizer} and
\ref{prop:centralizer}. 
\end{proof}

\begin{proposition}\label{prop: Gi=SL2R}
Let $\tilde{G}=\rho_1(\SL(2,\R)) \subset \Sp(4,\R)$. Then the normalizer
of $\tilde{G}$ in $\Sp(4,\R)$, i.e.\ $G_i$,  coincides with
$\tilde{G}$:
\begin{displaymath}
G_i= N_{\Sp(4,\R)}(\tilde{G}) = \tilde{G}.
\end{displaymath}
\end{proposition}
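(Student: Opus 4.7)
The plan is to leverage the computation of the normalizer in $\SL(4,\R)$ supplied by Corollary~\ref{cor:normalizer-SL4R}, and then intersect with $\Sp(4,\R)$. Since $\tilde{G}\subset \Sp(4,\R)$, we have $\tilde{G}\subseteq N_{\Sp(4,\R)}(\tilde{G})$ immediately, so the content is in the reverse inclusion. Any element of $N_{\Sp(4,\R)}(\tilde{G})$ also lies in $N_{\SL(4,\R)}(\tilde{G})$, and by Corollary~\ref{cor:normalizer-SL4R} it is therefore either an element of $\tilde{G}$ or an element of the unique non-trivial $\tilde{G}$-coset, generated by $M := \rho_1\bigl(\begin{smallmatrix}0 & 1\\ 1 & 0\end{smallmatrix}\bigr)$ as in (\ref{eq:irrep-det-minus}).

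The core step is therefore to show that no element of the coset $\tilde{G}\cdot M$ lies in $\Sp(4,\R)$. This reduces to a direct matrix computation: using the explicit form of $M$ and of the symplectic matrix $J_0$ on $S^3\R^2$ recorded in Section~\ref{subs:irep}, one verifies that
\begin{equation*}
M^t J_0 M = -J_0 ,
\end{equation*}
so $M$ is \emph{anti}-symplectic rather than symplectic with respect to $\Omega$. Conceptually, this is forced by naturality: for $A\in \SL_{\pm}(2,\R)$ one has $\rho_1(A)^t J_0 \rho_1(A) = (\det A)^{3} J_0$, and applying this to $A=\bigl(\begin{smallmatrix}0 & 1\\ 1 & 0\end{smallmatrix}\bigr)$ produces the sign $(-1)^3=-1$.

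To finish, note that for any $g\in \tilde{G}$ we have $g^t J_0 g = J_0$ (since $\tilde{G}\subset \Sp(4,\R)$), whence
\begin{equation*}
(gM)^t J_0 (gM) \;=\; M^t g^t J_0 g M \;=\; M^t J_0 M \;=\; -J_0 \;\neq\; J_0.
\end{equation*}
Hence the entire non-trivial coset $\tilde{G}\cdot M$ is disjoint from $\Sp(4,\R)$, and combined with the first paragraph this gives $N_{\Sp(4,\R)}(\tilde{G})=\tilde{G}$. The only mild nuisance is the choice of symplectic form: the paper conjugates $\rho_1$ by the matrix $h$ of Section~\ref{subs:irep} to pass from $J_0$ to $J_{13}$, but since conjugation sends symplectic (resp.\ anti-symplectic) elements to symplectic (resp.\ anti-symplectic) elements for the conjugated form, the conclusion is unchanged. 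I expect this bookkeeping to be the only real obstacle; everything else is either a one-line coset argument or a routine computation that has essentially been done already in (\ref{eq:irrep-det-minus}) and the displayed formula for $J_0$.
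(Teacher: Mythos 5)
Your proposal is correct and follows essentially the same route as the paper: both reduce to Corollary~\ref{cor:normalizer-SL4R} and then observe that the generator $\rho_1\bigl(\begin{smallmatrix}0&1\\1&0\end{smallmatrix}\bigr)$ of the nontrivial coset fails to satisfy (\ref{eq:symp-J_0}), so the coset misses $\Sp(4,\R)$. Your explicit computation $M^tJ_0M=-J_0$ (with the clean explanation via $\rho_1(A)^tJ_0\rho_1(A)=(\det A)^3J_0$) is exactly the check the paper leaves to the reader.
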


\begin{proof}
Consider $N_{\Sp(4,\R}(\tilde{G}) \subset \Sp(4,\R) \subset
\SL(4,\R)$ as a subgroup of $\SL(4,\R)$. Clearly,
\begin{displaymath}
\tilde{G} \subset N_{\Sp(4,\R}(\tilde{G}) \subset
N_{\SL(4,\R}(\tilde{G}). 
\end{displaymath}
We conclude from Corollary~\ref{cor:normalizer-SL4R} that either
$N_{\Sp(4,\R}(\tilde{G})$ coincides with the index $2$
subgroup $\tilde{G} \subset N_{\SL(4,\R}(\tilde{G})$ or it equals
$N_{\SL(4,\R}(\tilde{G})$. In the latter case, we must have $\rho_1(\left(
\begin{smallmatrix}
0 & 1 \\
1 & 0
\end{smallmatrix}\right)) \in N_{\Sp(4,\R}(\tilde{G})$. But from
(\ref{eq:normalizer-centralizer}) one easily checks that $\rho_1(\left(
\begin{smallmatrix}
0 & 1 \\
1 & 0
\end{smallmatrix}\right))$
does not satisfy (\ref{eq:symp-J_0}) and hence does not belong to
$\Sp(4,\R)$. This concludes the proof.
\end{proof}

\subsection{Summary}

Putting together Theorem~\ref{theorem: SL2R} ,
Corollary~\ref{cor:which-hitchin-component} and the fact that $G_i=\SL(2,\R)$,  we finally obtain:

\begin{theorem}\label{thm:GirrTally}
 A maximal polystable $\Sp(4,\R)$-Higgs bundle deforms to a
 polystable $G_{i}$-Higgs bundle if and only if it belongs to one of
 the Hitchin components $\mathcal{M}^{T}_{K^{1/2}}$.
\end{theorem}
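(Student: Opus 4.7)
The plan is to extract this theorem as a direct consequence of the analysis already carried out in Theorem \ref{theorem: SL2R}, Corollary \ref{cor:which-hitchin-component}, Proposition \ref{prop: Gi=SL2R}, and the connectedness of the Hitchin components established in Proposition \ref{prop:components}(2). I would prove the two implications separately.

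For the ``only if'' direction, suppose a maximal polystable $\Sp(4,\R)$-Higgs bundle in a component $\mathcal{N}$ deforms to a polystable $G_i$-Higgs bundle. Since the Toledo invariant is locally constant, that $G_i$-Higgs bundle $(V,\beta,\gamma)$ is itself maximal. By Proposition \ref{prop: Gi=SL2R} we have $G_i = \rho_1(\SL(2,\R))$, so $(V,\beta,\gamma)$ is the image under $\rho_{ir}^P$ of a polystable $\SL(2,\R)$-Higgs bundle $(L,\tilde\beta,\tilde\gamma)$. By Corollary \ref{cor:which-hitchin-component}(1), $\deg V = 2\deg L$, and maximality gives $\deg L = g-1$. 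Polystability of $(L,\tilde\beta,\tilde\gamma)$ with $\deg L > 0$ requires $\tilde\gamma \neq 0$ by Remark \ref{rem:SL2R-stability}; since $\tilde\gamma \in H^0(L^{-2}K)$ and $\deg(L^{-2}K)=0$, this forces $L^{-2}K = \mathcal{O}$, i.e.\ $L^2 = K$. Corollary \ref{cor:which-hitchin-component}(2) then places $(V,\beta,\gamma)$ in $\mathcal{M}^T_L$, so $\mathcal{N} = \mathcal{M}^T_{K^{1/2}}$ for the square root $K^{1/2} = L$.

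For the ``if'' direction, fix any square root $K^{1/2}$ of the canonical bundle. Consider the $\SL(2,\R)$-Higgs bundle $(K^{1/2}, \tilde\beta, \tilde\gamma) = (K^{1/2}, 0, 1)$ with $\tilde\beta = 0 \in H^0(K^2)$ and $\tilde\gamma = 1 \in H^0(\mathcal{O})$. Since $\deg(K^{1/2}) = g-1 > 0$ and $\tilde\gamma \neq 0$, this triple is stable (hence polystable) by Remark \ref{rem:SL2R-stability}. By Theorem \ref{theorem: SL2R}(b), its image under $\rho_{ir}^P$ is a maximal polystable $\Sp(4,\R)$-Higgs bundle lying in $\mathcal{M}^T_{K^{1/2}}$. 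This exhibits a polystable $G_i$-Higgs bundle in $\mathcal{M}^T_{K^{1/2}}$; since by Proposition \ref{prop:components}(2) this component is isomorphic to the vector space $H^0(K^2) \oplus H^0(K^4)$ and is in particular connected, every element of $\mathcal{M}^T_{K^{1/2}}$ deforms to it.

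The main substantive content -- the explicit description of the image of the irreducible embedding and the verification that the image of a maximal $\SL(2,\R)$-Higgs bundle lands in a Hitchin component -- has already been done in Theorem \ref{theorem: SL2R} and Corollary \ref{cor:which-hitchin-component}. The only delicate point in my proof is ensuring that polystability of $(L,\tilde\beta,\tilde\gamma)$ in the forward direction upgrades $\deg L = g-1$ to the stronger condition $L^2 = K$, which requires invoking the Milnor--Wood analysis for $\SL(2,\R)$-Higgs bundles in Remark \ref{rem:SL2R-stability}. No further obstacle appears.
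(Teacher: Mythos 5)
Your proof is correct and follows essentially the same route as the paper, which derives this theorem directly by combining Theorem~\ref{theorem: SL2R}, Corollary~\ref{cor:which-hitchin-component} and Proposition~\ref{prop: Gi=SL2R}; you have simply made explicit the two implications (maximality forcing $\deg L=g-1$ and then $L^2=K$ via the nonvanishing of $\tilde\gamma$, and the connectedness of $\mathcal{M}^T_{K^{1/2}}$ for the converse) that the paper leaves to the reader. No gaps.
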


\section{The case $n \geq 3$}
\label{sec:higher-n}

In this section we make a digression to the case of $n \geq 3$,
showing that in this case any maximal polystable $\Sp(2n,\R)$-Higgs
bundle can be deformed to a $G$-Higgs bundle for some proper reductive
Zariski closed subgroup $G \subset \Sp(2n,\R)$.

\subsection{The moduli space of $\Sp(2n,\R)$-Higgs bundles }

An $\Sp(2n,\R)$-Higgs bundle on $X$ (cf.\ Remark~\ref{rem:SL2R-higgs})
is a triple $(V,\beta,\gamma)$, where $V$ is a rank $n$ holomorphic
vector bundle on $X$, $\beta\in H^0(X, S^2V \otimes K)$ and $\gamma
\in H^0(X,S^2V^*\otimes K)$. The \emph{moduli space of polystable
$\Sp(2n,\R)$-Higgs bundles} is denoted by $\mathcal{M}(\Sp(2n,\R))$
and is homeomorphic to the moduli space $\mathcal{R}(\Sp(2n,\R))$ of
reductive representations of $\pi_1(X)$ in $\Sp(2n,\R)$.

The Milnor--Wood inequality for a $\Sp(2n,\R)$-Higgs bundle says that
$\abs{\deg(V)} \leq n(g-1)$. The moduli space of \emph{maximal
$\Sp(2n,\R)$-Higgs bundles} is
\begin{displaymath}
\mathcal{M}^{max}(\Sp(2n,\R)) = \{[V,\beta,\gamma] \in
\mathcal{M}(\Sp(2n,\R)) \suchthat \deg(V)=n \}.
\end{displaymath}
The space $\mathcal{M}^{max}(\Sp(2n,\R))$ is homeomorphic to the
moduli space of maximal representations of $\pi_1(X)$ in $\Sp(2n,\R)$.

For any maximal $\Sp(2n,\R)$-Higgs bundle $(V,\beta,\gamma)$, the map
$\gamma \colon V \to V^* \otimes K$ is an isomorphism and
$(V,\beta,\gamma)$ has a Cayley partner $(W,q_W,\theta)$ defined as in
(\ref{defn:CayleyW})--(\ref{defn:theta}). This leads to the existence
of invariants $w_1(V,\beta,\gamma) \in H^1(X,\Z/2)$ and
$w_2(V,\beta,\gamma) \in H^2(X,\Z/2)$ defined by the Stiefel--Whitney
classes of $(W,q_W)$ (cf.\ \cite{garcia-gothen-mundet:2008}).

The count of connected components of $\mathcal{M}^{max}(\Sp(2n,\R)$
was carried out in \cite{garcia-gothen-mundet:2008}, where the
following theorem is proved.

\begin{theorem}
The moduli space of maximal $\Sp(2n,\R)$-Higgs bundles has $3\cdot
2^{2g}$ connected components:
\begin{enumerate}
\item For each $(w_1,w_2) \in H^1(X,\Z/2) \x H^2(X,\Z/2)$ there is a
 component $\mathcal{M}_{w_1,w_2}$. Any $\Sp(2n,\R)$-Higgs bundle
 $(V,\beta,\gamma)$ in such a component has invariants $(w_1,w_2)$
 and can be deformed to one with $\beta=0$.
\item For each choice of a square root $K^{1/2}$ of the canonical
 bundle of $X$, there is a Hitchin component
 $\mathcal{M}^T_{K^{1/2}}$. Any $\Sp(2n,\R)$-Higgs bundle
 $(V,\beta,\gamma)$ in such a component has
 $\beta\neq 0$ and can be deformed to a $\rho_i(\SL(2,\R))$-Higgs
 bundle, where $\rho_i\colon \SL(2,\R)\to \Sp(2n,\R)$ is the
 irreducible representation. 
\end{enumerate}
\end{theorem}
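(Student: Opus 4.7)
The proof I propose follows the Morse-theoretic strategy pioneered by Hitchin and adapted in \cite{garcia-gothen-mundet:2008} for general $G$-Higgs bundles. The main tool is the \textbf{Hitchin function} $f\colon \mathcal{M}(\Sp(2n,\R)) \to \R_{\geq 0}$ given by $f(V,\beta,\gamma) = \|\beta\|_{L^2}^2 + \|\gamma\|_{L^2}^2$, evaluated with respect to the harmonic metric solving the Hitchin equations. A foundational result is that $f$ is proper on $\mathcal{M}$, so to establish connectedness of a closed invariant subvariety $\mathcal{N}\subset\mathcal{M}^{max}$ it suffices to show that the locus of local minima of $f|_{\mathcal{N}}$ is non-empty and connected; the downward gradient flow then deforms every point of $\mathcal{N}$ into this minimum locus.

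The first step is to produce the topological invariants $(w_1,w_2)$. Exactly as in Section~\ref{subs:cayley}, since $\gamma\colon V \to V^*\otimes K$ is an isomorphism in the maximal case, fixing a square root $L_0$ of $K$ and setting $W = V^*\otimes L_0$ produces a Cayley partner: an $\OO(n,\C)$-bundle $(W,q_W)$ together with a $q_W$-symmetric twisted endomorphism $\theta\in H^0(X,\End(W)\otimes K^2)$ encoding $\beta$. The Stiefel--Whitney classes $w_1(W,q_W)\in H^1(X;\Z/2)$ and $w_2(W,q_W)\in H^2(X;\Z/2)$ are independent of the choice of $L_0$ and hence yield locally constant invariants $(w_1,w_2)$ of $(V,\beta,\gamma)$; this immediately separates $\mathcal{M}^{max}(\Sp(2n,\R))$ into at least $2\cdot 2^{2g}$ open-and-closed pieces $\mathcal{M}_{w_1,w_2}$.

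Next I would classify the local minima of $f$ on $\mathcal{M}^{max}$. Using the $\C^*$-action $\theta\mapsto \lambda\theta$ in the Cayley picture, one shows that local minima come in two types. \emph{Type I}: minima with $\theta=0$ (equivalently $\beta=0$); the minimum locus in each stratum $(w_1,w_2)$ is the moduli space of polystable $\OO(n,\C)$-bundles of rank $n$ with prescribed Stiefel--Whitney classes, which for $n\geq 3$ is non-empty and connected (this is the key place where $n\geq 3$ differs from $n=2$, since in the latter case the orthogonal moduli space splits further according to the integer $\deg(L)$ in a decomposition $W = L\oplus L^{-1}$). \emph{Type II}: minima lying on a Hitchin component; for each choice of spin structure $K^{1/2}$, Hitchin \cite{hitchin:1992} constructs a distinguished component $\mathcal{M}^T_{K^{1/2}}$ isomorphic to $\bigoplus_{k=1}^{n} H^0(X,K^{2k})$, whose Higgs bundles deform to images of the irreducible representation $\rho_i\colon \SL(2,\R)\to\Sp(2n,\R)$. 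The minimum of $f$ on such a component is a single orbit (coming from a Fuchsian $\SL(2,\R)$-representation), hence connected, and distinct choices of $K^{1/2}$ give genuinely distinct components, yielding the remaining $2^{2g}$ pieces.

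Adding the two types gives the count $2\cdot 2^{2g}+2^{2g} = 3\cdot 2^{2g}$, and the deformation assertions in (1) and (2) are precisely the Morse-flow statements that each element of $\mathcal{M}_{w_1,w_2}$ flows to a Type~I minimum (which has $\beta=0$), while each element of $\mathcal{M}^T_{K^{1/2}}$ flows to a Type~II minimum (which arises from $\rho_i$). The main obstacle is the complete and rigorous identification of the local minima of $f$ at all strata; in particular one must verify that for $n\geq 3$ no exotic discrete invariant analogous to $\deg(L)$ further refines the $(w_1,w_2)$ classification, and this is ultimately where the extra rigidity of $\OO(n,\C)$-bundles for $n\geq 3$ enters the argument decisively.
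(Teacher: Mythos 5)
Your outline follows the same strategy that underlies the result as used in this paper: the theorem is not proved here at all but quoted from \cite{garcia-gothen-mundet:2008}, whose proof is precisely the Hitchin-function/Morse-theoretic argument you describe --- properness of $f$, the Cayley-partner invariants $(w_1,w_2)$, and the classification of local minima into the $\beta=0$ locus (identified with the connected moduli space of polystable $\OO(n,\C)$-bundles with fixed Stiefel--Whitney classes, which is where $n\ge 3$ enters) together with the Hitchin components. Your sketch is correct in approach and appropriately defers the genuinely hard step, namely the complete classification of local minima (which for $n=2$ yields the extra minima with $\beta_2\neq 0$ responsible for the components $\mathcal{M}^0_c$, and whose absence for $n\ge 3$ must be verified), to the detailed computation carried out in that reference.
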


\begin{remark}
An $\Sp(2n,\R)$-Higgs bundle in $\mathcal{M}_{w_1,w_2}$ has
invariants $(w_1,w_2)$, and an $\Sp(2n,\R)$-Higgs bundle in
$\mathcal{M}^T_{K^{1/2}}$ has $w_2=0$. See
\cite[Proposition~8.2]{garcia-gothen-mundet:2008} for the value of
$w_1$.
\end{remark}

Maximal $\Sp(2n,\R)$-Higgs bundles can be constructed as follows (cf.\
Proposition~\ref{prop: Gpstructure}). Let $(V_i,\beta_i,\gamma_i)$ be
maximal polystable $\Sp(2n_i,\R)$-Higgs bundles for $i=1,2$ and let
$n=n_1+n_2$. Then the polystable $\Sp(2n,\R)$-Higgs bundle $(V,\beta,\gamma)$
defined by
\begin{displaymath}
V = V_1 \oplus V_2,\quad
\beta = \beta_1 + \beta_2,\quad\text{and}\quad
\gamma = \gamma_1 + \gamma_2
\end{displaymath}
is maximal. Of course, such an $\Sp(2n,\R)$-Higgs bundle admits a
reduction of structure group to $\Sp(2n_1,\R)\x\Sp(2n_2,\R)$.

\begin{proposition}
\label{prop:higher-n}
Let $n \geq 3$.

(1) Let $(w_1,w_2)\in H^1(X,\Z/2)\times H^2(X,\Z/2)$ be different
from $(0,1)$. Then there is a maximal $\Sp(2n,\R)$-Higgs bundle which
represents a point in $\mathcal{M}_{w_1,w_2}$ and admits a reduction
of structure group to $\SL(2,\R)\x\dots\x\SL(2,\R)$ ($n$ copies).

(2) There is a maximal $\Sp(2n,\R)$-Higgs bundle which represents a
point in $\mathcal{M}_{0,1}$ and admits a reduction of structure
group to $\Sp(4,\R) \x \SL(2,\R)\x\dots\x\SL(2,\R)$ ($n-2$ copies of
$\SL(2,\R)$).
\end{proposition}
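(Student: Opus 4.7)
The plan is to exhibit, for each required case, an explicit direct sum of maximal polystable Higgs bundles of smaller rank, then identify the component by computing Stiefel--Whitney invariants of the Cayley partner. In all constructions, the summands will have $\SL(2,\R)$-degree $g-1$ (maximal for $\SL(2,\R)$) or $\Sp(4,\R)$-degree $2(g-1)$ (maximal for $\Sp(4,\R)$), so the total degree is $n(g-1)$ and the direct sum is maximal; polystability as an $\Sp(2n,\R)$-Higgs bundle follows from the $\Sp(2n,\R)$-analogue of Proposition~\ref{prop:Sp4R-GL4C-polystability-equivalence} together with the fact that all summands have slope $0$ on the associated $\GL(2n,\C)$-Higgs bundle.

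For part (1), I take the $\SL(2,\R)^n$-Higgs bundle
\begin{displaymath}
(V,\beta,\gamma) = \bigoplus_{i=1}^{n} (L_i,\,0,\,\gamma_i),
\end{displaymath}
with each $L_i$ a theta characteristic ($L_i^2 = K$) and $\gamma_i\colon L_i \to L_i^{-1}K$ the tautological isomorphism; each summand is polystable by Remark~\ref{rem:SL2R-stability}. Fixing $L_0 = K^{1/2}$, the Cayley partner decomposes as $(W,q_W) = \bigoplus_i (M_i,q_i)$ with $M_i := L_0 \otimes L_i^{-1}$ satisfying $M_i^2 = \mathcal{O}$. Setting $a_i = w_1(M_i) \in H^1(X,\Z/2)$, the Whitney product formula gives
\begin{displaymath}
w_1(W) = \sum_{i=1}^n a_i, \qquad w_2(W) = \sum_{i<j} a_i \cup a_j.
\end{displaymath}
Part (1) then reduces to realizing any $(w_1,w_2) \neq (0,1)$ in the image of this map. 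For $n \geq 3$ I take $a_3=\cdots=a_n=0$, and the $n=2$ case, asserting that $(a_1,a_2)\mapsto(a_1+a_2,\,a_1\cup a_2)$ surjects onto $(H^1(X,\Z/2)\times H^2(X,\Z/2))\setminus\{(0,1)\}$, is precisely the combinatorial computation carried out in the proof of Proposition~\ref{w1ne0}. Since the resulting $(V,\beta,\gamma)$ has $\beta = 0$ it cannot lie in a Hitchin component, so it lands in $\mathcal{M}_{w_1,w_2}$.

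For part (2), I replace the first two $\SL(2,\R)$-summands by a polystable $(V_1,\beta_1,\gamma_1) \in \mathcal{M}^{0}_{1}(\Sp(4,\R))$, whose Cayley partner $W_1$ has invariants $(w_1,w_2)=(0,1)$ by Proposition~\ref{prop:def_Chern}, and form
\begin{displaymath}
(V,\beta,\gamma) = (V_1,\beta_1,\gamma_1) \oplus \bigoplus_{i=1}^{n-2} (K^{1/2},\,0,\,1_{K^{1/2}}).
\end{displaymath}
The structure group manifestly reduces to $\Sp(4,\R)\times\SL(2,\R)^{n-2}$. The Cayley partner is $W_1 \oplus \mathcal{O}^{\oplus(n-2)}$, so each trivial summand contributes $1$ to the total Stiefel--Whitney class and the invariants are inherited from $W_1$, giving $(0,1)$ as required.

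I do not expect any serious obstacle; the main verifications are the Stiefel--Whitney computation, which reduces to the Whitney product formula for orthogonal direct sums, and the polystability of the direct sum, which is standard via the $\GL(2n,\C)$-polystability criterion. The combinatorial heart of part (1) is already handled by the $n=2$ argument of Proposition~\ref{w1ne0}; the rest is bookkeeping built on top of that.
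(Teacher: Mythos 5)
Your proof is correct and follows essentially the same route as the paper: build the $n=2$ block from the construction in Proposition~\ref{w1ne0} (or an $\Sp(4,\R)$-Higgs bundle with invariants $(0,1)$ for part (2)), pad with $n-2$ maximal $\SL(2,\R)$-summands having trivial Cayley invariants, and read off $(w_1,w_2)$ from the Whitney product formula. The only (equally valid) variation is how you rule out the Hitchin components in part (1): you use $\beta=0$ versus $\beta\neq 0$, whereas the paper argues that the direct sum is strictly polystable while Higgs bundles in Hitchin components are strictly stable.
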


\begin{proof}
(1) This follows by first using the construction in the proof of
Proposition~\ref{w1ne0} to obtain a maximal polystable
$\SL(2,\R)\x\dots\x\SL(2,\R)$-Higgs bundle with the required
$(w_1,w_2)$ and then taking direct sums with $n-2$ copies of a
maximal polystable $\SL(2,\R)$-Higgs bundle with $w_1=0$.  The
$\Sp(2n,\R)$-Higgs bundle $(V,\beta,\gamma)$ thus obtained is
maximal and has invariants $(w_1,w_2)$. Moreover $(V,\beta,\gamma)$
is strictly polystable. Since any $\Sp(2n,\R)$-Higgs bundle in a
Hitchin component is strictly stable
\cite{garcia-prada-gothen-mundet:2009a,hitchin:1992},
$(V,\beta,\gamma)$ does not lie in such a component and it follows that
$(V,\beta,\gamma)$ represents a point in $\mathcal{M}_{w_1,w_2}$ as
required.

(2) Take a maximal polystable $\Sp(4,\R)$-Higgs bundle with
invariants $(w_1,w_2)=(0,1)$ (existence follows from
Proposition~\ref{prop:components}) and take direct sums of this with
$n-2$ copies of a maximal polystable $\SL(2,\R)$-Higgs bundle with
$w_1=0$. As in the proof of (1), we see that this yields a maximal
$\Sp(2n,\R)$-Higgs bundle with the required properties.
\end{proof}

We already knew that maximal $\Sp(2n,\R)$-Higgs bundles in the Hitchin
components can always be deformed to $G'$-Higgs bundles for some proper
Zariski closed subgroup $G'\subset \Sp(2n,\R)$ (namely $G'=\SL(2,\R)$,
embedded via the irreducible representation);
Proposition~\ref{prop:higher-n} tells us that the same is true for
$\Sp(2n,\R)$-Higgs bundles in all other maximal components.
Thus the non-abelian Hodge theory correspondence gives the following.

\begin{corollary}
\label{cor:n-ge-3}
Let $n \geq 3$. Then any maximal representation of $\pi_1(S)$ in
$\Sp(2n,\R)$ can be deformed to one which factors through a proper
reductive Zariski closed subgroup of $\Sp(2n,\R)$.
\end{corollary}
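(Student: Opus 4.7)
The plan is to deduce the corollary directly from Proposition~\ref{prop:higher-n} together with the classification of connected components of $\mathcal{M}^{max}(\Sp(2n,\R))$ for $n\geq 3$, transported to the representation side via the non-abelian Hodge theory correspondence $\mathcal{M}(\Sp(2n,\R))\simeq \mathcal{R}(\Sp(2n,\R))$. The strategy is a case analysis over the $3\cdot 2^{2g}$ maximal components.

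First I would fix a maximal representation $\rho\colon\pi_1(S)\to\Sp(2n,\R)$ and let $(V,\beta,\gamma)$ be the polystable $\Sp(2n,\R)$-Higgs bundle corresponding to $\rho$ under the non-abelian Hodge theory correspondence. The component of $\mathcal{M}^{max}(\Sp(2n,\R))$ containing $(V,\beta,\gamma)$ is one of $\mathcal{M}^T_{K^{1/2}}$ or $\mathcal{M}_{w_1,w_2}$. I would then handle the three mutually exclusive cases in turn: (i) if $(V,\beta,\gamma)\in\mathcal{M}^T_{K^{1/2}}$, the theorem on connected components quoted earlier in Section~\ref{sec:higher-n} asserts directly that such a Higgs bundle deforms to a $\rho_i(\SL(2,\R))$-Higgs bundle, where $\rho_i\colon\SL(2,\R)\to\Sp(2n,\R)$ is the irreducible representation; (ii) if $(V,\beta,\gamma)\in\mathcal{M}_{w_1,w_2}$ with $(w_1,w_2)\neq(0,1)$, Proposition~\ref{prop:higher-n}(1) exhibits a Higgs bundle in the same component which admits a reduction of structure group to $\SL(2,\R)^{\times n}$; and (iii) if $(V,\beta,\gamma)\in\mathcal{M}_{0,1}$, Proposition~\ref{prop:higher-n}(2) exhibits a Higgs bundle in the same component which admits a reduction of structure group to $\Sp(4,\R)\times\SL(2,\R)^{\times(n-2)}$.

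Next I would observe that in each of these three cases the target subgroup $G'\subset\Sp(2n,\R)$ is a proper reductive Zariski closed subgroup: properness follows by a dimension/image count (the irreducible $\SL(2,\R)$, the block-diagonal $\SL(2,\R)^{\times n}$, and $\Sp(4,\R)\times\SL(2,\R)^{\times(n-2)}$ are all properly contained in $\Sp(2n,\R)$ for $n\geq 3$), reductivity is standard for each of these classical groups, and Zariski closedness follows because each $G'$ is the real locus of a complex algebraic subgroup of $\Sp(2n,\C)$. By Proposition~\ref{reduction} together with the discussion following it, the polystable $G'$-Higgs bundle obtained in each case corresponds under the non-abelian Hodge theory correspondence for $G'$ to a reductive representation $\pi_1(S)\to G'$, and composition with the inclusion $G'\hookrightarrow \Sp(2n,\R)$ yields a representation in the correct connected component of $\mathcal{R}^{max}(\Sp(2n,\R))$ (since connected components are matched by the correspondence~\eqref{eq:non-ab-hodge-max}). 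Hence $\rho$ is deformation-equivalent to a representation factoring through $G'$.

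There is no real obstacle here: the substantive work has been done in Proposition~\ref{prop:higher-n} and in the component count of \cite{garcia-gothen-mundet:2008}; the only mild point requiring care is checking that the subgroups produced on the Higgs bundle side are indeed Zariski closed and reductive, which is straightforward. The contrast with the $n=2$ case (where the components $\mathcal{M}^0_c$ with $0<c<2g-2$ have no such deformation, by Theorem~\ref{thm:main-result-higgs}(4)) hinges precisely on the absence of these components when $n\geq 3$, which is exactly what the $3\cdot 2^{2g}$ component count above makes manifest.
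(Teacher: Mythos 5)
Your argument is correct and is essentially the paper's own proof: the case analysis over the $3\cdot 2^{2g}$ maximal components, using the component classification for the Hitchin components and Proposition~\ref{prop:higher-n} for the components $\mathcal{M}_{w_1,w_2}$, followed by transfer to representations via the non-abelian Hodge theory correspondence, is exactly how the corollary is deduced in the text. Your extra verification that the target subgroups are proper, reductive and Zariski closed is a reasonable (if implicit in the paper) tidying-up, not a departure.
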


\appendix

\section{The Kronecker product}\label{subs:Sp4R}

If $A$ is an $m\times m$ matrix with entries $a_{ij}$ and $B$ is an
$n\times n$ matrix with entries $b_{ij}$, then the {\bf Kronecker product} $A\otimes B$ is
defined to be the $mn\times mn$ matrix with  block entries $a_{ij}B$.   Thus if $A$ and $B$ are both $2\times 2$ matrices, then
\begin{equation}
A\otimes B=
\begin{pmatrix}
a_{11}B & a_{12}B \\
a_{21}B & a_{22}B
\end{pmatrix}.
\end{equation}

\noindent  Several formulae in the main body of this paper have convenient forms when expressed in terms of this product.  In particular the symplectic forms used to define $\Sp(4,\R)$ are given by
\begin{align}
J_{13}&= J\otimes I,\\
J_{12}&= I\otimes J.
\end{align}

We record some elementary but useful properties of the Kronecker product.   

\begin{lemma}
Let $A, C$ be $m\times m$ matrices and $B,D$ be $n\times n$ matrices. Then
\begin{align}\label{kronecker}
& (A\otimes B)(C\otimes D)=AC\otimes BD\\
& (A\otimes B)^t=A^t\otimes B^t\nonumber\\
& \exp(A\otimes I_n + I_m\otimes B)=\exp(A)\otimes \exp(B)\nonumber
\end{align}

If $A$ and $B$ are both $2\times 2$ matrices and
\begin{equation}\label{eqn:transform}
h=h^t=h^{-1}=\begin{pmatrix}1&0&0&0 \\0&0&1&0
\\0&1&0&0\\0&0&0&1
\end{pmatrix}
\end{equation}

\noi then
\begin{equation}\label{AB=h.BA.h}
A\otimes B = h^t (B\otimes A) h.
\end{equation}
\end{lemma}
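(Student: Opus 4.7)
The plan is to verify each of the four identities directly from the definition of the Kronecker product, with the third one reducing to a standard fact about exponentials of commuting matrices once the first identity is established.

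For the first identity, I would write $A\otimes B$ in its block form, i.e.\ as the $m\times m$ array of $n\times n$ blocks whose $(i,j)$-th block is $a_{ij}B$, and likewise for $C\otimes D$. Carrying out the block-matrix multiplication, the $(i,p)$-th block of the product is
\begin{equation*}
\sum_{j=1}^{m} (a_{ij}B)(c_{jp}D)=\Bigl(\sum_{j}a_{ij}c_{jp}\Bigr)BD=(AC)_{ip}\,BD,
\end{equation*}
which is exactly the $(i,p)$-th block of $AC\otimes BD$. The second identity follows by the same block inspection: transposing $A\otimes B$ interchanges the outer block indices (producing $A^t$ in the outer slot) and simultaneously transposes each block (producing $B^t$ inside), matching $A^t\otimes B^t$.

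For the third identity, I would first observe that $(A\otimes I_n)(I_m\otimes B)=A\otimes B=(I_m\otimes B)(A\otimes I_n)$ by the first identity, so the two summands commute and $\exp(A\otimes I_n+I_m\otimes B)=\exp(A\otimes I_n)\exp(I_m\otimes B)$. The first identity also yields $(A\otimes I_n)^k=A^k\otimes I_n$, so termwise summation of the exponential series gives $\exp(A\otimes I_n)=\exp(A)\otimes I_n$, and analogously $\exp(I_m\otimes B)=I_m\otimes\exp(B)$. One last application of the first identity then collapses the product to $\exp(A)\otimes\exp(B)$.

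The fourth identity is a direct $4\times 4$ check. The matrix $h$ in \eqref{eqn:transform} is the permutation matrix that transposes the second and third standard basis vectors of $\R^4$, so $h^tXh=hXh$ acts on a $4\times 4$ matrix $X$ by simultaneously swapping rows $2,3$ and columns $2,3$. Writing out $B\otimes A$ as a $4\times 4$ matrix with entries $b_{ij}a_{kl}$ indexed lexicographically and applying this row/column swap produces precisely the matrix with entries $a_{ij}b_{kl}$ in lexicographic order, namely $A\otimes B$. No step requires an idea beyond book-keeping; the only place where care is needed is to keep the index conventions consistent in the fourth identity, which I view as the principal (very minor) obstacle.
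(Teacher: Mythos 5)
Your proof is correct: all four identities are verified by the standard direct block-index computations, and the commutativity observation $(A\otimes I_n)(I_m\otimes B)=(I_m\otimes B)(A\otimes I_n)$ is exactly what is needed to factor the exponential. The paper states this lemma without proof as a collection of elementary facts, and your argument supplies precisely the routine verification being omitted, so there is no divergence of approach to report.
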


\noi Applying (\ref{AB=h.BA.h}) to $J_{13}$ we see that 
\begin{equation}
hJ_{12}=J_{13}h.
\end{equation}

\noi It follows that $g\in \SL(4,\R)$ satisfies $g^tJ_{12}g=J_{12}$ if and only if
$g'=hgh$ satisfies $g'^tJ_{13}g'=J_{13}$. Thus the descriptions of $\Sp(4,\R)$ with respect to $J_{12}$ and with respect to $J_{13}$ are related by conjugation with $h$.

\section{Tables}

\pagestyle{plain}
\begin{landscape}

\begin{table}\label{table: Components}
\begin{tabular}{|c||c||c|c|c||c||c|}
\hline
& & & & & &  \\
Component & Higgs bundle $(V,\beta,\gamma)$& $w_1$ & {\Small $\deg(NK^{-1/2})$} & $w_2$ & $G_*$ & Number\\
\hline
\hline
& &  & & & & \\
&$V=K^{3/2}\oplus K^{-\frac{1}{2}}$ & & & & & \\
& & & & & & \\
$\mathcal{M}^T_{K^{1/2}}$&
$\gamma=\begin{pmatrix}0&1\\1&0\end{pmatrix}\ , \
\beta=\begin{pmatrix}\beta_1&\beta_3\\\beta_3&1\end{pmatrix}$&
$0$ & $2g-2$ & 0& $G_i$ & $2^{2g}$ \\
& & & & & & \\
& $\beta_1\in H^0(K^4)\ , \ \beta_3\in H^0(K^2)$ &  & & & &\\
\hline
& & & & & & \\
& $V=N\oplus N^{-1}K\ ,\  g-1<\mathrm{deg}(N)<3g-3$& & $2g-3$& & &\\
& & &$\vdots$ & & & \\
$\mathcal{M}^0_{c}$& $\gamma=\begin{pmatrix}0&1\\1&0\end{pmatrix}\
,\ \beta=\begin{pmatrix}\beta_1&\beta_3\\ \beta_3&\beta_2\end{pmatrix}\ ,\ \beta_2\ne 0$&
$0$ & $c$& c mod 2 & - & $(2g-3)$\\
({\Small$c=\deg(NK^{-1/2})$})& & &$\vdots$ & & & \\
& $\beta_1\in H^0(N^2K)\ ,\ \beta_3\in H^0(K^2)\ ,\ \beta_2\in H^0(N^{-2}K^3)$ &  & $1$& & &\\
\hline
& & & & & & \\
& $V=N\oplus N^{-1}K\ ,\  \mathrm{deg}(N)=g-1$&   & & & &\\
& & & & & & \\
$\mathcal{M}^0_{0}$& $\gamma=\begin{pmatrix}0&1\\1&0\end{pmatrix}  \ ,\
\beta=\begin{pmatrix}\beta_1&\beta_3\\\beta_3&\beta_2\end{pmatrix}$& 
$0$ & $0$ & $0$ & $G_{\Delta}, G_p$ & 1 \\
& & & & & & \\
& $\beta_1\in H^0(N^2K)\ ,\ \beta_3\in H^0(K^2)\ , \ \beta_2\in H^0(N^{-2}K^3) $ &  & & & & \\
\hline
& & & & & & \\
$\mathcal{M}_{w_1,w_2}$ & $V=W\otimes L_0\ ,\ L_0^2=K$ & & & & &\\
{\tiny $w_1\in H^1(X,\Z/2)-\{0\}$}& & $w_{1}$ & - & $0\ {\mathrm or}\ 1$ &  $G_{\Delta},G_p$  & $2.(2^{2g}-1)$ \\
{\tiny $w_2\in H^2(X,\Z/2)=\Z/2$} & $\gamma=q_W\otimes 1_{L_0}\ ,\ \beta\in H^0(\Sym^2(V)\otimes K)$ &  & & & & \\
\hline
TOTAL & & & & & & $3.2^{2g}+2g-4$\\
\hline
\end{tabular}
\vspace{12pt} \caption{ Higgs bundles in the
components of $\mathcal{M}^{max}$.  The columns show the form of the Higgs bundles, their topological invariants (when applicable), the subgroups to which the structure group of the Higgs bundles can reduce, and the number of connected components of each type. }

\end{table}

\begin{table}\label{table: G*-Higgs}
\begin{tabular}{|c||c|c|c|}
\hline
& & & \\
$G_*$ &  $V$ & $\beta$ & $\gamma$ \\
& & & \\\hline\hline
& & & \\
$G_i$ &  $K^{3/2}\oplus K^{-1/2}$ &$\begin{pmatrix}\beta_1&\beta_3\\\beta_3&1\end{pmatrix}\ ,\ \begin{cases}\beta_3\in H^0(K^2)\\ \beta_1=const.(\beta_3)^2\end{cases}$ 
& $\begin{pmatrix}0&1\\1&0\end{pmatrix}$ \\
& & & \\
\hline
& & & \\
$G_{\Delta}$ & $U\otimes L$ & $q^t_U\otimes \tilde{\beta}$& $q_U\otimes \tilde{\gamma}$ \\
& $U$ orthogonal& $\tilde{\beta}\in H^0(L^{2}K)$& $\tilde{\gamma}\in H^0(L^{-2}K)$  \\
& & & \\
\hline
& & & \\
$\SL(2,\R)\times\SL(2,\R)$  & $L_1\oplus
L_2$ &
$\begin{pmatrix}\beta_1&0\\0&\beta_2\end{pmatrix}$ & $\begin{pmatrix}\gamma_1&0\\0&\gamma_2\end{pmatrix}$ \\
& & & \\
& & & \\
\hline
& & & \\
$G_p$ &  $p^*(V)=L_1\oplus L_2$  & & \\
& $p:X'\longrightarrow X$&
$p^*(\beta)=\begin{pmatrix}\beta_1&0\\0&\beta_2\end{pmatrix}$&
$p^*(\gamma)=\begin{pmatrix}\gamma_1&0\\0&\gamma_2\end{pmatrix}$ \\
& 2:1 & & \\
\hline
\end{tabular}
\vspace{12pt} \caption{$G_*$-Higgs bundles in
$\mathcal{M}^{max}$, showing the special form of the defining data $(V,\beta,\gamma)$ for a $\Sp(4,\R)$-Higgs bundle which admits a reduction of structure group to the indicated subgroup. }\label{tab:G*-higgs}
\end{table}

\end{landscape}


\providecommand{\bysame}{\leavevmode\hbox
to3em{\hrulefill}\thinspace}

\end{document}